\documentclass[a4paper,reqno]{amsart}

\parindent=15pt
\parskip=3pt

\usepackage{amsfonts}
\usepackage{amsmath}
\usepackage[colorlinks, citecolor={red}, linkcolor=.]{hyperref}
\usepackage{amssymb}
\usepackage{tikz}
\usepackage{color}
\usepackage[all]{xy}
\usepackage{soul}
\usepackage{enumerate}
\usepackage{bm}
\usepackage{bbm}
\usepackage[normalem]{ulem}
\usepackage{mathrsfs}
\usepackage{tikz-cd}
\usepackage{mathtools}

\usepackage[margin = 1.25in, centering]{geometry}

\setlength{\parindent}{0cm}
\setlength{\parskip}{1em}
\setlength{\marginparwidth}{1in}

\setcounter{MaxMatrixCols}{10}

\allowdisplaybreaks
\numberwithin{equation}{section}
\newtheorem{theorem}{Theorem}[section]
\newtheorem{proposition}[theorem]{Proposition}
\newtheorem{lemma}[theorem]{Lemma}
\newtheorem{corollary}[theorem]{Corollary}

\theoremstyle{definition}
\newtheorem{definition}[theorem]{Definition}
\newtheorem{notation}[theorem]{Notation}

\newtheorem{observation}[theorem]{Observation}
\theoremstyle{remark}
\newtheorem{remark}[theorem]{Remark}
\newtheorem{example}[theorem]{Example}

\newcommand{\CC}{\mathbb{C}}
\newcommand{\NN}{\mathbb{N}}

\newcommand{\ZZ}{\mathbb{Z}}

\newcommand{\G}{G}

\newcommand{\src}{\bm{s}}
\newcommand{\ran}{\bm{r}}

\newcommand{\supp}{\operatorname{supp}}

\newcommand{\Span}{\operatorname{span}}

\newcommand{\Iso}{\operatorname{Iso}}
\newcommand{\Bisc}{\operatorname{Bis}_c}

\newcommand{\Ann}{\operatorname{Ann}}
\newcommand{\Ind}{\operatorname{Ind}}
\newcommand{\Indu}{\operatorname{Ind}_u}
\newcommand{\End}{\operatorname{End}}
\newcommand{\notrational}[1]{1_{(#1,0,#1)}}
\newcommand{\mayberational}[1]{1_{(#1,0,#1)}}

\title[A classification of ideals in Steinberg and Leavitt path algebras]{A classification of ideals in Steinberg and Leavitt path algebras over arbitrary rings}

\author[S. W. Rigby]{Simon W. Rigby}

\author[T. van den Hove]{Thibaud van den Hove}

\address{Simon W. Rigby: Department of Mathematics: Algebra and Geometry, Ghent University, Belgium}

\email{simon.rigby@ugent.be}

\address{Thibaud van den Hove: Mathematical Institute, University of Bonn, Germany}

\email{s6thvand@uni-bonn.de}

\subjclass{Primary 16S88; Secondary 16D70}

\begin{document}

\maketitle

\begin{abstract}
	We give a one-to-one correspondence between ideals in the Steinberg algebra of a Hausdorff ample groupoid $G$, and certain families of ideals in the group algebras of isotropy groups in $G$. This generalises a known ideal correspondence theorem for Steinberg algebras of strongly effective groupoids. We use this to give a complete graph-theoretic description of the ideal lattice of Leavitt path algebras over arbitrary commutative rings, generalising the classification of ideals in Leavitt path algebras over fields. 
	\end{abstract}

\section{Introduction}

The theory of Steinberg and Leavitt path algebras has become highly relevant and these days it has many applications in algebra. Some of these applications are to fundamental questions relating noncommutative rings to more basic algebraic or set-theoretic objects -- What are the possible module types of a ring~\cite{leavitt}? Which monoids can be realised as the monoid of finitely-generated projective modules over a (von Neumann regular) ring \cite{ara-realization, ara-separated}? Which posets can be realised as the spectrum of a ring \cite{abrams-posets}? For which cardinals $\kappa$  is there a ring with $\kappa$-many distinct irreducible representations \cite{ararangaswamy2}, or a ring that has Krull dimension $\kappa$ and yet has no chain of $\kappa$-many prime ideals~\cite{Loper}?

To enable this sort of progress, it is important to develop as far as possible the ideal theory and representation theory of Leavitt path algebras. We take a step further in that direction, by determining the full ideal lattice of a Leavitt path algebra in the most general setting yet, namely when the graph is arbitrary and the coefficients come from an arbitrary commutative ring with identity. We approach this problem initially through the lens of Steinberg algebras, where we find a correspondence theorem for ideals that is good for gaining insights, but somewhat difficult to work with. Then we specialise this to obtain a more user-friendly correspondence theorem for ideals in Leavitt path algebras, and give  a sample of some applications.

Steinberg algebras are convolution algebras of functions from an  ample topological groupoid to a commutative unital ring; they are the algebraic version of the groupoid $\mathrm{C^*}$-algebras that Renault first introduced in \cite{renault}. The role of groupoids is that they are extremely versatile objects for capturing dynamical information, and from early on this led to a unified treatment of many different examples of $\mathrm{C^*}$-algebras of dynamical or combinatorial origins. Steinberg algebras were first seen much later in \cite{steinberg0} and \cite{clark2014groupoid}. Their appearance has led to a proliferation of new examples and results in ring theory, and shows signs of bridging the gap between combinatorial $\mathrm{C^*}$-algebras and their various algebraic analogues \cite{clark-hazrat}.

There are two main reasons why we have used  Steinberg algebras as a starting point for our study of ideals in Leavitt path algebras, rather than the more traditional generators-and-relations construction. Firstly, it gives us access to more topological techniques and this gets us rather quickly to a first view of the full lattice of ideals, from which we can begin a process of specialising and simplifying. Secondly, this approach could in principle be applied to some classes of algebras more general than Leavitt path algebras, of which there are several interesting candidates \cite{abrams-corners,clark-KP_algebras,hazrat-nam}.

In trying to find a useful description or classification of ideals in Steinberg algebras, we took inspiration mainly from Steinberg's Disintegration Theorem \cite{steinberg}, which gives an equivalence between the category of right modules over a Steinberg algebra and the category of sheaves over its underlying groupoid. The equivalence involves a limiting process where multiplication on the right by successively smaller idempotents converges to a module for the isotropy group at each point of the unit space. The collection of all such modules has the structure of a sheaf.

Ideals are in particular also right modules, so one possible approach to our task might have been to try and characterise precisely which sheaves are the images of two-sided ideals. However, we settled instead  on a  different, more symmetric method where one takes the limit from both sides and this converges to an ideal in the group algebra of each isotropy group. The collection of all such ideals is a sheaf of ideals over the unit space of the groupoid (although we do not really emphasise this), and it provides an invariant from which one can uniquely recover the original ideal in the Steinberg algebra.

Leavitt path algebras are $\ZZ$-graded algebras associated to a directed graph, and form a special class of Steinberg algebras. They were introduced before Steinberg algebras, in \cite{abrams2005leavitt} and \cite{ara2007nonstable}, and are the algebraic analogue of graph \(\mathrm{C^*}\)-algebras. Even when these algebras were only just discovered, quite some research was done on the structure of their ideal lattices. For example, in~\cite{ara2007nonstable} and \cite{tomforde2007}, the authors analysed 
the lattice of graded ideals in the Leavitt path algebra $L_K(E)$ of a row-finite graph $E$ and a field $K$. They found that this lattice is independent of $K$, since every ideal is generated by a unique hereditary saturated set of vertices. Moving to a more general setting immediately raises some complications, which we outline as follows.

Firstly, if $E$ is not row-finite, the lattice of graded ideals in $L_K(E)$ is still independent of $K$ but there are some subtleties caused by the vertices that emit infinitely many edges. This can be dealt with by introducing the concept of a breaking vertex for a hereditary saturated set. Secondly, when $E$ has cycles that do not feed into any other cycles --- in other words, $E$ fails Condition (K) --- then $L_K(E)$ has non-graded ideals. Already for the minimal example $L_K(\xymatrix{\bullet \ar@(ur,dr)}\qquad)\cong K[x,x^{-1}]$, there are infinitely many non-graded ideals and the lattice of all ideals depends on the field $K$. However, this turns out to be essentially the only kind of obstacle. One can classify the intermediate ideals between two graded ideals in $L_K(E)$ by assigning an ideal in $K[x,x^{-1}]$ (or, equivalently, a non-constant polynomial in $K[x]$ with constant term 1) to each of the cycles posing an obstacle.

When considering Leavitt path algebras over a field, the ideal lattice is completely understood: see \cite[Theorem 2.8.10]{LPAbook}. Every ideal is determined by a hereditary saturated set~$H$, a set $S$ of breaking vertices for $H$, a set $C$ of cycles, and a  non-constant polynomial with constant term~1 for each cycle in $C$. This classification makes use of the fact that when a scalar multiple of a vertex is contained in an ideal, so is the vertex itself. This brings us to the third major complication. If~$R$ is a commutative unital ring instead of a field, then the ideal structure of $L_R(E)$ is heavily influenced by the ideal structures of $R$ and $R[x,x^{-1}]$. Very little progress has been made towards unravelling the relationship between ideals of $R$, $R[x,x^{-1}]$, and $L_R(E)$. A notable exception is~\cite{clark2019ideals}, where the authors managed to do this quite successfully for row-finite graphs that satisfy Condition (K); that is, by dealing with the third complication in the absence of the first two.

In this paper, we are able to deal with all of the above complications simultaneously, and fully describe the ideal lattice of the Leavitt path algebra of an arbitrary graph over a commutative ring \(R\) with unit. We associate to each ideal \(I\trianglelefteq L_R(E)\) a pair of functions. The first function pinpoints the nearest graded ideal: it maps each admissible pair \((H,S)\) to the largest  ideal \(J\trianglelefteq R\) such that $Jv$ and $Jw^H$ (see \eqref{notation:v^H}) are contained in $I$, for all $v \in H$ and $w \in S$.  To capture the non-graded ideals, we attach to \(I\) a second function from a set of cycles to the set of ideals of \(R[x,x^{-1}]\). These two functions (which must of course satisfy some conditions and be compatible with each other in a precise sense) then characterise the ideal \(I\), as we prove in Theorem \ref{LPA ideal correspondence}. Moreover, to have a description of the infimum and supremum of the ideal lattice as well, we describe the pair of functions to which the sum, intersection, and even the product of ideals corresponds.

When working over a field, we not only have a classification of the ideals of Leavitt path algebras, but it is also known that they satisfy interesting arithmetic properties. For example, it was shown in \cite{rangaswamy} that the multiplication of ideals of Leavitt path algebras is commutative, and that Leavitt path algebras are arithmetical rings and multiplication rings. Moreover, a classification of the prime ideals is given in \cite{rangaswamyprime}, and it is shown in \cite{rangaswamy} that the irreducible ideals of Leavitt path algebras are exactly the prime ideals, and that the primary ideals are exactly the powers of prime ideals. While we have not been able to find analogous results for Leavitt path algebras over arbitrary rings, we do have partial results. For example, we will show that the multiplication of ideals remains commutative, and give conditions on the base ring which imply that Leavitt path algebras will still be arithmetical rings. We also partially generalise the classification of prime ideals from~\cite{rangaswamyprime} to give necessary conditions for ideals in Leavitt path algebras over arbitrary rings to be prime.

We start this paper with Section \ref{sec:preliminaries} by giving a brief introduction to Steinberg algebras and some module theory that  will be needed. Like in \cite{clark2019ideals}, our techniques for Steinberg algebras often rely on a standing assumption that the ample groupoid is Hausdorff. Theorem \ref{disassembly-theorem} is our first major theorem: it describes the ideals of Steinberg algebras by breaking them down, or ``disassembling" them, into ideals of the isotropy group algebras. At the end of Section~\ref{section:disassembly theorem}, we use that theorem to deduce some results from \cite{clark2019ideals} about Steinberg algebras of strongly effective groupoids. Section \ref{sec:lattice structure} is about operations on ideals in Steinberg algebras: we deduce some results about commutativity and distributivity of ideals, and a new theorem about prime Steinberg algebras.

Sections \ref{sec:Leavitt path algebras} and \ref{sec:ideals of LPAs} are the biggest and most important ones, as we give a complete description of the ideal structure of Leavitt path algebras of an arbitrary directed graph over a commutative ring with unit. To be able to use previous results of this paper, we have to start by describing how Leavitt path algebras arise as Steinberg algebras, and hence introduce the boundary path groupoid \(G_E\). As we want to use Theorem \ref{disassembly-theorem} specifically, we also describe which subsets of the topological space \(X(G_E)\) are open, and satisfy the conditions needed. We then define and describe the lattices that we need, so that we can finally prove the ideal correspondence theorem for Leavitt path algebras (Theorem \ref{LPA ideal correspondence}). And to finish, we end the paper with a few sections to describe the product of ideals, the graded ideals, and give necessary conditions for an ideal to be prime.

\section{Preliminaries on groupoids and Steinberg algebras} \label{sec:preliminaries}

A \textit{groupoid} is a small category in which every morphism is invertible. We use the following notation and terminology: the \textit{unit space} is $\G^{(0)} = \{xx^{-1} \mid x \in \G \} = \{x^{-1}x \mid x \in \G\}$, the \textit{source} map is $\src: \G \to \G^{(0)}$, $\src(x) = x^{-1} x$, the \textit{range} map is $\ran: \G \to \G^{(0)}$, $\ran(x) = xx^{-1}$,  and the set of \textit{composable pairs} is $\G^{(2)} = \{(x,y) \in \G \times \G \mid \src(x) = \ran(y) \}$. We use the notation $\G_u = \src^{-1}(u)$, $\G^v = \ran^{-1}(v)$, and $\G_u^v = \G_u \cap \G^v$, whenever $u,v \in \G^{(0)}$ are units. The group $\G_u^u$ is called the \textit{isotropy group} based at~$u$. The \textit{isotropy subgroupoid} of $\G$ is the bundle of groups $\Iso(\G) = \{x \in \G \mid \src(x) = \ran(x) \} = \bigcup_{u \in \G^{(0)}} \G_u^u$. We say that a groupoid $\G$ is \textit{transitive} if $\G_u^v \ne \varnothing$ for every $u, v \in \G^{(0)}$, and it is \textit{principal} if $\Iso(\G) = \G^{(0)}$. The \textit{orbit} of a unit $u \in G^{(0)}$ is the set $\mathcal{O}_v = \{u \in G^{(0)} \mid G_u^v \ne \varnothing\}$. A subset $V \subseteq G^{(0)}$ is called \emph{invariant} if $V = \src(\ran^{-1}(V)) = \ran(\src^{-1}(V))$, which is the same as saying that $V$ is a union of orbits.

A topological groupoid is a groupoid $G$ equipped with a topology such that inversion $G \to G$  and composition  $G^{(2)}\to G$ are continuous (where $G^{(2)}$ has the subspace topology from $G \times G$). An \textit{ample groupoid} is a topological groupoid $\G$ in which $\src$ is a local homeomorphism and $\G^{(0)}$ is locally compact, totally disconnected, and Hausdorff. Equivalently, an ample groupoid is a topological groupoid $\G$ in which $\src$ is an open map and $\G$ has a basis of \textit{compact open bisections}; that is, compact open subsets of $\G$ on which $\src$ and $\ran$ are injective. In any ample groupoid, $G^{(0)}$ is open and $\Iso(G)$ is closed in $G$. If $G$ is Hausdorff, then $G^{(0)}$ is both open and closed \cite[p.\!~29]{rigby}.

We shall assume throughout that {$G$ is a \textit{Hausdorff} ample groupoid, and $R$ is an arbitrary commutative ring with 1}. If a topology is given to $R$, it will always be the discrete topology.  We write $\Bisc(\G)$ for the set of compact open bisections in $\G$, and $\Bisc(\G^{(0)})$ for the set of compact open subsets of $\G^{(0)}$. For $A, B \in \Bisc(G)$, we define $AB = \{ab \mid a \in A, b \in B, \src(a) = \ran(b) \}$ and $A^{-1} = \{a^{-1} \mid a \in A \}$. Under these operations, $\Bisc(G)$ is an inverse semigroup and  $\Bisc(G^{(0)})$ is the set of idempotents in $\Bisc(G)$. For any $A \subseteq G$, let $\bm{1}_A: \G \to R$ be its characteristic function. For any function $f: G \to R$, the \emph{support} of $f$ is $\supp(f) = \{x \in G \mid f(x) \ne 0\}$. The term \emph{ideal} always means two-sided ideal, and we specify if we mean left or right ideals. If $A$ is a ring, we write $\mathcal{L}(A)$ for the set of ideals in $A$ and we sometimes write $I \trianglelefteq A$ to mean $I \in \mathcal{L}(A)$.

\begin{definition} \cite{steinberg0} \label{steinberg-definition}
Let $\G$ be a Hausdorff ample groupoid. The \textit{Steinberg algebra} $A_R(G)$ is the $R$-module of locally constant, compactly supported functions $f: G \to R$,
equipped with the \textit{convolution product}:
\begin{align} \label{convol}
f*g(x) = \sum_{y \in \G_{\src(x)}} f(xy^{-1})g(y) = \sum_{\substack{(z,y) \in \G^{(2)},\\ zy = x}} f(z)g(y) && \text{for all } f, g \in A_R(\G) \text{ and } x \in \G.
\end{align}
\end{definition}
Every $f \in A_R(G)$ is continuous and can be written as a sum $f = \sum_{B \in {D}} r_B \bm{1}_{B}$, where ${D}$ is some finite set of mutually disjoint compact open bisections, and each $r_B \in R$ \cite[Proposition~2.5]{rigby}.
If $A, B \in \Bisc(\G)$ then (\ref{convol}) yields $\bm{1}_A * \bm{1}_B = \bm{1}_{AB}$. The following lemma is straightforward to prove from the definitions:

\begin{lemma} \label{lem1} If $f \in A_R(G)$, $U \in \Bisc(G^{(0)})$, and $B \in \Bisc(G)$, then for all $x \in G$,
\begin{align*}
f* \bm{1}_U(x) &= f(x) \bm{1}_{U}(\src(x)),
& f*\bm{1}_B(x) &= \begin{cases} f(xy^{-1}) & \text{if } B \cap G_{\src(x)} = \{y\} \\0 & \text{if } B \cap G_{\src(x)} = \varnothing, \end{cases} \\
\bm{1}_U* f(x) &= f(x) \bm{1}_{U}(\ran(x)),
& \bm{1}_B*f(x) &= \begin{cases} f(y^{-1}x) & \text{if } B \cap G^{\ran(x)} = \{y\} \\0 & \text{if } B \cap G^{\ran(x)} = \varnothing. \end{cases}
\end{align*}
\end{lemma}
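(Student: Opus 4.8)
The plan is to derive all four identities directly from the two equivalent expressions for the convolution product in \eqref{convol}, exploiting the fact that $B$ is a bisection to collapse each sum to a single term. I would begin with the right-multiplication identity for $\bm{1}_B$. Using the first form of \eqref{convol}, one has
\[
f * \bm{1}_B(x) = \sum_{y \in \G_{\src(x)}} f(xy^{-1}) \bm{1}_B(y),
\]
and since $\bm{1}_B(y)$ vanishes unless $y \in B$, the sum is really indexed by $B \cap \G_{\src(x)}$.

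The key step is the observation that $\src$ is injective on the bisection $B$, so $B \cap \G_{\src(x)} = B \cap \src^{-1}(\src(x))$ has at most one element. If this intersection is empty the sum is empty and the value is $0$; if it equals $\{y\}$, then $\src(y) = \src(x)$ guarantees that $xy^{-1}$ is composable, and the sole surviving term is $f(xy^{-1})$. This gives the second identity. The first identity is the special case $B = U \subseteq \G^{(0)}$: here each element of $U$ is a unit fixed by $\src$, so $U \cap \G_{\src(x)}$ is $\{\src(x)\}$ when $\src(x) \in U$ and empty otherwise; since $\src(x)^{-1} = \src(x)$, one has $x\,\src(x)^{-1} = x$, and the formula reduces to $f(x) \bm{1}_U(\src(x))$.

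The two left-multiplication identities follow by the symmetric argument, now using the second form of \eqref{convol}, namely $\bm{1}_B * f(x) = \sum_{zy = x} \bm{1}_B(z) f(y)$. Here the surviving indices are those $z \in B$ for which $zy = x$ admits a solution $y$; any such $z$ must satisfy $\ran(z) = \ran(x)$, so it lies in $B \cap \G^{\ran(x)}$, which again has at most one element because $\ran$ is injective on $B$. Writing $y$ for this element when it exists, the composability of $y^{-1}x$ is automatic and the single term is $f(y^{-1}x)$, with the value being zero otherwise; the third identity is then the specialisation to $B = U \subseteq \G^{(0)}$, exactly as before.

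No part of this presents a genuine obstacle: the entire content is the bookkeeping of (i) the support of $\bm{1}_B$ restricting the summation index to $B$, (ii) the bisection property forcing the relevant fibre intersection to be a singleton or empty, and (iii) the elementary check that $xy^{-1}$ and $y^{-1}x$ are composable whenever the indexing element exists. The only care required is in keeping straight which form of \eqref{convol} and which of $\src, \ran$ governs left versus right multiplication.
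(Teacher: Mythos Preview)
Your proposal is correct and is exactly the direct verification from the convolution formula \eqref{convol} that the paper has in mind; the paper itself omits the proof, remarking only that the lemma is ``straightforward to prove from the definitions.''
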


Steinberg algebras are locally unital rings in the sense that for every finitely generated subalgebra $S\subseteq A_R(G)$ there exists an idempotent $e\in A_R(G)$ such that $es = se = s$ for all $s \in S$. These idempotents can be chosen from the set $\{\bm{1}_U \mid U \in \Bisc(G^{(0)})\}$.

As an aside, to define the Steinberg algebra of a not necessarily Hausdorff ample groupoid, one needs to define $A_R(G)$ differently from how we have done it: it is the $R$-module generated by the functions $\bm{1}_K: G \to R$ where $K$ ranges over the set of compact open Hausdorff subsets of $G$ \cite[Definition 4.1]{steinberg0}. This is equivalent to Definition \ref{steinberg-definition} if $G$ is Hausdorff. But if $G$ is not Hausdorff, the functions in $A_R(G)$ are not necessarily locally constant and compactly supported.

\subsection{Module theory for Steinberg algebras} For our purposes, a left module over a locally unital ring $A$ is an abelian group $M$ with a ring homomorphism $\rho: A \to \End M$ such that $\rho(A)M = M$. Usually $\rho(a)m$ is written as $a \cdot m$ or even $am$. The condition that $\rho(A)M = M$ (or $A \cdot M = M$) is equivalent to the condition that for each $m \in M$ there is a local unit $e \in A$ such that $e\cdot m = m$. 
If $A$ is an $R$-algebra, then by defining $r \cdot m = (re) \cdot m$, we give $M$ the structure of an $R$-module. The \emph{annihilator} of $M$ is the ideal $\Ann(M) = \{a \in A \mid aM = 0\} \trianglelefteq A$.

\begin{notation} We shall use the following notational conventions. Let $u\in G^{(0)}$.
	\begin{enumerate}[(i)]
	\item By the notation $R\G_u^u$ we mean the group $R$-algebra of the isotropy group $\G_u^u$. Formally, we take $R\G_u^u$ to be the set of finitely supported functions $\alpha: \G_u^u \to R$ equipped with the convolution product $\alpha \cdot \beta(x) = \sum_{y \in G_u^u} \alpha(xy^{-1})\beta(y)$ for all $x \in G_u^u$ and all $\alpha, \beta \in RG_u^u$.
	\item For $f \in A_R(G)$, we write $f_u = f|_{G_u^u} \in R\G_u^u$.
	\item By the notation $RG_u$ (respectively, $RG^u$) we mean the $R$-module of finitely supported functions $\beta: G_u \to R$ (respectively, finitely supported functions $\beta: G^u \to R$). These are free $R$-modules.
	\item For $t \in G$ we write $1_t$   for the function sending $t$ to 1 and all other elements of its domain to 0. Thus we denote the generators of $RG_u^u$, $RG_u$, and $RG^u$ as $\{1_z: G_u^u \to R \mid z \in G^u_u\}$, $\{1_t : G_u \to R \mid t \in G_u \}$, and $\{1_t : G^u \to R \mid t \in G^u\}$ respectively. \end{enumerate}
\end{notation}

It is easy to show that the map $f \mapsto f_u$ is a surjective $R$-module homomorphism from $A_R(G)$ to $RG_u^u$. Likewise, $f \mapsto f|_{G_u}$ and $f \mapsto f|_{G^u}$ are surjective $R$-module homomorphisms from $A_R(G)$ to $RG_u$ and $RG^u$ respectively.

The $R$-module $RG_u$ has the additional structure of an $(A_R(G),RG_u^u)$-bimodule \cite[Proposition~7.8]{steinberg0}. The left action of $A_R(G)$ and the right action of $RG_u^u$ on $RG_u$ are defined as follows:
\begin{align} \label{laction}
	f \cdot 1_t &=  \sum_{y \in G_{\ran(t)}}f(y)1_{yt},& f \in A_R(G),\ t \in G_u;\\ \label{raction}
	1_t \cdot 1_z &= 1_{tz},& t \in G_u,\  z \in G_u^u.
\end{align}
For direct comparison with \cite[Proposition 7.8]{steinberg0}, one should perform a change of variables $y = xt^{-1}$ in~(\ref{laction}) and see that
 $f \cdot 1_t = \sum_{x \in G_u} f(xt^{-1})1_x$. 
Using this, one can also derive an alternative version of (\ref{laction}): if $\beta \in RG_u$ and $f\in A_R(G)$ then $f \cdot \beta = (f * h)|_{G_u}$ for all $h \in A_R(G)$ such that $h|_{G_u} = \beta$. An alternative version of (\ref{raction}) is that $\beta \cdot 1_z = (h*\bm{1}_{B})|_{G_u}$ for any $h \in A_R(G)$ such that $h|_{G_u} = \beta$, and any $B \in \Bisc(G)$ that contains $z$.

By the dual versions of (\ref{laction}) and (\ref{raction}), $RG^u$ is an $(RG_u^u,A_R(G))$-bimodule. If we nominate a single representative $j_v \in G_v^u$ for all $v \in \mathcal{O}_u$ and write $
\gamma_v = 1_{j_v^{-1}}$, the set $\{\gamma_v \mid v \in \mathcal{O}_u\}$ is a basis for $RG_u$ as a free right $RG_u^u$-module. 
 If we write $\delta_v = 1_{j_v}$, the set $\{\delta_v \mid v \in \mathcal{O}_u\}$ is a basis for $RG^u$ as a free left $RG_u^u$-module. We fix these bases throughout throughout this section, and it makes some calculations simpler if we assume that $j_u = u$.
 \label{basis}

\subsubsection{Induced modules}\cite{steinberg0}
Let $u \in G^{(0)}$. Given any left $RG_u^u$-module $M$, the \textit{induced module} is the left  $A_R(G)$-module:
\[
\textstyle \Indu M = RG_u \bigotimes_{RG_u^u}M.
\]
This gives a functor $\Indu$ from the category of left $RG_u^u$-modules to the category of left $A_R(G)$-modules. This functor is exact because it is defined by tensoring over a free module. It also sends simple modules to simple modules  and reflects isomorphisms \cite[Proposition 7.19]{steinberg0}.  Since $RG_u = \bigoplus_{v \in \mathcal{O}_u}\gamma_v RG_u^u$, we have $\Ind_u M =  \bigoplus_{v \in \mathcal{O}_u} \gamma_v \otimes M$. We can express the left action of $A_R(G)$ on $\Indu M$ in the co-ordinates of the basis $\{\gamma_v \mid v 
\in \mathcal{O}_u \}$ for $RG_u$ (compare with \cite[p.\!~4]{steinberg-eh}):

\begin{lemma}\label{ann}
	Let $m \in M$, $u \in G^{(0)}$, $v \in \mathcal{O}_u$, and $f \in A_R(G)$. For each $w \in \mathcal{O}_u$, let $B_w \in \Bisc(G)$ be a bisection containing $j_w$. Then:
	\[
	f \cdot (\gamma_v \otimes m) = \sum_{w \in \mathcal{O}_u} \gamma _w \otimes (\bm{1}_{B_w} * f * \bm{1}_{B_v^{-1}})_u \cdot m.
	\]
	Moreover, $\Ann(\Ind_u M) = \big\{f \in A_R(G) \mid (\bm{1}_{B_w} * f * \bm{1}_{B_v^{-1}})_u \in \Ann(M) \text{ for all } v,w \in \mathcal{O}_u\big\}$.
\end{lemma}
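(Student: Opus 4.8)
The plan is to reduce everything to a single computation in the bimodule $RG_u$ and then transport it through the tensor product. Since the left $A_R(G)$-action on $\Indu M = RG_u \otimes_{RG_u^u} M$ satisfies $f \cdot (\gamma_v \otimes m) = (f \cdot \gamma_v) \otimes m$, the whole formula follows once I compute $f \cdot \gamma_v \in RG_u$ and rewrite the result in the basis $\{\gamma_w \mid w \in \mathcal{O}_u\}$. Using the alternative form of \eqref{laction} recorded just after \eqref{raction}, I get $f \cdot \gamma_v = f \cdot 1_{j_v^{-1}} = \sum_{x \in G_u} f(x j_v)\,1_x$, a finite sum because $\supp(f)$ is compact.

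Next I would group this sum according to the orbit element $w = \ran(x)$. For each $x \in G_u$ with $\ran(x) = w$, the element $x$ lies in $G_u^w$, so by \eqref{raction} one has $1_x = \gamma_w \cdot 1_{j_w x}$ with $j_w x \in G_u^u$; hence in the tensor product $1_x \otimes m = \gamma_w \otimes (1_{j_w x} \cdot m)$. Collecting terms gives
\[
f \cdot (\gamma_v \otimes m) = \sum_{w \in \mathcal{O}_u} \gamma_w \otimes \Big( \sum_{\substack{x \in G_u \\ \ran(x) = w}} f(x j_v)\, 1_{j_w x} \Big)\cdot m.
\]
It then remains to identify the inner coefficient with $(\bm{1}_{B_w} * f * \bm{1}_{B_v^{-1}})_u$. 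I would do this by evaluating both sides at an arbitrary $\xi \in G_u^u$: on the left, the unique surviving term is $x = j_w^{-1}\xi$, giving $f(j_w^{-1}\xi j_v)$; on the right, two applications of Lemma \ref{lem1}, together with the fact that $B_w$ and $B_v^{-1}$ are bisections (so that $B_w \cap G^u = \{j_w\}$ and $B_v^{-1}\cap G_u = \{j_v^{-1}\}$), yield $(\bm{1}_{B_w} * f * \bm{1}_{B_v^{-1}})(\xi) = f(j_w^{-1}\xi j_v)$. This matching is the crux of the argument, and the step I expect to be the main obstacle in getting the bookkeeping exactly right; it is also here that the choice of bisections becomes irrelevant, since injectivity of $\ran$ and $\src$ on a bisection pins down the outer factors of the convolution for any $B_w \ni j_w$ and $B_v \ni j_v$.

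Finally, for the annihilator I would use the direct sum decomposition $\Indu M = \bigoplus_{w \in \mathcal{O}_u}\gamma_w \otimes M$ together with the fact that $\gamma_w \otimes(-)\colon M \to \gamma_w \otimes M$ is an isomorphism (because $RG_u$ is free over $RG_u^u$ on the $\gamma_w$). An element $f$ annihilates $\Indu M$ precisely when $f \cdot (\gamma_v \otimes m) = 0$ for all $v \in \mathcal{O}_u$ and all $m \in M$; by the formula just proved and the directness of the sum, this is equivalent to $(\bm{1}_{B_w}* f* \bm{1}_{B_v^{-1}})_u \cdot m = 0$ for all $v, w \in \mathcal{O}_u$ and all $m$, i.e.\ to $(\bm{1}_{B_w}* f* \bm{1}_{B_v^{-1}})_u \in \Ann(M)$ for all $v, w \in \mathcal{O}_u$. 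The only remaining point needing a word of care is finiteness: compactness of $\supp(f)$ ensures that only finitely many $w$ contribute, so all the sums above are legitimate.
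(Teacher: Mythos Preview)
Your proof is correct and follows essentially the same route as the paper. The only cosmetic difference is that the paper starts from the original form of \eqref{laction} and indexes the sum by $y \in G_v$ (writing $1_{yj_v^{-1}} = \gamma_w \cdot 1_{j_w y j_v^{-1}}$), whereas you use the alternative form and index by $x \in G_u$; these are related by the change of variables $x = y j_v^{-1}$, and the identification of the inner coefficient via Lemma~\ref{lem1} and the annihilator argument are identical.
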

\begin{proof}
By definition of the $A_R(G)$-action (\ref{laction}), we can compute
\begin{align*} \label{basis-action}
	f \cdot (\gamma_v \otimes m) &= (f \cdot 1_{j_v^{-1}}) \otimes m = \sum_{y \in G_v} f(y) 1_{yj_v^{-1}}\otimes m \\
	&= \sum_{w \in \mathcal{O}_v} \sum_{y \in G_v^w} f(y) 1_{j_w^{-1}}1_{j_wyj_v^{-1}} \otimes m = \sum_{w \in \mathcal{O}_v} \gamma _w \otimes \sum_{y \in G_v^w}f(y)1_{j_w y j_v^{-1} }\cdot m.
\end{align*}
By applying Lemma \ref{lem1} and then a change of variables, we have for all $w \in\mathcal{O}_v$:  \begin{align*}
\sum_{y \in G_v^w}f(y)1_{j_w y j_v^{-1}} &= \sum_{y \in G_v^w} \bm{1}_{B_w} * f * \bm{1}_{B_v^{-1}}(j_w y j_v^{-1}) 1_{j_w y j_v^{-1}} \\ 	
&= \sum_{z \in G_u^u} \bm{1}_{B_w}* f * \bm{1}_{B_v^{-1}}(z) {1}_z = (\bm{1}_{B_w} * f * \bm{1}_{B_v^{-1}})_u. 
 \end{align*}
 Since the $\gamma_w$'s are $RG_u^u$-linearly independent, the ``moreover" part of the statement is clear.
 \end{proof}

\subsubsection{Limits of modules} \cite{steinberg}
For a unit $u \in G^{(0)}$, define the directed system $(\Delta_u, \le)$ where \[\Delta_u = \{U \in \Bisc(G^{(0)}) \mid u \in U\}\]
and $U \le V$ if and only if $V \subseteq U$.  Given a left $A_R(G)$-module $N$, we define a left $RG_u^u$-module $\mathcal{E}(N)_u$ by taking the direct limit of $R$-modules:
	\[\mathcal{E}(N)_u = \lim_{\substack{\longrightarrow \\U \in \Delta_u}} \bm{1}_U N.\]
	where the connecting homomorphism $\rho_V^U: \bm{1}_UN \to \bm{1}_V N$, for $U \le V$, is left-multiplication by $\bm{1}_V$. The image of $n \in \bm{1}_U N$ in the direct limit is denoted by $[n]_u$. Using the fact that $N = \bigcup_{U \in \Bisc(G^{(0)})} \bm{1}_U N$ one can show that $n \mapsto [n]_u$ defines a surjective $R$-module homomorphism $N \to \mathcal{E}(N)_u$. We equip $\mathcal{E}(N)_u$ with the structure of a left $RG_u^u$-module as follows:  for each $z \in G_u^u$, choose an arbitrary $B \in \Bisc(G)$ containing $z$, and define \begin{align*}
 	z \cdot [n]_u = [\bm{1}_{B} n]_u		\end{align*}
 		for all $[n]_u \in \mathcal{E}(N)_u$. This does not depend on the choice of $B$.
 		
 		This construction extends to a functor $\mathcal{E}(-)_u$ from the category of left $A_R(G)$-modules to the category of left $RG_u^u$-modules. Since direct limits over directed systems preserve exact sequences,  $\mathcal{E}(-)_u$ is an exact functor. A left ideal $I$ in $A_R(G)$ corresponds to an $RG_u^u$-submodule of $RG^u$, namely $\{f|_{G^u} : f \in I \}$, which turns out to be isomorphic to $\mathcal{E}(I)_u$:
 		
\begin{lemma} \label{NB}
	Let $I$ be a left ideal in $A_R(G)$. Then $\mathcal{E}(I)_u \cong \{f|_{G^u} : f \in I \}$ as left $RG_u^u$-modules. In particular, $\mathcal{E}(A_R(G))_u \cong RG^u$.
\end{lemma}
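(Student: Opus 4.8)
The plan is to realise the asserted isomorphism through the restriction map $f \mapsto f|_{G^u}$, extracting both the module-theoretic and the topological input from Lemma~\ref{lem1}. First I would check that $N_u := \{f|_{G^u} : f \in I\}$ is genuinely a left $RG_u^u$-submodule of $RG^u$. For $z \in G_u^u$ and $f \in I$, choosing $B \in \Bisc(G)$ with $z \in B$, the dual of the alternative description of~(\ref{raction}) (coming from the dual bimodule structure on $RG^u$) gives $1_z \cdot (f|_{G^u}) = (\bm{1}_B * f)|_{G^u}$. Since $I$ is a left ideal, $\bm{1}_B * f \in I$, so $N_u$ is closed under the action and is a submodule, equipped with the restricted $RG_u^u$-structure.

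Next I would construct $\Phi : \mathcal{E}(I)_u \to N_u$ by $[n]_u \mapsto n|_{G^u}$. Well-definedness is the first place Lemma~\ref{lem1} enters: for $U \le V$ in $\Delta_u$ the connecting homomorphism sends $n \in \bm{1}_U I$ to $\bm{1}_V * n$, and since $u \in V$ we get $\bm{1}_V * n(x) = n(x)\bm{1}_V(\ran(x)) = n(x)$ for every $x \in G^u$; hence $(\bm{1}_V * n)|_{G^u} = n|_{G^u}$, the restriction maps $\bm{1}_U I \to RG^u$ are compatible with the directed system, and they descend to a single map $\Phi$ on the direct limit. That $\Phi$ is $RG_u^u$-linear is then immediate from the two computations just used: $z \cdot [n]_u = [\bm{1}_B * n]_u$ maps to $(\bm{1}_B * n)|_{G^u} = 1_z \cdot (n|_{G^u})$. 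Surjectivity is automatic, since for any $f \in I$ and any $U \in \Delta_u$ we have $\Phi([\bm{1}_U * f]_u) = (\bm{1}_U * f)|_{G^u} = f|_{G^u}$, so the image is exactly $N_u$.

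The crux is injectivity, and I expect this to be the real obstacle, because it is precisely where the standing hypotheses on $G$ must be spent. Suppose $n \in \bm{1}_U I$ satisfies $n|_{G^u} = 0$; I need to produce $V \in \Delta_u$ with $V \subseteq U$ and $\bm{1}_V * n = 0$, which forces $[n]_u = 0$ in the limit. As $n$ is locally constant and compactly supported, $\supp(n)$ is compact, so $\ran(\supp(n))$ is a compact, hence closed, subset of $G^{(0)}$ (here Hausdorffness is essential). The hypothesis $n|_{G^u} = 0$ says exactly that $\supp(n) \cap G^u = \varnothing$, that is, $u \notin \ran(\supp(n))$. Thus $U \setminus \ran(\supp(n))$ is an open set containing $u$, and since $G^{(0)}$ is locally compact, totally disconnected and Hausdorff, it contains a compact open neighbourhood $V$ of $u$; then $V \in \Delta_u$, $V \subseteq U$, and $\ran(\supp(n)) \cap V = \varnothing$. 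Applying Lemma~\ref{lem1} once more, $\bm{1}_V * n(x) = n(x)\bm{1}_V(\ran(x)) = 0$ for all $x$, so $\bm{1}_V * n = 0$ and $[n]_u = 0$, proving injectivity.

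Finally, the ``in particular'' is obtained by setting $I = A_R(G)$: the restriction $f \mapsto f|_{G^u}$ from $A_R(G)$ onto $RG^u$ is surjective (as noted earlier), so $N_u = RG^u$ and therefore $\mathcal{E}(A_R(G))_u \cong RG^u$.
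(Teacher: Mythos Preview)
Your proof is correct and follows essentially the same approach as the paper's own proof: both define the map $[f]_u \mapsto f|_{G^u}$, and the key injectivity step (finding a compact open $V \ni u$ disjoint from $\ran(\supp(n))$ using Hausdorffness and compactness of the support) is identical. Your write-up is more explicit about the $RG_u^u$-module structure and about compatibility with the connecting maps, points the paper dismisses as easy, but the underlying argument is the same.
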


\begin{proof}
	Since $A_R(G)$ is locally unital, $\bigcup_{U \in \Bisc(G^{(0)})} \bm{1}_U I = I$, and we have $\mathcal{E}(I)_u = \{[f]_u \mid f \in I \}$. Define the $R$-module homomorphism
	\begin{align*}
	\chi:  \mathcal{E}(I)_u \to \{f|_{G^u} \in RG^u \mid f \in I \}, &&
	[f]_u \mapsto f|_{G^u} \text{ for all } f \in I.
	\end{align*}
	To check that this is well-defined, suppose $[f]_u = [g]_u$ for some $f, g \in I$. Then $[f-g]_u = [0]_u$, which means $\bm{1}_U*(f-g) = 0$ for some $U \in \Bisc(G^{(0)})$ containing $u$. By Lemma \ref{lem1}, $f|_{G^u} = g|_{G^u}$. Clearly $\chi$ is surjective. To check that it is injective, suppose $f|_{G^u} = g|_{G^u}$ for some $f, g \in I$. Then $\ran(\supp (f-g))$ is a compact subset of $G^{(0)}$ not containing $u$, so there is an open set $K \in \Bisc(G^{(0)})$ with  $u\in K$ and $K \cap \ran(\supp(f-g)) = \varnothing$. Then $\bm{1}_K*(f-g) = 0$, so $[f]_u = [g]_u$.   It is easy to check that $\chi$ is also an isomorphism of left $RG_u^u$-modules.
\end{proof}

Note that if $I$ is a two-sided ideal, then the image of $\chi$ is not only an  $RG_u^u$-submodule of $RG^u$, but also an $(RG^u_u, A_R(G))$-submodule of $RG^u$. Note also that if $I$ and $J$ are two ideals in $A_R(G)$ with $I \subseteq J$, then the corresponding homomorphism  $\mathcal{E}(I)_u \to \mathcal{E}(J)_u$ sends $[f]_u \mapsto [f]_u$ for all $f \in I$. One easily checks that $\chi$ is natural in the sense that the diagram $\mathcal{E}(I)_u \to \mathcal{E}(J)_u$ is isomorphic to the inclusion $\{f|_{G^u}: f \in I\} \rightarrow \{f|_{G^u}: f \in J\}$.

\begin{lemma} \label{fu=gu}
Let $u \in G^{(0)}$. If $f, g \in A_R(G)$ and $f_u = g_u$ in $RG_u^u$, then there exist  $V, W \in \Bisc(G^{(0)})$ with $u \in V \cap W$ such that $\bm{1}_{V'} * f * \bm{1}_{W'} = \bm{1}_{V'} * g * \bm{1}_{W'}$  for all open subsets \(V'\subseteq V\) and \(W'\subseteq W\). 
\end{lemma}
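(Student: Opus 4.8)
The plan is to set $h = f - g$ and exploit bilinearity of convolution: since $\bm{1}_{V'} * f * \bm{1}_{W'} - \bm{1}_{V'} * g * \bm{1}_{W'} = \bm{1}_{V'} * h * \bm{1}_{W'}$, it suffices to find $V, W \in \Bisc(G^{(0)})$ with $u \in V \cap W$ for which $\bm{1}_{V'} * h * \bm{1}_{W'} = 0$ whenever $V' \subseteq V$ and $W' \subseteq W$ are open. The hypothesis $f_u = g_u$ translates into $h|_{G_u^u} = 0$, that is, $\supp(h) \cap G_u^u = \varnothing$.

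Next I would record the pointwise behaviour of the cut-offs. The formulas of Lemma \ref{lem1} extend verbatim to an arbitrary subset $U \subseteq G^{(0)}$ (only one composable pair ever contributes, so the convolution is a single term even when $\bm{1}_U \notin A_R(G)$), giving $\bm{1}_{V'} * h * \bm{1}_{W'}(x) = h(x)\,\bm{1}_{V'}(\ran(x))\,\bm{1}_{W'}(\src(x))$ for every $x \in G$. Consequently this function vanishes identically as soon as no $x \in \supp(h)$ satisfies both $\ran(x) \in V$ and $\src(x) \in W$; and if that holds for $V, W$, then it holds a fortiori for all open $V' \subseteq V$ and $W' \subseteq W$, since $\bm{1}_{V'}(\ran(x))\bm{1}_{W'}(\src(x)) \ne 0$ forces $\ran(x) \in V$ and $\src(x) \in W$.

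It remains to produce such $V$ and $W$, and here I would run a compactness argument through the continuous map $\phi = (\ran, \src) \colon G \to G^{(0)} \times G^{(0)}$. Because $h$ is locally constant, $\supp(h)$ is clopen, hence closed, and as $h$ is compactly supported it is compact; therefore $\phi(\supp(h))$ is compact and so closed in the Hausdorff space $G^{(0)} \times G^{(0)}$. Moreover $(u,u) \notin \phi(\supp(h))$, because $\phi^{-1}(u,u) = G_u^u$ is disjoint from $\supp(h)$. Thus $(u,u)$ lies in the open complement of $\phi(\supp(h))$, so it has a basic open neighbourhood $V_0 \times W_0$ disjoint from $\phi(\supp(h))$; since $G^{(0)}$ is locally compact, totally disconnected, and Hausdorff, the compact open sets form a basis, and I may shrink to compact open $V \subseteq V_0$ and $W \subseteq W_0$ with $u \in V \cap W$ and $(V \times W) \cap \phi(\supp(h)) = \varnothing$. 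This last condition is precisely the statement that $h(x) = 0$ whenever $\ran(x) \in V$ and $\src(x) \in W$, which combines with the previous paragraph to finish the proof.

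The one step needing genuine care is the passage from ``$h$ vanishes exactly on the isotropy group $G_u^u$'' to ``$h$ vanishes on a full product neighbourhood $\{x : \ran(x)\in V,\ \src(x)\in W\}$ of $G_u^u$''. This is exactly where compactness of $\supp(h)$ is essential, ensuring that its image under $\phi$ is closed and misses $(u,u)$, and where the ample, totally disconnected structure of $G^{(0)}$ lets me upgrade the resulting open neighbourhoods to compact open bisections. I do not anticipate any other obstacle.
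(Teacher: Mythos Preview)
Your proof is correct and takes a genuinely different route from the paper's. The paper argues asymmetrically: it first observes that $D = \supp(f-g) \cap G^u$ is finite (since $G^u$ is discrete and $f-g$ is compactly supported), picks $W \in \Bisc(G^{(0)})$ with $u \in W$ and $W \cap \src(D) = \varnothing$ so that $(f*\bm{1}_W)|_{G^u} = (g*\bm{1}_W)|_{G^u}$, and then invokes Lemma~\ref{NB} (the identification $\mathcal{E}(I)_u \cong \{f|_{G^u} : f \in I\}$, applied to $I = A_R(G)$) to manufacture $V$ on the other side. Your argument is symmetric and self-contained: you push the compact support of $h = f-g$ forward under $(\ran,\src)$ to a closed set in $G^{(0)} \times G^{(0)}$ missing $(u,u)$, and read off $V \times W$ from a basic open neighbourhood. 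Your approach avoids the detour through the limit-module machinery and makes the dependence on compactness and the totally disconnected base transparent; the paper's approach, on the other hand, ties the lemma into the module-theoretic framework it is building, which is thematically coherent with the rest of \S\ref{sec:preliminaries}. Both yield the ``for all open $V' \subseteq V$, $W' \subseteq W$'' clause immediately from the pointwise formula $\bm{1}_{V'} * h * \bm{1}_{W'}(x) = h(x)\,\bm{1}_{V'}(\ran(x))\,\bm{1}_{W'}(\src(x))$.
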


\begin{proof}
	The set $D= \supp(f-g)\cap G^u$ is finite because $f-g$ is compactly supported and $G^u$ is a discrete subspace of $G$. Since $u \notin \src(D)$, there exists a neighbourhood $W \in \Bisc(G^{(0)})$ of $u$ with $\src(D) \cap W = \varnothing$, and this implies ${(f*\bm{1}_W)}|_{G^u} = (g* \bm{1}_W)|_{G^u}$. It follows from Lemma \ref{NB} that  $\bm{1}_V * f * \bm{1}_W = \bm{1}_V * g * \bm{1}_W$ for some $V \in \Bisc(G^{(0)})$ containing $u$. Clearly this implies that $\bm{1}_{V'} * f * \bm{1}_{W'} = \bm{1}_{V'} * g * \bm{1}_{W'}$  for open subsets \(V'\subseteq V\) and \(W'\subseteq W\).
\end{proof}

In particular, the lemma above implies that $\displaystyle \lim_{ \substack{\longrightarrow \\  (U,V) \in \Delta_u\times \Delta_u}}\bm{1}_U * A_R(G) * \bm{1}_V \cong RG_u^u$.

\section{Disassembly of ideals} \label{section:disassembly theorem}

Define the set $X(G)$ to be the union of the group algebras of all the isotropy subgroups of $G$, \begin{equation}
	\label{XG} X(\G) = \bigcup_{u \in \G^{(0)}}{RG_u^u}.
	\end{equation}
Formally, $RG_u^u$ is the set of finitely supported functions $G_u^u \to R$, so (\ref{XG}) is a disjoint union. 
 Define the map $p: X(G) \to G^{(0)}$ by $p(\alpha) = u$ if $\alpha \in RG_u^u$. To each $f \in A_R(G)$ is associated a section of~$p$, namely  $s_f: G^{(0)} \to X(G)$, $u \mapsto f_u$.

\begin{lemma}
	The collection of sets $\{s_f(U) \mid f \in A_R(G),\ U \in \Bisc(G^{(0)})\}$ is a basis for a topology on $X(G)$ such that each $s_f$ is continuous and $p$ is a local homeomorphism.
\end{lemma}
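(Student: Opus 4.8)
The plan is to verify the two basis axioms for the proposed collection and then to deduce continuity of the sections $s_f$ and the local homeomorphism property of $p$ almost for free, using the tautological identity $p \circ s_f = \id_{G^{(0)}}$. The single technical fact underpinning everything is the following consequence of Lemma~\ref{fu=gu}: if $f,g \in A_R(G)$ satisfy $f_u = g_u$ in $RG_u^u$, then there is a compact open neighbourhood $W$ of $u$ with $f_{u'} = g_{u'}$ for every $u' \in W$. To obtain this I would first record, from Lemma~\ref{lem1}, that for $A,B \in \Bisc(G^{(0)})$ and $x \in G_{u'}^{u'}$ one has $\bm{1}_A * f * \bm{1}_B(x) = f(x)\,\bm{1}_A(\ran(x))\,\bm{1}_B(\src(x))$, whence $(\bm{1}_A * f * \bm{1}_B)_{u'} = f_{u'}$ whenever $u' \in A \cap B$. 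Lemma~\ref{fu=gu} provides neighbourhoods of $u$ on which $\bm{1}_A * f * \bm{1}_B$ and $\bm{1}_A * g * \bm{1}_B$ agree; intersecting these into a single compact open $W \ni u$ and taking $A = B = W$ gives $f_{u'} = (\bm{1}_W * f * \bm{1}_W)_{u'} = (\bm{1}_W * g * \bm{1}_W)_{u'} = g_{u'}$ for all $u' \in W$.

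With this rigidity statement available, I would check the basis axioms. For the covering axiom, given $\alpha \in RG_u^u \subseteq X(G)$, surjectivity of $f \mapsto f_u$ yields $f \in A_R(G)$ with $f_u = \alpha$, and since $G^{(0)}$ has a basis of compact open sets there is $U \in \Bisc(G^{(0)})$ with $u \in U$; then $\alpha \in s_f(U)$. For the exchange axiom, suppose $\alpha \in s_f(U) \cap s_g(V)$ and set $u = p(\alpha)$. Because \eqref{XG} is a disjoint union, $\alpha \in s_f(U)$ and $\alpha \in s_g(V)$ force $\alpha = f_u = g_u$ with $u \in U \cap V$. The technical fact then gives a compact open $W \ni u$ with $f_{u'} = g_{u'}$ for all $u' \in W$; after replacing $W$ by $W \cap U \cap V$ we obtain $s_f(W) = s_g(W) \subseteq s_f(U) \cap s_g(V)$ and $\alpha \in s_f(W)$, as required.

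For continuity of $s_f$, I would compute that the preimage of a basic open set is $s_f^{-1}(s_g(V)) = \{u \in V \mid f_u = g_u\}$ (disjointness of \eqref{XG} again forces the base points to coincide), and the technical fact shows that this set is open. The map $p$ is continuous since for open $V \subseteq G^{(0)}$ the set $p^{-1}(V)$ is the union of all basic open sets $s_f(U)$ with $U \subseteq V$. Finally, $p \circ s_f = \id_{G^{(0)}}$ by construction, so each $s_f$ is a continuous injection whose inverse on its image is the restriction of the continuous map $p$; hence for every $U \in \Bisc(G^{(0)})$ the map $s_f$ restricts to a homeomorphism $U \to s_f(U)$ with inverse $p|_{s_f(U)}$. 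As the sets $s_f(U)$ are open, cover $X(G)$, and are each carried homeomorphically by $p$ onto the open set $U$, the map $p$ is a local homeomorphism.

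The main obstacle is the exchange axiom, and specifically the passage from the pointwise equality $f_u = g_u$ in $RG_u^u$ to agreement $f_{u'} = g_{u'}$ on a whole neighbourhood of $u$; all the remaining assertions are formal once this local rigidity is in place. This is precisely where Lemma~\ref{fu=gu} does the real work, and the only non-routine verification feeding into it is the bookkeeping identity $(\bm{1}_A * f * \bm{1}_B)_{u'} = f_{u'}$ for $u' \in A \cap B$, which is what allows Lemma~\ref{fu=gu} to be translated into a statement about the sections $s_f$.
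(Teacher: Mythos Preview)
Your proof is correct and follows essentially the same approach as the paper's: both arguments hinge on using Lemma~\ref{fu=gu} (together with the identity $(\bm{1}_W * f * \bm{1}_W)_{u'} = f_{u'}$ for $u' \in W$) to upgrade the pointwise equality $f_u = g_u$ to agreement on a whole neighbourhood, which establishes the exchange axiom and continuity of the sections. You are more explicit than the paper about the covering axiom, continuity of $p$, and the local homeomorphism property (the paper simply declares the last of these ``clear''), but the substantive content is the same.
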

\begin{proof}
	Let $f, g \in A_R(G)$ and $U, V \in \Bisc(G^{(0)})$, and suppose $\alpha \in s_f(U) \cap s_g(V)$.   Then $p(\alpha) \in U \cap V$ and $\alpha = f_{p(\alpha)} = g_{p(\alpha)}$. By Lemma \ref{fu=gu}, there exist $K, L \in \Bisc(G^{(0)})$ containing $p(\alpha)$ such that ${\bm{1}_K * f * \bm{1}_L = \bm{1}_K * g * \bm{1}_L}$, and then for $W = K \cap L$ we have $\bm{1}_W *f * \bm{1}_W = \bm{1}_W * g * \bm{1}_W$. So $\alpha \in s_f(W) = s_g(W) \subseteq s_f(U) \cap s_g(V)$. This proves that the collection of sets displayed in the lemma is a basis for a topology.

	If $u \in s_g^{-1}(s_f(U))$, then $g_u \in s_g(U) \cap s_f(U)$. We showed in the previous paragraph that $g_u \in s_g(W) =s_f(W) \subseteq s_g(U) \cap s_f(U)$ for some open set $W \subseteq U$ containing $u$. Then $u \in W = s_g^{-1}(s_g(W)) \subseteq s_g^{-1}(s_f(U))$, from which we conclude that $s_g^{-1}(s_f(U))$ is open. This proves that every $s_g$ is continuous. It is clear that $p$ is a local homeomorphism.
\end{proof}

For a given $t \in G_u^v$ there is an important isomorphism of $R$-algebras:
\begin{align} \label{translation}
	{\tau}_t: RG_u^u \to RG_v^v, && {\tau}_{t}(\alpha) := \alpha^t, && \text{where }\alpha^t(z) = \alpha(t^{-1}zt)
\end{align}
for all $\alpha \in RG_u^u$, $z \in G_v^v$. We call this the \textit{translation isomorphism} associated to $t$. Expressed in another way, if $\alpha = \sum {r_i}1_z$ where $z \in G^u_u$, then $\alpha^t = \sum {r_i}1_{tzt^{-1}}$.
If $\alpha = f_u \in RG_u^u$ for some $f \in A_R(G)$, and $t \in A$ for some $A\in \Bisc(G)$, then by Lemma \ref{lem1} we can express $\alpha^t = (\bm{1}_A * f * \bm{1}_{A^{-1}})_v$. 

\subsection{The disassembly map and its inverse}

Given an ideal $I \subseteq A_R(G)$, we define the \emph{disassembly map}:
\begin{align} \label{disint-map}
	I \mapsto Y_I:= \bigcup_{u \in G^{(0)}} I_u,&&
	\text{where} \quad I_u := \{f_u \mid f \in I\} \subseteq RG_u^u.
\end{align}
Our first major theorem is the following:

\begin{theorem} \label{disassembly-theorem}
	The disassembly map
	$
	I \mapsto Y_I= \bigcup_{u \in G^{(0)}} I_u
	$
	 defines a bijection from $\mathcal{L}(A_R(G))$ to the set $\Delta_{G,R}$ of all open subsets $Y\subseteq X(G)$ satisfying the conditions:
	 \begin{enumerate}[\rm (D1)] \label{D1} \item For all $u \in G^{(0)}$, $Y \cap RG_u^u$ is an ideal in $RG_u^u$;
	 \item	\label{D2}
 For all $u, v \in G^{(0)}$ and $t \in G_u^v$, the translation map $\tau_{t}$ restricts to an isomorphism $Y \cap RG_u^u \to Y \cap RG_v^v$.
 \item \label{D3} For all $\alpha \in Y$, there exists some $f \in A_R(G)$ such that $f_{p(\alpha)} =  \alpha$ and $(f * \bm{1}_C)_v \in Y$ for all $v \in G^{(0)}$ and all $C \in \Bisc(G)$.
 \end{enumerate}
\end{theorem}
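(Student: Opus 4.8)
The plan is to produce an explicit inverse to the disassembly map and check that the two are mutually inverse. For $Y \in \Delta_{G,R}$ I would define the candidate
\[
\Psi(Y) = \{\, f \in A_R(G) : (f * \bm 1_C)_v \in Y \text{ for all } v \in G^{(0)} \text{ and all } C \in \Bisc(G)\,\},
\]
and then verify four things: (i) disassembly lands in $\Delta_{G,R}$; (ii) $\Psi(Y)$ is an ideal for every $Y\in\Delta_{G,R}$; (iii) $Y_{\Psi(Y)} = Y$; and (iv) $\Psi(Y_I)=I$. Since $(f*\bm 1_C)_v \in Y$ is the same as $(f*\bm 1_C)_v \in Y\cap RG_v^v$, the set $\Psi(Y_I)$ equals $\{f : (f*\bm 1_C)_v \in I_v \text{ for all }v,C\}$, so (iii) and (iv) together assert exactly that $\Psi$ and disassembly are inverse bijections.

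For (i), openness of $Y_I$ is immediate because $\alpha=f_u\in Y_I$ forces $s_f(U)\subseteq Y_I$ for any $U\ni u$. Condition (D1) comes from Lemma \ref{lem1}: choosing $B\in\Bisc(G)$ meeting $G_u$ (resp.\ $G^u$) in a single isotropy element $z$ gives $(f*\bm 1_B)_u = f_u\cdot 1_z$ and $(\bm 1_B*f)_u = 1_z\cdot f_u$, and extending $R$-linearly shows $I_u$ is a two-sided ideal. Condition (D2) follows from the identity $\alpha^t=(\bm 1_A * f * \bm 1_{A^{-1}})_v$ recorded after \eqref{translation}, and (D3) is automatic by taking $f\in I$ with $f_{p(\alpha)}=\alpha$. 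For (ii) and (iii) the engine is the observation that every two-sided fold reduces, up to a translation isomorphism, to a right fold: for $x\in G_u^u$ Lemma \ref{lem1} gives $(\bm 1_B * f * \bm 1_C)_u(x)=f(b^{-1}xc^{-1})$, where $b$ and $c$ are the unique elements of $B\cap G^u$ and $C\cap G_u$, and a direct substitution yields
\[
(\bm 1_B * f * \bm 1_C)_u = \tau_b\big((f * \bm 1_{C''})_{\src(b)}\big) \qquad \text{for any } C''\in\Bisc(G) \text{ with } cb\in C''.
\]
Granting this, closure of $\Psi(Y)$ under right multiplication is formal (as $\bm 1_B*\bm 1_C=\bm 1_{BC}$), while closure under left multiplication follows from the displayed identity together with (D1) and (D2), which make $\tau_b$ carry $Y\cap RG_{\src(b)}^{\src(b)}$ into $Y\cap RG_u^u$. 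For (iii), $Y_{\Psi(Y)}\subseteq Y$ is immediate on taking $C=U\in\Bisc(G^{(0)})$ with $v\in U$, and the reverse inclusion is precisely where (D3) is used: the representative it supplies for $\alpha\in Y$ lies in $\Psi(Y)$ by definition, so $\alpha=f_{p(\alpha)}\in Y_{\Psi(Y)}$.

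The main obstacle is (iv), specifically $\Psi(Y_I)\subseteq I$, which is the injectivity of disassembly. I would prove it in two steps. First, a range-fibre determination lemma: an ideal $I$ is recovered from the left $RG_u^u$-submodules $\mathcal F_u := \{f|_{G^u} : f\in I\}\subseteq RG^u$, in the sense that $f\in I$ whenever $f|_{G^u}\in\mathcal F_u$ for every $u$. This step is clean because the range fibres $G^u$ partition $G$: if $f|_{G^u}=h|_{G^u}$ with $h\in I$, the argument of Lemma \ref{NB} gives $\bm 1_K * f = \bm 1_K * h\in I$ for some compact open $K\ni u$, and covering the compact set $\ran(\supp f)$ by finitely many such $K$ and disjointifying yields $f=\sum_j \bm 1_{P_j} * f\in I$ with no cross terms.

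Second, and this is the technical heart, I must pin down $\mathcal F_u$ in terms of the diagonal data $I_u$. Using the free left basis $\{\delta_w\}_{w\in\mathcal O_u}$ of $RG^u$, I would claim
\[
\mathcal F_u = \bigoplus_{w\in\mathcal O_u} I_u\,\delta_w .
\]
The inclusion $\subseteq$ is the computation that the $w$-coordinate of $f|_{G^u}$ is the right fold $(f * \bm 1_{C_w})_u$ for any $C_w\ni j_w^{-1}$, which lies in $I_u$ when $f\in I$. The inclusion $\supseteq$ requires, for each $c\in I_u$ and each $w$, constructing $h\in I$ with $h|_{G^u}=c\,\delta_w$; I would build this from a representative $g\in I$ with $g_u=c$ by right-translating its isotropy data onto the fibre $G_w^u$ and then restricting the source with a unit function, the key point being that $G^u$ is discrete, so $\supp(g * \bm 1_{D_w})\cap G^u$ is finite and the unwanted fibres can be separated off. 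Combining the two steps, $f\in\Psi(Y_I)$ forces each $(f*\bm 1_{C_w})_u\in I_u$, hence $f|_{G^u}=\sum_w (f*\bm 1_{C_w})_u\,\delta_w\in\mathcal F_u$ for all $u$, and therefore $f\in I$. I expect the $\supseteq$ half of this structural identity, with its careful separation of range fibres, to be the most delicate point of the whole argument.
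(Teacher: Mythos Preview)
Your proposal is correct, and the overall architecture matches the paper's: define a reassembly map, show the disassembly lands in $\Delta_{G,R}$, show $Y_{\Psi(Y)}=Y$, and show $\Psi(Y_I)=I$. The cosmetic difference is that you use a one-sided reassembly $\Psi(Y)=\{f:(f*\bm 1_C)_v\in Y\}$ while the paper uses the two-sided $\mathcal I(Y)=\{f:(\bm 1_A*f*\bm 1_B)_u\in Y\}$; your translation identity $(\bm 1_B*f*\bm 1_C)_u=\tau_b((f*\bm 1_{CB})_{\src(b)})$ shows these agree under (D2), and in fact the paper uses exactly this reduction in its proof of Lemma~\ref{disint-lem3}.

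The substantive divergence is in the injectivity step $\Psi(Y_I)\subseteq I$. The paper does not prove your ``range-fibre determination lemma'' directly; instead it invokes the identity $I=\bigcap_{u}\Ann(\Ind_u\,\mathcal E(A_R(G)/I)_u)$ from \cite{steinberg-eh}, then identifies $\mathcal E(A_R(G)/I)_u$ with $RG^u/I_uRG^u$ via the same structural equality $\{f|_{G^u}:f\in I\}=I_uRG^u$ that you call Step~2, and finally checks $\mathcal I(Y_I)\subseteq\bigcap_u\Ann(\Ind_u M_u)$ using the coordinate formula of Lemma~\ref{ann}. So both routes hinge on Step~2 (and the paper's computation of the $\supseteq$ half, via $f*\bm 1_U*\bm 1_B$ with $U$ a small unit neighbourhood of $u$, is the precise version of what you sketch as ``translate and then separate off unwanted fibres''). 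What you do differently is replace the cited annihilator identity by the elementary compactness-and-disjointification argument: if $f|_{G^u}\in\mathcal F_u$ for every $u$, cover $\ran(\supp f)$ by finitely many $K_u$ with $\bm 1_{K_u}*f\in I$ and sum. This makes your proof self-contained and avoids the induced-module machinery entirely; the paper's route, on the other hand, situates the result within Steinberg's sheaf-theoretic framework and explains conceptually why the isotropy data suffices.
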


We give the proof of Theorem \ref{disassembly-theorem} in the sequence of Lemmas \ref{disint-lem1}--\ref{disint-lem3}. Before taking on that task, we make a few remarks about the set $\Delta_{G,R}$ and the conditions that define it. Firstly, (\hyperref[D2]{D2}) implies $Y \cap RG_u^u\cong Y \cap RG_v^v$ whenever $u$ and $v$ are in the same orbit. Secondly, to check (\hyperref[D2]{D2}) it is sufficient to check that  $\tau_t(Y\cap RG_{u}^{u}) \subseteq Y \cap RG_{v}^{v}$ for all $u, v \in G^{(0)}$, $t \in G_u^v$, because we already know that $\tau_t: RG_u^u \to RG_v^v$ and its inverse $\tau_t^{-1} = \tau_{t^{-1}}: RG_v^v \to RG_u^u$ are isomorphisms. Thirdly, when applying this theorem, the most difficult of the three conditions to check is usually~(\hyperref[D3]{D3}). Fortunately, there are some special cases where we can show that (\hyperref[D1]{D1}) and (\hyperref[D2]{D2}) together imply~(\hyperref[D3]{D3}) for open subsets of $X(G)$. These special cases include when $G$ is strongly effective (Corollary \ref{se-d3}) and when $G$ is the boundary path groupoid of a graph (Lemma~\ref{D3 not necessary for LPA}). We are not sure whether or not (\hyperref[D1]{D1}) and (\hyperref[D2]{D2}) imply (\hyperref[D3]{D3}) in the general case.

\begin{lemma} \label{disint-lem1}
If $I$ is an ideal in $A_R(G)$, then $Y_I = \bigcup_{u \in G^{(0)}} I_u$ is an open subset of $X(G)$ satisfying conditions {\rm(\hyperref[D1]{D1})--(\hyperref[D3]{D3})}.	
\end{lemma}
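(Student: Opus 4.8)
The plan is to observe that $Y_I \cap RG_u^u = I_u$ for every $u \in G^{(0)}$, so all four assertions (openness, (\hyperref[D1]{D1}), (\hyperref[D2]{D2}), (\hyperref[D3]{D3})) are really statements about the family $\{I_u\}_{u}$. Two of them---openness and (\hyperref[D3]{D3})---fall out almost immediately from the fact that a single $f \in I$ produces a global section $s_f$ whose image lies entirely inside $Y_I$, together with the fact that $I$ is closed under convolution by arbitrary elements of $A_R(G)$. The real content is in (\hyperref[D1]{D1}) and (\hyperref[D2]{D2}), where I must relate the pointwise restriction $f \mapsto f_u$ to the multiplicative structure.

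For openness, given $\alpha \in Y_I$ with $u = p(\alpha)$, I choose $f \in I$ with $f_u = \alpha$ and any $U \in \Bisc(G^{(0)})$ with $u \in U$ (which exists since $G^{(0)}$ has a basis of compact open sets). The basic open set $s_f(U) = \{f_w \mid w \in U\}$ then contains $\alpha$ and lies in $Y_I$, because $f \in I$ forces $f_w \in I_w$ for every $w$. For (\hyperref[D3]{D3}) the same $f$ does the job: $f_{p(\alpha)} = \alpha$, and since $I$ is a two-sided ideal, $f * \bm{1}_C \in I$ for every $C \in \Bisc(G)$, whence $(f * \bm{1}_C)_v \in I_v \subseteq Y_I$ for all $v \in G^{(0)}$.

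Condition (\hyperref[D1]{D1}) is where I expect the main difficulty, because $f \mapsto f_u$ is only an $R$-module homomorphism and \emph{not} a ring homomorphism: the convolution $(g*f)_u$ sums over all of $G_u$, whereas the product $g_u \cdot f_u$ in $RG_u^u$ sums only over the isotropy $G_u^u$. Thus I cannot simply invoke ``the image of an ideal under a ring map is an ideal.'' Instead, $I_u$ is clearly an $R$-submodule (being the image of the submodule $I$), and to upgrade this to an ideal I realize multiplication by a group-algebra generator as a convolution: for $z \in G_u^u$ and any $B \in \Bisc(G)$ with $z \in B$, injectivity of $\ran$ (resp. $\src$) on $B$ gives $B \cap G^u = \{z\}$ (resp. $B \cap G_u = \{z\}$), so Lemma \ref{lem1} yields $(\bm{1}_B * f)_u = 1_z \cdot f_u$ and $(f * \bm{1}_B)_u = f_u \cdot 1_z$. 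Writing a general $\beta \in RG_u^u$ as $\sum_i r_i 1_{z_i}$ and using $R$-linearity, I obtain $\beta \cdot f_u = (h * f)_u$ and $f_u \cdot \beta = (f * h)_u$ with $h = \sum_i r_i \bm{1}_{B_i} \in A_R(G)$; since $I$ is two-sided, both products lie in $I_u$, proving $I_u \trianglelefteq RG_u^u$.

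Finally, (\hyperref[D2]{D2}) follows from the identity $\tau_t(\alpha) = \alpha^t = (\bm{1}_A * f * \bm{1}_{A^{-1}})_v$ recorded just before the disassembly map, valid whenever $\alpha = f_u$ and $t \in A \in \Bisc(G)$. Applying it to $\alpha = f_u \in I_u$ with $f \in I$ shows $\tau_t(I_u) \subseteq I_v$, because $\bm{1}_A * f * \bm{1}_{A^{-1}} \in I$. By the second remark following Theorem \ref{disassembly-theorem}, this one-sided inclusion---applied also to $t^{-1} \in G_v^u$---already yields that $\tau_t$ restricts to an isomorphism $I_u \to I_v$, completing the proof.
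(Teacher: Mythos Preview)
Your argument is correct. For openness, (\hyperref[D2]{D2}), and (\hyperref[D3]{D3}) you do exactly what the paper does. The difference is in (\hyperref[D1]{D1}): the paper proceeds by choosing a small enough $K \in \Bisc(G^{(0)})$ around $u$ so that $\supp(f) \cap G_u^w = \varnothing$ for all $w \in K \setminus \{u\}$, and then computes directly that $(g * \bm{1}_K * f)_u = g_u \cdot f_u$ for an \emph{arbitrary} $g \in A_R(G)$; this realises the full group-algebra product as a restricted convolution. You instead handle a single generator $1_z$ via Lemma~\ref{lem1}, observing that $(\bm{1}_B * f)_u = 1_z \cdot f_u$ whenever $z \in B \in \Bisc(G)$, and then extend by $R$-linearity. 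Your route is shorter and avoids the compactness/Hausdorff argument used to produce $K$; the paper's route, on the other hand, establishes the stronger identity $(g * \bm{1}_K * f)_u = g_u \cdot f_u$, which is reused verbatim later in the proof of Proposition~\ref{product of ideals of isotropy groups} to show $(IJ)_u = I_u J_u$. So both arguments are clean, but the paper's detour pays dividends downstream.
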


\begin{proof}
	Throughout the proof, let $\alpha \in Y_I$ and write $u = p(\alpha)$. Then  $\alpha = f_u$ for some representative $f \in I$ (also remaining fixed throughout). For any open $U \subseteq G^{(0)}$, we have  $\alpha \in s_f(U) \subseteq Y_I$. Since $\alpha$ is arbitrary, this proves $Y_I$ is open.
	
	It is easy to see that $Y_I \cap RG_u^u = I_u$ is a subgroup of $RG_u^u$. Since $\supp(f)$ is compact and $G_u$ is a discrete subspace of $G$, there are only finitely many $w \in G^{(0)}$ such that $\supp(f) \cap G_u^w \ne \varnothing$. Since $G^{(0)}$ is Hausdorff, we can choose a neighbourhood $K \in \Bisc(G^{(0)})$ of $u$ such that $\supp(f) \cap G_u^w = \varnothing$ for all $w \in K \setminus \{u\}$. Now let $\gamma \in RG_u^u$, and pick a representative $g \in A_R(G)$ such that $\gamma = g_u$. For all $x \in G_u^u$, we have
	\begin{align*}
	g* \bm{1}_K * f (x) &= \sum_{y \in G_u} g* \bm{1}_K(xy^{-1})f(y) = \sum_{y \in G_u} g(xy^{-1})\bm{1}_K(\src(xy^{-1})) f(y)\\
	&=  \sum_{y \in G_u}g(xy^{-1})\bm{1}_K(\ran(y))f(y) = \sum_{y\in G_u^u} g(xy^{-1})f(y)\\ &=  \sum_{y \in G_u^u} \gamma(xy^{-1})\alpha(y) = \gamma \cdot \alpha (x).
	\end{align*}
	The second equality in this calculation is justified by Lemma \ref{lem1}. The fourth equality is justified by the observation that $y \in \supp(f) \cap G_u \cap \ran^{-1}(K)$ implies $y \in G_u^u$, because of how $K$ was chosen. 
	This shows that $(g*\bm{1}_K *f)_u = \gamma \cdot \alpha$. Since $f \in I$, we have $g*\bm{1}_K * f \in I$, and it follows that $\gamma \cdot \alpha \in I_u$. In the same way, one shows that $\alpha\cdot \gamma \in I_u$. Since $\alpha$ and $\gamma$ were arbitrary, the conclusion is that (\hyperref[D1]{D1}) holds.

Now suppose $t \in G_u^v$ for some $v \in G^{(0)}$. Pick some $A \in \Bisc(G)$ with $t \in A$. Then $\tau_{t}(\alpha) = (\bm{1}_A * f * \bm{1}_{A^{-1}})_v \in I_v$. Since $\alpha$ was arbitrary,  $\tau_{t}(I_u) \subseteq I_v$ for all $u,v\in G^{(0)}$, and (\hyperref[D2]{D2}) holds. Condition (\hyperref[D3]{D3}) is clearly implied by the fact that $I$ is an ideal.
\end{proof}

Given a subset $Y \subseteq X(G)$, we define the \emph{reassembly map},
\begin{equation} \label{reassembly-map}
Y \mapsto \mathcal{I}(Y) := \big\{ f \in A_R(G) \mid (\bm{1}_A * f * \bm{1}_B)_u \in Y \text{ for all } u \in G^{(0)} \text{ and all } A, B \in \Bisc(G) \big\}.
\end{equation}
If $Y \cap RG_u^u$ is an $R$-submodule of $RG_u^u$ for all $u \in G^{(0)}$, then clearly $\mathcal I(Y)$ is an $R$-submodule of $A_R(G)$. It is also an ideal: if $f \in  \mathcal I(Y)$ and $C,D \in \Bisc(G)$ then $(\bm{1}_A*(\bm{1}_C* f* \bm{1}_D)*\bm{1}_B)_u = (\bm{1}_{AC}* f * \bm{1}_{DB})_u\in Y$ for all $u \in G^{(0)}$ and $A, B \in \Bisc(G)$,  so clearly $\bm{1}_C * f * \bm{1}_D \in \mathcal I(Y)$. Since characteristic functions generate $A_R(G)$, it follows that $A_R(G)*\mathcal I(Y)*A_R(G) \subseteq \mathcal I(Y)$.

Given an ideal $I$ and a unit $u \in G^{(0)}$, consider the $RG_u^u$-module \[
	M_u = RG^u/I_u RG^u.
	\]

\begin{lemma}
If $I$ is an ideal in $A_R(G)$, then
\[
I = \bigcap_{u \in G^{(0)}} \Ann (\Ind_u M_u) = \mathcal{I}(Y_I).
\]	
\end{lemma}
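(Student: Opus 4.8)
The plan is to prove both equalities by establishing the chain of inclusions
\[
I \subseteq \mathcal{I}(Y_I) \subseteq \bigcap_{u \in G^{(0)}} \Ann(\Ind_u M_u) \subseteq I ,
\]
so that the outer containment forces all three ideals to coincide. The first step is to make the annihilators explicit by computing $\Ann(M_u)$ inside $RG_u^u$. Since $RG^u = \bigoplus_{v \in \mathcal{O}_u} RG_u^u \delta_v$ is free as a left $RG_u^u$-module and $I_u = Y_I \cap RG_u^u$ is a two-sided ideal by Lemma \ref{disint-lem1}, we get $I_u RG^u = \bigoplus_v I_u \delta_v$ and hence $M_u = \bigoplus_v (RG_u^u/I_u)\delta_v$. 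As a faithful free module over $RG_u^u/I_u$ this has $\Ann(M_u) = I_u$, and substituting into Lemma \ref{ann} gives
\[
\Ann(\Ind_u M_u) = \big\{f \in A_R(G) \mid (\bm{1}_{B_w} * f * \bm{1}_{B_v^{-1}})_u \in I_u \text{ for all } v, w \in \mathcal{O}_u\big\}.
\]

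The three left-hand inclusions are then routine. If $f \in I$ then $\bm{1}_A * f * \bm{1}_B \in I$ for all $A, B \in \Bisc(G)$, so each isotropy-restriction $(\bm{1}_A * f * \bm{1}_B)_u$ lies in $I_u$; this yields both $I \subseteq \mathcal{I}(Y_I)$ and $I \subseteq \bigcap_u \Ann(\Ind_u M_u)$. Moreover the defining condition of $\mathcal{I}(Y_I)$ quantifies over all $A, B \in \Bisc(G)$, so it holds in particular for the special bisections $B_w, B_v^{-1}$, giving $\mathcal{I}(Y_I) \subseteq \bigcap_u \Ann(\Ind_u M_u)$. It therefore remains only to prove the single hard inclusion $\bigcap_u \Ann(\Ind_u M_u) \subseteq I$.

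For that, I fix $f$ in the intersection and argue locally. The key claim is: for each $x \in \supp(f)$, writing $u = \src(x)$ and $v = \ran(x)$, there are compact open $V_x \ni v$ and $W_x \ni u$ in $G^{(0)}$ with $\bm{1}_{V_x} * f * \bm{1}_{W_x} \in I$. To see this, pick a bisection $B \ni x$; Lemma \ref{lem1} gives $(\bm{1}_{B^{-1}} * f)_u = \sum_{z \in G_u^u} f(xz)\,1_z$. Factoring $x^{-1} = z_0 j_v$ with $z_0 = x^{-1} j_v^{-1} \in G_u^u$, I may take $\bm{1}_{B^{-1}} = \bm{1}_Z * \bm{1}_{B_v}$ near $x^{-1}$; since left multiplication by $\bm{1}_Z$ realises left multiplication by $1_{z_0}$ on the isotropy at $u$, this shows $(\bm{1}_{B^{-1}} * f)_u = 1_{z_0} \cdot (\bm{1}_{B_v} * f * \bm{1}_{W})_u \in I_u$ for small $W \ni u$, using $v \in \mathcal{O}_u$ and that $I_u$ is an ideal. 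Choosing $g \in I$ with $g_u = (\bm{1}_{B^{-1}} * f)_u$, Lemma \ref{fu=gu} provides neighbourhoods $V', W'$ of $u$ with $\bm{1}_{V'} * g * \bm{1}_{W'} = \bm{1}_{V'} * \bm{1}_{B^{-1}} * f * \bm{1}_{W'}$; multiplying on the left by $\bm{1}_B$ and using $\bm{1}_B * \bm{1}_{V'} * \bm{1}_{B^{-1}} = \bm{1}_{V''}$ for $V'' = \ran(B \cap \src^{-1}(V')) \ni v$ gives $\bm{1}_{V''} * f * \bm{1}_{W'} \in I$, which is the claim with $V_x = V''$, $W_x = W'$.

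Finally I globalise. The point is that $\bm{1}_{V_x} * f * \bm{1}_{W_x}$ is precisely the restriction of $f$ to the open box $Q_x = \ran^{-1}(V_x) \cap \src^{-1}(W_x) \ni x$, that such boxes are closed under intersection, and that restricting $f|_{Q_x} \in I$ to a smaller box stays in $I$ because $I$ is an ideal. As $\supp(f)$ is compact, finitely many boxes $Q_1, \dots, Q_n$ cover it, and inclusion–exclusion for indicator functions yields
\[
f = \sum_{\varnothing \ne S \subseteq \{1,\dots,n\}} (-1)^{|S|+1}\, \bm{1}_{\bigcap_{i \in S} V_i} * f * \bm{1}_{\bigcap_{i \in S} W_i},
\]
a finite $R$-linear combination of restrictions of $f$ to boxes, each lying in $I$; hence $f \in I$. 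I expect the main obstacle to be exactly this local-to-global step: producing the boxes forces one to combine the one-sided translation bookkeeping (passing from the fixed representatives $j_v$ to an arbitrary $x \in \supp(f)$) with Lemma \ref{fu=gu}, and the unavoidable overlaps between boxes must be disentangled, which is what the inclusion–exclusion identity achieves.
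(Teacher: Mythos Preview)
Your proof is correct and takes a genuinely different route from the paper's. The paper establishes $I = \bigcap_{u} \Ann(\Ind_u M_u)$ by invoking Steinberg's Effros--Hahn result \cite[Theorem~5, Remark~6]{steinberg-eh}, which gives $I = \bigcap_{u} \Ann\big(\Ind_u \mathcal{E}(A_R(G)/I)_u\big)$, and then shows $M_u \cong \mathcal{E}(A_R(G)/I)_u$ via the exactness of the $\mathcal{E}(-)_u$ functor and the identity $I_u RG^u = \{f|_{G^u} : f \in I\}$. You bypass both the external citation and the $\mathcal{E}$ machinery entirely: after computing $\Ann(M_u) = I_u$ directly, you prove the hard inclusion $\bigcap_u \Ann(\Ind_u M_u) \subseteq I$ by a local-to-global argument --- producing for each $x \in \supp(f)$ a box $\ran^{-1}(V_x) \cap \src^{-1}(W_x)$ on which the restriction of $f$ already lies in $I$, and then assembling via compactness and inclusion--exclusion. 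Your argument is more self-contained and elementary; the paper's has the virtue of tying the statement to the existing Effros--Hahn framework. One small remark: the sentence factoring $\bm{1}_{B^{-1}} = \bm{1}_Z * \bm{1}_{B_v}$ ``near $x^{-1}$'' is a bit compressed; it would read more cleanly to observe directly that $(\bm{1}_{B^{-1}} * f)_u = 1_{z_0} \cdot (\bm{1}_{B_v} * f)_u$ by the change of variables $z \mapsto z_0 z$ in the isotropy sum, and then note $(\bm{1}_{B_v} * f)_u = (\bm{1}_{B_v} * f * \bm{1}_{B_u^{-1}})_u \in I_u$ from the annihilator description with the second index equal to $u$ (where $B_u^{-1} = B_u$ since $j_u = u$).
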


\begin{proof} The proof plays out in three steps: we show that $I \subseteq \mathcal{I}(Y_I)$ and $ \bigcap_{u \in G^{(0)}} \Ann(\Ind_u M_u)\supseteq \mathcal{I}(Y_I) $ and finally that $I = \bigcap_{u \in G^{(0)}} \Ann(\Ind_u M_u) $. The first step is easy:
	if $f \in I$, then $\bm{1}_A * f * \bm{1}_B \in I$ for all $A, B \in \Bisc(G)$, so $(\bm{1}_A * f * \bm{1}_B)_u \in Y_I$ for all $u\in G^{(0)}$, and it follows that $f \in \mathcal{I}(Y_I)$.
	
	Now suppose $h \in \mathcal{I}(Y_I)$ and $u \in G^{(0)}$.  Let $m = \gamma_v \otimes(\beta + I_uRG^u) \in \Indu M_u$, where $\beta \in RG^u$ and $v \in \mathcal{O}_u$. By Lemma~\ref{ann}, we have
	\[
	h \cdot m = \sum_{w \in \mathcal{O}_u} \gamma_w \otimes \big( (\bm{1}_{B_w} * h * \bm{1}_{B_v^{-1}})_u \cdot \beta + I_uRG^u\big)= 0
	\]
	because $h \in \mathcal{I}(Y_I)$ implies $(\bm{1}_{B_w} * h * \bm{1}_{B_v^{-1}})_u \in I_u$. Therefore $h$ annihilates $\Ind_u M_u$ for all $u \in G^{(0)}$. This proves $\bigcap_{u \in G^{(0)}} \Ann (\Indu M_u)\supseteq \mathcal{I}(Y_I)$ and concludes the second step of the proof.
	
	By \cite[Theorem 5, Remark 6]{steinberg-eh}, we have that
	\begin{align*} 	I = \bigcap_{u \in G^{(0)}} \Ann\Big(\Indu \ \mathcal{E}(A_R(G)/I)_u\Big).
	\end{align*}
	The proof will be completed if we can prove that  $M_u \cong \mathcal{E}(A_R(G)/I)_u$ as $RG_u^u$-modules. Using the exactness of the functor $\mathcal{E}(-)_u$, it is sufficient to prove that the diagram $\mathcal E (I)_u \to \mathcal E(A_R(G))_u$ is isomorphic to the diagram $I_uRG^u \to RG^u$. Applying Lemma \ref{NB}, it is sufficient to prove that  \begin{align} \label{eq}
	I_uRG^u = \{f|_{G^u} \in RG^u \mid f \in I \}.\end{align}
	
Suppose that $\alpha \in I_u$, $t \in G^u_v$, and $t \in B \in \Bisc(G)$. Then $\alpha = f_u = \sum_{z \in G_u^u} f(z)1_z$ for some $f \in I$, and \[\alpha \cdot 1_t = \sum_{z \in G_u^u} f(z)1_{zt} = \sum_{z \in G^u_u} f*\bm{1}_{B}(zt) 1_{zt}.\]
	By choosing a neighbourhood $U \in \Bisc(G^{(0)})$ of $u$ small enough that $\supp(f) \cap G^u \cap \src^{-1}(U) \subseteq G_u^u$ (as in the proof of Lemma \ref{disint-lem1}) we can ensure that $f(x)\bm{1}_U(\src(x))\bm{1}_B(b) = 0$ for all $x \in G^u$ and all $b \in G^{\src(x)}$, except possibly if $x \in G_u^u$ and $b = t$. Consequently,
	\begin{align*}
	(f*\bm{1}_U*\bm{1}_B)|_{G^u} &= \sum_{y \in G^u}f* \bm{1}_U*\bm{1}_B(y)1_y = \sum_{y \in G^u}\Big(\sum_{\substack{x,u,b \in G\\xub = y}} f(x)\bm{1}_U(u) \bm{1}_B(b) \Big) 1_y\\ &= \sum_{y \in G^u}\Big(\sum_{\substack{x,b\in G\\xb=y}}f(x)\bm{1}_{U}(\src(x))\bm{1}_{B}(b)\Big)1_y  = \sum_{x\in G_u^u}f(x)1_{xt} = \alpha \cdot 1_t.
	\end{align*}

	This proves the $``\subseteq"$ part of (\ref{eq}). Now suppose that $h \in I$ and $\beta = h|_{G^u}$. We can write $\beta = \sum_{v \in \mathcal{O}_u} z_v \delta_v$ where  $\{\delta_v \mid v \in \mathcal{O}_u\}$ is the basis for $RG^u$ that was introduced in \S\ref{basis}, and $z_v \in RG_u^u$ (only finitely many of which are nonzero). Let $w \in \mathcal{O}_u$ be fixed for now. We can find a small enough compact open bisection  $B_w$ that contains $j_w$, where ``small enough" means that $v \notin \src(B_w)$ for all $v\in \mathcal{O}_u$ such that $v \ne w$ and $z_v \ne 0$. Then for all $v \ne w$ such that $z_v \ne 0$,\[\delta_v \cdot \bm{1}_{B_w^{-1}} = 1_{j_v} \cdot \bm{1}_{B_w^{-1}} = \sum_{y \in G^v} \bm{1}_{B_w^{-1}}(y)1_{j_v y} = 0.\]
	Therefore,
	\[
	\beta \cdot \bm{1}_{B_w^{-1}} = \sum_{v \in \mathcal{O}_u} z_v \delta_v \cdot \bm{1}_{B_w^{-1}} = z_w \delta_w \cdot \bm{1}_{B_w^{-1}} = z_w \cdot 1_u.
	\]
	This calculation shows that $z_w \cdot 1_u = (h*\bm{1}_{B_w^{-1}})|_{G^u}$. But $z_w \cdot 1_u (x) = z_w(x)$ for all $x \in G^u_u$, so it follows that $z_w = (h*\bm{1}_{B_w^{-1}})|_{G_u^u} \in I_u$. Since $w\in \mathcal{O}_u$ was arbitrary, we have $z_v\in I_u$ for all $v \in \mathcal{O}_u$, and $\beta \in I_uRG^u$. This proves the $``\supseteq"$ part of (\ref{eq}).
	\end{proof}
	
At this point, we have already shown that the disassembly map (\ref{disint-map}) is injective. Now we show that it is surjective onto $\Delta_{G,R}$.

\begin{lemma} \label{disint-lem3}
	If $Y$ is an open subset of $X(G)$ satisfying conditions {\rm(\hyperref[D1]{D1})--(\hyperref[D3]{D3})}, then $Y_{\mathcal{I}(Y)} = Y$.
\end{lemma}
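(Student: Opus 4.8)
\emph{Strategy.} The plan is to prove the two inclusions $Y_{\mathcal{I}(Y)}\subseteq Y$ and $Y\subseteq Y_{\mathcal{I}(Y)}$ separately; only the second will use the conditions (\hyperref[D1]{D1})--(\hyperref[D3]{D3}). For the first inclusion, suppose $\alpha\in Y_{\mathcal{I}(Y)}$ and write $\alpha=f_u$ with $f\in\mathcal{I}(Y)$ and $u=p(\alpha)$. Picking any $U\in\Bisc(G^{(0)})$ with $u\in U$ and applying Lemma \ref{lem1} twice gives $\bm{1}_U*f*\bm{1}_U(x)=f(x)\bm{1}_U(\src(x))\bm{1}_U(\ran(x))=f(x)$ for every $x\in G_u^u$, since then $\src(x)=\ran(x)=u\in U$. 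Hence $(\bm{1}_U*f*\bm{1}_U)_u=f_u=\alpha$, and the left-hand side lies in $Y$ by the definition \eqref{reassembly-map} of $\mathcal{I}(Y)$; so $\alpha\in Y$.

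For the reverse inclusion, take $\alpha\in Y$ and set $u=p(\alpha)$. Here I would invoke condition (\hyperref[D3]{D3}) to obtain a representative $f\in A_R(G)$ with $f_u=\alpha$ and with the property that $(f*\bm{1}_C)_w\in Y$ for all $w\in G^{(0)}$ and all $C\in\Bisc(G)$. It then suffices to show that this $f$ belongs to $\mathcal{I}(Y)$, because then $\alpha=f_u\in\mathcal{I}(Y)_u\subseteq Y_{\mathcal{I}(Y)}$. Concretely, I must promote the \emph{one-sided} information from (\hyperref[D3]{D3}) --- that every right translate $(f*\bm{1}_C)_w$ stays in $Y$ --- to the \emph{two-sided} statement that $(\bm{1}_A*f*\bm{1}_B)_v\in Y$ for all $A,B\in\Bisc(G)$ and all $v\in G^{(0)}$.

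The device that performs this promotion is condition (\hyperref[D2]{D2}), which allows the left factor $\bm{1}_A$ to be absorbed into a translation isomorphism. Fixing $A,B,v$, if $A\cap G^v=\varnothing$ then $(\bm{1}_A*f*\bm{1}_B)_v=0\in Y$ by (\hyperref[D1]{D1}); otherwise let $a$ be the unique element of $A$ with $\ran(a)=v$, put $u_1=\src(a)$, and set $C=BA\in\Bisc(G)$. The heart of the argument is the identity
\[
(\bm{1}_A*f*\bm{1}_B)_v=\tau_a\big((f*\bm{1}_{C})_{u_1}\big),
\]
with $\tau_a$ the translation isomorphism \eqref{translation} attached to $a\in G_{u_1}^v$. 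This I would verify by a direct computation using Lemma \ref{lem1}: for $x\in G_v^v$ one has $(\bm{1}_A*f*\bm{1}_B)(x)=f(a^{-1}xb^{-1})$, where $b$ is the unique element of $B$ with $\src(b)=v$; substituting $x=aya^{-1}$ with $y\in G_{u_1}^{u_1}$ collapses $a^{-1}xb^{-1}$ to $y(ba)^{-1}$, which is exactly $(f*\bm{1}_{BA})(y)$ since $ba\in BA\cap G_{u_1}$. Granting the identity, $(f*\bm{1}_C)_{u_1}\in Y\cap RG_{u_1}^{u_1}$ by (\hyperref[D3]{D3}) and $\tau_a$ carries $Y\cap RG_{u_1}^{u_1}$ into $Y\cap RG_v^v$ by (\hyperref[D2]{D2}), so $(\bm{1}_A*f*\bm{1}_B)_v\in Y$. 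As $A,B,v$ were arbitrary, $f\in\mathcal{I}(Y)$.

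\emph{Main obstacle.} The one step requiring genuine care is the displayed identity. Since $a^{-1}xb^{-1}$ a priori lies in $G_{\ran(b)}^{u_1}$ --- which is not an isotropy group when $\ran(b)\neq u_1$ --- one must check that, after restricting to $x\in G_v^v$ and conjugating by $a$, the off-diagonal contributions reorganise precisely into the right translate $(f*\bm{1}_{BA})_{u_1}$ produced by (\hyperref[D3]{D3}), rather than into some other function about which the conditions say nothing. Keeping the sources and ranges straight in the bisection product $BA$ is the crux; once the identity is secured, (\hyperref[D2]{D2}) and (\hyperref[D3]{D3}) finish the proof immediately.
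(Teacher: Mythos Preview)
Your proposal is correct and follows essentially the same route as the paper's own proof: both establish $Y_{\mathcal{I}(Y)}\subseteq Y$ by evaluating at a local unit, and both obtain $Y\subseteq Y_{\mathcal{I}(Y)}$ by taking the representative $f$ supplied by (\hyperref[D3]{D3}) and then proving the key identity $(\bm{1}_A*f*\bm{1}_B)_v=\tau_a\big((f*\bm{1}_{BA})_{\src(a)}\big)$, after which (\hyperref[D2]{D2}) finishes the argument. The only cosmetic difference is that the paper phrases the case split as ``$(\bm{1}_A*f*\bm{1}_B)_v=0$ versus $\ne 0$'' whereas you split on whether $A\cap G^v$ is empty; your verification sketch should also note that if $B\cap G_v=\varnothing$ then $BA\cap G_{u_1}=\varnothing$ as well (since $A$ is a bisection), so both sides of the displayed identity vanish and no ``$b$'' need be named---but this is a minor omission in an otherwise accurate outline.
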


\begin{proof}
	Suppose  $\alpha \in Y_{\mathcal{I}(Y)}$. Then $\alpha = f_{p(\alpha)}$ for some $f \in \mathcal{I}(Y)$. Since $A_R(G)$ is locally unital, there is a $U \in \Bisc(G^{(0)})$ such that $\bm{1}_U * f * \bm{1}_U = f$. Then $f \in \mathcal{I}(Y)$ implies $\alpha = (\bm{1}_U * f * \bm{1}_U)_{p(\alpha)} \in Y$. Therefore $Y_{\mathcal{I}(Y)} \subseteq Y$. 
	
	Now suppose that $\alpha \in Y$. By (\hyperref[D3]{D3}) there exists some $f \in A_R(G)$ such that $f_{p(\alpha)} = \alpha$ and $(f*\bm{1}_C)_u \in Y$ for all $u \in G^{(0)}$ and $C \in \Bisc(G^{(0)})$. Now let $A, B \in \Bisc(G)$ be arbitrary. If $(\bm{1}_A*f*\bm{1}_B)_u = 0$ then it is in $Y$ because every fibre of $p: Y \to G^{(0)}$ is an ideal and therefore contains the zero of $RG_u^u$. On the other hand, if $(\bm{1}_A* f * \bm{1}_B)_u \ne 0$ then $u \in \ran(A) \cap \src(B)$. Letting $A \cap G^u = \{t\}$, we have 
	\[
	(\bm{1}_A * f * \bm{1}_B)_u = (\bm{1}_A* (f * \bm{1}_{BA}) * \bm{1}_{A^{-1}})_u = \tau_t\big((f*\bm{1}_{BA})_{\src(t)}\big) \in Y
	\]
	because $(f*\bm{1}_{BA})_{\src(t)} \in Y$, and because of (\hyperref[D2]{D2}). Therefore $f \in \mathcal{I}(Y)$ and $\alpha \in Y_{\mathcal{I}(Y)}$. The conclusion is that $Y \subseteq Y_{\mathcal{I}(Y)}$.
\end{proof}

We have concluded the proof of Theorem \ref{disassembly-theorem} by showing that the reassembly map (\ref{reassembly-map}) maps $\Delta_{G,R}$ bijectively to $\mathcal{L}(A_R(G))$ and that this is the inverse of the disassembly map (\ref{disint-map}). We end the section with an observation that will be useful later.

\begin{proposition} \label{d3-prop}
		Let $Y \subseteq X(G)$ be an open subset that satisfies {\rm (\hyperref[D1]{D1})} and {\rm (\hyperref[D2]{D2})}. If for every $u \in G^{(0)}$, the ideal $Y \cap RG_u^u \trianglelefteq RG_u^u$ is of the form $J_u RG_u^u$ for some ideal $J_u \trianglelefteq R$, then $Y_{\mathcal{I}(Y)} = Y$. Consequently, if $Y$ has this property then it also satisfies {\rm (\hyperref[D3]{D3})}.
\end{proposition}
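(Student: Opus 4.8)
The plan is to establish the nontrivial inclusion $Y\subseteq Y_{\mathcal I(Y)}$; the reverse inclusion $Y_{\mathcal I(Y)}\subseteq Y$ holds in complete generality, by the argument in the first paragraph of the proof of Lemma~\ref{disint-lem3} (which uses only local units and the definition of $\mathcal I(Y)$). Granting $Y_{\mathcal I(Y)}=Y$, the final sentence follows at once: $\mathcal I(Y)$ is an ideal of $A_R(G)$, so by Lemma~\ref{disint-lem1} the set $Y_{\mathcal I(Y)}$ satisfies {\rm(\hyperref[D1]{D1})--(\hyperref[D3]{D3})}, and since it equals $Y$, condition {\rm(\hyperref[D3]{D3})} holds for $Y$.

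To prove $Y\subseteq Y_{\mathcal I(Y)}$, fix $\alpha\in Y$, put $u=p(\alpha)$, and write $\alpha=\sum_{z\in F}c_z1_z$ with $F\subseteq G_u^u$ finite and $c_z\in J_u$, using $Y\cap RG_u^u=J_uRG_u^u$. I would look for a preimage $f\in\mathcal I(Y)$ with $f_u=\alpha$ of the simplest shape $f=\sum_{z\in F}c_z\bm 1_{D_z}$, where $D_z\ni z$ is a compact open bisection chosen with $D_z\cap G_u^u=\{z\}$, so that $f_u=\alpha$ automatically. To verify $f\in\mathcal I(Y)$ I would first reduce two-sided sandwiches to one-sided ones exactly as in the proof of Lemma~\ref{disint-lem3}: if $(\bm 1_A*f*\bm 1_B)_v\neq0$ and $\{t\}=A\cap G^v$, then $(\bm 1_A*f*\bm 1_B)_v=\tau_t\big((f*\bm 1_{BA})_{\src(t)}\big)$, so by {\rm(\hyperref[D2]{D2})} it suffices to show $(f*\bm 1_C)_p\in Y$ for all $p\in G^{(0)}$ and $C\in\Bisc(G)$. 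Now $(f*\bm 1_C)_p=\sum_{z\in F}c_z(\bm 1_{D_zC})_p$, and each nonzero summand equals $c_z1_w$ for the unique $w\in D_zC\cap G_p^p$; since $w$ factors as $w=de$ with $d\in D_z$ and $\ran(w)=p$, we have $p=\ran(d)\in\ran(D_z)$. Thus only those $c_z$ with $p\in\ran(D_z)$ can appear in $(f*\bm 1_C)_p$, and it remains to force such $c_z$ into $J_p$.

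The decisive point, where the scalar hypothesis and the openness of $Y$ enter together, is the propagation statement: for each $z\in F$ there is a neighbourhood $V_z$ of $u$ in $G^{(0)}$ with $c_z\in J_w$ for all $w\in V_z$. To see this, note $c_z1_u\in J_uRG_u^u=Y\cap RG_u^u\subseteq Y$, so by openness there are $h\in A_R(G)$ and $W\in\Bisc(G^{(0)})$ with $c_z1_u\in s_h(W)\subseteq Y$ and $h_u=c_z1_u$; as $h$ is locally constant with $h(u)=c_z$, there is a neighbourhood $V_z\subseteq W$ of $u$ with $h(w)=c_z$ for all $w\in V_z$, while $s_h(W)\subseteq Y$ forces $h_w\in J_wRG_w^w$ and hence $c_z=h(w)=h_w(w)\in J_w$ for $w\in V_z$. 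Setting $V=\bigcap_{z\in F}V_z$ and shrinking each $D_z$ so that $\ran(D_z)\subseteq V$ (possible since $\ran(z)=u\in V$ and $\ran$ is continuous), every $c_z$ contributing to $(f*\bm 1_C)_p$ satisfies $p\in\ran(D_z)\subseteq V$, whence $c_z\in J_p$. Therefore $(f*\bm 1_C)_p\in J_pRG_p^p=Y\cap RG_p^p\subseteq Y$, completing the verification that $f\in\mathcal I(Y)$ and hence that $\alpha\in Y_{\mathcal I(Y)}$.

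The step I expect to be the main obstacle is precisely this coefficient control. Because $\Iso(G)$ need not be open, any bisection $D_z\ni z$ inevitably contains off-diagonal points $d$ with $\ran(d)\neq\src(d)$, and the scalar $c_z\in J_u$ has no reason to lie in the possibly-different ideal $J_{\ran(d)}$; consequently a careless choice of $f$ generally fails to land in $\mathcal I(Y)$, and the naive argument breaks down. It is exactly the scalar form of the fibre ideals $J_uRG_u^u$ that allows the openness of $Y$ to propagate $c_z\in J_u$ to $c_z\in J_w$ for all $w$ in a neighbourhood of $u$, after which pushing the ranges of the bisections into that neighbourhood is what forces every sandwiched restriction into an admissible fibre. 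I do not expect {\rm(\hyperref[D2]{D2})} to play any role beyond the standard reduction to one-sided products, nor do I expect to need the orbit-invariance of the family $(J_u)_u$ explicitly, since the membership $c_z\in J_p$ is obtained directly from $p\in V$.
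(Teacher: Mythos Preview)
Your proof is correct and follows essentially the same approach as the paper: both exploit openness of $Y$ to propagate $c_z\in J_u$ to $c_z\in J_w$ for $w$ in a neighbourhood of $u$, and then build an explicit preimage $f\in\mathcal I(Y)$ whose support is controlled to lie over that neighbourhood. The only organisational difference is that the paper first reduces to the generators $j1_u$ (using that $Y_{\mathcal I(Y)}$ satisfies {\rm(\hyperref[D1]{D1})}) and takes the simpler preimage $f=j\bm 1_V$, checking the two-sided condition directly and invoking {\rm(\hyperref[D2]{D2})} to pass from $V$ to its saturation $\src(\ran^{-1}(V))$, whereas you handle a general $\alpha$ and reduce to one-sided products first via the Lemma~\ref{disint-lem3} trick.
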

\begin{proof}
	When we proved that $Y_{\mathcal{I}(Y)} \subseteq Y$, we did not use (\hyperref[D3]{D3}), so we have this fact already. With the assumptions above, $Y \cap RG_u^u$ is generated as an ideal by elements of the form $\alpha = j1_u$ for $j \in J_u$. If we can prove that $\{j1_u \mid u \in G^{(0)}, j \in J_u\}\subseteq Y_{\mathcal{I}(Y)}$, then it implies $Y \subseteq Y_{\mathcal{I}(Y)}$. 
	
	Let $j1_u \in Y$ where $u \in G^{(0)}$ and $0 \ne j \in J_u$. Since $Y$ is open, we can find an $h \in A_R(G)$ and a $U \in \Bisc(G^{(0)})$ such that $u \in U$, $h_u = j1_u$, and $s_h(U) \subseteq Y$. Since $j1_u = (j\bm{1}_U)_u$, Lemma~\ref{fu=gu} implies that there is a $V \in \Bisc(G^{(0)})$ with $u\in V \subseteq U$ such that $\bm{1}_V * h * \bm{1}_V = j\bm{1}_{V}$. Then $s_{j\bm{1}_{V}}(V)  = s_{\bm{1}_V*h*\bm{1}_V}(V) = s_h(V) \subseteq s_h(U) \subseteq Y$. Therefore $(j\bm{1}_V)_v = j1_v \in Y \cap RG_v^v$ and $j \in J_v$, for all $v \in V$. By the translation-invariant property (\hyperref[D2]{D2}), $j \in J_w$ for all $w \in \src(\ran^{-1}(V))$.

	Now let $f = j\bm{1}_V$ and take $w \in G^{(0)}$ and $ C, D \in \Bisc(G)$ to be arbitrary. Then $(\bm{1}_C * f * \bm{1}_D)_w = (j\bm{1}_{CVD})_w = 0$ or $j1_t$ according as $CVD \cap G_w^w = \varnothing$ or $\{t\}$. In the first case, $(\bm{1}_C * f * \bm{1}_D)_w = 0 \in Y$. In the second case, $t = cvd \in G_w^w$ for some $v \in V$, $c \in C$, and $d \in D$, which implies that $w\in \src(\ran^{-1}(V))$ and therefore $j \in J_w$. This yields that $j1_t \in J_w RG_w^w \subseteq Y$. Therefore $f \in \mathcal{I}(Y)$ and $j1_u = f_u \in Y_{\mathcal{I}(Y)}$.
	\end{proof}

\subsection{The special case of strongly effective groupoids}

An ample groupoid $G$ is called \emph{effective} if the interior of $\Iso(G)$ is $G^{(0)}$. Given a subset $V \subseteq G^{(0)}$, we define $VGV = \ran^{-1}(V) \cap \src^{-1}(V)$. The groupoid $G$ is  \emph{strongly effective} if $VGV$ is effective for every nonempty closed invariant subset $V \subseteq G^{(0)}$.

\begin{proposition} \label{se-prop1}
If $G$ is strongly effective, then every open subset $Y \subseteq X(G)$ that satisfies {\rm (\hyperref[D1]{D1})} and {\rm (\hyperref[D2]{D2})} is of the form $\bigcup_{u \in G^{(0)}} \rho(u)  RG_u^u$ for some ideals $\rho(u) \trianglelefteq R$.
\end{proposition}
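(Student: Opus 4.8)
The plan is to define, for each $u \in G^{(0)}$, the ideal $\rho(u) = \{r \in R : r 1_u \in Y\} \trianglelefteq R$ (it is an ideal of $R$ because $Y \cap RG_u^u$ is an ideal of $RG_u^u$ by (\hyperref[D1]{D1}), so the set of $r$ with $r1_u \in Y$ is closed under addition and under multiplication by $R$), and to prove the single equality $Y \cap RG_u^u = \rho(u)\,RG_u^u$ for every $u$; the stated form of $Y$ then follows since $Y = \bigcup_u (Y \cap RG_u^u)$. The inclusion $\rho(u)\,RG_u^u \subseteq Y \cap RG_u^u$ is immediate: the ideal $Y \cap RG_u^u$ contains $r1_u$ for all $r \in \rho(u)$, hence contains the ideal they generate, namely $\rho(u)\,RG_u^u$. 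All the work is in the reverse inclusion.

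I would first record two reductions. First, $\rho$ is constant on orbits: if $v \in \mathcal{O}_u$ and $t \in G_u^v$, then $\tau_t(r1_u) = r1_v$, so (\hyperref[D2]{D2}) gives $r1_u \in Y \iff r1_v \in Y$, i.e. $\rho(u) = \rho(v)$. Second, it is enough to control the coefficient at the identity: given $\alpha = \sum_i r_i g_i \in Y \cap RG_u^u$ with the $g_i \in G_u^u$ distinct, and a fixed index $k$, the element $1_{g_k^{-1}} \cdot \alpha$ again lies in $Y \cap RG_u^u$ by (\hyperref[D1]{D1}), and its coefficient at the unit $u$ equals $r_k$. So it suffices to show that the identity-coefficient of any element of $Y \cap RG_u^u$ lies in $\rho(u)$; applying this to each $1_{g_k^{-1}}\cdot\alpha$ yields $r_k \in \rho(u)$ for all $k$, hence $\alpha \in \rho(u)\,RG_u^u$.

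The heart of the matter is thus: if $\beta = r_0 1_u + \sum_{i \ge 1} r_i g_i \in Y \cap RG_u^u$ with $g_i \in G_u^u \setminus \{u\}$ distinct, then $r_0 \in \rho(u)$. Choose pairwise-disjoint compact open bisections $B_0 \subseteq G^{(0)}$ with $u \in B_0$ and $B_i \ni g_i$ with $B_i \cap G^{(0)} = \varnothing$ for $i \ge 1$, and set $f = \sum_i r_i \bm{1}_{B_i}$, so $f_u = \beta$. Since $Y$ is open and $s_f$ is continuous, $f_v \in Y$ for all $v$ in some neighbourhood $U \subseteq B_0$ of $u$. Let $V = \overline{\mathcal{O}_u}$, a closed invariant set, so that $VGV$ is effective by hypothesis. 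For $i \ge 1$ put $F_i = \{v \in V : B_i \cap G_v^v \ne \varnothing\} = V \cap \src(B_i \cap \Iso(G))$; this is closed in $V$, and it has empty interior in $V$, for otherwise $B_i \cap \src^{-1}(W) \cap VGV$ (with $W$ a nonempty open subset of $F_i$) would be a nonempty open subset of $\Iso(VGV)$ containing no units of $VGV$, contradicting effectiveness of $VGV$. Hence $\bigcup_{i \ge 1} F_i$ is closed and nowhere dense in $V$, so $V \setminus \bigcup_i F_i$ is open and dense. As $\mathcal{O}_u$ is dense in $V$, the nonempty open set $\big(V \setminus \bigcup_i F_i\big) \cap U$ meets $\mathcal{O}_u$; pick $v$ in this intersection. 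Then the $B_i$ with $i \ge 1$ contribute nothing to $f_v$, while $B_0 \subseteq G^{(0)}$ contributes $r_0 1_v$, so $f_v = r_0 1_v \in Y$, giving $r_0 \in \rho(v) = \rho(u)$.

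The main obstacle is exactly the requirement that the chosen point $v$ lie in the orbit $\mathcal{O}_u$ and not merely in its closure $V$. Effectiveness of $VGV$ only produces points of $V$ at which the off-diagonal bisections fail to be isotropy, and the only semicontinuity available from openness of $Y$ runs the wrong way (it gives $\rho(u) \subseteq \rho(v)$ for $v$ near $u$, never the reverse), so a nearby point of $V \setminus \mathcal{O}_u$ would be useless. The resolution is the combination of density of $\mathcal{O}_u$ in $V$ with the fact that a finite union of closed nowhere-dense subsets of $V$ remains nowhere dense, which together force a \emph{genuine orbit point} into the good region; only then does (\hyperref[D2]{D2}) let me upgrade $r_0 \in \rho(v)$ to $r_0 \in \rho(u)$. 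The remaining points --- closedness and empty interior of the $F_i$ from effectiveness, and the verification that $f_v = r_0 1_v$ via Lemma \ref{lem1} --- are routine and I would fill them in directly.
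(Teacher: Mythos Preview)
Your argument is correct, but it follows a genuinely different route from the paper's. The paper fixes $\alpha\in Y$ with $\alpha\in s_f(U)\subseteq Y$, writes $f=r_0\bm 1_U+\sum_{i=1}^T s_i\bm 1_{B_i}$ with $B_i\cap G^{(0)}=\varnothing$, and then \emph{iteratively conjugates}: it invokes \cite[Lemma~4.2]{clark2019ideals} to find $N\in\Bisc(G)$ with $p(\alpha)\in\src(N)\subseteq U$, $\ran(N)\subseteq U$, and $N^{-1}B_T N=\varnothing$, so that $\bm 1_{N^{-1}}*f*\bm 1_N$ has one fewer off-diagonal term, while (\hyperref[D2]{D2}) and $s_f(U)\subseteq Y$ keep the restriction at $p(\alpha)$ inside $Y$; repeating yields $r_0 1_{p(\alpha)}\in Y$. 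Your proof instead works topologically at the level of the orbit closure: effectiveness of $VGV$ with $V=\overline{\mathcal O_u}$ forces each $F_i=V\cap\src(B_i\cap\Iso(G))$ to be closed and nowhere dense in $V$, so the dense subset $\mathcal O_u$ meets the open set $U\cap\big(V\setminus\bigcup_i F_i\big)$, producing an orbit point $v$ at which $f_v=r_0 1_v\in Y$, whence $r_0\in\rho(v)=\rho(u)$. Both proofs ultimately rest on effectiveness of $VGV$; the paper's approach packages this via the cited shrinking lemma and an inductive conjugation, while yours is self-contained and makes the Baire-type mechanism (finite unions of nowhere-dense sets versus a dense orbit) explicit. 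One small notational point: when you write $\src^{-1}(W)$ for $W$ open in $V$, you should pass to an ambient open $W'\subseteq G^{(0)}$ with $W=W'\cap V$ to ensure $B_i\cap\src^{-1}(W')\cap VGV$ is open in $VGV$; the argument is unaffected.
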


\begin{proof}
Suppose $G$ is strongly effective and $Y \subseteq X(G)$ is an open subset satisfying (\hyperref[D1]{D1}) and (\hyperref[D2]{D2}). Let $\alpha \in Y$. Then $\alpha \in s_f(U) \subseteq Y$ for some $f \in A_R(G)$ and $U \in \Bisc(G^{(0)})$. Replacing $f$ with $\bm{1}_U * f * \bm{1}_U$, we may write \[f = r_0\bm{1}_U + \sum_{i = 1}^{T} s_i \bm{1}_{B_i}\] where $B_i \in \Bisc(G)$, $B_i \cap G^{(0)} = \varnothing$, and $\src(B_i), \ran(B_i) \subseteq U$. By eliminating some $B_i$'s if necessary, we may assume that $p(\alpha) \in \src(B_i)\cap \ran(B_i)$. Applying \cite[Lemma 4.2]{clark2019ideals} to $B_T$, we know that there exists some $N \in \Bisc(G)$ such that $p(\alpha) \in \src(N)\subseteq U$, $\ran(N) \subseteq U$, and $N^{-1} B_T N = \varnothing$. Thus
\[
\bm{1}_{N^{-1}} * f * \bm{1}_{N} = r_0\bm{1}_{\src(N)} + \sum_{i = 1}^{T-1} s_i \bm{1}_{N^{-1}B_T N}
\]
and (\hyperref[D2]{D2}) implies that $0 \ne (\bm{1}_{N^{-1}} * f * \bm{1}_N)_u \in Y$. Applying this process inductively yields the conclusion that there exists $N\subseteq UGU$ with $p(\alpha) \in \src(N)$ and $(r_0 \bm{1}_{\src(N)})_{p(\alpha)}  = r_0{1}_{p(\alpha)} \in Y$. Every $\alpha \in X(G)$ can be written in a unique way as $\alpha  = r_0 1_{p(\alpha)} + \sum s_i 1_{z_i}$ for $r_0, s_i \in R$ and $z_i \in G_{p(\alpha)}^{p(\alpha)} \setminus \{p_{\alpha}\}$, and we have proved that $\alpha \in Y$ implies $r_0 1_{p(\alpha)} \in Y$. Moreover, $\alpha z_i^{-1} = s_i1_{p(\alpha)} + r_0 1_{z_i^{-1}} + \sum_{j \ne i} s_i 1_{z_j z_i^{-1}} \in Y$ and applying the same reasoning we have $s_i1_{p(\alpha)} \in Y$ for all $i$.

If we let $\rho(u) = \{\alpha(z) \mid  z \in G_u^u, \alpha \in Y\cap RG_u^u\}$, then clearly $\rho(u)$ is an ideal in $R$. The previous paragraph proves that $Y \cap RG_u^u = \rho(u) RG_u^u$ for all $u \in G^{(0)}$.
\end{proof}

Propositions \ref{d3-prop} and \ref{se-prop1} imply the following.

\begin{corollary} \label{se-d3}
	If $G$ is strongly effective, then every open subset $Y \subseteq X(G)$ that satisfies {\rm (\hyperref[D1]{D1})} and {\rm (\hyperref[D2]{D2})} also satisfies  {\rm (\hyperref[D3]{D3})}.
\end{corollary}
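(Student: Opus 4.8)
The plan is to deduce this purely by stitching together the two preceding propositions, so the corollary should require no new ideas beyond a short logical chaining. The point is that Proposition~\ref{se-prop1} upgrades the structural information supplied by (\hyperref[D1]{D1}) and (\hyperref[D2]{D2}) in the strongly effective case into precisely the hypothesis needed to invoke Proposition~\ref{d3-prop}.

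Concretely, suppose $G$ is strongly effective and $Y \subseteq X(G)$ is open and satisfies (\hyperref[D1]{D1}) and (\hyperref[D2]{D2}). First I would apply Proposition~\ref{se-prop1} to obtain ideals $\rho(u) \trianglelefteq R$, one for each $u \in G^{(0)}$, such that $Y = \bigcup_{u \in G^{(0)}} \rho(u) RG_u^u$. Reading this fibrewise gives $Y \cap RG_u^u = \rho(u) RG_u^u$ for every $u$, which is exactly the condition that ``$Y \cap RG_u^u$ is of the form $J_u RG_u^u$ for some ideal $J_u \trianglelefteq R$'' appearing in the hypothesis of Proposition~\ref{d3-prop}, with $J_u = \rho(u)$. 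Since $Y$ already satisfies (\hyperref[D1]{D1}) and (\hyperref[D2]{D2}) by assumption, all the hypotheses of Proposition~\ref{d3-prop} are met, and its conclusion yields $Y_{\mathcal{I}(Y)} = Y$ together with the assertion that $Y$ satisfies (\hyperref[D3]{D3}). This is exactly the claim.

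I do not expect a genuine obstacle at the level of the corollary itself: all the real work has already been invested in the two propositions. In particular, the substantive use of the strongly effective hypothesis occurs inside Proposition~\ref{se-prop1}, where one repeatedly applies \cite[Lemma 4.2]{clark2019ideals} to find bisections $N$ that conjugate away the off-diagonal terms $\bm{1}_{B_i}$ of a representative $f$, thereby forcing each scalar $r_0 1_{p(\alpha)}$ (and each $s_i 1_{p(\alpha)}$) into $Y$ and identifying $Y \cap RG_u^u$ with $\rho(u) RG_u^u$. The only thing to check carefully is that the notation interlocks: the ideals called $\rho(u)$ produced by Proposition~\ref{se-prop1} are exactly the ideals $J_u$ demanded by Proposition~\ref{d3-prop}, so the two statements compose without any gap.
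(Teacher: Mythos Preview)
Your proposal is correct and matches the paper's approach exactly: the paper simply states that Propositions~\ref{d3-prop} and~\ref{se-prop1} imply the corollary, and you have spelled out precisely this chaining. There is no additional content in the paper's proof beyond the logical composition you describe.
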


We can now give a new proof of \cite[Theorem 5.4]{clark2019ideals} using the general framework we have developed. The authors describe a topology on $\mathcal{L}(R)$  generated by the sets
	\[
	Z(F) = \{I \in \mathcal{L}(R) \mid F \subseteq I \}
	\]
	as $F$ ranges over finite subsets of $R$. They show in \cite[Lemma 5.2]{clark2019ideals} that a map $\rho: G^{(0)} \to \mathcal{L}(R)$ is continuous at a point $u \in G^{(0)}$ if and only if for all $a \in \rho(u)$ there exists an open neighbourhood $W \subseteq G^{(0)}$ of $u$ such that $a \in \rho(w)$ for all $w \in W$. A map $\rho:G^{(0)} \to \mathcal{L}(R)$ is defined to be \emph{$G$-invariant} if it is constant on orbits.

\begin{theorem}\cite[Theorem 5.4]{clark2019ideals}
Let $G$ be strongly effective. There is a one-to-one correspondence between $\mathcal{L}(A_R(G))$ and the set of continuous $G$-invariant maps $\rho: G^{(0)} \to \mathcal{L}(R)$, defined by sending $\rho$ to the ideal \begin{equation} \label{clark-ideal}
I = \Span_R\Big\{r\bm{1}_B \mid B \in \Bisc(G),\ r \in \bigcap_{u \in \src(B)}\rho(u) \Big\}.	
 \end{equation}
	\end{theorem}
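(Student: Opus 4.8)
The plan is to derive this statement as a corollary of the disassembly theorem together with Proposition~\ref{se-prop1} and Corollary~\ref{se-d3}. By Theorem~\ref{disassembly-theorem}, ideals of $A_R(G)$ correspond bijectively to the sets $Y \in \Delta_{G,R}$; and by Corollary~\ref{se-d3}, for strongly effective $G$ condition (\hyperref[D3]{D3}) is automatic, so $\Delta_{G,R}$ consists of \emph{all} open $Y \subseteq X(G)$ satisfying (\hyperref[D1]{D1}) and (\hyperref[D2]{D2}). Proposition~\ref{se-prop1} further tells us every such $Y$ has the shape $Y = \bigcup_{u} \rho(u)\,RG_u^u$ for ideals $\rho(u) \trianglelefteq R$, where $\rho$ is recovered from $Y$ by $\rho(u) = \{\alpha(z) \mid z \in G_u^u,\ \alpha \in Y \cap RG_u^u\}$. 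So the whole task reduces to (a) pinning down which maps $\rho\colon G^{(0)} \to \mathcal{L}(R)$ arise this way, and (b) matching the resulting ideal with the span formula \eqref{clark-ideal}.

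For step (a), I would translate (\hyperref[D1]{D1}), (\hyperref[D2]{D2}), and openness into properties of $\rho$. Condition (\hyperref[D1]{D1}) is free, since each fibre $\rho(u)RG_u^u$ is by construction an ideal of $RG_u^u$. For (\hyperref[D2]{D2}) I would compute that the translation isomorphism $\tau_t$, for $t \in G_u^v$, sends $\rho(u)RG_u^u$ onto $\rho(u)RG_v^v$, because conjugation by $t$ carries $G_u^u$ bijectively onto $G_v^v$ while fixing the scalar; comparing the $1_v$-coefficients then shows $\rho(u)RG_v^v = \rho(v)RG_v^v$ if and only if $\rho(u) = \rho(v)$. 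Hence (\hyperref[D2]{D2}) holds exactly when $\rho$ is constant on orbits, i.e.\ $G$-invariant. For openness I would use Lemma~\ref{fu=gu} together with the continuity criterion recalled from \cite[Lemma 5.2]{clark2019ideals}: if $\rho$ is continuous, then for $\alpha \in \rho(u)RG_u^u$ I can build a local section $f = \sum_i r_i \bm{1}_{B_i}$ whose coefficients $r_i \in \rho(u)$ all remain in $\rho(w)$ on a common neighbourhood $W$ of $u$, giving $s_f(W) \subseteq Y$; conversely, for a basic open set with $a1_u \in s_f(U) \subseteq Y$, applying Lemma~\ref{fu=gu} to $f$ and $a\bm{1}_U$ shrinks to a $V \ni u$ on which $f_w = a1_w \in \rho(w)RG_w^w$, forcing $a \in \rho(w)$ for all $w \in V$, which is precisely continuity at $u$.

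For step (b), write $I_\rho$ for the right-hand side of \eqref{clark-ideal}; it is visibly an ideal. I would prove $I_\rho = \mathcal{I}(Y)$, where $Y = \bigcup_u \rho(u)RG_u^u$ is the set just analysed, and conclude by the bijectivity in Theorem~\ref{disassembly-theorem}. The inclusion $I_\rho \subseteq \mathcal{I}(Y)$ is direct: for a generator $r\bm{1}_B$ with $r \in \bigcap_{u\in\src(B)}\rho(u)$ and any $A,C \in \Bisc(G)$, the element $(\bm{1}_A * r\bm{1}_B * \bm{1}_C)_u$ is either $0$ or $r1_z$ with $z = abc \in G_u^u$; then $\src(b) \in \src(B)$ lies in the orbit of $u$, so $G$-invariance gives $r \in \rho(\src(b)) = \rho(u)$ and $r1_z \in Y$. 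For the reverse I would instead show $Y_{I_\rho} = Y$: given $\alpha = \sum_i r_i 1_{z_i} \in \rho(u)RG_u^u$, continuity lets me shrink bisections $B_i \ni z_i$ so that $\src(B_i)$ sits inside a neighbourhood where each $r_i$ stays in $\rho$, making $r_i\bm{1}_{B_i} \in I_\rho$ and $f = \sum_i r_i \bm{1}_{B_i} \in I_\rho$ with $f_u = \alpha$; hence $Y \subseteq Y_{I_\rho} \subseteq Y_{\mathcal{I}(Y)} = Y$. Injectivity of the disassembly map then yields $I_\rho = \mathcal{I}(Y)$, identifying $I_\rho$ as the ideal corresponding to $\rho$.

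The main obstacle I anticipate is step (a), and within it the openness-versus-continuity equivalence: the topology on $X(G)$ is described only through the basic sets $s_f(U)$, so extracting the pointwise condition ``$a \in \rho(w)$ on a neighbourhood of $u$'' from openness of $Y$ requires the delicate local comparison supplied by Lemma~\ref{fu=gu}, and the converse requires manufacturing a section with prescribed germ at $u$ whose fibres stay inside $Y$ nearby. The bisection-shrinking argument that simultaneously exploits continuity (to keep scalars inside $\rho$) and $G$-invariance (to transport scalars along the orbit) is where the two hypotheses on $\rho$ genuinely interact, so I would take the most care there.
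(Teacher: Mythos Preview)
Your proposal is correct and follows essentially the same approach as the paper: reduce to open $Y \subseteq X(G)$ satisfying (\hyperref[D1]{D1})--(\hyperref[D2]{D2}) via Theorem~\ref{disassembly-theorem} and Corollary~\ref{se-d3}, invoke Proposition~\ref{se-prop1} to write $Y = \bigcup_u \rho(u)RG_u^u$, and then match (\hyperref[D2]{D2}) with $G$-invariance and openness with continuity of $\rho$. The only minor differences are that the paper extracts continuity from openness by intersecting with the preimage $f^{-1}(a)$ rather than invoking Lemma~\ref{fu=gu}, and it dispatches the identification of $I$ with \eqref{clark-ideal} in one line by checking $Y_I = \bigcup_u \rho(u)RG_u^u$ directly, whereas you spell out both inclusions via $\mathcal{I}(Y)$.
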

	
\begin{proof}
	Combining Theorem \ref{disassembly-theorem} with Corollary \ref{se-d3}, we know that
	\begin{align*}
	I \mapsto Y_I = \bigcup_{u \in G^{(0)}} I_u \subseteq X(G)
	\end{align*}
	is a bijection from $\mathcal{L}(A_R(G))$ to the set of open subsets of $X(G)$ satisfying (\hyperref[D1]{D1}) and (\hyperref[D2]{D2}). We show that the set of open subsets of $X(G)$ satisfying (\hyperref[D1]{D1}) and (\hyperref[D2]{D2}) is in bijection with the set of continuous $G$-invariant maps $\rho: G^{(0)} \to \mathcal{L}(R)$. 
	 Proposition \ref{se-prop1} tells us that every such subset of $X(G)$ is of the form $Y = \bigcup_{u \in U} \rho(u)RG_u^u$ for a unique $\rho(u) \in \mathcal{L}(R)$.
	 	The map $\rho: G^{(0)} \to \mathcal{L}(R)$ is $G$-invariant because $Y$ satisfies (\hyperref[D2]{D2}). If $a \in \rho(u)$ then $a1_u \in Y$, so there exists $f \in A_R(G)$ and $U \in \Bisc(G^{(0)})$ with $a1_u \in s_f(U)\subseteq Y$.  The set $W = f^{-1}(a) \cap U$ is an open neighbourhood of $u$ with $a \in \rho(w)$ for all $w \in W$. Therefore $\rho$ is continuous.
	
	Conversely, given a continuous $G$-invariant map $\rho: G^{(0)} \to \mathcal{L}(R)$, we can define \[Y = \bigcup_{u \in G^{(0)}}\rho(u)RG_u^u \subseteq X(G).\] Clearly $Y$ satisfies (\hyperref[D1]{D1}) and (\hyperref[D2]{D2}). We claim that it is open in $X(G)$. Indeed, let $\alpha \in Y$ and $u = p(\alpha)\in G^{(0)}$.  Then $\alpha = \sum r_i 1_{z_i}$ for some $r_i \in \rho(u)$ and $z_i\in G_u^u$. For each $i$ we can pick $X_i \in \Bisc(G)$ containing $z_i$ in such a way that all of the $X_i$'s are disjoint. Letting $f = \sum r_i \bm{1}_{X_i}$, we have $f_u = \alpha$. Since $\rho$ is continuous at $u$, there are open neighbourhoods $W_i \in \Bisc(G^{(0)})$ of $u$ such that $r_i \in \rho(w)$ for all $w \in W_i$. Let $W = \bigcap W_i$. For any $w \in W$, we have $f_w \in \rho(w) RG_w^w$, so $s_f(W)$ is an open subset of $Y$ containing $\alpha$. Therefore $Y$ is open.
	
	It is straightforward to check that $Y_I = \bigcup_{u \in G^{(0)}}\rho(u) RG_u^u$ where $I$ is as in (\ref{clark-ideal}), which completes the proof.
\end{proof}

\section{The lattice and multiplicative structure of ideals} \label{sec:lattice structure}
 The image of the disassembly map \[\Delta_{G,R} = \big\{Y_I \subseteq X(G) \mid I \in \mathcal{L}(A_R(G))\big\}\]
  is partially ordered by inclusion. Since the disassembly map (\ref{disint-map}) and its inverse (\ref{reassembly-map}) are order-preserving, the partially ordered set $\Delta_{G,R}$
is in fact a lattice and it is isomorphic to the lattice $\mathcal{L}(A_R(G))$. The next proposition confirms that the join and meet operations on the lattice $\Delta_{G,R}$ are given by:
\begin{align*}
Y_1\vee Y_2 = \bigcup_{u \in G^{(0)}} Y_1\cap RG_u^u+ Y_2\cap RG_u^u, && 
Y_1\wedge Y_2 = Y_1\cap Y_2.
\end{align*}
The third item in the proposition, describing the multiplicative behaviour of the disassembly map, is quite surprising.
\begin{proposition} \label{product of ideals of isotropy groups}
 For all ideals \(I,J\trianglelefteq A_R(G)\) and for all \(u\in G^{(0)}\),\begin{align} (I+J)_u&=I_u+J_u;\label{additivity} \\ (I\cap J)_u&=I_u\cap J_u; \label{intersectionality} \\ (IJ)_u &= I_u J_u. \label{multiplicativity}\end{align}
\end{proposition}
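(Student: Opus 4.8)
The plan is to prove the three identities \eqref{additivity}, \eqref{intersectionality}, \eqref{multiplicativity} separately, exploiting throughout that $f\mapsto f_u$ is the restriction $f\mapsto f|_{G_u^u}$ and hence an additive (indeed $R$-linear and surjective) map; this makes one inclusion immediate in each case. For \eqref{additivity}, any $h\in I+J$ splits as $h=f+g$ with $f\in I$, $g\in J$, so $h_u=f_u+g_u\in I_u+J_u$, and conversely $f_u+g_u=(f+g)_u\in(I+J)_u$. For \eqref{intersectionality}, the inclusion $(I\cap J)_u\subseteq I_u\cap J_u$ is clear; for the reverse I would take $\alpha=f_u=g_u$ with $f\in I$, $g\in J$, and invoke Lemma \ref{fu=gu} to obtain $V,W\in\Bisc(G^{(0)})$ containing $u$ with $h:=\bm{1}_V*f*\bm{1}_W=\bm{1}_V*g*\bm{1}_W$. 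The two expressions for $h$ show $h\in I$ and $h\in J$ respectively, so $h\in I\cap J$, while Lemma \ref{lem1} gives $h(z)=f(z)$ for $z\in G_u^u$ (since $u\in V\cap W$), whence $h_u=\alpha$ and $\alpha\in(I\cap J)_u$.

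The content is in \eqref{multiplicativity}. For ``$\supseteq$'', given $\alpha=f_u\in I_u$ and $\beta=g_u\in J_u$, I would form $f*\bm{1}_U*g\in IJ$ and choose $U\in\Bisc(G^{(0)})$ small enough that $U$ avoids $\ran(y)$ for every $y\in\supp(g)\cap G_u$ with $\ran(y)\ne u$; this is possible since $\supp(g)\cap G_u$ is finite and $G^{(0)}$ is Hausdorff. With this choice Lemma \ref{lem1} collapses the convolution to the isotropy sum, giving $(f*\bm{1}_U*g)_u=f_u\cdot g_u=\alpha\beta$. Since $(IJ)_u$ is an ideal of $RG_u^u$ by (\hyperref[D1]{D1}), it is additively closed, so every finite sum $\sum_i\alpha_i\beta_i$ lies in $(IJ)_u$; hence $I_uJ_u\subseteq(IJ)_u$.

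For ``$\subseteq$'', by additivity it suffices to show $(f*g)_u\in I_uJ_u$ for $f\in I$, $g\in J$. Here I would pass to the $(A_R(G),RG_u^u)$-bimodule $RG_u$ and use that $(f*g)|_{G_u}=f\cdot(g|_{G_u})$. Writing $g|_{G_u}$ in the free right basis $\{\gamma_v\}$ as $g|_{G_u}=\sum_v\gamma_v\,b_v$, one checks (dually to the ``$\supseteq$'' computation in the proof of \eqref{eq}) that $b_v=(\bm{1}_{B_v}*g)_u\in J_u$, where $B_v\in\Bisc(G)$ contains $j_v$. The computation in the proof of Lemma \ref{ann} gives the $RG_u$-level identity $f\cdot\gamma_v=\sum_w\gamma_w\,(\bm{1}_{B_w}*f*\bm{1}_{B_v^{-1}})_u$, all of whose coefficients lie in $I_u$. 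Since $j_u=u$ and $\gamma_u=1_u$, the $\gamma_u$-coefficient of an element of $RG_u$ is exactly its restriction to $G_u^u$, so reading off the $\gamma_u$-component of
\[
f\cdot(g|_{G_u})=\sum_{v,w}\gamma_w\,(\bm{1}_{B_w}*f*\bm{1}_{B_v^{-1}})_u\,b_v
\]
yields $(f*g)_u=\sum_v(\bm{1}_{B_u}*f*\bm{1}_{B_v^{-1}})_u\,b_v\in I_uJ_u$, a finite sum since $g|_{G_u}$ is finitely supported.

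I expect the main obstacle to be the bookkeeping in the ``$\subseteq$'' direction of \eqref{multiplicativity}. The subtlety is that $(f*g)_u\ne f_u\cdot g_u$ in general: the convolution restricted to $G_u^u$ picks up cross-terms routed through other points of the orbit $\mathcal{O}_u$. The bimodule basis $\{\gamma_v\}$ is precisely the device that separates these cross-terms, and the care needed is in verifying that extracting the $\gamma_u$-component produces exactly a sum of products of coefficients $(\bm{1}_{B_u}*f*\bm{1}_{B_v^{-1}})_u\in I_u$ with the coefficients $b_v\in J_u$, so that the outcome lands in $I_uJ_u$ rather than merely in $(IJ)_u$.
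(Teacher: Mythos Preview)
Your proof is correct. Parts \eqref{additivity}, \eqref{intersectionality}, and the ``$\supseteq$'' direction of \eqref{multiplicativity} are essentially identical to the paper's argument (the paper's choice of the small neighbourhood $K$ is made on the left factor rather than the right, but this is immaterial).

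For the ``$\subseteq$'' direction of \eqref{multiplicativity} you take a genuinely different route. The paper writes $f=\sum_i r_i\bm{1}_{B_i}$ and $g=\sum_j s_j\bm{1}_{C_j}$ explicitly, isolates the finite set $F\subseteq\mathcal{O}_u$ of intermediate units $v$ through which the convolution $(f*g)(x)$ can route for $x\in G_u^u$, picks bisections $D_v$ meeting $G_u^v$, and then checks directly that $(f*g)_u=f'_u\cdot g'_u$ where $f'=f*\sum_{v\in F}\bm{1}_{D_v}\in I$ and $g'=\sum_{v\in F}\bm{1}_{D_v^{-1}}*g\in J$. You instead work in the free right $RG_u^u$-module $RG_u$ with basis $\{\gamma_v\}$, using the formula from the proof of Lemma~\ref{ann} for $f\cdot\gamma_v$ together with the dual of the basis-coefficient extraction from the proof of \eqref{eq} to identify $b_v=(\bm{1}_{B_v}*g)_u\in J_u$; reading off the $\gamma_u$-component then exhibits $(f*g)_u$ as a finite sum of products in $I_uJ_u$. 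Your argument is more structural, reusing machinery already set up in \S\ref{sec:preliminaries}, and it makes transparent \emph{why} the cross-terms through other orbit points still land in $I_uJ_u$: each is visibly a product of a coefficient from $I_u$ and one from $J_u$. The paper's argument is more self-contained and hands-on but requires a small ``straightforward to verify'' step. Both approaches are of comparable length and difficulty.
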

\begin{proof}
(\ref{additivity}) 
We have $(I+J)_u =  \{(f+g)_u \mid f \in I, g \in J\} = \{f_u + g_u \mid f \in I, g \in J\} = I_u + J_u$.

(\ref{intersectionality})
It is clear that $(I \cap J)_u \subseteq I_u \cap J_u$. On the other hand, if $\alpha \in I_u \cap J_u$ then $\alpha = f_u = g_u$ for some $f \in I$ and $g \in J$. By Lemma \ref{fu=gu}, there exist $V, W \in \Bisc(G^{(0)})$ containing $u$ such that $\bm{1}_V * f * \bm{1}_W = \bm{1}_V * g * \bm{1}_W\in I \cap J$, and therefore $\alpha = (\bm{1}_V * f * \bm{1}_W)_u \in (I \cap J)_u$.

(\ref{multiplicativity}) First, suppose we have some \(\alpha\in I_u\) and \(\beta\in J_u\). Let \(f\in I\) and \(g\in J\) be representatives of \(\alpha\) and \(\beta\) respectively. As in the proof of Lemma \ref{disint-lem1}, we can find a neighbourhood $K \in \Bisc(G^{(0)})$ such that $ \alpha \cdot \beta (x) = f*\bm{1}_K * g(x)$ for all $x \in G_u^u$, so $\alpha \cdot \beta = ((f* \bm{1}_K)*g)_u \in (IJ)_u$. Therefore $I_u J_u \subseteq (IJ)_u$. Conversely, suppose we have some $f = \sum_i r_i \bm{1}_{B_i} \in I$ and $g = \sum_j s_j \bm{1}_{C_j} \in J$, where $B_i, C_j \in \Bisc(G)$. Then $f*g = \sum_i \sum_j r_is_j \bm{1}_{B_i C_j}$. Note that $f*g (x) \ne 0$ implies $x = bc$ for some $i,j$, $b \in B_i$, and $c \in C_j$ with $\ran(c) = \src(b)$.  Let $F\subseteq G^{(0)}$ be the finite set of all units $\ran(c) = \src(b)$ arising in this way. For each $v \in F$, pick any $D_v \in \Bisc(G)$ with $D_v \cap G_u^v \ne \varnothing$. Let $f' = f* \sum_{v \in F} \bm{1}_{D_v}$ and $g' = \sum_{v \in V} \bm{1}_{D_v^{-1}}* g$. Then $f_u' \in I_u$ and $g_u' \in J_u$, and it is straightforward to verify that $(f*g)_u = f_u' \cdot g_u'\in I_u J_u$. Therefore $(IJ)_u \subseteq I_u J_u$.
\end{proof}

An \emph{arithmetical ring} is a ring $A$ whose lattice of ideals is distributive; that is, $I \cap (J + K) = I \cap J + I \cap K$ for all ideals $I,J,K \in \mathcal{L}(A)$. This is equivalent to $I + (J \cap K) = (I + J) \cap (I + K)$ for all ideals $I, J, K \in \mathcal{L}(A)$. 

\begin{corollary} \label{commuting and distributing ideals}  \begin{enumerate}[\rm (1)]
\item 	If all the isotropy groups of $G$ are abelian, then $IJ = JI$ for all $I, J \in \mathcal{L}(A_R(G))$. 	
\item If $RG_u^u$ is an arithmetical ring for all $u \in G^{(0)}$, then $A_R(G)$ is an arithmetical ring.
\end{enumerate}
\end{corollary}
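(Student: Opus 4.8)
The plan is to derive both statements directly from Proposition \ref{product of ideals of isotropy groups}, which translates ideal-theoretic operations in $A_R(G)$ into the corresponding operations fibrewise in the isotropy group algebras $RG_u^u$. The key observation is that the disassembly map $I \mapsto Y_I$ is an order-isomorphism from $\mathcal{L}(A_R(G))$ onto $\Delta_{G,R}$, so it suffices to verify each identity at the level of the fibres $Y_I \cap RG_u^u = I_u$, where everything reduces to the (assumed) structure of a single group algebra $RG_u^u$.

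For part (1), I would argue as follows. By \eqref{multiplicativity} we have $(IJ)_u = I_u J_u$ and $(JI)_u = J_u I_u$ for every $u \in G^{(0)}$. If every isotropy group $G_u^u$ is abelian, then $RG_u^u$ is a commutative ring, so the product of ideals is commutative: $I_u J_u = J_u I_u$. Hence $(IJ)_u = (JI)_u$ for all $u$, which means $Y_{IJ} = Y_{JI}$. Since the disassembly map is injective (Theorem \ref{disassembly-theorem}), this forces $IJ = JI$.

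For part (2), the strategy is identical in spirit. To show $A_R(G)$ is arithmetical, I must check the distributive law $I \cap (J + K) = (I \cap J) + (I \cap K)$ for all $I, J, K \in \mathcal{L}(A_R(G))$. Applying the disassembly map and using \eqref{additivity} and \eqref{intersectionality}, the left-hand side disassembles fibrewise to
\[
\big(I \cap (J+K)\big)_u = I_u \cap (J+K)_u = I_u \cap (J_u + K_u),
\]
while the right-hand side disassembles to
\[
\big((I\cap J) + (I \cap K)\big)_u = (I\cap J)_u + (I \cap K)_u = (I_u \cap J_u) + (I_u \cap K_u).
\]
If each $RG_u^u$ is an arithmetical ring, then $I_u \cap (J_u + K_u) = (I_u \cap J_u) + (I_u \cap K_u)$ holds in $RG_u^u$ for every $u$, so the two ideals of $A_R(G)$ have identical disassemblies and are therefore equal by injectivity of the disassembly map.

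The argument is essentially a formal consequence of Proposition \ref{product of ideals of isotropy groups}, so I do not anticipate a serious obstacle; the only point requiring a little care is to confirm that the distributivity identity genuinely transfers back and forth through the disassembly map, i.e. that equality of all fibres $I_u$ is equivalent to equality of the ideals themselves. This is exactly the injectivity guaranteed by Theorem \ref{disassembly-theorem}, so both parts follow immediately once the fibrewise identities from \eqref{additivity}--\eqref{multiplicativity} are invoked.
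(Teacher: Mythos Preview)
Your proof is correct and is exactly the argument the paper intends: the corollary is stated immediately after Proposition~\ref{product of ideals of isotropy groups} with no proof, because both parts follow at once from the fibrewise identities \eqref{additivity}--\eqref{multiplicativity} together with the injectivity of the disassembly map from Theorem~\ref{disassembly-theorem}, precisely as you have written.
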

A theorem of  Tuganbaev \cite[Theorem 12.57]{tuganbaev} is a good companion to this corollary, as it gives a characterisation of abelian groups $\Gamma$ and rings $R$ such that $R\Gamma$ is arithmetical.

\subsection{Prime and semiprime Steinberg algebras}

Recall that a ring $A$ is \emph{prime} if $IJ = 0$ implies $I = 0$ or $J = 0$ for all ideals $I, J \trianglelefteq  A$, and $A$ is \emph{semiprime} if $I^2 = 0$ implies $I = 0$ for all ideals $I\trianglelefteq A$.

A groupoid $G$ is called \emph{topologically transitive} if for every pair of nonempty open subsets $U, V \subseteq G^{(0)}$ the set $\ran^{-1}(U) \cap \src^{-1}(V)$ is nonempty. In particular, every transitive groupoid is topologically transitive. Steinberg's \cite[Proposition 3.3]{steinberg-prime} shows that $G$ is topologically transitive if and only if $G^{(0)}$ is not a union of two proper closed invariant subsets. It is known that $G$ is topologically transitive if $A_R(G)$ is prime \cite[Proposition 4.3]{steinberg-prime}. Sufficient conditions for primeness are, however, much more delicate.

\begin{lemma} \label{U1U0}
    If $I$ is an ideal in $A_R(G)$, then: \begin{enumerate}[\rm (1)] \item For any $r \in R$, the set $U_r = \{u \in G^{(0)} \mid r1_u \in I_u\}$ is open and invariant in $G^{(0)}$. In particular, $U_1 = \{u \in G^{(0)} \mid I_u = RG_u^u\}$ is open and invariant.
		\item The set $U_{\rm null} = \{u \in G^{(0)} \mid I_u = 0\}$ is closed and invariant in $G^{(0)}$.
	\end{enumerate}
\end{lemma}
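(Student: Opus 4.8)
The plan is to unpack the definitions of $U_r$ and $U_{\mathrm{null}}$ and use the three conditions (D1)--(D3) characterising $Y_I \in \Delta_{G,R}$ from Theorem \ref{disassembly-theorem}, together with the fibrewise description $I_u = Y_I \cap RG_u^u$. The key observation is that invariance of a set of units should be read off directly from condition (\hyperref[D2]{D2}), the translation-invariance of $Y_I$, while openness/closedness should follow from the fact that $Y_I$ is an open subset of $X(G)$ and that $p\colon X(G) \to G^{(0)}$ is a local homeomorphism.

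For part (1), I would first establish invariance. Suppose $u \in U_r$, so $r1_u \in I_u = Y_I \cap RG_u^u$, and let $v \in \mathcal{O}_u$, say $t \in G_u^v$. Under the translation isomorphism $\tau_t\colon RG_u^u \to RG_v^v$ we have $\tau_t(r1_u) = r1_{tt^{-1}} = r1_v$ (using that $\tau_t$ sends $1_u \mapsto 1_{t u t^{-1}} = 1_v$ and is $R$-linear), and by (\hyperref[D2]{D2}) this lands in $Y_I \cap RG_v^v = I_v$. Hence $v \in U_r$, so $U_r$ is a union of orbits, i.e.\ invariant. For openness: if $u \in U_r$, then since $Y_I$ is open, $r1_u$ has an open neighbourhood of the form $s_f(U)$ contained in $Y_I$ for some $f \in A_R(G)$, $U \in \Bisc(G^{(0)})$ with $f_u = r1_u$. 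The natural candidate is to shrink to a neighbourhood on which $f$ looks like $r\bm{1}_U$; I would use Lemma \ref{fu=gu} applied to $f$ and $r\bm{1}_U$ (which agree at $u$) to find $V \in \Bisc(G^{(0)})$ with $u \in V$ and $\bm{1}_V * f * \bm{1}_V = r\bm{1}_V$, exactly as in the proof of Proposition \ref{d3-prop}. Then for every $w \in V$ we get $(r\bm{1}_V)_w = r1_w = f_w \in Y_I$, so $w \in U_r$, proving $V \subseteq U_r$ and hence openness. The statement about $U_1$ is then the special case $r = 1$, after noting $I_u = RG_u^u$ iff $1_u \in I_u$, since $I_u$ is an ideal of $RG_u^u$ by (\hyperref[D1]{D1}) and $1_u$ is the identity.

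For part (2), $U_{\mathrm{null}}$ is invariant by the same translation argument: $\tau_t$ is an isomorphism, so $I_u = 0$ iff $\tau_t(I_u) = I_v = 0$. For closedness, I would show the complement $\{u \mid I_u \ne 0\}$ is open: if $I_u \ne 0$ pick $0 \ne \alpha \in I_u \subseteq Y_I$; since $Y_I$ is open there is an open neighbourhood $s_f(U) \subseteq Y_I$ of $\alpha$ with $f_u = \alpha \ne 0$, and because $f$ is locally constant (indeed every element of $A_R(G)$ is continuous) the set of $w$ where $f_w \ne 0$ is open, giving a neighbourhood of $u$ inside the complement of $U_{\mathrm{null}}$. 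A little care is needed here since $f_w$ could vanish even when $f$ doesn't near $w$, so the cleaner route is to argue directly from local constancy of $f$: choose $U$ small enough that $f$ restricted to the relevant isotropy is ``the same'' as at $u$, again via Lemma \ref{fu=gu} comparing $f$ with a fixed representative.

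The main obstacle I anticipate is the openness argument in part (1) (and symmetrically the open-complement argument in part (2)): the subtlety is that knowing $f_u = r1_u$ does not immediately give $f_w = r1_w$ for nearby $w$, because the isotropy group $G_w^w$ may differ from $G_u^u$ and $f$ could have support on elements of $G_w^w$ that collapse to $u$ in the limit. This is precisely the phenomenon controlled by Lemma \ref{fu=gu}, which produces a genuine two-sided equality $\bm{1}_V * f * \bm{1}_V = r\bm{1}_V$ on a full neighbourhood rather than just a pointwise match; I expect the entire proof to hinge on invoking that lemma correctly, exactly as it was used in Proposition \ref{d3-prop}.
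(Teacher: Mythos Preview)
Your argument for part (1) is correct and essentially identical to the paper's: both use Lemma \ref{fu=gu} to upgrade the pointwise equality $f_u = r1_u$ to an actual equality $\bm{1}_V * f * \bm{1}_V = r\bm{1}_V$ on a neighbourhood, from which $V \subseteq U_r$ follows. The invariance via (\hyperref[D2]{D2}) is also exactly how the paper handles it.

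Part (2), however, has a genuine gap. Your claim that ``the set of $w$ where $f_w \ne 0$ is open'' is \emph{false} in general, and your proposed fix via Lemma \ref{fu=gu} does not repair it. Concretely, take the Toeplitz groupoid from Example \ref{toeplitz graph groupoid}: at $u = e^\infty$ the isotropy is $\ZZ$, but the nearby units $e^n f$ have trivial isotropy. If $B$ is a bisection containing a nontrivial element of $G_u^u$ and $f = \bm{1}_B$, then $f_u \ne 0$ while $f_w = 0$ for all nearby $w = e^n f$ (since $B \cap G_w^w = \varnothing$). Lemma \ref{fu=gu} only lets you replace $f$ by another representative $g$ with $g_u = f_u$ and $g_w = f_w$ on a neighbourhood, so you inherit the same vanishing; it cannot manufacture a nonzero $g_w$ when the isotropy at $w$ is too small to see the support of $f$.

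The missing idea, which the paper supplies, is to translate the nonzero value down to the unit space before invoking continuity. If $f \in I$ with $f(z) \ne 0$ for some $z \in G_u^u$, pick $C \in \Bisc(G)$ with $z \in C$; then $g := f * \bm{1}_{C^{-1}} \in I$ satisfies $g(u) = f(z) \ne 0$ by Lemma \ref{lem1}. Now continuity of $g$ gives an open neighbourhood of $u$ in $G^{(0)}$ on which $g$ is nonzero, and for each such $w$ we have $g_w(w) = g(w) \ne 0$, hence $I_w \ne 0$. The paper packages this as the identity
\[
U_{\rm null} = \bigcap_{f \in I}\ \bigcap_{C \in \Bisc(G)} (f*\bm{1}_C)^{-1}(0) \cap G^{(0)},
\]
an intersection of closed sets; the multiplication by $\bm{1}_C$ is precisely the device that reduces ``$f_u = 0$'' to the pointwise condition ``$f*\bm{1}_C(u) = 0$ for all $C$'', where closedness is immediate from Hausdorffness.
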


\begin{proof}
	(1) Suppose $u \in U_r$. Then $r{1}_u = f_u$ for some $f \in I$, and $f_u = (r\bm{1}_W)_u$ for every $W \in \Bisc(G^{(0)})$ containing $u$. By Lemma~\ref{fu=gu}, we have $\bm{1}_V * f * \bm{1}_V = r\bm{1}_V$ for some $V \in \Bisc(G^{(0)})$ containing $u$. Then $r\bm{1}_V \in I$ and $u \in V \subseteq U_r$, so $U_r$ is open. 
	
	(2)
	Fix $f \in A_R(G)$. We claim that $(f*\bm{1}_C)_u = 0$ for all $C \in \Bisc(G)$ if and only if $f*\bm{1}_C(u) = 0$ for all $C \in \Bisc(G)$.
	Indeed, suppose that $u \in G^{(0)}$ is such that $f * \bm{1}_C(u) = 0$ for all $C \in \Bisc(G)$. Let $x \in G_u^u$, and choose some $D \in \Bisc(G)$ containing $x$. Then for all $C \in \Bisc(G)$, we have
	$
	f * \bm{1}_C (x) = f* \bm{1}_{CD^{-1}}(u) = 0.
	$
	With this observation in hand, we can compute:
	\begin{align*} \label{U0}
	U_{\rm null} &= \big\{u \in G^{(0)} \mid I_u = 0 \big\} = \bigcap_{f \in I } \big\{u \in G^{(0)} \mid f_u = 0\big\} = \bigcap_{f \in I} \bigcap_{C \in \Bisc(G)} \big\{u \in G^{(0)} \mid (f*\bm{1}_C)_u = 0\big\} \\&= \bigcap_{f \in I} \bigcap_{C \in \Bisc(G)} \big\{u \in G^{(0)} \mid f*\bm{1}_C(u) = 0\big\} = \bigcap_{f \in I} \bigcap_{C \in \Bisc(G)} (f*\bm{1}_C)^{-1}(0) \cap G^{(0)}.
	\end{align*}
	Since $G$ is Hausdorff, both $G^{(0)}$ and $(f*\bm{1}_C)^{-1}(0)$ are closed in $G$, so $G^{(0)} \cap (f*\bm{1}_{C})^{-1}(0)$ is closed in $G$, and hence closed in $G^{(0)}$. Being an intersection of closed sets, $U_{\rm null}$ is closed in $G^{(0)}$.
	
	All the $U_r$'s and $U_{\rm null}$ are invariant because there is a translation $\tau_t$ that maps $I_u$ isomorphically to $I_v$, whenever $u$ and $v$ are in the same orbit.
\end{proof}

In the realm of Hausdorff ample groupoids, we are able to specify a weaker condition than the one in \cite[Theorem 4.9]{steinberg-prime} and show that it is still strong enough to imply primeness. We also give a streamlined proof of \cite[Theorem 4.10]{steinberg-prime}.

\begin{theorem}
	\begin{enumerate}[\rm (1)]
		\item If $G$ is topologically transitive and there is a dense subset $D \subseteq G^{(0)}$ such that $RG_d^d$ is prime for every $d \in D$, then $A_R(G)$ is prime.
		\item If there is a dense subset $D \subseteq G^{(0)}$ such that $RG_d^d$ is semiprime for every $d \in D$, then $A_R(G)$ is semiprime.
	\end{enumerate}
\end{theorem}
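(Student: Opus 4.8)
The plan is to use the disassembly map from Theorem \ref{disassembly-theorem} together with the multiplicativity of Proposition \ref{product of ideals of isotropy groups}, which reduces questions about products of ideals in $A_R(G)$ to questions about products in the isotropy group algebras $RG_u^u$. The key principle is that $IJ = 0$ if and only if $(IJ)_u = I_u J_u = 0$ for all $u \in G^{(0)}$ (by injectivity of the disassembly map and \eqref{multiplicativity}), and similarly $I^2 = 0$ iff $I_u^2 = 0$ for all $u$. The density hypothesis lets us upgrade primeness/semiprimeness from the dense set $D$ to all of $G^{(0)}$ via the openness and closedness results in Lemma \ref{U1U0}.

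For part (2), the semiprime case, suppose $I \trianglelefteq A_R(G)$ satisfies $I^2 = 0$. By \eqref{multiplicativity}, $I_u^2 = (I^2)_u = 0$ for every $u \in G^{(0)}$. At each $d \in D$, primeness gives $I_d^2 = 0 \Rightarrow I_d = 0$, so $D \subseteq U_{\mathrm{null}}$. By Lemma \ref{U1U0}(2), $U_{\mathrm{null}}$ is closed, and since $D$ is dense we get $U_{\mathrm{null}} = G^{(0)}$, i.e.\ $I_u = 0$ for all $u$. Hence $Y_I = \varnothing$, and by injectivity of the disassembly map $I = 0$. This part is essentially immediate once the machinery is set up.

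For part (1), the prime case, suppose $I, J \trianglelefteq A_R(G)$ are nonzero; I want to show $IJ \ne 0$. Since $I \ne 0$ and $J \ne 0$, the sets $V_I = \{u \mid I_u \ne 0\}$ and $V_J = \{u \mid J_u \ne 0\}$ are the complements of the closed invariant sets $U_{\mathrm{null}}(I)$ and $U_{\mathrm{null}}(J)$ from Lemma \ref{U1U0}(2), hence they are nonempty open invariant subsets of $G^{(0)}$. Each is a union of orbits, so it is saturated; I would then argue that $\src(\ran^{-1}(V_I)) = V_I$ and use that $V_I$ contains a nonempty open set. The main obstacle, and the heart of the argument, is to produce a single point where both $I$ and $J$ are nonzero \emph{and} which lies in the dense prime locus $D$. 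Topological transitivity is exactly what bridges the two open sets: applying \cite[Proposition 3.3]{steinberg-prime}, $G^{(0)}$ is not a union of two proper closed invariant subsets, so $U_{\mathrm{null}}(I) \cup U_{\mathrm{null}}(J) \ne G^{(0)}$, giving a nonempty open invariant set $W = V_I \cap V_J$ on which both $I_u \ne 0$ and $J_u \ne 0$. Since $D$ is dense, I can pick $d \in D \cap W$, where $RG_d^d$ is prime; then $I_d \ne 0$ and $J_d \ne 0$ force $I_d J_d \ne 0$, so $(IJ)_d = I_d J_d \ne 0$ by \eqref{multiplicativity}, whence $IJ \ne 0$.

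The delicate point to get right is the relationship between nonvanishing of $I_u$ on an orbit and the invariant structure: I must verify that $V_I \cap W$ meeting $D$ suffices, which relies on $W$ being open (so density of $D$ applies) and on primeness of $RG_d^d$ being available precisely at points of $D$. I expect the invariance bookkeeping — translating the fact that $I_u \ne 0$ at one point of an orbit to the whole orbit, via the translation isomorphisms $\tau_t$ of \eqref{translation} — to be routine given Lemma \ref{U1U0}, so the genuine content is funneling everything through topological transitivity to land inside the dense prime locus.
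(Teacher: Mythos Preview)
Your proposal is correct and uses the same ingredients as the paper: Proposition~\ref{product of ideals of isotropy groups}(\ref{multiplicativity}), Lemma~\ref{U1U0}(2), the characterisation of topological transitivity from \cite[Proposition~3.3]{steinberg-prime}, and density of $D$. Part~(2) is identical to the paper's argument (modulo writing ``primeness'' where you mean ``semiprimeness''). For part~(1) you argue contrapositively---assuming $I,J\ne 0$, intersecting the open invariant sets $V_I,V_J$ via topological transitivity, and then using density to find $d\in D\cap V_I\cap V_J$ where primeness of $RG_d^d$ forces $(IJ)_d\ne 0$---whereas the paper assumes $IJ=0$, covers $D$ by the two closed invariant null-sets $K_1,K_2$, uses density to get $G^{(0)}=K_1\cup K_2$, and then invokes topological transitivity to conclude one of them is all of $G^{(0)}$. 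The two arguments are formal duals of one another and neither offers a real advantage.
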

\begin{proof}
	(1) Suppose that $IJ = 0$ for some ideals $I, J \trianglelefteq  A_R(G)$. Then $(IJ)_u = I_u J_u = 0$ for all $u \in G^{(0)}$. Then for every $d \in D$ we have either $I_u = 0$ or $J_u = 0$. Let $K_1 = \{u \in G^{(0)} \mid I_u = 0\}$, and $K_2 = \{u \in G^{(0)} \mid J_u = 0\}$.  These sets are closed, by Lemma \ref{U1U0} (2). But $D \subseteq K_1 \cup K_2$ and $D$ is dense, so $G^{(0)} = \overline D \subseteq \overline {K_1 \cup K_2} = K_1 \cup K_2 \subseteq G^{(0)}$. Since $G$ is topologically transitive, $G^{(0)}$ cannot be a union of two proper closed invariant subsets. Clearly $K_1$ and $K_2$ are invariant, so one of them is equal to all of $G^{(0)}$. This implies that $I = 0$ or $J = 0$.
	
	(2) Suppose $I^2 = 0$ for some ideal $I \trianglelefteq A_R(G)$. Then $(I^2)_u = (I_u)^2 = 0$ for all $u \in G^{(0)}$, and we have $I_d = 0$ for all $d \in D$. Let $K = \{u \in G^{(0)} \mid I_u = 0 \}$. By Lemma \ref{U1U0} (2), $K$ is closed. Now $D \subseteq K$ and $G^{(0)} = \overline D \subseteq \overline K = K\subseteq G^{(0)}$. Therefore $I_u = 0$ for all $u \in G^{(0)}$, and this implies $I = 0$.
\end{proof}

The theorems of Connell and Passman \cite[Ch.\! 4 \S10]{Lam} are good companions to the above theorem because they characterise the rings $R$ and groups $\Gamma$ such that $R\Gamma$ is prime or semiprime.

\section{Leavitt path algebras} \label{sec:Leavitt path algebras}

Leavitt path algebras are algebras associated to directed graphs, and are a special kind of Steinberg algebras. We will give a brief introduction on their definition and how they become Steinberg algebras, but we refer to \cite{LPAbook,rigby} and the introduction for more information, applications, and motivations. The  purpose of this section is to specialise the contents of \S\ref{section:disassembly theorem} to boundary path groupoids, moving from a topological setting towards a more combinatorial setting and simplifying some aspects of the theory where it is possible to do so.

\subsection{Preliminaries on Leavitt path algebras}

To start, a \emph{directed graph} \(E\) is a quadruple \((E^0, E^1, r, s)\) consisting of a set \(E^0\) of \emph{vertices}, a set \(E^1\) of \emph{edges}, and two functions \(r,s: E^1\to E^0\) assigning to each vertex its \emph{range} and \emph{source} respectively. A vertex \(v\) is a \emph{sink} if it doesn't emit any edges (i.e.\! if \(s^{-1}(v)=\varnothing\)),   and an \emph{infinite emitter} if it emits infinitely many edges. In all other cases, \(v\) is a \emph{regular vertex}, and the set of all of these is denoted by $\operatorname{Reg}(E) \subseteq E^0$.

A \emph{path} \(\mu\) of length \(|\mu|>0\) is a finite sequence \(\mu=e_1 e_2\cdots e_{|\mu|}\) of edges such that \(r(e_i)=s(e_{i+1})\) for \(i = 1, \dots, \vert \mu \vert -1\). Vertices are considered to be paths of length 0. The set \(\{s(e_1), r(e_1), \ldots, r(e_{|\mu|})\}\) of vertices of \(\mu\) is denoted by \(\mu^0\), and the source and range of \(\mu\) are defined as \(s(\mu)=s(e_1)\) and \(r(\mu)=r(e_{|\mu|})\). For two paths \(\mu\) and \(\nu\) such that \(r(\mu)=s(\nu)\), we can define their \emph{concatenation} \(\mu\nu\) in the obvious way. The set of all paths will be denoted by~\(E^*\). We say \(\mu\in E^*\) is \emph{closed} if \(|\mu|>0\) and \(s(e_1)=r(e_{|\mu|})\). A closed path is called \emph{simple}, if \(s(e_1)=s(e_i)\) implies \(i=1\), and a \emph{cycle} if \(s(e_i)=s(e_j)\) implies \(i=j\). Clearly, every cycle is simple. An exit for a path \(e_1\cdots e_n\) is an edge \(e \in E^1\) for which there exists \(i\in \{1, \ldots, n\}\) such that \(s(e)=s(e_i)\) but \(e\neq e_i\). 
For each \(e\in E^1\), we formally define a \emph{ghost edge} \(e^*\), which we think about as an edge with the same endpoints as \(e\), but going in the opposite direction. The set of ghost edges is denoted by \((E^1)^*\), and for each path \(\mu =e_1\cdots e_n\) we have a corresponding sequence \(\mu^*=e_n^* \cdots e_1^*\), called a \emph{ghost path}.

\subsubsection{The classical definition: generators and relations}

\begin{definition} 
The \emph{Leavitt path algebra} \(L_R(E)\) of a directed graph $E$ over the ring \(R\) is the universal associative \(R\)-algebra generated by the set  \(E^0\cup E^1 \cup (E^1)^*\), satisfying the following relations: 
\begin{itemize}
    \item[(V)] \(vw=\delta_{vw}v\),
    \item[(E1)] \(s(e)e=e=er(e) \),    \item[(E2)] \(r(e)e^*=e^*=e^*s(e)\),    \item[(CK1)] \(e^*f=\delta_{ef}r(e) \),
         \item[(CK2)] \(u=\sum_{e\in s^{-1}(u)}ee^*\) 
\end{itemize}
for all $v, w \in E^0$, $e, f \in E^1$, and $u \in \operatorname{Reg}(E)$. 
\end{definition}
Note that every element of \(L_R(E)\) can be written (non-uniquely) as \(\sum_i r_i \alpha_i \beta_i^*\) for paths \(\alpha_i, \beta_i\). It was also proved in \cite[Proposition 3.4]{tomforde2011leavitt} that the elements in \(E^0\cup E^1\cup (E^1)^*\) are all nonzero. Every Leavitt path algebra is \(\ZZ\)-graded, with the grading induced by letting \(\deg(v)=0\) for \(v\in E^0\) and \(\deg(e)=1\) and \(\deg(e^*)=-1\) for \(e\in E^1\). So \(L_R(E)=\bigoplus_{n\in \ZZ} L_n\) with \(L_n=\{\sum_i r_i\alpha_i \beta_i^*\mid |\alpha_i|-|\beta_i|=n\}\) and $L_nL_m \subseteq L_{n+m}$ for all $n, m \in \ZZ$. An ideal \(I\trianglelefteq L_R(E)\) is \emph{graded} if \(I=\bigoplus_{n\in \ZZ} (I\cap L_n)\).

A subset \(H\subseteq E^0\) is called \emph{hereditary} if for each path \(\mu\), \(s(\mu)\in H\) implies \(r(\mu)\in H\), and \(H\) is called \emph{saturated} if for each regular vertex \(v\), \(r(s^{-1}(v))\subseteq H\) implies \(v\in H\). If \(H\) is both hereditary and saturated, we say \(H\) is \emph{hereditary saturated}. For any subset \(K\subseteq E^0\), we can define the \emph{hereditary saturated closure} \(\overline{K}\) of \(K\) as the smallest hereditary saturated subset of \(E^0\) containing \(K\).  See \cite[Lemma 2.0.7]{LPAbook} for an explicit construction of this hereditary saturated closure; alternatively, it is given by Formula (\ref{S-saturation}) with \(S = \varnothing\).
If \(H\subseteq E^0\) is a hereditary subset, a \emph{breaking vertex} for \(H\) is an infinite emitter \(v\in E^0\setminus H\), with \(0< |s^{-1}(v)\cap r^{-1}(E^0\setminus H)| <\infty\). In words, a breaking vertex is an infinite emitter not in \(H\), such that a finite but nonzero number of edges it emits have their range not in \(H\). For such a breaking vertex \(v\), we define an element \(v^H\in L_R(E)\), by
	\begin{equation} \label{notation:v^H}
	v^H = v- \sum_{\substack{e\in s^{-1}(v)\\ r(e)\notin H}} ee^*.
	\end{equation}

We have already defined paths in \(E\), which are of finite length by definition. To relate Leavitt path algebras to Steinberg algebras, we will also need the concept of an \emph{infinite path}, which is an infinite sequence \(e_1e_2e_3\ldots\) such that \(r(e_i)=s(e_{i+1})\) for \(i\geq 1\). Unless we specify otherwise, a \emph{path} will always mean a finite path. For a path \(\alpha\) and a (possibly infinite) path \(x\), we say \(\alpha\) is an \emph{initial subpath} of \(x\) if there exists a (possibly infinite) path \(x'\) such that \(x=\alpha x'\).

\subsubsection{The Steinberg algebra model} \label{sec:Steinberg algebra model}
For each graph \(E\), we want to define a Hausdorff ample groupoid \(G_E\) such that \(L_R(E)\cong A_R(G_E)\). This is done as follows: let \(\partial E\) be the set of infinite paths, together with the finite paths that end in a singular vertex (i.e.\! a sink or infinite emitter). These are called the \emph{boundary paths}. These will be the units of \(G_E\). For two boundary paths \(x,y\in \partial E\) and \(k\in \ZZ\), we say \(x\) and \(y\) are \emph{tail equivalent of lag \(k\)} if there exist \(\alpha, \beta \in E^*\) and \(z\in \partial E\) such that \(x=\alpha z\), \(y=\beta z\) and \(|\alpha|-|\beta|=k\). Note that two boundary paths can be tail equivalent of multiple lags. Clearly, being tail equivalent (without fixed lag) is an equivalence relation. A boundary path is called \emph{rational} if it is infinite and can be written as \(\rho c^\infty= \rho ccc\cdots\), for some \(\rho\in E^*\) and closed path \(c\). An \emph{irrational} path is an infinite path that is not rational.

Next, we have to define a topology on \(\partial E\). For each \(\alpha\in E^*\) and \(F\subseteq_{\text{finite}} s^{-1}(r(\alpha))\), let \(Z(\alpha) = \{\alpha x\in \partial E\mid s(x)=r(\alpha)\}\), and \(Z(\alpha, F) = Z(\alpha)\setminus \bigcup_{e\in F}Z(\alpha e)\). In particular, \(Z(\alpha, \varnothing) = Z(\alpha)\). Note that the collection of such sets, together with the empty set, is closed under finite intersections. We define a topology on \(\partial E\) by letting these sets form a basis. For this topology, \(\partial E\) is Hausdorff, locally compact, totally disconnected, and each \(Z(\alpha, F)\) is open and compact.

Now we can define the \emph{boundary path groupoid} \(G_E\) as the set of triples \((x,k,y)\in \partial E \times \mathbb{Z} \times \partial E\) where $x$ and $y$ are tail equivalent of lag $k$. The source and range maps are defined as \(\src((x,k,y))=y\) and \(\ran((x,k,y)) = x\) respectively, and composition and inversion are given by \((x,k,y)\cdot(y,l,z) = (x,k+l,z)\) and \((x,k,y)^{-1}=(y,-k,x)\). The unit space \(G_E^{(0)}\) is \(\{(x,0,x)\mid x\in \partial E\}\), and it can clearly be identified with \(\partial E\).    It is also not hard to see that the isotropy group based at \(u \in \partial E\) is infinite cyclic if \(u\) is rational, and trivial if \(u\) is irrational or finite.

\begin{notation}
From now on, we will usually identify \(\partial E\) and \(G_E^{(0)}\). We will clarify however, which notation we will use in what circumstances. If we pick a boundary path or do something that only works for units, such as taking isotropy groups or disassembling ideals, we will use \(u\in \partial E\). If we use an element of \(G_E\) that happens to be a unit, even when the isotropy group at that unit is trivial, we will use \((u,0,u)\in G_E^{(0)}\).
\end{notation}

For each \(\alpha, \beta\in E^*\) such that \(r(\alpha)=r(\beta)\) and \(F\subseteq_{\text{finite}} s^{-1}(r(\alpha))\), define 
\[\mathcal{Z}(\alpha, \beta) = \{(\alpha x, |\alpha|-|\beta|, \beta x)\mid x\in \partial E, s(x)=r(\alpha)\}
\] 
and 
\[
\mathcal{Z}(\alpha, \beta, F) = \mathcal{Z}(\alpha, \beta)\setminus \bigcup_{e\in F} \mathcal{Z}(\alpha e, \beta e)\]
Again we have \(\mathcal{Z}(\alpha, \beta, \varnothing) = \mathcal{Z}(\alpha, \beta)\), and the collection of these sets together with the empty set is closed under finite intersections. As with \(\partial E\), we give \(G_E\) the topology generated by such sets \(\mathcal{Z}(\alpha, \beta, F)\). For this topology, \(G_E\) is a Hausdorff ample groupoid and each set \(\mathcal{Z}(\alpha, \beta, F)\) is a compact open bisection \cite[Theorem 2.4]{rigby}. Note that for each \(\alpha\in E^*\) and \(F\subseteq_{\text{finite}} s^{-1}(r(\alpha))\), we have \(\mathcal{Z}(\alpha, \alpha, F) = Z(\alpha, F)\) once $G_E^{(0)}$ is identified with $\partial E$, so that identifying them is indeed a homeomorphism.

Lastly, we want to find an isomorphism \(L_R(E)\overset{\sim}{\to} A_R(G_E)\). This is done as follows: for each \(\alpha, \beta\in E^*\), map \(\alpha\beta^*\in L_R(E)\) to \(\bm{1}_{\mathcal{Z}(\alpha, \beta)}\in A_R(G_E)\). This map can be extended uniquely to an \(R\)-algebra homomorphism \(L_R(E)\to A_R(G_E)\), and \cite[Theorem 2.7]{rigby} shows that this is an isomorphism. In particular, \(A_R(G_E)\) is generated as an \(R\)-module by the set \(\{\bm{1}_{\mathcal{Z}(\alpha,\beta)} \mid \alpha,\beta\in E^*, r(\alpha) = r(\beta)\}\). Note that if \(v\in E^0\) is a breaking vertex for the hereditary subset \(H\subseteq E^0\), then under this isomorphism \(v^H\) corresponds to \(\bm{1}_{\mathcal{Z}(v,v,F)}\) where  \(F=s^{-1}(v)\cap r^{-1}(E^0\setminus H)\).

We will always assume that any summation is finite, i.e.\!  only finitely many terms are nonzero. Moreover, when working with rational paths, we will use the following notation:
\begin{notation}
Let \(\rho c^\infty\) be a rational path. Then we have \(\mathcal{Z}(\rho c, \rho)\mathcal{Z}(\rho c, \rho) = \mathcal{Z}(\rho c^2, \rho)\). We also have \(\mathcal{Z}(\rho c, \rho)^{-1} = \mathcal{Z}(\rho, \rho c)\). Consequently, for \(n\in \ZZ\setminus \{0\}\), we have \(\mathcal{Z}(\rho c, \rho)^n = \mathcal{Z}(\rho c^n, \rho)\) if \(n>0\) and \(\mathcal{Z}(\rho c, \rho)^{n} = \mathcal{Z}(\rho, \rho c^{-n})\) if \(n<0\). We will extend this notation to \(n\in \ZZ\), by letting \(\mathcal{Z}(\rho c, \rho)^0 = \mathcal{Z}(\rho, \rho)\).
\end{notation}

\subsubsection{Multiplicative ideal theory for Leavitt path algebras}

Using this Steinberg algebra model for Leavitt path algebras, we can already find some properties of the ideal lattice of \(L_R(E)\). It has been established (e.g., in \cite[Corollary 2.8.17]{LPAbook}) that  ideal multiplication in Leavitt path algebras over fields is commutative. In Proposition \ref{multiplication of Leavitt ideals}, we will give a visibly commutative formula for multiplying ideals in Leavitt path algebras over arbitrary rings. However, note that it follows already from Corollary \ref{commuting and distributing ideals} that ideal multiplication is commutative.
 
Even more surprisingly, in \cite[Theorem 4.3]{rangaswamy} it was proved that Leavitt path algebras over fields are arithmetical rings.  Corollary \ref{commuting and distributing ideals} provides a groupoid-theoretic explanation for this phenomenon: if $R$ is von Neumann regular and all of $G$'s isotropy groups are either trivial or infinite cyclic, then $A_R(G)$ is an arithmetical ring simply because $R$ and $R[x,x^{-1}]$ are \cite[Theorem 12.4]{tuganbaev}.

\begin{example}\label{toeplitz graph groupoid}
To illustrate the concept of a boundary path groupoid, let us consider the \emph{Toeplitz graph}
\[T=
 	\xymatrix{\bullet_u \ar@(ul,dl)_e \ar[r]^f & \bullet_v}.
\]
In this case the elements of \(\partial T\) are given by \(v\), \(e^nf\) for \(n\in \NN\), and \(e^\infty\). Moreover, given the topology described above, the open subsets are exactly the unions of sets of the form \(\{v\}\), \(\{e^nf\}\) for \(n\in \NN\), or \(\{e^\infty, e^nf\mid n\geq m\}\) for \(m\in \NN\).

Similarly, we can describe the elements of \(G_{T}\) as \((v,0,v)\), \((e^nf, n+1,v)\) and the corresponding inverses for \(n\in \NN\), \((e^mf,m-n,e^nf)\) for \((m,n)\in \ZZ^2\) and \((e^\infty, m, e^\infty)\) for \(m\in \ZZ\). In this case the open subsets are unions of sets of the form \(\{(v,0,v)\}\), \(\{(e^nf, n+1, v)\}\) and \(\{(v, -n-1, e^nf)\}\) for \(n\in \NN\), \(\{(e^mf, m-n, e^nf)\}\) for \((m,n)\in \ZZ^2\), or \(\{(e^\infty, m-n, e^\infty), (e^{m+k} f, m-n, e^{n-k}f)\mid k\in \NN\}\) for \((m,n)\in \ZZ^2\). 
\end{example}

Throughout the rest of this paper, we will sometimes go back to this graph to illustrate other concepts.

\subsection{Correspondence between the ideals of \(A_R(G_E)\) and the disassembled ideals} \label{sec:ideal-disassembly correspondence}

For the rest of this paper, let \(E\) be an arbitrary directed graph and let \(G_E\) be its boundary path groupoid. To avoid too cumbersome notation, for \(u\in \partial E\), we will denote the isotropy group of \(G_E\) at \(u\in \partial E\) by \(G_u^u\), instead of \((G_E)_u^u\).

To start this section, we will show how an ideal \(I\trianglelefteq A_R(G_E)\) is characterised by its disassembled ideals \(I_u\), as defined in \eqref{disint-map}. The same characterisations will also hold if we consider \(I\) as an ideal of \(L_R(E)\) under the isomorphism \(L_R(E) \cong A_R(G_E)\). We will only really need these characterisations later on, but keeping these in mind can help to understand why the lattice \(\mathscr{D}_{E,R}\) will be defined as it is. Recall that \(RG_u^u\cong R\) if \(u\) is irrational or finite, and \(RG_u^u\cong R[x, x^{-1}]\) if \(u\) is rational (i.e.\! \(u=\rho c^\infty\) for some paths \(\rho\) and \(c\)). In the latter case, the isomorphism is given by \(\sum_n r_n 1_{(u, n|c|, u)} \mapsto \sum_n r_n x^n\).

\begin{proposition}\label{correspondence: proposition}
Let \(I\trianglelefteq A_R(G_E)\) be an ideal. Then the following statements hold:
\begin{enumerate}[\rm (1)]
    \item \label{correspondence: ends in sink} If \(u\in \partial E\) is finite and ends in a sink, then for each \(r\in R\), we have \(r\notrational{u}\in I_u\) if and only if \(r\bm{1}_{\mathcal{Z}(r(u), r(u))}\in I\). 
    \item \label{correspondence: irrational} Let \(u\in \partial E\) be irrational and \(r\in R\). Then we have \(r\notrational{u} \in I_u\) if and only if there exists a vertex \(v\in E^0\) on \(u\) such that \(r\bm{1}_{\mathcal{Z}(v,v)}\in I\).
    \item \label{correspondence: rational} Let \(u=\rho c^\infty \in \partial E\) be rational, with \(v=s(c)=r(c)\). The Laurent polynomial \(\sum_{n\in \mathbb{Z}} r_n1_{(u,n|c|,u)}\) is an element of \(I_u\) if and only if the function \(\sum_{n\in \ZZ} r_n \bm{1}_{\mathcal{Z}(c, v)^n}\) is an element of \(I\).
    \item \label{correspondence: ends in infinite emitter} If \(u\in \partial E\) is finite and ends in an infinite emitter, we have \(r\notrational{u} \in I_u\) if and only if \(r\bm{1}_{\mathcal{Z}(r(u),r(u),F)}\in I\), for some finite \(F\subseteq s^{-1}(r(u))\), for all \(r\in R\).
    \item \label{correspondence: vertices} For a vertex \(v\) and \(r\in R\), we have \(r\bm{1}_{\mathcal{Z}(v,v)}\in I\) if and only if \(r\notrational{u}\in I_u\) for all \(u\in \partial E\) such that \(v\in u^0\).
\end{enumerate}
\end{proposition}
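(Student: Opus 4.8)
The statement to establish is the equivalence in part~(\ref{correspondence: vertices}), and the plan is to run everything through the disassembly correspondence of Theorem~\ref{disassembly-theorem}, exploiting that $I=\mathcal{I}(Y_I)$ together with the properties (\hyperref[D1]{D1}) and (\hyperref[D2]{D2}) of $Y_I$. The first observation is that $\mathcal{Z}(v,v)$ is a compact open subset of the unit space: under the identification $G_E^{(0)}\cong\partial E$ it is exactly $Z(v)=\{x\in\partial E\mid s(x)=v\}$. Writing $U=\mathcal{Z}(v,v)\in\Bisc(G_E^{(0)})$, I note that for any $w\in\partial E$ the restriction $(r\bm{1}_U)_w$ equals $r\notrational{w}$ when $s(w)=v$ and is $0$ otherwise, since $U\subseteq G_E^{(0)}$ meets the isotropy group $G_w^w$ in at most the single element $(w,0,w)$.

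For the forward implication, suppose $r\bm{1}_U\in I$ and let $u\in\partial E$ satisfy $v\in u^0$. I would factor $u=\mu u'$ with $\mu\in E^*$, $r(\mu)=v$, and $u'$ a boundary path with $s(u')=v$ (taking $\mu=v$ when $v=s(u)$). Then $u$ and $u'$ are tail equivalent of lag $|\mu|$, so $t:=(u,|\mu|,u')\in G_{u'}^u$. By the previous paragraph $(r\bm{1}_U)_{u'}=r\notrational{u'}\in I_{u'}$, and since the translation isomorphism $\tau_t$ is an $R$-algebra isomorphism sending the identity $\notrational{u'}$ of $RG_{u'}^{u'}$ to the identity $\notrational{u}$ of $RG_u^u$, condition~(\hyperref[D2]{D2}) gives $r\notrational{u}=\tau_t(r\notrational{u'})\in I_u$. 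Equivalently, one checks directly that $\bm{1}_{\mathcal{Z}(\mu,v)}*(r\bm{1}_U)*\bm{1}_{\mathcal{Z}(v,\mu)}=r\bm{1}_{\mathcal{Z}(\mu,\mu)}\in I$ and that this function restricts to $r\notrational{u}$ at $u$.

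For the reverse implication, suppose $r\notrational{u}\in I_u$ for every $u$ with $v\in u^0$. Since $I=\mathcal{I}(Y_I)$ by Theorem~\ref{disassembly-theorem}, it suffices to show that $(\bm{1}_A*(r\bm{1}_U)*\bm{1}_B)_w\in I_w$ for every $w\in\partial E$ and all $A,B\in\Bisc(G_E)$. Because $U$ lies in the unit space, the identity $\bm{1}_A*\bm{1}_B=\bm{1}_{AB}$ for compact open bisections yields $\bm{1}_A*(r\bm{1}_U)*\bm{1}_B=r\bm{1}_{AUB}$, and $AUB$ is again a compact open bisection; hence $(r\bm{1}_{AUB})_w$ is either $0$ (trivially in $I_w$) or equal to $r1_{(w,k,w)}$, where $AUB\cap G_w^w=\{(w,k,w)\}$. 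In the nontrivial case I would write $(w,k,w)=a\,(y,0,y)\,b$ with $a\in A$, $(y,0,y)\in U$, and $b\in B$; then $s(y)=v$, so $v\in y^0$ and the hypothesis gives $r\notrational{y}\in I_y$. Writing $a=(w,k_1,y)\in G_y^w$ and transporting by~(\hyperref[D2]{D2}) produces $r\notrational{w}=\tau_a(r\notrational{y})\in I_w$.

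The main obstacle is precisely this last step: the test point $w$ need not contain $v$, and the isotropy element produced, $(w,k,w)$, generally has nonzero lag $k$, so the hypothesis cannot be invoked at $w$ directly. The resolution is to transport the hypothesis from $y$, where $v\in y^0$ does hold, to $w$ via the translation isomorphism, and then to absorb the lag using property~(\hyperref[D1]{D1}): since $I_w$ is an ideal of $RG_w^w$ containing $r\notrational{w}$ and $1_{(w,k,w)}$ is a unit of $RG_w^w$, we obtain $r\,1_{(w,k,w)}=(r\notrational{w})\cdot 1_{(w,k,w)}\in I_w$, which completes the verification that $r\bm{1}_U\in I$. The same disassembly toolkit (specialising $U$ and the relevant orbit) drives the other four parts, with the rational case~(\ref{correspondence: rational}) requiring the full isotropy group $RG_u^u\cong R[x,x^{-1}]$ rather than just scalars.
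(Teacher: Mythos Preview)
Your proof of part~(\ref{correspondence: vertices}) is correct, but the reverse implication takes a genuinely different route from the paper. The forward direction is essentially identical: both you and the paper conjugate $r\bm{1}_{\mathcal{Z}(v,v)}$ by $\bm{1}_{\mathcal{Z}(\mu,v)}$ to land in a neighbourhood of $u$.

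For the reverse implication, the paper argues topologically: for each $u\in Z(v)$ it uses Lemma~\ref{fu=gu} to produce a compact open $B_u\subseteq Z(v)$ with $r\bm{1}_{B_u}\in I$, then invokes compactness of $Z(v)$ to extract a finite subcover and an inclusion--exclusion identity to assemble $r\bm{1}_{\mathcal{Z}(v,v)}$ from the $r\bm{1}_{B_i}$. You instead feed $r\bm{1}_{\mathcal{Z}(v,v)}$ directly into the reassembly description $I=\mathcal{I}(Y_I)$ and verify the defining condition pointwise, transporting the hypothesis along orbits via (\hyperref[D2]{D2}) and absorbing the lag via (\hyperref[D1]{D1}). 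Your argument is cleaner and more conceptual---it avoids the compactness and inclusion--exclusion bookkeeping entirely, and makes transparent that part~(\ref{correspondence: vertices}) is really just a special case of the reassembly formula. The paper's argument, by contrast, is self-contained at the level of $I$ and does not invoke Theorem~\ref{disassembly-theorem}, which may be preferable if one wants the proposition to stand independently of the disassembly machinery.

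One caveat: your closing sentence asserts that the same toolkit handles parts~(\ref{correspondence: ends in sink})--(\ref{correspondence: ends in infinite emitter}), but this is not quite accurate. The paper's proofs of those parts rest on identifying explicit neighbourhood bases at each type of boundary path and applying Lemma~\ref{fu=gu}; the reassembly viewpoint does not obviously shortcut this, since for those parts one is trying to \emph{extract} an element of $I$ from information about a single $I_u$, not to test membership of a given function.
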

\begin{proof}
(1) If $u$ ends in a sink, then $r\notrational{u} \in I_u$ if and only if $r\notrational{r(u)} \in I_{r(u)}$, because $u$ and $r(u)$ are in the same orbit. This holds if and only if $r\notrational{r(u)} = f_{r(u)}$ for some $f \in I$, which holds if and only if $r\bm{1}_B \in I$ for some compact open neighbourhood $B$ of $r(u)$, by an application of Lemma~\ref{fu=gu}. But $\mathcal{Z}(r(u), r(u)) = \{r(u)\}$ is contained in every such neighbourhood $B$, so this holds if and only if $r\bm{1}_{\mathcal{Z}(r(u), r(u))} \in I$.

(2) Let $u = e_1 e_2 \dots$ be an irrational path. Since  $\{\mathcal{Z}(e_1\cdots e_n, e_1\cdots e_n) \mid n \ge 1 \}$ is a neighbourhood base for $u$, we have $r\notrational{u} \in I_u$ if and only if $r\bm{1}_{\mathcal{Z}(e_1\cdots e_n, e_1\cdots e_n)} \in I$ for some $n \ge 1$, again by Lemma \ref{fu=gu}. Let $B = \mathcal{Z}(r(e_n),e_1\cdots e_n)$. Then clearly $\bm{1}_{B}* r\bm{1}_{\mathcal{Z}(e_1\cdots e_n, e_1\cdots e_n)} * \bm{1}_{B^{-1}} = r\bm{1}_{\mathcal{Z}(r(e_n), r(e_n))} \in I$ if and only if $\bm{1}_{B^{-1}} *r \bm{1}_{\mathcal{Z}(r(e_n), r(e_n))} * \bm{1}_{B} = r\bm{1}_{\mathcal{Z}(e_1\dots e_n, e_1\cdots e_n)} \in I$. 

(3) Suppose $u = \rho c^\infty$, $v = r(c) = s(c)$, and $\sum_{n \in \ZZ} r_n1_{(u,n\vert c\vert ,u)} \in I_u$. Then there is an $f \in I$ such that $f(u,n\vert c \vert, u) = r_n$ for all $n \in \ZZ$. Applying Lemma \ref{fu=gu}, we may replace $f$ with $\bm{1}_V * f * \bm{1}_V$ for some $V \in \Bisc(G_E^{(0)})$ if necessary, and assume without loss of generality that $f = \sum_{n \in \ZZ} r_n \bm{1}_{B_n} \in I$ for some sets $B_n \in \Bisc(G_E)$ such that $B_n$ is a nonempty neighbourhood of $(u,n \vert c \vert, u)$ if $r_n \ne 0$ and $B_n = \varnothing$ otherwise. We may even assume that the nonempty $B_n$'s are of the form $\mathcal Z(\gamma_n, \delta_n)$ where $\gamma_n, \delta_n$ are initial subpaths of $u$ such that $\vert \gamma_n \vert - \vert \delta_n \vert = n \vert c \vert$, since sets of this kind are a neighbourhood base for $(u, n \vert c \vert, u)$.  Now choose an initial subpath \(y\) of \(u\) such that \(r(y) = v\) and \(y\) is longer than any \(\gamma_{n}\) or \(\delta_{n}\). Then \(\bm{1}_{\mathcal{Z}(v,y)}* f*\bm{1}_{\mathcal{Z}(y,v)} = \sum_{n \in \ZZ} r_n\bm{1}_{\mathcal{Z}(c,v)^n} \in I\).

Conversely, suppose \(\sum_{n\in \ZZ} r_n \bm{1}_{\mathcal{Z}(c,v)^n}\in I\). Then we also have \(\bm{1}_{\mathcal{Z}(\rho,v)}*\sum_{n\in \mathbb{Z}} r_n\bm{1}_{\mathcal{Z}(c,v)^n}*\bm{1}_{\mathcal{Z}(v,\rho)} = \sum_{n \in \ZZ} r_n \bm{1}_{\mathcal{Z}(\rho c, c)^n}\in I\). Since \((u,n|c|,u)\in  \mathcal{Z}(\rho c, \rho)^m\) if and only if \(n=m\), we find that \(f(u,n\vert c\vert,u)=r_n\) for all $n \in \ZZ$. Consequently, \(f_u=\sum_{n\in \mathbb{Z}} r_n 1_{(u,n|c|,u)}\in I_u\).
	
	(4) This is analogous to (2), using the fact that $\{\mathcal{Z}(r(u),r(u),F) \mid F \subseteq_{\rm finite} s^{-1}(r(u)) \}$ is a neighbourhood base of the infinite emitter $r(u)$.
	
	(5) Assume  we have \(r\bm{1}_{\mathcal{Z}(v, v)}\in I\), with $v \in E^0$ and \(r\in R\), and let \(u\in \partial E\) be a path that contains \(v\). Let \(\mu\in E^*\) be the initial subpath of \(u\) ending at the first occurrence of \(v\). Then we have \(\bm{1}_{\mathcal{Z}(\mu,v)} * r\bm{1}_{\mathcal{Z}(v, v)} * \bm{1}_{\mathcal{Z}(v,\mu)} = r\bm{1}_{\mathcal{Z}(\mu, \mu)} \in I\). Since \(u\in Z(\mu)\), it follows that \(r\notrational{u} \in I_u\).

	Now assume that $r\mayberational{u} \in I_u$ for all $u \in \partial E$ such that $v \in u^0$. In particular, $r \mayberational{u} \in I_u$ for all $u \in Z(v)$. Then for each $u\in Z(v)$ there exists a $B_u \in \Bisc(G_E^{(0)})$ with $u\in B_u \subseteq Z(v)$ such that $r\bm{1}_{B_u} \in I$. Since $\{B_u \mid u \in Z(v)\}$ covers $Z(v)$, which is compact, there is a finite subcover $\{B_1, \dots, B_n\}$ also covering $Z(v)$. For compact open subsets $C \subseteq B_i$, we also have $r\bm{1}_{C} = \bm{1}_{C}*r\bm{1}_{B_i} \in I$ and $-r\bm{1}_C \in I$. By the principle of inclusion-exclusion,
	\[
	\bm{1}_{\mathcal{Z}(v, v)} = \bm{1}_{B_1 \cup \dots \cup B_n} = \sum_{k=1}^n (-1)^{i-1} \sum_{\substack{I \subseteq \{1, \dots, n\} \\ |I|=k }}\bm{1}_{\bigcap_{i \in I} B_i},
	\]
	and consequently $r\bm{1}_{\mathcal{Z}(v,v)} \in I$.\end{proof}

\begin{corollary}\label{ideal is generated by vertices and cycles}
Every ideal of \(L_R(E)\) is generated by elements of the following form:
\begin{itemize} \item Scalar multiples of vertices, 
\item Scalar multiples of elements of the form \(v^H = v- \sum_{e\in s^{-1}(v), r(e)\notin H} ee^*\), with \(v\) a breaking vertex for a hereditary subset \(H\subseteq E^0\), 
\item Laurent polynomials of cycles. 
\end{itemize}
\end{corollary}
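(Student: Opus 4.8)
The plan is to reduce the statement to a fibrewise check via the disassembly bijection of Theorem \ref{disassembly-theorem}, and then to dispatch each type of boundary path using Proposition \ref{correspondence: proposition}. Let $I'\trianglelefteq A_R(G_E)$ be the ideal generated by all of the listed elements that happen to lie in $I$: the vertex generators $r\bm{1}_{\mathcal{Z}(v,v)}$, the breaking-vertex generators $rv^H = r\bm{1}_{\mathcal{Z}(v,v,F)}$, and the Laurent polynomials of cycles $\sum_n r_n\bm{1}_{\mathcal{Z}(c,v)^n}$ (all under the isomorphism $L_R(E)\cong A_R(G_E)$). Then $I'\subseteq I$ trivially. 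Since the disassembly map $J\mapsto Y_J=\bigcup_u J_u$ is injective by Theorem \ref{disassembly-theorem} and $Y_J$ is recovered from the fibres $J_u$, the equality $I=I'$ will follow once I show $I_u\subseteq I'_u$ for every $u\in\partial E$, the reverse inclusion being automatic. The mechanism in each case is the same: apply a part of Proposition \ref{correspondence: proposition} to $I$ to produce a generator lying in $I$ (hence in $I'$), and then apply the same part to $I'$ to recover the given fibre element inside $I'_u$.

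When $u$ ends in a sink or is irrational, $G_u^u$ is trivial and a typical element of $I_u$ is $r\notrational{u}$. Parts (1) and (2) produce a vertex $v\in u^0$ with $r\bm{1}_{\mathcal{Z}(v,v)}\in I$, a vertex generator and so an element of $I'$; the same parts applied to $I'$ then give $r\notrational{u}\in I'_u$. When $u=\rho c^\infty$ is rational, I take $c$ to be the minimal period, which is a cycle, so that $G_u^u$ identifies with $R[x,x^{-1}]$; a typical element of $I_u$ is $\sum_n r_n 1_{(u,n|c|,u)}$, and part (3) equates its membership in $I_u$ with $\sum_n r_n\bm{1}_{\mathcal{Z}(c,v)^n}\in I$, a Laurent-polynomial-of-a-cycle generator. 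As before, part (3) applied to $I'$ returns the element to $I'_u$. These cases are routine.

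The real work is the infinite-emitter case, where $v=r(u)$ and part (4) only gives $r\bm{1}_{\mathcal{Z}(v,v,F)}\in I$ for some finite $F\subseteq s^{-1}(v)$; a priori this need not be one of the listed generators, since $v$ need not be a breaking vertex for any hereditary set whose associated edge-set is $F$. To repair this I would introduce $W_r=\{w\in E^0\mid r\bm{1}_{\mathcal{Z}(w,w)}\in I\}$. Using the relation $\bm{1}_{\mathcal{Z}(w',e)}*\bm{1}_{\mathcal{Z}(w,w)}*\bm{1}_{\mathcal{Z}(e,w')}=\bm{1}_{\mathcal{Z}(w',w')}$ for an edge $e\colon w\to w'$ (i.e.\ $e^*we=w'$), one checks that $W_r$ is hereditary. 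Put $F_0=\{e\in F\mid r(e)\notin W_r\}$. Adding back the edges of $F\setminus F_0$, each of which contributes $r\bm{1}_{\mathcal{Z}(e,e)}\in I$ because $r(e)\in W_r$, shows $r\bm{1}_{\mathcal{Z}(v,v,F_0)}\in I$. A short Cuntz--Krieger computation, $e^*v^{W_r}e=r(e)$ for $e\in s^{-1}(v)\setminus F_0$, then forces every $e\in s^{-1}(v)$ with $r(e)\notin W_r$ to already lie in $F$, whence $F_0=s^{-1}(v)\cap r^{-1}(E^0\setminus W_r)$.

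With this identification the case closes cleanly. If $F_0=\varnothing$ then $r\bm{1}_{\mathcal{Z}(v,v)}\in I$ is a vertex generator; if $F_0\ne\varnothing$ then $v\notin W_r$, the vertex $v$ is a breaking vertex for the hereditary set $W_r$, and $r\bm{1}_{\mathcal{Z}(v,v,F_0)}=rv^{W_r}\in I$ is precisely a breaking-vertex generator. In either case $r\bm{1}_{\mathcal{Z}(v,v,F_0)}\in I'$, so part (4) applied to $I'$ (with $F_0$) yields $r\notrational{u}\in I'_u$. Having covered all four types of $u$, we obtain $I_u\subseteq I'_u$ for every $u\in\partial E$, and therefore $I=I'$ by Theorem \ref{disassembly-theorem}. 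The one genuinely delicate point is the construction of $W_r$ together with the verification that $F_0=s^{-1}(v)\cap r^{-1}(E^0\setminus W_r)$; everything else is a direct application of the correspondence proposition.
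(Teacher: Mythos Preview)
Your proof is correct and follows the same strategy as the paper: use the injectivity of the disassembly map together with Proposition~\ref{correspondence: proposition} to show that $I$ coincides with the ideal $I'$ generated by the listed elements it contains. The paper's own proof is a one-line appeal to these two ingredients, whereas you actually carry out the fibrewise verification; in particular, your treatment of the infinite-emitter case---introducing the hereditary set $W_r$, trimming $F$ to $F_0$, and checking $F_0=s^{-1}(v)\cap r^{-1}(E^0\setminus W_r)$---is precisely the computation the paper postpones to the proof of Theorem~\ref{LPA ideal correspondence}. One cosmetic remark: in the line ``$e^*v^{W_r}e=r(e)$ for $e\in s^{-1}(v)\setminus F_0$'' you are really computing $e^*\big(v-\sum_{f\in F_0}ff^*\big)e=r(e)$, since at that moment you have not yet identified $F_0$ with the edge-set defining $v^{W_r}$; the argument is unaffected.
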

\begin{proof}
Proposition \ref{correspondence: proposition} and the fact that the disassembly map is injective imply that an ideal~$I$ is uniquely determined by the elements of this form contained in $I$.
\end{proof}

\subsection{Characterising open subsets of \(X(G_E)\)}

The disassembly map gives us a lattice isomorphism between the ideals of \(A_R(G_E)\) and the set $\Delta_{G_E,R}$ of open subsets of \(X(G_E)\) that satisfy three other conditions. For a given subset of \(X(G_E)\) it is relatively easy to see if it satisfies (\hyperref[D1]{D1}) and (\hyperref[D2]{D2}), but harder to check (\hyperref[D3]{D3}) and openness. As such, we will use this subsection to find an equivalent formulation of openness, given a subset that satisfies (\hyperref[D1]{D1}) and (\hyperref[D2]{D2}), and show that this all implies (\hyperref[D3]{D3}).

\begin{theorem}\label{open equivalence}
Let \(Y\subseteq X(G_E)\) be a subset that satisfies the conditions {\rm (\hyperref[D1]{D1})} and {\rm (\hyperref[D2]{D2})}. Then \(Y\) is open in \(X(G_E)\) if and only if \(Y\) satisfies the following conditions.
\begin{enumerate}[\rm(L1)]
    \item \label{L1} Suppose \(u\in \partial E\) is irrational and \(r\notrational{u} \in Y\cap RG_u^u\) for some \(r\in R\). Then there exists a vertex \(v_r\in u^0\) such that \(r\mayberational{z} \in Y\cap RG_z^z\), for all \(z\in \partial E\) with \(v_r\in z^0\).
    \item \label{L2} Suppose \(u=\rho c^\infty \in \partial E\) is a rational path, and \(z\in \partial E\) contains a vertex of c, but is not tail equivalent to \(u\). Then \(r\mayberational{z}\in Y\cap RG_z^z\), for all \(r\in R\) such that \(r\) is a coefficient of a Laurent polynomial of \(Y\cap RG_u^u\).
    \item \label{L3} Suppose \(u\in \partial E\) is a finite path, and \(r(u)\) is an infinite emitter. If \(r\notrational{u}\in Y\cap RG_u^u\) for some \(r\in R\), then there exists a finite set \(F\subseteq s^{-1}(r(u))\) such that if \(z\in \partial E\) is a boundary path that contains a vertex of \(r(s^{-1}(r(u))\setminus F)\), then \(r \mayberational{z} \in Y\).
\end{enumerate}
Moreover, every open subset $Y \subseteq X(G_E)$ that satisfies  {\rm (\hyperref[D1]{D1})} and {\rm (\hyperref[D2]{D2})} also satisfies {\rm (\hyperref[D3]{D3})}.
\end{theorem}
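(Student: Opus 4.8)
The plan is to use the equivalence just established, so that we may assume $Y$ satisfies (L1), (L2), (L3) in addition to (D1), (D2), and then to prove the sharper statement that $Y\subseteq Y_{\mathcal I(Y)}$, where $\mathcal I(Y)$ is the reassembly ideal of \eqref{reassembly-map}. Since $Y_{\mathcal I(Y)}\subseteq Y$ always holds (this was shown at the start of the proof of Lemma \ref{disint-lem3} without using (D3)), this yields $Y=Y_{\mathcal I(Y)}=Y_I$ for the ideal $I=\mathcal I(Y)$, and then Lemma \ref{disint-lem1} shows that $Y$ satisfies (D3). Concretely, for each $\alpha\in Y$ with $u=p(\alpha)$ it suffices to produce a single $f\in\mathcal I(Y)$ with $f_u=\alpha$: then $(f*\bm1_C)_v=(\bm1_V*f*\bm1_C)_v\in Y$ for a local unit $\bm1_V$ fixing $f$, which is exactly (D3).

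Given $\alpha\in Y$, the key idea is not to use the ``tuned'' representative $\sum_n r_n\bm1_{\mathcal Z(\rho c,\rho)^n}$, but to exploit openness directly: choose $f\in A_R(G_E)$ and $U\in\Bisc(G_E^{(0)})$ with $u\in U$, $f_u=\alpha$, $s_f(U)\subseteq Y$ (so $f_w\in Y$ for all $w\in U$), and, after replacing $f$ by $\bm1_U*f*\bm1_U$, $\supp f\subseteq \ran^{-1}(U)\cap\src^{-1}(U)$. Because $\supp f$ is compact, the lag (the $\ZZ$-coordinate) is bounded on it by some $L_{\max}$, and shrinking $U$ to a basic neighbourhood of $u$ whose defining initial path is longer than $L_{\max}$ does not increase this bound. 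The point of this choice is that $f$ is locally constant, so for $w_1,w_2\in U$ sharing a long enough initial segment with $u$ one has $f(w_1,L,w_2)=f(u,L,u)=\alpha(L)$ for every relevant lag $L$; this is the mechanism that prevents a single coefficient of $\alpha$ from appearing in isolation.

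To verify $f\in\mathcal I(Y)$ I would compute $(\bm1_A*f*\bm1_B)_v$ for arbitrary $A,B\in\Bisc(G_E)$ and $v\in G_E^{(0)}$, as in Lemma \ref{ann}. A nonzero value forces $A\cap G^v=\{a_0\}$ and $B\cap G_v=\{b_0\}$, with $w_1=\src(a_0),\ w_2=\ran(b_0)\in U$ (otherwise the relevant values of $f$ vanish), and then $(\bm1_A*f*\bm1_B)_v=\sum_k f(w_1,k-p-q,w_2)\,1_{(v,k,v)}$ where $p,q$ are the lags of $a_0,b_0$. Using boundedness of the lag together with local constancy, $f(w_1,L,w_2)=\alpha(L)$ for $|L|\le L_{\max}$ and $=0$ otherwise, so this fibre is precisely a unit-shift $x^{(p+q)/|c|}\tau_{a_0}(\alpha)$ of a translate of $\alpha$ (and is $0$ unless the cycle length divides $p+q$). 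When $v$ lies in the orbit of $u$, this element lies in $Y\cap RG_v^v$ by (D2) together with the fact that ideals of $RG_v^v$ are closed under multiplication by the units $x^k$. When $v$ is off the orbit of $u$, then $w_1\in U$ is off-orbit yet passes through a vertex of the cycle $c$ (it begins with the defining path of $U$), so (L2) gives $r\,\notrational{w_1}\in Y$ for every coefficient $r$ of $\alpha$, whence $r\,\notrational v\in Y$ by (D2); summing, the fibre again lies in $Y$. The cases where $u$ is irrational, a sink, or an infinite emitter are the same computation with trivial isotropy, the necessary memberships being supplied by (L1) and (L3) (equivalently, by $f_w\in Y$ on $U$).

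The main obstacle is exactly the rational case, and within it the danger that a two-sided multiple restricts, at an on-orbit point $v=\rho c^{n}\nu c^\infty$ with $\nu$ a return path at the cycle, to a lone monomial $r_n\,1_{(v,\cdot,v)}$, which need not lie in the genuinely non-extended Laurent ideal $Y\cap RG_v^v$. The resolution, and the crux of the argument, is that with the \emph{locally constant} representative $f$ obtained from openness (rather than a tuned one), local constancy forces the off-diagonal values $f(w_1,L,w_2)$ to reproduce the entire pattern $\alpha(L)$ once $U$ is taken smaller than the lag-span of $f$; the isolated coefficient can therefore never occur, and every contribution is either a full shift of a translate of $\alpha$, handled by (D2), or an off-orbit contribution handled by (L2). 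I expect checking this reconstruction — in particular that the bounded-lag and long-shared-prefix bookkeeping really does yield $f(w_1,L,w_2)=\alpha(L)$ uniformly in $A,B$ — to be the most delicate part of the write-up.
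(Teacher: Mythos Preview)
Your proposal only addresses the ``moreover'' part (the implication to (D3)), taking the equivalence between openness and (L1)--(L3) as already done. That is fine as a division of labour, so I focus on the (D3) argument.

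There is a genuine gap. Your central claim is that for the representative $f$ obtained from openness (with $f_u=\alpha$ and $s_f(U)\subseteq Y$), local constancy together with shrinking $U$ forces the \emph{off-diagonal} values $f(w_1,L,w_2)$ to coincide with $\alpha(L)$ for all $w_1,w_2\in U$. This is false. The condition $s_f(U)\subseteq Y$ controls only the diagonal restrictions $f_w=f|_{G_w^w}$; it says nothing about $f(w_1,L,w_2)$ when $w_1\neq w_2$. Concretely, in the pair-groupoid model $A_R(G)\cong M_2(R)$ one has $f_w\in I_w$ for all $w$ (diagonal entries in an ideal $J$) without $f\in I=M_2(J)$ (an off-diagonal entry outside $J$); the same phenomenon occurs inside $UG_E U$ whenever $c\notin C_u(E)$, since then there exist $w_1\neq w_2$ in $U\cap\mathcal O_u$, and one can choose $f=\bm 1_U*f*\bm 1_U$ with an extra summand $s\,\bm 1_{\mathcal Z(\mu,\nu)}$ (for suitable $\mu,\nu$ with $\mu_u\ne\nu_u$) that contributes nothing to any $f_w$ yet makes $f(w_1,0,w_2)=s$ arbitrary. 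No amount of shrinking $U$ removes such a summand, because $\bm 1_{U'}*\bm 1_{\mathcal Z(\mu,\nu)}*\bm 1_{U'}$ is again of the same form. Hence your conclusion that $f\in\mathcal I(Y)$, and with it the route to (D3) via $Y=Y_{\mathcal I(Y)}$, does not follow from the hypotheses you have arranged.

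What actually makes the argument go through is precisely what you chose to avoid: one uses a \emph{specific} representative tailored to $u$. The paper takes $f=r\bm 1_{\mathcal Z(u,u)}$ (sink), $f=r\bm 1_{\mathcal Z(u',u')}$ with $u'$ ending at the vertex $v_r$ from (L1) (irrational), $f=r\bm 1_{\mathcal Z(u,u,F)}$ with $F$ from (L3) (infinite emitter), and $f=\sum_n r_n\bm 1_{\mathcal Z(\rho c^n,\rho)}$ (rational). For these explicit $f$'s, every off-diagonal contribution of $(\bm 1_A*f*\bm 1_B)_v$ is visibly either a translate of $\alpha$ (handled by (D2)) or a scalar at an off-orbit point governed by (L1)--(L3). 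The subtlety you flagged---a lone coefficient $r_n$ appearing at an on-orbit point---is resolved not by local constancy but by a dichotomy: if $c\in C_u(E)$ then $U\cap\mathcal O_u=\{u\}$ (every closed path at $s(c)$ is a power of $c$), so $w_1=w_2=u$ and the ``off-diagonal'' case never arises; if $c\notin C_u(E)$ then Lemma~\ref{lemma to prove some condition} gives $Y\cap RG_u^u=J\,RG_u^u$, so single coefficients already lie in $Y$. Either of these observations would repair your argument, but they replace the local-constancy mechanism rather than justify it.
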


The proof will be given in the Lemmas \ref{open_implies}--\ref{D3 not necessary for LPA}. The first three lemmas show the equivalence between \(Y\) being open and satisfying (\hyperref[L1]{L1})--(\hyperref[L3]{L3}) under the assumption that (\hyperref[D1]{D1}) and (\hyperref[D2]{D2}) hold, and the last lemma shows that this all implies (\hyperref[D3]{D3}). Note that this means that we can ignore condition (\hyperref[D3]{D3}) when applying Theorem \ref{disassembly-theorem} to Leavitt path algebras.

\begin{lemma}\label{open_implies}
Let \(Y\subseteq X(G_E)\) be an open subset satisfying {\rm (\hyperref[D1]{D1})} and {\rm (\hyperref[D2]{D2})}. Then \(Y\) satisfies {\rm (\hyperref[L1]{L1})--(\hyperref[L3]{L3})}.
\end{lemma}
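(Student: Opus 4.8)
The plan is to establish all three conditions by a single mechanism: use the openness of $Y$ to pass from the membership of one element $\alpha\in Y$ to the membership of a whole basic open set $s_f(U)\subseteq Y$, then read off what this says about $f_w$ for units $w$ near $u$, and finally apply the translation-invariance condition (\hyperref[D2]{D2}) to transport the resulting memberships along orbits to the target units $z$. Throughout, the neighbourhood bases at $u$ recorded in Proposition \ref{correspondence: proposition} dictate which initial data to extract: the sets $Z(e_1\cdots e_n)$ for irrational $u$, the sets $\mathcal{Z}(u,u,F)$ for $u$ ending in an infinite emitter, and the sets $Z(\sigma)$ for rational $u$.

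For (\hyperref[L1]{L1}) and (\hyperref[L3]{L3}) the isotropy at $u$ is trivial, so $\alpha=r\notrational{u}$ and the normalisation is clean. First I take $f$ and $U_0$ with $r\notrational{u}=f_u\in s_f(U_0)\subseteq Y$. Since $r\notrational{u}=(r\bm{1}_U)_u$ for any compact open $U\ni u$, Lemma \ref{fu=gu} yields a compact open $K\ni u$ with $K\subseteq U_0$ and $\bm{1}_K*f*\bm{1}_K=r\bm{1}_K$. Because $\ran(x)=\src(x)=w\in K$ for every $x\in G_w^w$ with $w\in K$, Lemma \ref{lem1} gives $f_w=(\bm{1}_K*f*\bm{1}_K)_w=r\notrational{w}$ for all $w\in K$; hence $r\notrational{w}\in s_f(K)\subseteq Y$ for every $w\in K$. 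Now shrink: for (\hyperref[L1]{L1}) I pick an initial subpath $\gamma$ of $u$ with $Z(\gamma)\subseteq K$ and set $v_r=r(\gamma)\in u^0$, while for (\hyperref[L3]{L3}) I pick a finite $F\subseteq s^{-1}(r(u))$ with $\mathcal{Z}(u,u,F)\subseteq K$. Finally, given a target $z$ containing $v_r$ (resp.\ a vertex of $r(s^{-1}(r(u))\setminus F)$), I factor $z=\mu z'$ at the relevant vertex and observe that the boundary path $\gamma z'$ (resp.\ $uez'$ for an appropriate edge $e\notin F$) lies in $Z(\gamma)$ (resp.\ $\mathcal{Z}(u,u,F)$), so $r\notrational{\gamma z'}\in Y$ (resp.\ $r\notrational{uez'}\in Y$); since this path is tail equivalent to $z$, condition (\hyperref[D2]{D2}) transports the membership to $r\notrational{z}\in Y\cap RG_z^z$.

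Condition (\hyperref[L2]{L2}) is the hard case, because $\alpha=\sum_n r_n\notrational{u}$ is a genuine Laurent polynomial and an ideal of $RG_u^u\cong R[x,x^{-1}]$ need not contain the individual coefficients of its elements, so $r$ cannot be isolated inside $RG_u^u$. The remedy is twofold. First, multiplying $\alpha$ by the unit $1_{(u,-m|c|,u)}$, which is legitimate since $Y\cap RG_u^u$ is an ideal by (\hyperref[D1]{D1}), reduces to the case where the coefficient of interest $r$ is the constant term $r_0$. Second, openness lets me choose a representative $f=\sum_n r_n\bm{1}_{\mathcal{Z}(\gamma_n,\delta_n)}$ with $\gamma_n,\delta_n$ initial subpaths of $u$ ending at $v$, with the lag-$0$ term supported on the diagonal bisection $\mathcal{Z}(\sigma,\sigma)$, and with $s_f(Z(\sigma))\subseteq Y$. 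The crux is that for a unit $w\in Z(\sigma)$ that is not tail equivalent to $u$, every off-diagonal term vanishes on the isotropy $G_w^w$: if $\bm{1}_{\mathcal{Z}(\gamma_n,\delta_n)}$ were nonzero at some $(w,n|c|,w)$ with $n\neq 0$, then $\gamma_n t=\delta_n t=w$ would force, after cancelling the common prefix and using $\gamma_n=\delta_n c^n$, the tail of $w$ to be $c^\infty$, making $w$ tail equivalent to $u$, a contradiction. Hence $f_w=r\notrational{w}\in Y$ for every such $w$. To reach the target $z$, I factor $z=\mu z'$ at a vertex of $c$ that it contains and form $w:=\sigma\eta z'\in Z(\sigma)$, where $\eta$ is the path inside $c$ from $v$ to that vertex; this $w$ shares the tail $z'$ with $z$, so it is not tail equivalent to $u$ (otherwise $z$ would be), giving $r\notrational{w}\in Y$, and (\hyperref[D2]{D2}) transports this to $r\notrational{z}\in Y$.

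I expect the main obstacle to be exactly this coefficient-isolation problem in (\hyperref[L2]{L2}); the two manoeuvres above, namely reducing to the constant term via (\hyperref[D1]{D1}) and moving to a non-tail-equivalent unit where the periodic structure collapses so that $f_w$ reduces to a single scalar multiple of the unit, are what make the argument go through. The verification that the off-diagonal terms genuinely vanish at such $w$, i.e.\ the periodicity implication $\gamma_n t=\delta_n t\Rightarrow w\sim u$, is the technical heart of the proof and the step most prone to error.
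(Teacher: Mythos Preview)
Your proposal is correct and follows essentially the same strategy as the paper: exploit openness to find a basic neighbourhood $s_f(U)\subseteq Y$, normalise $f$ via Lemma~\ref{fu=gu}, read off the resulting memberships $r\mayberational{w}\in Y$ for nearby units $w$, and transport them to the target $z$ using (\hyperref[D2]{D2}). Your treatment of (\hyperref[L3]{L3}) is in fact slightly cleaner than the paper's---you use the normalisation $\bm{1}_K*f*\bm{1}_K=r\bm{1}_K$ uniformly for both (\hyperref[L1]{L1}) and (\hyperref[L3]{L3}), whereas the paper handles (\hyperref[L3]{L3}) by decomposing $f=\sum_i r_i\bm{1}_{B_i}+\sum_j r_j\bm{1}_{C_j}$ and enlarging $F$ to $F'$ to kill the diagonal $C_{j'}$ terms; your route avoids that bookkeeping entirely.
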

\begin{proof}
(\hyperref[L1]{L1}) Let \(u\in \partial E\) be irrational. Since \(Y\) is open, there exist \(f\in A_R(G_E)\), \(\alpha\in E^*\) and \(F\subseteq_{\text{finite}} s^{-1}(r(\alpha))\) such that \(r\notrational{u} \in s_f(Z(\alpha,F))\subseteq Y\). Clearly, \(\alpha\) must be an initial subpath of \(u\), and \(e\notin F\), where \(e\in E^1\) is the first edge of \(u\) after \(\alpha\). As \(f_u=(r \bm{1}_{\mathcal{Z}(\alpha, \alpha, F)})_u\), Lemma \ref{fu=gu} tells us that there exists an initial subpath \(\beta\) of \(u\), strictly longer than \(\alpha\), such that \( s_{r\bm{1}_{\mathcal{Z}(\beta, \beta)}} (Z(\beta)) = s_f(Z(\beta) )\subseteq Y\). In particular, for any \(z\in Z(\beta)\), we have \(r\mayberational{z} \in Y\). Since any path that contains the vertex \(r(\beta)\) is tail equivalent to some path in \(Z(\beta)\), the fact that the translation maps restrict to isomorphisms implies that \(r(\beta)\) satisfies the conditions needed for~\(v_r\).
    
(\hyperref[L2]{L2}) Now assume \(u=\rho c^\infty\in \partial E\) is rational. Since \(Y\cap RG_u^u\) is an ideal, any coefficient of a Laurent polynomial in \(Y\cap RG_u^u\) can be chosen to be the coefficient of \(1_{(u,0,u)}\) for some Laurent polynomial in $Y \cap RG_u^u$. As such, it is enough to prove that \(r_0\mayberational{z}\in Y\) for any \(\sum_{n\in \mathbb{Z}} r_n1_{(u,n|c|,u)}\in Y\), so take such a Laurent polynomial in \(Y\cap RG_u^u\). Since \(Y\cap RG_u^u\) and \(Y\cap RG_{u'}^{u'}\) are isomorphic if \(u\) and \(u'\) are tail equivalent, we may choose \(u\) to be equal to \(c^\infty\). Again, since \(Y\) is open, there exist \(f\in A_R(G_E)\), \(\alpha\in E^*\) and \(F\subseteq_{\text{finite}} s^{-1}(r(\alpha))\) such that \(\sum_{n\in \mathbb{Z}}r_n 1_{(u,n|c|,u)}\in s_f(Z(\alpha,F))\subseteq Y\). And again, we know that \(\alpha\) must be an initial subpath of \(u\), and that \(e\notin F\), with \(e\) being the first edge of \(u\) after \(\alpha\). Since \(Z(\beta)\subseteq Z(\alpha,F)\), if \(\alpha\) is an initial subpath of \(\beta\) and the first edge of \(\beta\) after \(\alpha\) is not in \(F\), we can choose \(\alpha = c^m\) and \(F=\varnothing\), for some \(m\in \mathbb{N}\). By Lemma \ref{fu=gu}, we can also assume \(f=\sum_n r_n \bm{1}_{X_n}\), for some open bisections \(X_n\) with \(X_n\cap G_u^u=\{(u,n|c|,u)\}\). If \(z\in Z(c^m)\) is irrational or finite, we immediately have \(s_f(z)=r_0 \notrational{z}\). On the other hand, suppose \(z\in Z(c^m)\) is rational and that there exists \(n\neq 0\) with \(X_n\cap G_z^z\neq \varnothing\). Take \(\gamma_n\in E^*\) such that \(X_n=\mathcal{Z}(\gamma_n c, \gamma_n)^n\). Then \(z\) would have to be equal to \(\gamma_n c^n c^n\cdots\), which is tail equivalent to \(u\). Consequently, for any \(z\in Z(c^m)\) that is not tail equivalent to \(u\), we have \(r_0 1_{(z,0,z)} = s_f(z)\in s_f(Z(c^m))\subseteq s_f(Z(\alpha, F))\subseteq Y\). But any \(z'\in \partial E\) that contains a vertex of \(c\) is tail equivalent to a path \(z\) in \(Z(c^m)\), and \(z'\) is tail equivalent to \(u\) if and only if \(z\) is. Since the ideals \(Y\cap RG_z^z\trianglelefteq RG_z^z\) and \(Y\cap RG_{z'}^{z'}\trianglelefteq RG_{z'}^{z'}\) are naturally isomorphic, condition (\hyperref[L2]{L2}) holds.
    
(\hyperref[L3]{L3}) Finally, assume \(u\in \partial E\) is finite, and that \(r(u)\) is an infinite emitter. Like before, we can choose \(f\in A_R(G_E)\), \(\alpha\in E^*\) and \(F\subseteq_{\text{finite}} s^{-1}(r(\alpha))\) such that \(r\notrational{u}\in s_f(Z(\alpha,F)) \subseteq Y\), with \(\alpha\) an initial subpath of \(u\) and if \(\alpha\) is strictly shorter than \(u\), the first edge of \(u\) after \(\alpha\) is not in \(F\). If \(\alpha\) is strictly shorter than \(u\), however, we get \(Z(u,\varnothing)\subseteq Z(\alpha,F)\) and \(r\notrational{u}\in s_f(Z(u, \varnothing))\), so we can assume that \(\alpha\) is equal to \(u\). Now, let \(f=\sum_i r_i\bm{1}_{B_i} + \sum_j r_j\bm{1}_{C_j}\), with all \(B_i\) and \(C_j\) of the form \(\mathcal{Z}(\gamma,\delta)\), and \((u,0,u)\in B_i\) and \((u,0,u)\notin C_j\) for all \(i\) and \(j\). In particular, we have \(\sum_i r_i = r\). Now, consider those \(C_{j'}\), for which \(\gamma_{j'}=\delta_{j'}\), of which \(u\) is a (strict) initial subpath. Let \(F'\) be the union of \(F\), with the set of edges consisting of the first edge of \(\gamma_{j'}\) after \(u\), for all these \(j'\). Then we have \(Z(u,F')\subseteq Z(u,F)\). But for any \(z\in Z(u,F')\), we have \(r \mayberational{z} = s_f(z)\in s_f(Z(u,F'))\subseteq s_f(Z(u,F))\subseteq Y\). Since paths that contain a vertex of \(r(s^{-1}(r(u))\setminus F')\) are tail equivalent with a path in \(Z(u,F')\), \(F'\) satisfies the requirements for~(\hyperref[L3]{L3}).
\end{proof}

\begin{lemma}\label{lemma to prove some condition}
Let \(Y\subseteq X(G_E)\) be a subset satisfying {\rm (\hyperref[D1]{D1})}, {\rm(\hyperref[D2]{D2})}, {\rm (\hyperref[L1]{L1})} and {\rm (\hyperref[L2]{L2})}. If \(u=\rho c^\infty \in \partial E\) is a rational path  such that there are multiple closed simple paths based at \(s(c)=r(c)\), then there exists an ideal \(J\trianglelefteq R\) such that \(Y\cap RG_u^u=JG_u^u\).
\end{lemma}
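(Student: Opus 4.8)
The plan is to identify $Y\cap RG_u^u$ inside $RG_u^u\cong R[x,x^{-1}]$ (via $\sum_n r_n 1_{(u,n|c|,u)}\mapsto \sum_n r_n x^n$) and show it is a \emph{graded} ideal: whenever $\alpha=\sum_n r_n 1_{(u,n|c|,u)}\in Y\cap RG_u^u$, each monomial $r_n 1_{(u,n|c|,u)}$ already lies in $Y$. Granting this, I would set $J=\{r\in R\mid r\mayberational{u}\in Y\}$, which is an ideal of $R$ by (\hyperref[D1]{D1}); gradedness together with the fact that multiplication by the unit $1_{(u,n|c|,u)}$ preserves the ideal $Y\cap RG_u^u$ then yields $Y\cap RG_u^u=JG_u^u$ immediately. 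Since multiplying by the unit $1_{(u,-n|c|,u)}$ moves the $n$-th coefficient of $\alpha$ into the constant slot, it is enough to prove the single assertion that \emph{every coefficient $r$ of every element of $Y\cap RG_u^u$ lies in $J$}, i.e.\ $r\mayberational{u}\in Y$. Using (\hyperref[D2]{D2}) to translate along the tail, I may also assume $\rho$ is trivial, so $u=c^\infty$.

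To exploit the hypothesis, I would feed the two distinct simple closed paths $c_1\ne c_2$ based at $v=s(c)=r(c)$ into the recurrent infinite path
\[
z=c_1\,c_2\,c_1^2\,c_2\,c_1^3\,c_2\cdots,
\]
which is a boundary path passing through $v$ at every block boundary, and in particular contains the vertex $v\in c^0$. This $z$ is irrational: because $c_1$ is \emph{simple}, the vertex $v$ occurs in any $c_1^m$ only at block boundaries, so $c_2$ (a closed path at $v$ distinct from $c_1$) is never a factor of a power of $c_1$; as the $c_1$-runs in $z$ grow without bound while the blocks $c_2$ recur, $z$ cannot be eventually periodic, and for the same reason $z$ is not tail equivalent to $u=c^\infty$. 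Now let $r$ be any coefficient of a Laurent polynomial in $Y\cap RG_u^u$. Since $z$ contains a vertex of $c$ and is not tail equivalent to $u$, condition (\hyperref[L2]{L2}) gives $r\mayberational{z}\in Y\cap RG_z^z$.

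The return trip is where I would spend the effort. As $z$ is irrational, I apply (\hyperref[L1]{L1}) to $r\mayberational{z}$: there is a vertex $v_r\in z^0$ with $r\mayberational{w}\in Y$ for every $w\in\partial E$ containing $v_r$. Every vertex of $z$ lies on $c_1$ or $c_2$, so following $z$ forward from an occurrence of $v_r$ to the next block boundary produces a finite path $\mu$ from $v_r$ to $v$. Then $w:=\mu c^\infty$ contains $v_r$, giving $r\mayberational{w}\in Y$, and $w$ is tail equivalent to $u=c^\infty$. Choosing $t\in G_u^w$ with $\src(t)=u$ and $\ran(t)=w$, the translation $\tau_t$ sends $\mayberational{u}$ to $\mayberational{w}$, so by (\hyperref[D2]{D2}) the isomorphism $\tau_t\colon Y\cap RG_u^u\to Y\cap RG_w^w$ pulls $r\mayberational{w}$ back to $r\mayberational{u}\in Y$. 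Hence $r\in J$, which is exactly what the reduction requires.

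The reduction to gradedness and the translation bookkeeping in (\hyperref[D1]{D1})/(\hyperref[D2]{D2}) are routine. The two load-bearing points are: (i) manufacturing an irrational path $z$ through $v$ that is provably not tail equivalent to $u$ — this is precisely where two distinct simple closed paths at $v$ are needed, and the simplicity of $c_1$ is what forces aperiodicity; and (ii) the crux, the return step, which converts the constant $r\mayberational{z}\in Y$ living at the distant irrational point $z$ back into $r\mayberational{u}\in Y$ by propagating forward with (\hyperref[L1]{L1}) to a vertex that reaches $v$ and then translating along the tail. I expect (ii) to be the main obstacle, because (\hyperref[L2]{L2}) only carries coefficients \emph{away} from $u$, and it is (\hyperref[L1]{L1}) that brings them back.
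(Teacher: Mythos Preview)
Your proof is correct and follows essentially the same strategy as the paper: reduce to showing the constant coefficient lies in $Y$, push it via (\hyperref[L2]{L2}) to the irrational path built from alternating powers of the two simple closed loops, then pull it back via (\hyperref[L1]{L1}) to a rational path tail equivalent to $u$ and translate with (\hyperref[D2]{D2}). The only cosmetic difference is that the paper, rather than constructing your path $\mu$ from $v_r$ to $v$, simply observes that \emph{every} vertex of the irrational path already lies on the single fixed path $u'=\rho\gamma\delta c^\infty$, so whatever $v_r$ condition (\hyperref[L1]{L1}) produces is automatically on $u'$; this avoids the ``follow $z$ forward to the next block boundary'' step but is otherwise the same argument.
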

\begin{proof}
Let \(u=\rho c^\infty\in \partial E\) be a rational path, and \(\gamma\) and \(\delta\) two distinct closed simple paths based at \(s(c)=r(c)\). We will show that for any \(\sum_{n\in \mathbb{Z}}r_n 1_{(u,n|c|,u)}\in Y\cap RG_u^u\), \(r_01_{(u,0,u)}\) is an element of \(Y\). Since \(Y\cap RG_u^u\) is an ideal in \(RG_u^u\) by {\rm (\hyperref[D1]{D1})}, the same will hold for any \(r_n\), and it will follow that \(Y\cap RG_u^u = JG_u^u\), where \(J\) is the ideal containing the coefficients of all Laurent polynomials of \(Y\cap RG_u^u\).

So consider \(\sum_{n\in \mathbb{Z}}r_n 1_{(u,n|c|,u)}\in Y\cap RG_u^u\), and let \(x= \rho \gamma \delta \gamma^2\delta \gamma^3\delta \dots \in \partial E\). Clearly \(x\) is irrational, so it cannnot be tail equivalent to \(u\). So by (\hyperref[L2]{L2}), \(r_0 \notrational{x}\in Y\cap RG_x^x\). Let \(u'=\rho\gamma\delta c^\infty\in \partial E\) be a rational path tail equivalent to \(u\). Since all vertices of \(x\) lie on \(u'\), (\hyperref[L1]{L1}) tells us that \(r_01_{(u',0,u')}\in Y\cap RG_{u'}^{u'}\). But since \(u\) and \(u'\) are tail equivalent, {\rm(\hyperref[D2]{D2})} gives a translation isomorphism \(Y\cap RG_{u'}^{u'}\to Y\cap RG_u^u\) which sends \(r_01_{(u',0,u')}\) to \(r_01_{(u,0,u)}\), so \(r_01_{(u,0,u)}\) is also an element of \(Y\).
\end{proof}

\begin{lemma}\label{implies_open} 
If \(Y\subseteq X(G_E)\) satisfies {\rm (\hyperref[D1]{D1})}, {\rm (\hyperref[D2]{D2})}, and {\rm (\hyperref[L1]{L1})--(\hyperref[L3]{L3})}, then \(Y\) is open in \(X(G_E)\).
\end{lemma}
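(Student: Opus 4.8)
The plan is to show that every point of $Y$ has a basic open neighbourhood in $X(G_E)$ contained in $Y$. Recall that the basic open sets are the $s_f(Z(\alpha,F))$ with $f\in A_R(G_E)$ and $Z(\alpha,F)$ a compact open subset of $\partial E$. So I would fix $\alpha\in Y$, set $u=p(\alpha)$, and argue by cases according to the type of the boundary path $u$. In each case I will exhibit an explicit $f\in A_R(G_E)$ with $f_u=\alpha$ together with a compact open neighbourhood $U$ of $u$, and then check that $f_z\in Y$ for every $z\in U$; this yields $\alpha\in s_f(U)\subseteq Y$.

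The three non-rational types are handled directly by (\hyperref[L1]{L1})--(\hyperref[L3]{L3}). If $u$ is finite and ends in a sink, then $\{u\}=Z(u)$ is open and isolated, $\alpha=r\notrational{u}$, and $f=r\bm{1}_{\mathcal{Z}(u,u)}$, $U=\{u\}$ work trivially. If $u$ is irrational, then $\alpha=r\notrational{u}$, and (\hyperref[L1]{L1}) provides a vertex $v_r\in u^0$; taking $\beta$ to be the initial subpath of $u$ ending at the first occurrence of $v_r$, I set $f=r\bm{1}_{\mathcal{Z}(\beta,\beta)}$ and $U=Z(\beta)$. Since $\mathcal{Z}(\beta,\beta)$ consists of lag-$0$ elements, $f_z=r\notrational{z}$ for $z\in Z(\beta)$, and every such $z$ contains $v_r$, so $f_z\in Y$ by (\hyperref[L1]{L1}). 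If $u$ is finite and $r(u)$ is an infinite emitter, then (\hyperref[L3]{L3}) supplies a finite $F\subseteq s^{-1}(r(u))$; I take $f=r\bm{1}_{\mathcal{Z}(u,u,F)}$ and $U=Z(u,F)$. Again $f_z=r\notrational{z}$ for $z\in U$, and this lies in $Y$ either because $z=u$ (so $f_z=\alpha$) or because $z$ passes through an edge of $s^{-1}(r(u))\setminus F$, so that (\hyperref[L3]{L3}) applies; note this computation is insensitive to whether $z$ happens to be rational, because only the lag-$0$ diagonal of $\mathcal{Z}(u,u,F)$ survives.

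The rational case $u=\rho c^\infty$, with $v=s(c)=r(c)$, is the crux. Writing $\alpha=\sum_n r_n 1_{(u,n|c|,u)}$, I take $f=\sum_n r_n\bm{1}_{\mathcal{Z}(\rho c,\rho)^n}$ and $U=Z(\rho c)$; the conventions on powers of $\mathcal{Z}(\rho c,\rho)$ give $f_u=\alpha$. The work is to understand $f_z$ for $z=\rho c\,y\in Z(\rho c)$ with $z\ne u$. A short computation with lags shows that $(z,m,z)\in\mathcal{Z}(\rho c,\rho)^n$ with $n\ne 0$ forces $y=c^\infty$, i.e.\ $z=u$; hence for $z\ne u$ that is \emph{not} tail equivalent to $u$ one gets $f_z=r_0\notrational{z}$, and this lies in $Y$ by (\hyperref[L2]{L2}), since $z$ contains the vertex $v$ of $c$. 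This already disposes of every irrational, finite, or rational-but-not-tail-equivalent $z$ in $U$.

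The remaining points are the rational $z\in Z(\rho c)$, $z\ne u$, that \emph{are} tail equivalent to $u$; for these $f_z$ is a genuine Laurent polynomial and one must control all of $Y\cap RG_z^z$. This is where the dichotomy of Lemma \ref{lemma to prove some condition} enters, and I expect it to be the main obstacle. If there is a unique closed simple path based at $v$, then every closed path based at $v$ is a power of it, which forces such a $z$ to coincide with $u$; so no problematic point exists and the argument above already finishes. Otherwise Lemma \ref{lemma to prove some condition} yields an ideal $J\trianglelefteq R$ with $Y\cap RG_u^u=JG_u^u$, so every coefficient $r_n$ of $\alpha$ lies in $J$; since $f_z$ is an $R$-linear combination of the $1_{(z,m,z)}$ whose coefficients are sums of the $r_n$, all coefficients of $f_z$ lie in $J$, and (\hyperref[D2]{D2}) gives $f_z\in JG_z^z=Y\cap RG_z^z\subseteq Y$. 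The delicate part is precisely this: correctly identifying which nearby rational paths are tail equivalent to $u$, and recognising that the coefficient-ideal form of $Y\cap RG_u^u$ (available exactly when several simple cycles meet at $v$) is what makes the full Laurent polynomial $f_z$ land inside $Y$.
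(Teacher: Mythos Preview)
Your proof is correct and follows essentially the same case analysis as the paper (you take $U=Z(\rho c)$ in the rational case where the paper takes $Z(\rho)$, which is immaterial). One harmless slip: your own lag computation already shows $f_z=r_0\,1_{(z,0,z)}$ for \emph{every} $z\in Z(\rho c)$ with $z\neq u$, including those tail equivalent to $u$, so the ``genuine Laurent polynomial'' in that last sub-case is in fact just the constant term; your appeal to Lemma~\ref{lemma to prove some condition} and (\hyperref[D2]{D2}) still disposes of it, since $r_0\in J$.
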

\begin{proof}
We will show that for any \(\alpha\in Y\), there exist \(f\in A_R(G_E)\) and \(U\in \Bisc(G_E^{(0)})\) such that \(\alpha\in s_f(U)\subseteq Y\). Let \(u\in \partial E\) be the unique boundary path such that \(\alpha\in Y\cap G_u^u\). If \(u\) is finite and ends in a sink, we have \(\alpha = r\notrational{u}\) for some \(r\in R\), and \(\{\alpha\}=s_{r\bm{1}_{\mathcal{Z}(u, u)}}(Z(u)) \subseteq Y\).

Now, suppose \(u\) is irrational. Then \(\alpha= r\notrational{u}\) for some \(r\in R\). Let \(\mu\) be the initial subpath of \(u\) ending at the first occurrence of \(v_r\), with \(v_r\) as in (\hyperref[L1]{L1}). Let \(f=r\bm{1}_{\mathcal{Z}(\mu, \mu)}\), so that \(\alpha\in s_f(Z(\mu))\). Now, for any \(z\in Z(\mu)\), we have \(s_f(z)=r\mayberational{z}\). But by (\hyperref[L1]{L1}), this is an element of \(Y\).

For rational \(u=\rho c^{\infty}\), there exist scalars \(r_n\in R\) such that \(\alpha=\sum_{n\in \mathbb{Z}}r_n 1_{(u,n|c|,u)}\). Let \(f=\sum_{n\in \mathbb{Z}}r_n \bm{1}_{\mathcal{Z}(\rho c,\rho)^n}\), so that clearly \(\alpha\in s_f(Z(\rho))\). We want to show that \(s_f(Z(\rho))\subseteq Y\), so let \(z\in Z(\rho)\). If \(z\) is irrational or finite, we have \(s_f(z)=r_0 \notrational{z}\), which is an element of \(Y\) by (\hyperref[L2]{L2}). If \(z\) is tail equivalent to \(u\), but not equal to \(u\), this means that there are multiple closed simple paths based at \(s(c)=r(c)\). In this case, \(s_f(z)\) is some Laurent polynomial with coefficients in \(\{r_n\mid n\in \mathbb{Z}\}\), which is an element of \(Y\) by Lemma \ref{lemma to prove some condition}, since we know that \(Y\cap RG_u^u = Y\cap RG_z^z\). Lastly, if \(z\) is rational but not tail equivalent to \(u\), \(s_f(z)\) will again be a Laurent polynomial with coefficients in \(\{r_n\mid n\in \mathbb{Z}\}\), which is an element of \(Y\) by (\hyperref[L2]{L2}).

Finally, suppose \(u\) is finite with \(r(u)\) is an infinite emitter, and \(\alpha = r\notrational{u}\), for some \(r\in R\). Let \(f=r\bm{1}_{\mathcal{Z}(u, u,F)}\), with \(F\) found by (\hyperref[L3]{L3}). Clearly, we have \(u\in Z(u,F)\) and \(s_f(u)=\alpha\). If we take any other \(z\in Z(u,F)\), this path contains a vertex of \(r(s^{-1}(r(u))\setminus F)\), so \(s_f(z)\in Y\) by (\hyperref[L3]{L3}), and we find \(s_f(Z(u,F))\subseteq Y\).
\end{proof}

\begin{lemma}\label{D3 not necessary for LPA}
Let \(Y\subseteq X(G_E)\) be an open subset that satisfies {\rm (\hyperref[D1]{D1})} and {\rm (\hyperref[D2]{D2})}. Then \(Y\) also satisfies {\rm (\hyperref[D3]{D3})}.
\end{lemma}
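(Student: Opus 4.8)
The plan is to reduce to the combinatorial conditions (L1)--(L3) and then, for each $\alpha \in Y$, exhibit an explicit witness for (D3). By Lemmas \ref{open_implies} and \ref{implies_open}, an open subset satisfying (D1) and (D2) is precisely one satisfying (L1)--(L3), so I may assume (L1)--(L3). Fix $\alpha \in Y$ and set $u = p(\alpha)$; I take $f$ to be exactly the function exhibited in the proof of Lemma \ref{implies_open}, so that $f_u = \alpha$ holds for free, and I only need to check the remaining clause of (D3): that $(f * \bm{1}_C)_v \in Y$ for all $v \in G_E^{(0)}$ and all $C \in \Bisc(G_E)$. Since every compact open bisection is a finite disjoint union of basic bisections $\mathcal{Z}(\gamma,\delta)$, and both $*$ and restriction to $G_v^v$ are additive, it suffices to treat $C = \mathcal{Z}(\gamma,\delta)$.

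The case where $u$ is finite or irrational is the easy one. Here $\alpha = r\notrational{u}$ and $f = r\bm{1}_B$ for a bisection $B \subseteq G_E^{(0)}$ lying in the unit space ($B = \{u\}$, $Z(\mu)$, or $Z(u,F)$ in the three subcases of Lemma \ref{implies_open}). The rule $\bm{1}_B * \bm{1}_C = \bm{1}_{BC}$ gives $f * \bm{1}_C = r\bm{1}_{C \cap \ran^{-1}(B)}$, so $(f*\bm{1}_C)_v$ vanishes unless $v \in B$, in which case it equals $r1_w$ for the unique $w \in C \cap G_v^v$. When $v \in B$, the defining property of $B$ together with (L1) or (L3) (or $\alpha \in Y$ itself, in the sink case) shows $r\notrational{v} \in Y \cap RG_v^v$; and if $v$ is rational this is an element of an ideal of $RG_v^v \cong R[x,x^{-1}]$ containing $r$, so multiplying by the unit of $R[x,x^{-1}]$ corresponding to $w$ shows $r1_w \in Y$ as well.

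The rational case $u = \rho c^\infty$ is the heart of the argument, with $\alpha = \sum_n r_n 1_{(u,n|c|,u)}$ and $f = \sum_n r_n \bm{1}_{\mathcal{Z}(\rho c,\rho)^n}$. Because $\ran(\mathcal{Z}(\rho c,\rho)^n) \subseteq Z(\rho)$ for every $n$, the range of $\supp f$ lies in $Z(\rho)$, so $(f*\bm{1}_C)_v = 0$ unless $v \in Z(\rho)$; moreover, by Lemma \ref{lem1} every value of $f * \bm{1}_C$ is one of the $r_n$, whence $(f*\bm{1}_C)_v$ is an $R$-combination of isotropy elements at $v$ whose coefficients are coefficients of $\alpha$. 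I would then distinguish three possibilities for $v \in Z(\rho)$. \emph{First}, if $v = u$, a direct matching of composable pairs in $\mathcal{Z}(\rho c,\rho)^n \cdot C$ shows $(f * \bm{1}_C)_u = x^m\alpha$ for some $m \in \ZZ$ (under $RG_u^u \cong R[x,x^{-1}]$), which lies in the ideal $Y \cap RG_u^u$. \emph{Second}, if $v$ is not tail equivalent to $u$, then $v \in Z(\rho)$ contains the vertex $s(c)$, so (L2) gives $r_n\notrational{v} \in Y$ for each occurring coefficient, and the ideal property (D1) upgrades these to the monomials $r_n 1_{(v,\cdot,v)}$ and hence to their sum. \emph{Third}, if $v$ is tail equivalent to $u$ but $v \ne u$, then $v = \rho\eta c^\infty$ with $\eta$ a nontrivial closed path at $s(c)$; if $s(c)$ carries a unique simple closed path this forces $\eta = c^k$ and $v = u$, a contradiction, while otherwise Lemma \ref{lemma to prove some condition} makes $Y \cap RG_u^u = JG_u^u$ flat, so every $r_n \in J$ and, translating by (D2), $(f*\bm{1}_C)_v \in JG_v^v = Y \cap RG_v^v$.

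The main obstacle is exactly this third possibility. When $Y \cap RG_u^u$ is a genuinely non-graded (Laurent-polynomial) ideal, a coefficient $r_n$ of $\alpha$ need not satisfy $r_n x^k \in Y \cap RG_u^u$, so one cannot argue monomial-by-monomial, and the partial contributions to $(f*\bm{1}_C)_v$ at units $v$ tail equivalent to $u$ must be reconciled with the polynomial structure. The resolution hinges on the dichotomy in Lemma \ref{lemma to prove some condition}: multiple return paths force the fibre into the flat form $JG_u^u$, dissolving the difficulty, whereas a unique return path rigidly forces any tail-equivalent $v \in Z(\rho)$ to coincide with $u$. Getting the bookkeeping of composable pairs right in the computation $(f*\bm{1}_C)_u = x^m\alpha$, and confirming that for $v \ne u$ no unexpected powers of $c$ contribute (so that the coefficients really are among the $r_n$), is the delicate part of the verification.
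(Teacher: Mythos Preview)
Your proof is correct and follows the paper's approach: you pick the very same witnesses $f$ as in Lemma~\ref{implies_open} and reduce the (D3) check to basic bisections, then verify case by case using (L1)--(L3) and Lemma~\ref{lemma to prove some condition}. Since the paper's own proof merely lists the choice of $f$ and says ``one can check by applying the conclusions of (L1)--(L3)'', you have in fact supplied the verification that the paper leaves to the reader; in particular, your three-way split of the rational case (with the dichotomy $c\in C_u(E)$ versus $c\notin C_u(E)$ resolving the tail-equivalent subcase) is exactly the intended argument.

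One small slip: it is not true that every compact open bisection in $G_E$ is a finite \emph{disjoint} union of sets $\mathcal{Z}(\gamma,\delta)$ when $E$ has infinite emitters (for instance, $\mathcal{Z}(v,v,F)$ with $v$ an infinite emitter and $F\ne\varnothing$ cannot be so decomposed). What \emph{is} true, and what both you and the paper actually need, is that $\bm{1}_C$ is a $\ZZ$-linear combination of indicators $\bm{1}_{\mathcal{Z}(\gamma,\delta)}$: write $C$ as a finite union of basic sets $\mathcal{Z}(\gamma,\delta,F)$ by compactness, use inclusion--exclusion (the basis is closed under intersection), and then expand each $\bm{1}_{\mathcal{Z}(\gamma,\delta,F)} = \bm{1}_{\mathcal{Z}(\gamma,\delta)} - \sum_{e\in F}\bm{1}_{\mathcal{Z}(\gamma e,\delta e)}$. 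Since $Y\cap RG_v^v$ is an additive subgroup, this is all the linearity you need, and the rest of your argument goes through unchanged.
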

\begin{proof}
For (\hyperref[D3]{D3}), we must check that for each \(u\in \partial E\) and \(\alpha\in Y\cap RG_u^u\), there exists \(f\in A_R(G_E)\) such that \(f_u=\alpha\) and  \((f * \bm{1}_C)_v \in Y\) for all \(v \in \partial E\) and \(C \in \Bisc(G_E)\). As the compact open bisections of the form \(\mathcal{Z}(\gamma, \delta, F)\) form a basis for the topology on \(G_E\), closed under finite intersections, it is enough to check this only for these compact open bisections. As we also have \(\mathcal{Z}(\gamma, \delta, F) = \mathcal{Z}(\gamma, \delta)\setminus \bigcup_{e\in F} \mathcal{Z}(\gamma e, \delta e)\) (where the union is a disjoint union), it is even enough to only check for compact open bisections of the form \(\mathcal{Z}(\gamma, \delta)\), with \(\gamma\) and \(\delta\) paths in \(E\). Note that by Lemma \ref{open_implies}, we know that \(Y\) satisfies (\hyperref[L1]{L1})--(\hyperref[L3]{L3}). Now, let \(u\in \partial E\), and let \(\alpha\in Y\cap RG_u^u\). 

If \(u\) is finite or irrational, \(\alpha\) must be of the form \(r \notrational{u}\) for some \(r\in R\). If \(u\) is finite and ends in a sink, choose \(f=r\bm{1}_{\mathcal{Z}(u,u)}\). If \(u\) is irrational, let \(u'\) be the initial subpath of \(u\) until the first occurrence of \(v_r\) as in (\hyperref[L1]{L1}), and choose \(f=r\bm{1}_{\mathcal{Z}(u',u')}\). If \(u\) ends in an infinite emitter, let \(f=r\bm{1}_{\mathcal{Z}(u,u,F)}\), with \(F\) as in (\hyperref[L3]{L3}). Finally, if \(u=\rho c^\infty\) is rational, \(\alpha\) will be of the form \(\sum_n r_n1_{(u,n|c|, u)}\) for some \(r_n\in R\), so let \(f=\sum_n r_n\bm{1}_{\mathcal{Z}(\rho c^n, \rho)}\). Then one can check by applying the conclusions of (\hyperref[L1]{L1})--(\hyperref[L3]{L3}) that \(f\) satisfies the condition needed for (\hyperref[D3]{D3}).
\end{proof}

\section{Ideals of Leavitt path algebras} \label{sec:ideals of LPAs}

In \cite[Theorem 2.8.10]{LPAbook}, it is shown that each ideal \(I\) of \(L_K(E)\), with \(K\) a field, is determined by the set of vertices \(H\) that it contains, the breaking vertices \(w\) of \(H\) for which \(w^H\in I\), and the Laurent polynomials \(p\in K[x, x^{-1}]\) with \(p(c)\in I\) for certain cycles \(c\) in \(E\). In the case of Leavitt path algebras over an arbitrary commutative ring \(R\) with unit, we have the additional difficulty that the ideal structure of \(R\) comes into play as well. It is clear that for any ideal \(I\trianglelefteq L_R(E)\) and for any vertex \(v\in E^0\), the set \(\{r\in R\mid rv\in I\}\) is an ideal of \(R\). So one way to resolve the problem about the ideal structure of \(R\) would be to assign to each ideal \(J\trianglelefteq R\) the pair \((H,S)\), with \(H=\{v\in E^0\mid rv\in I \text{ for all } r\in J\}\) and \(S\) the set of breaking vertices \(w\) of \(H\) for which \(rw^H\in I\) for each \(r\in J\). Such a pair would then automatically be an \emph{admissible pair}, a concept we will define shortly. If we similarly assign to each ideal of \(R[x,x^{-1}]\) a certain set of cycles as well, this would determine the ideal \(I\) completely.

The approach outlined in the previous paragraph would succeed in cataloguing the ideals of $L_R(E)$ using graphical data and ideals of $R$. But this is not exactly the approach we followed. It turns out to be better to describe an ideal \(I\trianglelefteq L_R(E)\) by instead assigning to each admissible pair an ideal of~\(R\), and to certain cycles an ideal of \(R[x,x^{-1}]\). In particular, this will make it easier to describe  the sum, intersection, and product of ideals. 

{Using this approach, each ideal of \(L_R(E)\) will correspond to a function from the set of admissible pairs of \(E\) to the lattice of ideals of \(R\), and a function from a certain set of cycles of~\(E\) to the lattice of ideals of \(R[x,x^{-1}]\). We will start this section by describing the functions that we will need, and the structure of the lattice that they form. After that, we will use Theorem~\ref{disassembly-theorem} to show that these functions indeed classify the ideals of \(L_R(E)\). Finally, to end this paper, we will describe what the product of ideals looks like, and which pairs of functions correspond to graded ideals.}

\subsection{Some auxiliary lattices} \label{section: lattices}
Before we can describe a lattice to which the lattice of ideals of \(L_R(E)\) is isomorphic, we will need some other lattices. The first of these is the lattice \(\mathscr{T}_E\) of \emph{admissible pairs}, as described in \cite[Definition 2.5.3]{LPAbook}. An admissible pair is a pair \((H,S)\) where \(H\subseteq E^0\) is hereditary saturated and \(S\) is a set of breaking vertices for \(H\). A partial order on $\mathscr{T}_E$ is given by \[(H_1,S_1)\leq (H_2,S_2) \Longleftrightarrow H_1\subseteq H_2 \text{ and } S_1\subseteq H_2\cup S_2.\]

The partially ordered set \(\mathscr{T}_E\) is a lattice, with the following supremum and infimum; see \cite[Proposition 2.5.6]{LPAbook}:
\begin{align*}(H_1, S_1)\vee (H_2, S_2) &= \left(\overline{H_1 \cup H_2}^{S_1\cup S_2},\ (S_1\cup S_2)\setminus \overline{H_1\cup H_2}^{S_1\cup S_2}\right),\\
(H_1, S_1) \wedge (H_2,S_2) &= \Big(H_1\cap H_2, \ (S_1\cap S_2) \cup \big((S_1\cup S_2)\cap (H_1\cup H_2)\big)\Big),\end{align*}
where, for a hereditary set of vertices \(H\) and a set of vertices \(S \subseteq H\cup B_H\), the set \(\overline{H}^S\) is the \emph{\(S\)-saturation} of \(H\), defined as the smallest subset \(H'\) of vertices that satisfies the following:
\begin{itemize}
    \item \(H\subseteq H'\),
    \item \(H'\) is hereditary saturated,
    \item If \(v\in S\) and \(r(s^{-1}(v))\subseteq H'\), then \(v\in H'\).
\end{itemize}

This \(S\)-saturation can be constructed as follows: let \(\Lambda_0^S(H) = H\) and  
\begin{equation} \label{S-saturation}\Lambda_n^S(H) = \Lambda_{n-1}^S(H) \cup \Big\{v\in E^0\setminus \Lambda_{n-1}^S(H)\mid v\in \operatorname{Reg}(E)\cup S\text{ and } r(s^{-1}(v)) \subseteq \Lambda_{n-1}^S(H)\Big\}.	
\end{equation}
Then \(\overline{H}^S = \bigcup_{n\in \mathbb{N}} \Lambda_n^S(H)\). See also \cite[Definition 2.5.5]{LPAbook}.

We can make the following observations about this construction, which will be useful later on:

\begin{observation}\label{observation: saturation} 
 \begin{enumerate}
     \item \label{observation: infinite emitter} If \(v\in \overline{H}^S\) is an infinite emitter, then we must already have \(v\in H\cup S\).
     \item \label{observation: vertex on cycle} If the vertex \(v\in \overline{H}^S\) lies on a cycle, then \(v\) must already be in \(H\).
     \item \label{observation: vertices receiving paths} We have $v \in \Lambda_n^S(H)$ if and only if every regular vertex receiving a path from $v$ of length $m$ is in $H$ and every singular  vertex receiving a path from $v$ of length $\le m$ is in $H \cup S$. This can be proved by induction on $n$.
 \end{enumerate}
\end{observation}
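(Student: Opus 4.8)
The substantive statement is part (\ref{observation: vertices receiving paths}), which I would establish by induction on \(n\), reading the depth \(m\) as existentially quantified with \(0\le m\le n\); parts (\ref{observation: infinite emitter}) and (\ref{observation: vertex on cycle}) then follow by short direct arguments from the recursive description (\ref{S-saturation}), and can in fact also be read off from (\ref{observation: vertices receiving paths}). Here ``a vertex \(w\) receives a path from \(v\)'' means there is a path in \(E\) from \(v\) to \(w\).

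For (\ref{observation: vertices receiving paths}), the base case \(n=0\) is immediate: \(\Lambda_0^S(H)=H\), only \(m=0\) is available, and the stated condition collapses to \(v\in H\). For the inductive step I would argue both directions through the defining recursion (\ref{S-saturation}). If \(v\in\Lambda_n^S(H)\setminus\Lambda_{n-1}^S(H)\), then \(v\in\operatorname{Reg}(E)\cup S\) and \(r(s^{-1}(v))\subseteq\Lambda_{n-1}^S(H)\); applying the inductive hypothesis to each range vertex \(r(e)\) produces a depth \(m_e\le n-1\) witnessing the path condition for \(r(e)\), and prefixing the edge \(e\) shows that \(m=1+\max_e m_e\) witnesses the condition for \(v\). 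Conversely, given the path condition for \(v\) at some \(m\le n\) with \(v\notin H\), I would strip the first edge from each path issuing from \(v\), check that every \(r(e)\) satisfies the condition at level \(m-1\), and invoke the inductive hypothesis to get \(r(s^{-1}(v))\subseteq\Lambda_{n-1}^S(H)\); since \(v\) must be regular or in \(S\) for the frontier past it to be reachable, (\ref{S-saturation}) then places \(v\in\Lambda_n^S(H)\).

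Parts (\ref{observation: infinite emitter}) and (\ref{observation: vertex on cycle}) I would prove directly. For (\ref{observation: infinite emitter}), if the infinite emitter \(v\) lies in \(\overline H^S=\bigcup_n\Lambda_n^S(H)\), take the least \(n\) with \(v\in\Lambda_n^S(H)\): if \(n=0\) then \(v\in H\), and if \(n\ge1\) then (\ref{S-saturation}) forces \(v\in\operatorname{Reg}(E)\cup S\), hence \(v\in S\) because \(v\) is an infinite emitter; either way \(v\in H\cup S\). For (\ref{observation: vertex on cycle}), write the cycle as \(v=v_0\xrightarrow{e_1}v_1\to\cdots\xrightarrow{e_\ell}v_\ell=v\). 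Since \(\overline H^S\) is hereditary, every \(v_i\) lies in \(\overline H^S\); and if some \(v_j\in H\) then a path around the cycle from \(v_j\) back to \(v\), together with hereditariness of \(H\), already forces \(v\in H\). So I may assume no \(v_i\) lies in \(H\), whence the least index \(n_i\) with \(v_i\in\Lambda_{n_i}^S(H)\) satisfies \(n_i\ge1\). By (\ref{S-saturation}), adding \(v_i\) at stage \(n_i\) requires \(v_{i+1}=r(e_{i+1})\in\Lambda_{n_i-1}^S(H)\), so \(n_{i+1}\le n_i-1\); running once around the cycle yields \(n_0>n_1>\cdots>n_\ell=n_0\), a contradiction, so \(v\in H\).

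The step I expect to be the main obstacle is the bookkeeping for infinite emitters in the inductive step of (\ref{observation: vertices receiving paths}). When \(v\in S\) is an infinite emitter the set \(r(s^{-1}(v))\) is infinite, so it is not automatic that \(m=1+\max_e m_e\) is finite. This is precisely where the breaking-vertex hypothesis enters: all but finitely many edges emitted by \(v\) have range already in \(H\) (contributing depth \(0\)), so the maximum is effectively taken over the finitely many escaping edges and is therefore finite. Securing this finiteness, and keeping the singular-versus-regular frontier conditions straight, is the delicate part; the rest is a routine unwinding of (\ref{S-saturation}).
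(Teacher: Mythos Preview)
Your proposal is correct and follows exactly the approach the paper indicates: the paper gives no proof at all for parts~(\ref{observation: infinite emitter}) and~(\ref{observation: vertex on cycle}) and only the one-line hint ``This can be proved by induction on $n$'' for part~(\ref{observation: vertices receiving paths}), which you carry out in detail. Your identification of the finiteness issue for breaking vertices in $S$ as the only delicate point is apt, and your direct arguments for (\ref{observation: infinite emitter}) and (\ref{observation: vertex on cycle}) via the least index $n$ with $v\in\Lambda_n^S(H)$ are exactly what the paper leaves to the reader.
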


\begin{example} \label{graph example}
As an example, let us again consider the Toeplitz graph
\[T=
 	\xymatrix{\bullet_u \ar@(ul,dl)_e \ar[r]^f & \bullet_v}
\]
from Example \ref{toeplitz graph groupoid}. The corresponding Leavitt path algebra is given by the free associative \(R\)-algebra \(R\langle X,Y\rangle\) modulo the single relation \(XY=1\), as was shown in \cite[Proposition 1.3.7]{LPAbook} for the case where \(R\) is a field. Let us determine the set \(\mathscr{T}_{T}\). Clearly, \(T\) has no infinite emitters, and any hereditary set of vertices containing \(u\) must also contain \(v\). It is then easy to verify that \(\mathscr{T}_{T}=\big\{(\varnothing, \varnothing), (\{v\}, \varnothing), (T^0, \varnothing)\big\}\).
\end{example}

In \cite[Theorem 2.5.8]{LPAbook}, it is proved that \(\mathscr{T}_E\) is isomorphic to the lattice of graded ideals of \(L_K(E)\), for any field \(K\). Since these graded ideals form a complete lattice, \(\mathscr{T}_E\) is also complete. On the groupoid side, $\mathscr{T}_E$ is isomorphic to the lattice of open invariant subsets of $G_E^{(0)}$ \cite[Theorem 3.3]{clark2016decomposability}. The supremum of an arbitrary number of admissible pairs is given in the following proposition.

\begin{proposition} \label{prop:supremum in TE}
For any collection of admissible pairs $\{(H_i,S_i) \}_i$, we have \[\displaystyle \bigvee_i (H_i,S_i) = \left(\overline{\bigcup_i H_i}^{\bigcup_i S_i},\ \Big(\bigcup_i S_i\Big)\setminus \overline{\bigcup_i H_i}^{\bigcup_i S_i}\right).\]
\end{proposition}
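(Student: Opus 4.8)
The plan is to verify directly that the right-hand pair is the least upper bound in $\mathscr{T}_E$, which simultaneously reproves that the supremum exists and identifies it. Write $H = \overline{\bigcup_i H_i}^{\bigcup_i S_i}$ and $S = (\bigcup_i S_i) \setminus H$ for the proposed pair, and proceed in three stages: show $(H,S)$ is admissible, show it is an upper bound, and show it is least.

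First I would check that $(H,S)$ is an admissible pair. The set $H$ is hereditary saturated by the very definition of the $S$-saturation. To see that $S$ is a set of breaking vertices for $H$, take $v \in S$, so $v \in S_j$ for some $j$ and $v \notin H$. Since $v$ is a breaking vertex for $H_j$ and $H_j \subseteq H$, the inclusion $E^0 \setminus H \subseteq E^0 \setminus H_j$ gives $|s^{-1}(v) \cap r^{-1}(E^0 \setminus H)| \le |s^{-1}(v) \cap r^{-1}(E^0 \setminus H_j)| < \infty$, so only finitely many edges from $v$ land outside $H$. For the positivity requirement, if instead $r(s^{-1}(v)) \subseteq H$, then since $v \in \bigcup_i S_i$ and $H$ is closed under the third clause defining the $\bigcup_i S_i$-saturation, we would obtain $v \in H$, contradicting $v \notin H$. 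Hence $v$ emits at least one edge with range outside $H$, so $v$ is a genuine breaking vertex for $H$. The upper-bound property is then immediate: $H_i \subseteq \bigcup_i H_i \subseteq H$ and $S_i \subseteq \bigcup_i S_i \subseteq H \cup S$, which is exactly $(H_i,S_i) \le (H,S)$.

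The heart of the argument is minimality. Let $(K,T)$ be any admissible pair with $(H_i,S_i) \le (K,T)$ for all $i$, so that $\bigcup_i H_i \subseteq K$ and $\bigcup_i S_i \subseteq K \cup T$. I would show $H \subseteq K$ by verifying that $K$ satisfies the three defining properties of the $\bigcup_i S_i$-saturation of $\bigcup_i H_i$, which forces $H \subseteq K$ by minimality of the saturation. The first two ($\bigcup_i H_i \subseteq K$, and $K$ hereditary saturated) are immediate. The crux is the third clause: suppose $v \in \bigcup_i S_i$ with $r(s^{-1}(v)) \subseteq K$; I must deduce $v \in K$. Since $v \in K \cup T$, the only alternative is $v \in T \setminus K$; but then $v$ is a breaking vertex for $K$, hence emits some edge with range outside $K$, contradicting $r(s^{-1}(v)) \subseteq K$. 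Thus $v \in K$, and $H \subseteq K$ follows. Finally $S \subseteq \bigcup_i S_i \subseteq K \cup T$, so $(H,S) \le (K,T)$, which establishes that $(H,S)$ is the supremum.

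I expect the breaking-vertex bookkeeping to be the only real obstacle: one must exploit the defining inequality $0 < |s^{-1}(v) \cap r^{-1}(E^0 \setminus H)| < \infty$ in both directions---once, using $H_j \subseteq H$, to obtain finiteness of the new breaking set, and once contrapositively to rule out a would-be breaking vertex all of whose edges return to the larger set. Everything else reduces to the minimality built into the definition of the $S$-saturation operation, so no appeal to completeness of $\mathscr{T}_E$ is needed; indeed this computation reproves completeness for arbitrary joins.
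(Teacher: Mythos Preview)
Your proof is correct and follows essentially the same approach as the paper's own proof. The only difference is cosmetic: where the paper verifies $H \subseteq K$ by induction on the explicit construction $\Lambda_n^{\bigcup_i S_i}(\bigcup_i H_i)$, you instead check that $K$ satisfies the three closure properties defining the $\bigcup_i S_i$-saturation and invoke its minimality directly; the key step in both arguments is the same observation that a vertex in $\bigcup_i S_i$ with $r(s^{-1}(v)) \subseteq K$ cannot lie in $T$ because breaking vertices for $K$ must emit an edge leaving $K$.
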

\begin{proof} 
We essentially use the proof of \cite[Proposition 2.5.6]{LPAbook}, but generalise it to infinite index sets. We immediately find that \(\overline{\bigcup_i H_i}^{\bigcup_i S_i}\) is hereditary saturated, by definition of the saturation. {Any vertex in  \((\bigcup_i S_i)\setminus\overline{\bigcup_i H_i}^{\bigcup_i S_i}\) must also be a breaking vertex for \(\overline{\bigcup_i H_i}^{\bigcup_i S_i}\)}.  So \((\overline{\bigcup_i H_i}^{\bigcup_i S_i}, (\bigcup_i S_i)\setminus\overline{\bigcup_i H_i}^{\bigcup_i S_i})\) is an admissible pair, and it is clearly greater than   or equal to any \((H_i,S_i)\), so it must be greater than or equal to \(\bigvee_i (H_i,S_i)\).

Now suppose we have some \((H,S)\in \mathscr{T}_E\) such that \((H,S)\geq (H_i,S_i)\), or in other words \(H_i\subseteq H\) and \(S_i\subseteq H\cup S\), for all \(i\). Then we also have \(\bigcup S_i\subseteq H\cup S\), so \((\bigcup_i S_i)\setminus\overline{\bigcup_i H_i}^{\bigcup_i S_i} \subseteq H\cup S\). Using the construction of the saturation given above, we will show that \(\Lambda_n^{\bigcup_i S_i} (\bigcup_i H_i) \subseteq H\) for each \(n\in \mathbb{N}\), from which it will follow that \(\overline{\bigcup_i H_i}^{\bigcup_i S_i} \subseteq H\). The case \(n=0\) is just \(\bigcup H_i \subseteq H\), which follows immediately from the fact that \(H_i \subseteq H\), for all \(i\). So suppose \(\Lambda_{n-1}^{\bigcup_i S_i} (\bigcup_i H_i) \subseteq H\), for some \(n\geq 1\). Suppose we have some vertex \(v\in E^0\setminus \Lambda_{n-1}^{\bigcup_i S_i} (\bigcup_i H_i)\). If \(v\) is regular and \(r(s^{-1}(v)) \subseteq \Lambda_{n-1}^{\bigcup_i S_i} (\bigcup_i H_i)\), then \(v\) must be in \(H\), since \(H\) is saturated. On the other hand, if \(v\in \bigcup_i S_i\) and \(r(s^{-1}(v)) \subseteq \Lambda_{n-1}^{\bigcup_i S_i} (\bigcup_i H_i)\), \(v\) must also be an element of \(H\): we know that it is an element of \(H\cup S\), but it cannot be in \(S\), as $r(s^{-1}(v)\subseteq H$ implies $v$ is not a breaking vertex for \(H\). It follows that \(\big(\overline{\bigcup_i H_i}^{\bigcup_i S_i},\ (\bigcup_i S_i)\setminus\overline{\bigcup_i H_i}^{\bigcup_i S_i}\big) \leq (H,S)\), so \(\bigvee_i (H_i,S_i) = \big(\overline{\bigcup_i H_i}^{\bigcup_i S_i},\ (\bigcup_i S_i)\setminus\overline{\bigcup_i H_i}^{\bigcup_i S_i}\big) \).
\end{proof}

Another property of \(\mathscr{T}_E\) is the following:
\begin{lemma}\label{admissible pairs distributive}
Let \(\{(H_\alpha, S_\alpha)\}_\alpha\) and \(\{(H_\beta, S_\beta)\}_\beta\) be sets of admissible pairs such that \(\bigvee_\alpha (H_\alpha, S_\alpha) = (H_1,S_1)\) and \(\bigvee_\beta (H_\beta, S_\beta) = (H_2,S_2)\).  Then we have \(\bigvee_{\alpha, \beta} ((H_\alpha, S_\alpha) \wedge (H_\beta, S_\beta)) = (H_1, S_1)\wedge (H_2, S_2)\). In particular, \(\mathscr{T}_E\) is a distributive lattice.
\end{lemma}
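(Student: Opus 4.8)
The plan is to avoid any direct manipulation of the $S$-saturation and instead transport the whole question to the lattice of open invariant subsets of $G_E^{(0)}$, via the isomorphism recalled just before Proposition~\ref{prop:supremum in TE}. Two facts make this work. First, both $\mathscr{T}_E$ and the lattice of open invariant subsets are complete lattices (we already know $\mathscr{T}_E$ is complete, and an arbitrary union of open invariant subsets is again open and invariant), and an order isomorphism between complete lattices automatically preserves all suprema and infima; hence the isomorphism carries $\bigvee$ and $\wedge$ in $\mathscr{T}_E$ to $\bigcup$ and $\cap$ of open invariant subsets. Second, the lattice of open invariant subsets is a \emph{frame}: finite meets are intersections, arbitrary joins are unions, and the frame law $U\cap\bigcup_i V_i = \bigcup_i (U\cap V_i)$ is just the set-theoretic distributivity of intersection over union.

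Granting these, the lemma reduces to the two-family distributive law in a frame. Writing $a_\alpha$, $b_\beta$ for the images of $(H_\alpha,S_\alpha)$, $(H_\beta,S_\beta)$, I would apply the frame law twice:
\[
\Big(\bigvee_\alpha a_\alpha\Big)\wedge\Big(\bigvee_\beta b_\beta\Big) = \bigvee_\beta\Big(\big(\textstyle\bigvee_\alpha a_\alpha\big)\wedge b_\beta\Big) = \bigvee_\beta\bigvee_\alpha (a_\alpha\wedge b_\beta) = \bigvee_{\alpha,\beta}(a_\alpha\wedge b_\beta),
\]
where the middle equality uses the frame law for each fixed $\beta$ together with commutativity of $\wedge$. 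Transporting back along the isomorphism gives exactly $\bigvee_{\alpha,\beta}\big((H_\alpha,S_\alpha)\wedge(H_\beta,S_\beta)\big) = (H_1,S_1)\wedge(H_2,S_2)$. The final assertion that $\mathscr{T}_E$ is distributive is then the special case in which the first family is $\{(H_1,S_1),(H_2,S_2)\}$ and the second family is a singleton $\{(H_3,S_3)\}$, which collapses to $\big((H_1,S_1)\vee(H_2,S_2)\big)\wedge(H_3,S_3) = \big((H_1,S_1)\wedge(H_3,S_3)\big)\vee\big((H_2,S_2)\wedge(H_3,S_3)\big)$.

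The real work, and where I expect the only subtlety, is justifying the first paragraph rather than the frame computation itself: one must be certain the cited bijection is genuinely an \emph{order} isomorphism (so that it intertwines the lattice operations), and that joins and meets of open invariant subsets really are unions and intersections — the latter because the intersection of two invariant sets is invariant and a union of invariant sets is invariant, so these set operations stay inside the sublattice. If a self-contained argument not invoking \cite{clark2016decomposability} is preferred, the alternative is to prove $A\le B$ directly (the reverse inequality $B\le A$ being immediate from monotonicity of $\wedge$ and $\vee$); there the difficulty concentrates entirely in controlling the saturation $\overline{\bigcup_i H_i}^{\bigcup_i S_i}$ of Proposition~\ref{prop:supremum in TE} across infinite families, using Observation~\ref{observation: saturation} to track which vertices enter the saturation. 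This bookkeeping is precisely what the frame approach lets us sidestep.
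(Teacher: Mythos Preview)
Your approach is correct and genuinely different from the paper's. The paper proves the lemma by direct combinatorial manipulation of the $S$-saturation: after the easy inequality $(H,S)\le(H_1,S_1)\wedge(H_2,S_2)$, it takes a vertex $v\in H_1\cap H_2$, invokes Observation~\ref{observation: saturation}(\ref{observation: vertices receiving paths}) to find integers $m,n$ with $v\in\Lambda_m^{\cup_\alpha S_\alpha}(\cup_\alpha H_\alpha)\cap\Lambda_n^{\cup_\beta S_\beta}(\cup_\beta H_\beta)$, and then tracks which vertices receiving paths from $v$ land in $(H_\alpha\cup S_\alpha)\cap(H_\beta\cup S_\beta)$ for some $\alpha,\beta$; a separate case analysis handles the breaking-vertex component. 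This is exactly the ``bookkeeping'' you anticipated and chose to avoid.

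Your route trades this for an appeal to the lattice isomorphism $\mathscr{T}_E\cong\{\text{open invariant subsets of }G_E^{(0)}\}$ from \cite{clark2016decomposability}, after which the statement is the frame law for open sets. This is shorter and more conceptual, and it makes the infinite-family version no harder than the finite one. The cost is that the argument is no longer self-contained within the graph combinatorics: it imports a result whose proof (that the bijection respects order in both directions) the paper does not reproduce. You flagged this dependence honestly, which is the right thing to do; if the paper's authors wanted the section to stand independently of \cite{clark2016decomposability}, they would prefer their direct argument, but as a proof of the lemma yours is perfectly valid.
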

\begin{proof}
We will write the supremum \(\bigvee_{\alpha, \beta} \big((H_\alpha, S_\alpha) \wedge (H_\beta, S_\beta)\big)\) as \((H,S)\). Note that this means $H = \overline{H'}^{S'}$ and $S = S'\setminus H$, where $H' = \bigcup_{\alpha, \beta} H_\alpha \cap H_\beta$ and $S' = \bigcup_{\alpha, \beta}(S_\alpha \cap S_\beta)\cup ((S_\alpha \cup S_\beta)\cap (H_\alpha \cup H_\beta))$.
Since \((H_\alpha, S_\alpha)\wedge (H_\beta, S_\beta) \leq (H_i,S_i)\) for each \(\alpha\) and \(\beta\) and for \(i\in \{1,2\}\), it follows that \((H,S) \leq (H_1, S_1)\wedge (H_2, S_2)\). For the reverse inequality, recall that \[(H_1, S_1)\wedge (H_2, S_2)=\big(H_1\cap H_2, (S_1\cap S_2)\cup ((S_1\cup S_2)\cap (H_1\cup H_2))\big).\] Suppose we have some vertex \(v\in H_1\cap H_2\). Using the construction given above, we can find \(m,n\in \mathbb{N}\) such that \(v\in \Lambda_m^{\bigcup_{\alpha} S_\alpha} (\bigcup_\alpha H_\alpha)\) and \(v\in \Lambda_n^{\bigcup_{\beta} S_\beta} (\bigcup_\beta H_\beta)\).  This means (see Observation \ref{observation: saturation} (\ref{observation: vertices receiving paths})) that all regular vertices that receive a path from $v$ of length \(m\) are contained in \(\bigcup_\alpha H_\alpha\) and all singular vertices that receive a path from \(v\) of length \(\le m\) are contained in \(\bigcup_\alpha H_\alpha\cup S_\alpha\). In particular, all such vertices lie in \(\bigcup_\alpha H_\alpha \cup S_\alpha\). The same can be said for \(n\) and \(\bigcup_\beta H_\beta \cup S_\beta\). Suppose that \(n\geq m\), the other case being analogous. Since each \(H_\alpha\) is hereditary, we also find that all regular vertices that receive a path of length \(n\) from \(v\) and all singular vertices that receive a path from \(v\) of length  \(\le n\) are elements of \(\bigcup_\alpha H_\alpha \cup S_\alpha\). So each such vertex is contained in some \((H_\alpha \cup S_\alpha)\cap (H_\beta \cup S_\beta)\). It follows from Observation \ref{observation: saturation} (\ref{observation: vertices receiving paths}) that \(v\in \Lambda_n ^{S'}(H') \subseteq H\), so \(H_1\cap H_2\subseteq H\).

Now suppose \(v\) is a vertex in \((S_1\cap S_2) \cup ((S_1\cup S_2) \cap (H_1\cup H_2)) = (S_1\cap S_2)\cup (S_1\cap H_2) \cup (H_1\cap S_2)\). If \(v\) is an element of \(S_1\), there must exist \(\alpha\) such that \(v\in S_\alpha\). On the other hand, if \(v\in H_1\), \(v\) must either be in some \(H_\alpha\) or in some \(S_\alpha\), as \(v\) is not a regular vertex. So if \(v\in S_1\cap S_2\), we can find \(\alpha\) and \(\beta\) such that \(v\in S_\alpha\cap S_\beta\), so \(v\) is in the set of breaking vertices of the admissible pair \((H_\alpha, S_\alpha) \wedge (H_\beta, S_\beta)\), so \(v\in H\cup S\). If \(v\in S_1\cap H_2\) we can find \(\alpha\) and \(\beta\) such that \(v\in S_\alpha\) and \(v\in H_\beta \cup S_\beta\), so again, so \(v\) is in the set of breaking vertices of \((H_\alpha, S_\alpha) \wedge (H_\beta, S_\beta)\), so \(v\in H\cup S\). The case \(v\in H_1\cap S_2\) is similar, and we conclude that \(v\) must be an element of \(H\cup S\). It follows that \((H_1, S_1)\wedge (H_2, S_2) \leq (H,S)\), which proves the lemma.
\end{proof}

If we define the set \(\mathscr{T}_E^* = \mathscr{T}_E\setminus \{(\varnothing,\varnothing)\}\), this forms a join-semilattice under the same partial order and with the same supremum, since \((\varnothing, \varnothing)\) is the least element of \(\mathscr{T}_E\). Remember also that for a ring \(A\), the lattice of ideals with the usual partial order is denoted by \(\mathcal{L}(A)\). The following definition is a generalisation of an idea in \cite[Theorem 6.1 (b)]{clark2019ideals}.

\begin{definition}\label{def:saturated function}
The set \(\mathscr{F}_{E,R}\) of \emph{saturated functions} is the set of functions \(f: \mathscr{T}_E^* \to \mathcal{L}(R)\) that satisfy \[f\Big(\bigvee_{i\in I} (H_i,S_i)\Big)= \bigcap_{i\in I} f\big((H_i,S_i)\big)\] for all families \(\{(H_i,S_i)\}_{i \in I}\) of admissible pairs in \(\mathscr{T}_E^*\). The set \(\mathscr{F}_{E,R}\) comes with a natural partial order, defined by: \(f_1\leq f_2\) if and only if \(f_1(H,S) \subseteq f_2(H,S)\) for all \((H,S)\in \mathscr{T}_E^*\).
\end{definition}

Notice that saturated functions are order-reversing, i.e.\! if \((H_1,S_1)\leq (H_2,S_2)\) then \(f(H_2,S_2)\subseteq f(H_1,S_1)\). If we reverse the partial order of \(\mathscr{T}_E^*\) and denote the new partially ordered set by \((\mathscr{T}_E^*)^{\rm op}\), then \(\mathscr{F}_{E,R}\) is the set of morphisms  \((\mathscr{T}_E^*)^{\rm op}\to \mathcal{L}(R)\) of complete meet-semilattices. 

\begin{notation}
Given two $\mathcal{L}(R)$-valued functions \(f_1\) and \(f_2\) with the same domain, we define the functions \(f_1+ f_2\), \(f_1\cap f_2\) and \(f_1f_2\), given by the pointwise sum, intersection, or multiplication of ideals respectively. We can similarly define sums and intersections of arbitrarily many $\mathcal{L}(R)$-valued functions.
\end{notation}

To show that \(\mathscr{F}_{E,R}\) is a lattice, we will be interested in \(f_1\cap f_2\) and \(f_1+f_2\), for  functions \(f_1,f_2\in \mathscr{F}_{E,R}\). It is clear that the intersection of two saturated functions is also saturated. Unfortunately, while the sum \(f_1+f_2\) will be order-reversing, it need not be saturated, as the following example shows:

\begin{example}\label{saturation example}
Consider the graph
\[E=\begin{tikzcd}
&\bullet u \arrow[dl] \arrow[dr] & \\
\bullet v & & \bullet w.
\end{tikzcd}\]
Then the functions
\[f_1: \begin{cases}
\{v\} &\mapsto R\\
\{w\} &\mapsto 0\\
E^0 &\mapsto 0
\end{cases} \qquad \text{ and } \qquad 
f_2: \begin{cases}
\{v\} &\mapsto 0\\
\{w\} &\mapsto R\\
E^0 &\mapsto 0
\end{cases}
\]
are saturated. Their sum, however, is not saturated, as 
\[(f_1 + f_2)(E^0) = 0 \neq R= (f_1 + f_2)(\{v\}) \cap (f_1+f_2)(\{w\}).\]
\end{example}

To address this problem, we will introduce the following definition.

\begin{definition} \label{def:saturation}
Given an order-reversing function \(f:\mathscr{T}_E^*\to \mathcal{L}(R)\), we will define the \emph{saturation} of \(f\) as the smallest function \(\overline{f}\in \mathscr{F}_{E,R}\) such that \(f(H,S)\subseteq \overline{f}(H,S)\) for all \((H,S)\in \mathscr{T}_E^*\). A standard argument proves that \(\overline f\) exists, and we give a construction of it below.
\end{definition}

\begin{remark}
Note that in Example \ref{saturation example}, the saturation of $f_1 + f_2$ will be the function sending every hereditary saturated subset to \(R\). This is because the saturation of \(\{v,w\}\) is \(E^0\). This can be seen to hold in more generality; namely,  the saturation of functions in \(\mathscr{F}_{E,R}\) taking values in \(\{0,R\}\) is a generalisation of the saturation of hereditary sets of vertices of \(E\).
\end{remark}

It is clear that \(\mathscr{F}_{E,R}\) is a complete lattice, with \(\bigwedge f_i = \bigcap_i f_i\) and \(\bigvee_i f_i = \overline{\sum_i f_i}\). We have the following explicit construction for the saturation of an order-reversing function:
\begin{proposition}\label{construction saturation}
If \(f:\mathscr{T}_E^*\to \mathcal{L}(R)\) is order-reversing, the saturation \(\overline{f}\) of \(f\) is given by \[\overline{f}(H,S) = \bigcup_{\substack{A\subseteq \mathscr{T}_E^* \\ \scalebox{0.70}{$\displaystyle \bigvee_{\alpha\in A}(H_\alpha, S_\alpha) = (H,S)$}}}\left( \bigcap_{\alpha\in A} f(H_\alpha, S_\alpha)\right).\]
\end{proposition}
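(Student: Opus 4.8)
The plan is to write $g$ for the function defined by the right-hand side, where for each $(H,S) \in \mathscr{T}_E^*$ the index sets $A$ range over all subsets of $\mathscr{T}_E^*$ with $\bigvee_{\alpha \in A}(H_\alpha,S_\alpha) = (H,S)$, and to verify the three properties that characterise $\overline f$ in Definition \ref{def:saturation}: that $g \in \mathscr{F}_{E,R}$, that $f(H,S) \subseteq g(H,S)$ for all $(H,S)$, and that $g \le h$ for every $h \in \mathscr{F}_{E,R}$ with $f \le h$. Together these force $g = \overline f$. The middle property is immediate, since $A = \{(H,S)\}$ is an admissible index set contributing exactly $f(H,S)$ to the union.

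The first genuine step is to check that each value $g(H,S)$ is an ideal of $R$, not merely a union of ideals. I would do this by showing the family $\{I_A := \bigcap_{\alpha \in A} f(H_\alpha,S_\alpha)\}_A$ appearing in the union is upward directed. Given two index sets $A_1, A_2$ both with join $(H,S)$, form $A_3 = \{(H_\alpha,S_\alpha)\wedge(H_\beta,S_\beta) : \alpha \in A_1,\ \beta \in A_2\}$. The distributivity Lemma \ref{admissible pairs distributive}, applied with both families having supremum $(H,S)$, gives $\bigvee A_3 = (H,S)\wedge(H,S) = (H,S)$, so $A_3$ is again admissible. Since $f$ is order-reversing and each meet lies below both $(H_\alpha,S_\alpha)$ and $(H_\beta,S_\beta)$, one gets $f((H_\alpha,S_\alpha)\wedge(H_\beta,S_\beta)) \supseteq f(H_\alpha,S_\alpha) + f(H_\beta,S_\beta)$, whence $I_{A_1} + I_{A_2} \subseteq I_{A_3}$, and in particular $I_{A_1}, I_{A_2} \subseteq I_{A_3}$. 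Thus the family is directed and its union $g(H,S)$ is an ideal.

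The same distributivity trick then shows $g$ is order-reversing: if $(H',S') \le (H,S)$ and $A$ is an index set for $(H,S)$, then $\{(H_\alpha,S_\alpha)\wedge(H',S') : \alpha \in A\}$ has join $(H,S)\wedge(H',S') = (H',S')$, and its associated ideal contains $I_A$, so $g(H,S) \subseteq g(H',S')$. With order-reversal established, saturation follows in both directions. For $\subseteq$, each $(H_i,S_i) \le \bigvee_i(H_i,S_i)$ gives $g(\bigvee_i(H_i,S_i)) \subseteq g(H_i,S_i)$, hence $g(\bigvee_i(H_i,S_i)) \subseteq \bigcap_i g(H_i,S_i)$. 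For $\supseteq$, given $x \in \bigcap_i g(H_i,S_i)$, choose for each $i$ an index set $A_i$ for $(H_i,S_i)$ with $x \in I_{A_i}$; then $A = \bigcup_i A_i$ satisfies $\bigvee A = \bigvee_i(H_i,S_i)$ by associativity of arbitrary joins in the complete lattice $\mathscr{T}_E$ (cf. Proposition \ref{prop:supremum in TE}), and $x \in \bigcap_i I_{A_i} = I_A \subseteq g(\bigvee_i(H_i,S_i))$. Hence $g \in \mathscr{F}_{E,R}$.

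Minimality is the easiest step: for $h \in \mathscr{F}_{E,R}$ with $f \le h$ and any index set $A$ with $\bigvee A = (H,S)$, saturation of $h$ gives $h(H,S) = \bigcap_{\alpha \in A} h(H_\alpha,S_\alpha) \supseteq \bigcap_{\alpha \in A} f(H_\alpha,S_\alpha) = I_A$, and taking the union over all admissible $A$ yields $g(H,S) \subseteq h(H,S)$. Combining the three properties gives $g = \overline f$. The one delicate point — and the place where I expect all the work to concentrate — is verifying that $g$ is ideal-valued and order-reversing; both rest entirely on the distributivity of $\mathscr{T}_E$ from Lemma \ref{admissible pairs distributive}, and it is precisely there that the argument would collapse for a non-distributive lattice of admissible pairs.
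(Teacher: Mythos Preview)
Your proof is correct and follows essentially the same approach as the paper: both establish that the formula defines an ideal and is order-reversing via the distributivity Lemma~\ref{admissible pairs distributive}, prove saturation by taking unions of index sets, and note minimality is immediate. One small technicality you glossed over (as does the paper in the ideal step, though it is careful in the order-reversing step): when forming $A_3 = \{(H_\alpha,S_\alpha)\wedge(H_\beta,S_\beta)\}$, some of these meets may equal $(\varnothing,\varnothing)\notin\mathscr{T}_E^*$; such terms should be discarded, which is harmless since removing the least element does not change the join and only enlarges the intersection $I_{A_3}$.
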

\begin{proof}
Denote the expression on the right-hand side as $f^*(H,S)$. We will show that \(f^*(H,S) \in \mathcal{L}(R)\) and that the function \(f^*\) is saturated, i.e.\! \(f^* \in \mathscr{F}_{E,R}\). Once we have this, it is immediate that it must be the least element of \(\mathscr{F}_{E,R}\) such that \(f(H,S) \subseteq f^*(H,S)\) for all \((H,S) \in \mathscr{T}_E^*\).
 
We must first show that \(f^*(H,S)\) is an ideal of \(R\). Since each \(f(H_\alpha,S_\alpha)\) is an ideal of \(R\), we only have to show that, if \(x,y\in f^*(H,S)\), we also have \(x+y\in f^*(H,S)\). So consider a subset \(A \subseteq \mathscr{T}_E^*\) such that \(\bigvee_{\alpha\in A}(H_\alpha, S_\alpha) = (H,S)\) and \(x\in \bigcap_{\alpha\in A}f(H_\alpha, S_\alpha)\), and a similar subset \(B\) for \(y\). Consider the subset \(C\subseteq \mathscr{T}_E^*\) given by \(C=\{(H_\alpha, S_\alpha) \wedge (H_\beta, S_\beta)\mid \alpha\in A, \beta\in B\}\). By Lemma~\ref{admissible pairs distributive}, we see that \(\bigvee_{\gamma\in C}(H_\gamma, S_\gamma) = (H,S)\). Since each \((H_\gamma,S_\gamma)\) is less than some \((H_\alpha,S_\alpha)\) and less than some \((H_\beta,S_\beta)\), each \(f(H_\gamma,S_\gamma)\) contains both \(x\) and \(y\), and consequently also \(x+y\). It follows that \(x+y\in f^*(H,S)\).

To show that \(f^*\) is order-reversing, consider some \((H_1,S_1)\leq (H_2,S_2)\) in \(\mathscr{T}_E^*\), and let \(r\in \bigcap_{\alpha \in A} f(H_\alpha, S_\alpha) \subseteq f^*(H_2,S_2)\) for a certain \(A\subseteq \mathscr{T}_E^*\) with \(\bigvee_{\alpha \in A} (H_\alpha, S_\alpha) = (H_2, S_2)\). Now consider the subset \(B\subseteq \mathscr{T}_E^*\) given by 
\[
B=\{(H_\alpha, S_\alpha)\wedge (H_1,S_1) \mid \alpha\in A \text{ such that } (H_\alpha, S_\alpha)\wedge (H_1,S_1) \neq (\varnothing, \varnothing)\}.
\]
By Lemma \ref{admissible pairs distributive} we have \(\bigvee_{\beta\in B} (H_\beta, S_\beta) = (H_1,S_1)\). Since \(f\) is order-reversing, it follows that \(r\in \bigcap_{\beta\in B} f(H_\beta, S_\beta)\), so \(r\in f^*(H_1,S_1)\).

Finally, we must show that \(f^*(\bigvee_{i\in I}(H_i,S_i)) = \bigcap_{i\in I} f^*(H_i,S_i)\), for an arbitrary number of admissible pairs \(\{(H_i, S_i)\}_{i\in I}\). The inclusion \(\subseteq\) follows from the order-reversing property of \(f^*\). On the other hand, if we have \(r\in \bigcap_{i\in I} f^*(H_i,S_i)\) for some \(r\in R\), there exist subsets \(A_i\subseteq \mathscr{T}_E^*\) such that \(\bigvee_{\alpha_i\in A_i}(H_{\alpha_i}, S_{\alpha_i}) = (H_i,S_i)\) and \(r\in \bigcap_{\alpha_i\in A_i} f(H_{\alpha_i}, S_{\alpha_i})\) for all \(i\). Since \(\bigvee_{i\in I, \alpha_i\in A_i} (H_{\alpha_i}, S_{\alpha_i}) = \bigvee_{i\in I} (H_i,S_i)\) and \(r\in \bigcap_{i\in I, \alpha_i\in A_i} f(H_{\alpha_i}, S_{\alpha_i})\), it follows from the definition of \(f^*\) that \(r\in f^*(\bigvee_{i\in I}(H_i,S_i))\).
\end{proof}

The following corollary will also be useful later on. Recall that \(\overline{c^0}\) is the hereditary saturated closure of the set of vertices of \(c\).
\begin{corollary}\label{saturation function for cycles}
If \(c\) is a cycle in \(E\) and \(f:\mathscr{T}_E^*\to \mathcal{L}(R)\) is an order reversing function, we have \(\overline{f}(\overline{c^0},\varnothing) = f(\overline{c^0}, \varnothing)\).
\end{corollary}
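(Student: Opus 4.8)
The plan is to prove the two inclusions separately. The inclusion $f(\overline{c^0},\varnothing) \subseteq \overline{f}(\overline{c^0},\varnothing)$ is immediate from Definition \ref{def:saturation}, since $\overline f$ dominates $f$ pointwise. For the reverse inclusion, I would unwind the explicit formula for $\overline f$ from Proposition \ref{construction saturation}:
\[
\overline{f}(\overline{c^0},\varnothing) = \bigcup_{\substack{A\subseteq \mathscr{T}_E^* \\ \scalebox{0.70}{$\displaystyle \bigvee_{\alpha\in A}(H_\alpha, S_\alpha) = (\overline{c^0},\varnothing)$}}}\left( \bigcap_{\alpha\in A} f(H_\alpha, S_\alpha)\right),
\]
so it suffices to show that for every family $A$ whose join is $(\overline{c^0},\varnothing)$, the intersection $\bigcap_{\alpha\in A} f(H_\alpha,S_\alpha)$ is already contained in $f(\overline{c^0},\varnothing)$.

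The crux is to show that any such family $A$ must contain a single pair $(H_{\alpha_0},S_{\alpha_0})$ with $(\overline{c^0},\varnothing)\le (H_{\alpha_0},S_{\alpha_0})$; that is, $\overline{c^0}\subseteq H_{\alpha_0}$. First I would apply Proposition \ref{prop:supremum in TE} to the hypothesis $\bigvee_{\alpha\in A}(H_\alpha,S_\alpha)=(\overline{c^0},\varnothing)$, which gives $\overline{\bigcup_\alpha H_\alpha}^{\bigcup_\alpha S_\alpha}=\overline{c^0}$. Since every vertex of $c$ lies on the cycle $c$ and belongs to this saturation, Observation \ref{observation: saturation}(\ref{observation: vertex on cycle}) forces $c^0\subseteq \bigcup_\alpha H_\alpha$. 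In particular $s(c)\in H_{\alpha_0}$ for some $\alpha_0\in A$; and because $H_{\alpha_0}$ is hereditary and every vertex of $c$ is the range of an initial subpath of $c$ starting at $s(c)$, all of $c^0$ lies in $H_{\alpha_0}$. As $H_{\alpha_0}$ is hereditary saturated, this upgrades to $\overline{c^0}\subseteq H_{\alpha_0}$, which is exactly $(\overline{c^0},\varnothing)\le (H_{\alpha_0},S_{\alpha_0})$.

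With this pair in hand the conclusion is routine: since $f$ is order-reversing by hypothesis, $f(H_{\alpha_0},S_{\alpha_0})\subseteq f(\overline{c^0},\varnothing)$, whence $\bigcap_{\alpha\in A} f(H_\alpha,S_\alpha)\subseteq f(H_{\alpha_0},S_{\alpha_0})\subseteq f(\overline{c^0},\varnothing)$. Taking the union over all admissible families $A$ yields $\overline{f}(\overline{c^0},\varnothing)\subseteq f(\overline{c^0},\varnothing)$, and combined with the easy inclusion this gives the desired equality.

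I expect the main obstacle to be the middle step, namely arguing that the cycle cannot be ``distributed'' across several members of the covering family. The idea that resolves it is that heredity glues the cycle together: once $s(c)$ lands in some $H_{\alpha_0}$, the whole cycle is dragged into that same $H_{\alpha_0}$, so the covering family always contains a single pair already above $(\overline{c^0},\varnothing)$. The fact that $\varnothing$ appears in the second coordinate (no breaking vertices) is what lets us reduce everything to the vertex set $\overline{c^0}$ and sidesteps any subtleties involving breaking vertices.
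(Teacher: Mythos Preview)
Your proof is correct and follows essentially the same approach as the paper's: both use Proposition~\ref{construction saturation} for the nontrivial inclusion, invoke Observation~\ref{observation: saturation}(\ref{observation: vertex on cycle}) to find some $H_{\alpha_0}$ containing a vertex of $c$, and then use heredity and the order-reversing property of $f$ to conclude. You spell out a few more details (explicitly citing Proposition~\ref{prop:supremum in TE} and tracing heredity from $s(c)$ to all of $c^0$), but the argument is the same.
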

\begin{proof}
The inclusion \(f(\overline{c^0}, \varnothing) \subseteq \overline{f}(\overline{c^0},\varnothing)\) is obvious. For the reverse inclusion, suppose we have a subset \(A\subseteq \mathscr{T}_E^*\) such that \(\bigvee_{\alpha\in A} (H_\alpha, S_\alpha) = (\overline{c^0}, \varnothing)\) and \(r\in \bigcap_{\alpha\in A}f(H_\alpha, S_\alpha)\). Since \(c\) is a cycle, there must be some pair \((H_\alpha, S_\alpha)\) such that \(H_\alpha\) contains a vertex of \(c\), by Observation \ref{observation: saturation} \eqref{observation: vertex on cycle}. But then we have \((\overline{c^0}, \varnothing) \leq (H_\alpha, S_\alpha)\), so \(r\in f(H_\alpha, S_\alpha) \subseteq f(\overline{c^0}, \varnothing)\).
\end{proof}

\subsection{The ideal correspondence for Leavitt path algebras}

In this section we introduce the lattice \(\mathscr{D}_{E,R}\). This lattice will be the most important one, as it will be isomorphic to the lattice of ideals of \(L_R(E)\). It may interest the reader to skip ahead to Theorem \ref{LPA ideal correspondence} and see the isomorphism before reading through the details of how we derived it.

\begin{notation} 
\begin{enumerate}[(i)]
\item We say two cycles \(c=e_1 \cdots e_n\) and \(c'=e'_1 \cdots e'_m\) are \emph{equivalent} if \(n=m\) and there exists \(i\in \{0, \ldots, n-1\}\) such that \(e_1 \cdots e_n = e'_{i+1} \cdots e'_{i+n}\), where the indices are taken modulo \(n\). So two cycles are equivalent if they contain the same edges, in the same order, but possibly have different basepoints.
\item Let \(C_u(E)\) be the set of equivalence classes of cycles \(c\) in \(E\) such that there is exactly one closed simple path based at \(s(c) = r(c)\), as in \cite[2.8.1]{LPAbook}. In general we will ignore the choice of basepoint, and write \(c\in C_u(E)\) for some representative cycle \(c\). (Cycles in \(C_u(E)\) were originally called \emph{cycles without K} in \cite{rangaswamyprime}, and have also been called \emph{exclusive cycles} in \cite{ararangaswamy}.)
\item For any cycle \(c\) in \(E\), let \(c_\downarrow\) be the hereditary saturated closure of the ranges of the exits of \(c\).
\end{enumerate}
\end{notation}

In particular, we have \(c_\downarrow = \overline{c^0}\) if and only if \(c\notin C_u(E)\) and for \(c\in C_u(E)\), \((c_\downarrow, \varnothing)\) is the largest admissible pair strictly smaller than \((\overline{c^0}, \varnothing)\).

\begin{definition} \label{D_E,R}
The set \(\mathscr{D}_{E,R}\) is the set of pairs \((f,g)\) where \(f\in \mathscr{F}_{E,R}\) and \(g:C_u(E)\to \mathcal{L}(R[x,x^{-1}])\) is a map such that \(g(c)\cap R = f(\overline{c^0},\varnothing)\) and \(g(c) \subseteq f(c_{\downarrow},\varnothing)[x,x^{-1}]\) if \(c_\downarrow\neq \varnothing\).
\end{definition}

The reason we only require $g$ to be defined on \(C_u(E)\), and not all the cycles of \(E\), is that for a cycle \(c\notin C_u(E)\), we have \(c_\downarrow = \overline{c^0}\). But then \(g(c)\) would have to be \(f(\overline{c^0}, \varnothing)[x,x^{-1}]\), so it is redundant to define \(g\) for cycles not in \(C_u(E)\). A consequence of Definition {\ref{D_E,R}} is that for each \((f,g)\in \mathscr{D}_{E,R}\) and \(c\in C_u(E)\), we have \(f(\overline{c^0}, \varnothing)[x,x^{-1}] \subseteq g(c) \subseteq f(c_\downarrow, \varnothing)[x,x^{-1}]\), but note that this alone does not imply \(g(c)\cap R = f(\overline{c^0}, \varnothing)\).

\begin{notation}\label{drawing functions}
To make it easier to describe elements of \(\mathscr{D}_{E,R}\), we introduce the following notation. Suppose \(E\) has a finite number of vertices and edges. Then we can visualise a pair \((f,g)\in \mathscr{D}_{E,R}\) by drawing the graph \(E\), and attaching to each vertex \(v\in E^0\) the ideal \(f(\overline{\{v\}}) \trianglelefteq R\) and to each cycle \(c\in C_u(E)\) the ideal \(g(c) \trianglelefteq R[x,x^{-1}]\). As \(E\) is finite there are no breaking vertices, and every hereditary saturated subset \(H\) is  the supremum (in the lattice \(\mathscr{T}_E\)) of the sets of the form \(\overline{\{v\}}\), where \(v\) runs over the vertices in \(H\). So as \(f\) is saturated, it is uniquely determined by its values at sets of the form \(\overline{\{v\}}\).

Note however that if we were to draw \(E\) and attach arbitrary ideals of \(R\) to the vertices and ideals of \(R[x,x^{-1}]\) to the cycles in \(C_u(E)\), this would not necessarily correspond to an element of \(\mathscr{D}_{E,R}\). We will give an example of this notation in the example below.
\end{notation}

\begin{example}
Using the same graph \(T\) from Example \ref{graph example}, let us work out the set \(\mathscr{D}_{T,\ZZ}\). To determine \(\mathscr{F}_{T,\ZZ}\), recall that \(\mathscr{T}_{T}^*\) contains exactly two admissible pairs. The condition \(f(\bigvee_{i\in I} (H_i,S_i))= \bigcap_{i\in I} f((H_i,S_i))\) is then equivalent to \(f(T^0, \varnothing)\subseteq f(\{v\}, \varnothing)\). So each \(f\in \mathscr{F}_{T, \ZZ}\) is determined uniquely by a pair of integers \((a,b)\) with \(a\mid b\), by sending \((\{v\}, \varnothing)\) to \(a\ZZ\), and \((T^0, \varnothing)\) to \(b\ZZ\).
		
Next, note that \(e\) is the only cycle in \(T\), and we have \(\overline{e^0}=T^0\) and \(e_\downarrow = \{v\}\). So for a pair \((f,g)\in \mathscr{D}_{T,\ZZ}\), we must have \(f\in \mathscr{F}_{T, \ZZ}\) as above, and \(g(e)=b\ZZ[x,x^{-1}] + aI\), where \(I\trianglelefteq \ZZ[x,x^{-1}]\) is any ideal with \(I\cap \ZZ\subseteq \frac{b}{a}\ZZ\). Each such pair \((f,g)\) is clearly an element of \(\mathscr{D}_{T, \ZZ}\), so we have determined the set \(\mathscr{D}_{T, \ZZ}\) completely.
	
To illustrate the notation introduced above, we can describe such a pair \((f,g)\) as follows:
\[\xymatrix{b\ZZ \ar@(ul,dl)_{g(e)} \ar[r] & a\ZZ}.\]
\end{example}

The following proposition will tell us how we can give the set $\mathscr{D}_{E,R}$ a lattice structure.

\begin{proposition}\label{lattice of pairs of functions}
The relation
\begin{align*}
 (f_1,g_1)\leq (f_2,g_2) \iff \begin{cases} f_1(H,S)\subseteq f_2(H,S) & \text{for all } (H,S)\in \mathscr{T}_E^*,\text{ and} \\
 g_1(c)\subseteq g_2(c) &\text{for all } c\in C_u(E)\end{cases}
\end{align*}
is a partial order on \(\mathscr{D}_{E,R}\). With this partial order, \(\mathscr{D}_{E,R}\) forms a lattice, with the meet and join operations given by:
 \begin{align*}(f_1,g_1)\wedge (f_2,g_2) &= (f_1\cap f_2, g_1\cap g_2), \\ 
	(f_1,g_1)\vee (f_2,g_2) &= (\overline{f_1+ f_2 + G_c^+}, g_1+ g_2),
 \end{align*}
  where \(G_c^+: \mathscr{T}_E^*\to \mathcal{L}(R)\) is the function that maps \((\overline{c^0},\varnothing)\) to \((g_1(c)+g_2(c))\cap R\) for \(c\in C_u(E)\), and the rest to the zero ideal.
\end{proposition}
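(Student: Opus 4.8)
The plan is to handle the partial order, the meet, and the join in turn, with essentially all the difficulty concentrated in the join. That $\leq$ is a partial order is immediate, since it is defined coordinatewise by inclusion of ideals in $\mathscr{F}_{E,R}$ and in the $g$-component, and inclusion is reflexive, antisymmetric and transitive. For the meet, I would first confirm that $(f_1\cap f_2, g_1\cap g_2)$ lies in $\mathscr{D}_{E,R}$: the function $f_1\cap f_2$ is saturated because intersections of saturated functions are saturated, and the two compatibility conditions of Definition \ref{D_E,R} reduce to the elementary identities $(g_1(c)\cap g_2(c))\cap R = (g_1(c)\cap R)\cap (g_2(c)\cap R)$ and $(J_1[x,x^{-1}])\cap (J_2[x,x^{-1}]) = (J_1\cap J_2)[x,x^{-1}]$ for ideals $J_1,J_2\trianglelefteq R$. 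That this pair is the greatest lower bound is then formal: it is a lower bound by construction, and any lower bound is contained in it coordinatewise.

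The substantive step is to show that $h:=f_1+f_2+G_c^+$ is order-reversing, so that its saturation $\overline h$ is defined by Proposition \ref{construction saturation} and lies in $\mathscr{F}_{E,R}$. As $f_1,f_2$ are order-reversing, the only thing to control is the $G_c^+$ term, which jumps up at each $(\overline{c^0},\varnothing)$. The key point is that for $c\in C_u(E)$ the pair $(c_\downarrow,\varnothing)$ is the largest admissible pair strictly below $(\overline{c^0},\varnothing)$; hence any $(H_1,S_1)<(\overline{c^0},\varnothing)$ satisfies $(H_1,S_1)\leq (c_\downarrow,\varnothing)$, and the compatibility condition $g_i(c)\subseteq f_i(c_\downarrow,\varnothing)[x,x^{-1}]$ yields $(g_1(c)+g_2(c))\cap R\subseteq f_1(c_\downarrow,\varnothing)+f_2(c_\downarrow,\varnothing)\subseteq (f_1+f_2)(H_1,S_1)$. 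This is precisely what absorbs the jump and forces $h$ to be order-reversing. (The case $c_\downarrow=\varnothing$ is vacuous, since then no pair of $\mathscr{T}_E^*$ lies strictly below $(\overline{c^0},\varnothing)$; I would also record that $c\mapsto(\overline{c^0},\varnothing)$ is injective on $C_u(E)$ by Observation \ref{observation: saturation}\eqref{observation: vertex on cycle}, so that $G_c^+$ is well defined.)

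Next I would verify $(\overline h, g_1+g_2)\in\mathscr{D}_{E,R}$. For the condition $(g_1+g_2)(c)\cap R = \overline h(\overline{c^0},\varnothing)$, apply Corollary \ref{saturation function for cycles} to obtain $\overline h(\overline{c^0},\varnothing)=h(\overline{c^0},\varnothing)=f_1(\overline{c^0},\varnothing)+f_2(\overline{c^0},\varnothing)+(g_1(c)+g_2(c))\cap R$; since $f_i(\overline{c^0},\varnothing)=g_i(c)\cap R\subseteq (g_1(c)+g_2(c))\cap R$, the right-hand side collapses to $(g_1(c)+g_2(c))\cap R = (g_1+g_2)(c)\cap R$. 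The inclusion $(g_1+g_2)(c)\subseteq \overline h(c_\downarrow,\varnothing)[x,x^{-1}]$ when $c_\downarrow\neq\varnothing$ follows from $g_1(c)+g_2(c)\subseteq (f_1(c_\downarrow,\varnothing)+f_2(c_\downarrow,\varnothing))[x,x^{-1}]$ together with $f_1(c_\downarrow,\varnothing)+f_2(c_\downarrow,\varnothing)\subseteq \overline h(c_\downarrow,\varnothing)$. For the least-upper-bound property, the pair is plainly an upper bound, and given any upper bound $(f',g')\in\mathscr{D}_{E,R}$ the $g$-coordinate is handled directly; for the $f$-coordinate it suffices, since $f'$ is saturated and $\overline h$ is the least saturated function dominating $h$, to check $h\leq f'$ pointwise, which reduces to $(g_1(c)+g_2(c))\cap R\subseteq g'(c)\cap R = f'(\overline{c^0},\varnothing)$, valid because $g_i(c)\subseteq g'(c)$.

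I expect the main obstacle to be exactly the order-reversing verification for $f_1+f_2+G_c^+$, because $G_c^+$ is not order-reversing on its own, and the entire construction is engineered so that the $f_1+f_2$ part, the compatibility condition $g_i(c)\subseteq f_i(c_\downarrow,\varnothing)[x,x^{-1}]$, and the maximality of $(c_\downarrow,\varnothing)$ below $(\overline{c^0},\varnothing)$ together compensate for its jump. The very presence of the extra term $G_c^+$ reflects that the constant terms of $g_1(c)+g_2(c)$ can strictly exceed $(g_1(c)\cap R)+(g_2(c)\cap R)$; without it the proposed $f$-coordinate would fail the defining condition $g(c)\cap R = f(\overline{c^0},\varnothing)$.
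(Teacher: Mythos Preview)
Your proposal is correct and follows essentially the same route as the paper: verify the meet lies in $\mathscr{D}_{E,R}$ via the two elementary identities, prove $f_1+f_2+G_c^+$ is order-reversing by using that $(c_\downarrow,\varnothing)$ is maximal below $(\overline{c^0},\varnothing)$ together with $g_i(c)\subseteq f_i(c_\downarrow,\varnothing)[x,x^{-1}]$, and then invoke Corollary~\ref{saturation function for cycles} for the compatibility at $(\overline{c^0},\varnothing)$. Your treatment of the least-upper-bound property (reducing $\overline h\leq f'$ to $h\leq f'$ via minimality of the saturation) and your remark on the injectivity of $c\mapsto(\overline{c^0},\varnothing)$ are in fact more explicit than the paper, which simply declares this step ``clear from its definition''.
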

\begin{proof}
Clearly, \(\leq\) is a partial order. For the meet, we only have to show that \((f_1\cap f_2, g_1\cap g_2)\) is an element of \(\mathscr{D}_{E,R}\): if this is the case, it is easy to see that it is the greatest lower bound of \((f_1,g_1)\) and \((f_2,g_2)\). For \(c\in C_u(E)\) we have
\[
(g_1\cap g_2)(c)\cap R = g_1(c)\cap R\cap g_2(c)\cap R = f_1(\overline{c^0},\varnothing) \cap f_2(\overline{c^0}, \varnothing) = (f_1\cap f_2)(\overline{c^0},\varnothing).
\]
If \(c_\downarrow \neq \varnothing\), then
\[
g_1(c)\cap g_2(c) \subseteq f_1(c_\downarrow,\varnothing) [x,x^{-1}] \cap f_2(c_\downarrow, \varnothing)[x,x^{-1}] = (f_1\cap f_2) (c_\downarrow, \varnothing)[x,x^{-1}]
\]
 since \(g_1(c)\cap g_2(c)\subseteq g_i(c)\subseteq f_i(c_\downarrow, \varnothing) [x,x^{-1}]\) for \(i= 1,2\). It follows that \((f_1\cap f_2, g_1\cap g_2) \in \mathscr{D}_{E,R}\), so \((f_1,g_1)\wedge (f_2,g_2) = (f_1\cap f_2, g_1\cap g_2)\).

For the supremum of \((f_1,g_1)\) and \((f_2,g_2)\) we will start by showing that \(f_1+f_2 + G_c^+\) is order-reversing, so its saturation \(\overline{f_1+f_2 + G_c^+}\) is defined and is an element of \(\mathscr{F}_{E,R}\); see Definition~\ref{def:saturation}. Since \(f_1\) and \(f_2\) are order-reversing, we only need to verify that, if \((H,S)< (\overline{c^0},\varnothing)\) for some \((H,S)\in \mathscr{T}_E^*\) and \(c\in C_u(E)\), then \((g_1(c)+g_2(c))\cap R \subseteq f_1(H,S)+ f_2(H,S)\). If we have such an admissible pair, we know that \((H,S)\leq (c_\downarrow,\varnothing)\), so {this} follows from the fact that \(g_i(c)\subseteq f_i(c_\downarrow, \varnothing)[x,x^{-1}]\), with \(i\in \{1,2\}\), i.e. all the coefficients of the Laurent polynomials in \(g_i(c)\) are elements of \(f_i(c_\downarrow, \varnothing)\).

Next, we will show that \((\overline{f_1+ f_2 + G_c^+}, g_1+ g_2)\in \mathscr{D}_{E,R}\). By definition, \((g_1+g_2)(c) \cap R = G_c^+(\overline{c^0}, \varnothing)\), so \((g_1+g_2)(c)\cap R \subseteq \overline{f_1+ f_2+ G_c^+}(\overline{c^0}, \varnothing)\). The reverse inclusion follows from Corollary~\ref{saturation function for cycles} and the fact that
\begin{align*}
(f_1+f_2+G_c^+)(\overline{c^0}, \varnothing) &= f_1(\overline{c^0}, \varnothing) + f_2(\overline{c^0}, \varnothing) + G_c^+(\overline{c^0}, \varnothing) \\ &= g_1(c)\cap R + g_2(c)\cap R + (g_1(c)+g_2(c))\cap R \subseteq (g_1+g_2)(c)\cap R).
\end{align*}
Lastly, if \(c_\downarrow \neq \varnothing\), we find
\begin{align*}
g_1(c)+g_2(c) &\subseteq f_1(c_\downarrow, \varnothing)[x,x^{-1}] + f_2(c_\downarrow, \varnothing)[x,x^{-1}] \\&= (f_1+f_2)(c_\downarrow, \varnothing)[x,x^{-1}] \subseteq \overline{f_1+ f_2+ G_c^+}(c_\downarrow, \varnothing)[x,x^{-1}],
\end{align*}
 so \((\overline{f_1+ f_2+ G_c^+}, g_1+g_2)\) is an element of \(\mathscr{D}_{E,R}\).

Finally we need to show that \((\overline{f_1+ f_2 + G_c^+}, g_1+ g_2)\) is indeed the least upper bound of \((f_1, g_1)\) and \((f_2, g_2)\), but this is clear from its definition.
\end{proof}

From now on, for each \(u\in \partial E\), we will naturally identify the group algebra \(RG_u^u\) with \(R\) if \(u\) is irrational or finite, and with \(R[x,x^{-1}]\) if \(u\) is rational. We introduce the shorthand \((H,\{v\}_\varnothing)\) to mean \((H,\{v\})\) if \(v\notin H\), and \((H,\varnothing)\) if \(v\in H\); i.e. \((H, \{v\}_\varnothing) = (H, \{v\}\setminus H)\). Recall that \(\Delta_{G_E,R}\) is the image of the disassembly map from Theorem \ref{disassembly-theorem}. We shall define the following two maps: 
\begin{align} \label{Psi}
	\Psi: \mathscr{D}_{E,R} &\to  \Delta_{G_E,R} \\ \notag
\end{align}
where
\begin{align*}
	\small \Psi(f,g)\cap RG_u^u= 
    \begin{cases}
    \bigcup_{v\in u^0} f\Big(\overline{\small\{v \small\}}, \varnothing\Big) & \text{for } u \text{ irrational} \\
    f\Big(\overline{\{ r(u) \}},\varnothing\Big) & \text{for } u \text{ finite, } r(u) \text{ a sink} \\
    \bigcup_{F\subseteq_{\rm finite} s^{-1}(r(u))}f\Big(\overline{r(s^{-1}(r(u))\setminus F)}, \{r(u)\}_\varnothing\Big) & \text{for } u \text{ finite, } r(u) \text{ an inf.\! emitter} \\
    g(c) & \text{for } u=\rho c^\infty, c\in C_u(E)\\
    f\big(\overline{c^0},\varnothing\big) [x,x^{-1}] & \text{for } u=\rho c^\infty, c\notin C_u(E),
  \end{cases}
\end{align*}
 and
\begin{align} \label{Phi}
\Phi: \Delta_{G_E,R} &\to \mathscr{D}_{E,R}\\ \notag
Y &\mapsto (f,g)
\end{align}
where
\begin{align*}
f(H,S) &= \bigg(\bigcap_{\substack{u \in \partial E \\ u^0 \cap H \ne \varnothing }} (Y \cap RG_u^u \cap R)\bigg)\cap\bigg(\bigcap_{v \in S}(Y \cap RG_v^v)\bigg) & \text{for } (H,S) \in \mathscr{T}_E^*,\\
g(c) &= Y \cap RG_{c^\infty}^{c^\infty} & \text{for } c \in C_u(E).
\end{align*}
We will sometimes write $\Phi(Y) = (\Phi(Y)_f, \Phi(Y)_g)$.

{Note that for \((f,g)\in \mathscr{D}_{E,R}\) and \(u\) a finite boundary path that ends in a sink, we have \[\Psi(f,g) \cap RG^u_u = f\Big(\overline{\{ r(u) \}},\varnothing\Big) = \bigcup_{v\in u^0} f\Big(\overline{\small\{v \small\}}, \varnothing\Big),\] by the order-reversing property of \(f\). In particular, this is the same expression as in the case where~\(u\) is an irrational boundary path. In some proofs we will need to distinguish between different kind of boundary paths, and this observation will allow us to treat the cases of irrational paths and finite paths that end in a sink together.}

The following theorem is the main reason we introduce these maps:

\begin{theorem}\label{phi is lattice isomorphism}
The maps $\Phi$ and $\Psi$ are mutually inverse lattice isomorphisms.
\end{theorem}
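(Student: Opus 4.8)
The plan is to prove that $\Psi$ and $\Phi$ are well-defined, order-preserving, and mutually inverse; since an order-isomorphism between lattices automatically preserves meets and joins, this yields the stated lattice isomorphism and we need not re-derive the explicit formulas of Proposition \ref{lattice of pairs of functions}. Order-preservation is immediate in both directions: each clause defining $\Psi(f,g)\cap RG_u^u$ is monotone in $(f,g)$, and each ideal $f(H,S)$, $g(c)$ defining $\Phi(Y)$ is obtained from $Y$ by intersections and restrictions, hence monotone in $Y$. So the real content is well-definedness together with $\Psi\circ\Phi=\mathrm{id}$ and $\Phi\circ\Psi=\mathrm{id}$.

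For well-definedness of $\Psi$, I would first check (D1) and (D2) directly from the case definition: each fibre $\Psi(f,g)\cap RG_u^u$ is visibly an ideal (a directed union of ideals of $R$ in the irrational and infinite-emitter cases, an ideal of $R[x,x^{-1}]$ in the rational cases), and it depends only on the tail-equivalence class of $u$ and on the equivalence class of the relevant cycle, which gives translation-invariance (D2). By Theorem \ref{open equivalence} and Lemma \ref{D3 not necessary for LPA} it then remains to verify \hyperref[L1]{(L1)}--\hyperref[L3]{(L3)}; each follows by taking $v_r$ (resp. the finite set $F$) to be the witness of membership and using that $f$ is order-reversing, together with the two compatibility conditions $g(c)\cap R=f(\overline{c^0},\varnothing)$ and $g(c)\subseteq f(c_\downarrow,\varnothing)[x,x^{-1}]$ built into $\mathscr{D}_{E,R}$. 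The latter is exactly what makes \hyperref[L2]{(L2)} hold, since a path meeting a vertex of $c$ but not tail-equivalent to $\rho c^\infty$ must take an exit and so pass through $c_\downarrow$. For well-definedness of $\Phi$, the key observation is that Proposition \ref{correspondence: proposition}(5) rewrites $\Phi(Y)_f(H,S)=\{r\in R : rv\in I \text{ for all } v\in H,\ rw^H\in I \text{ for all } w\in S\}$ where $I=\mathcal I(Y)$; the saturation identity for $f$ then reduces to the fact that $\{w : rw\in I\}$ is hereditary and saturated (a consequence of the Cuntz--Krieger relations, i.e.\ of $I$ being an ideal) combined with Proposition \ref{prop:supremum in TE}. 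The compatibility conditions on $g=\Phi(Y)_g$ come from Proposition \ref{correspondence: proposition}(3) and from applying \hyperref[L2]{(L2)} and (D2) to the exits of $c$.

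For $\Psi\circ\Phi=\mathrm{id}$, since elements of $\Delta_{G_E,R}$ are determined by their fibres, I would fix $Y$ and compare $\Psi(\Phi(Y))\cap RG_u^u$ with $Y\cap RG_u^u$ for each $u\in\partial E$. The rational case $u=\rho c^\infty$ is handled by (D2) when $c\in C_u(E)$ (both equal $Y\cap RG_{c^\infty}^{c^\infty}$ under $RG_u^u\cong R[x,x^{-1}]$) and by Lemma \ref{lemma to prove some condition} when $c\notin C_u(E)$ (the fibre is then forced to have the form $J[x,x^{-1}]$). The irrational case is exactly encapsulated by \hyperref[L1]{(L1)}, the infinite-emitter case by \hyperref[L3]{(L3)}, and the sink case is direct: each of these conditions says that membership $r\notrational{u}\in Y$ is detected by a single vertex or a single finite set of edges, which is precisely the statement that $Y\cap RG_u^u$ equals the union defining $\Psi(\Phi(Y))\cap RG_u^u$.

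The genuinely hard step is $\Phi\circ\Psi=\mathrm{id}$, and within it the $f$-component. The $g$-component is immediate: $\Phi(\Psi(f,g))_g(c)=\Psi(f,g)\cap RG_{c^\infty}^{c^\infty}=g(c)$. For $f$, write $I=\mathcal I(\Psi(f,g))$; using the rewriting above and the saturation property of $f$ (with Proposition \ref{prop:supremum in TE} to decompose $(H,S)$ into generators), everything reduces to the single-vertex identity $\{r\in R : rv\in I\}=f(\overline{\{v\}},\varnothing)$, together with its breaking-vertex analogue. The inclusion $\supseteq$ is a routine monotonicity check across the clauses of $\Psi$. The reverse inclusion is the crux, and I would argue it contrapositively. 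Set $W=\{w\in\overline{\{v\}} : r\in f(\overline{\{w\}},\varnothing)\}$; since $f$ is order-reversing, $W$ is hereditary. If $r\notin f(\overline{\{v\}},\varnothing)$, then the saturation identity $f(\overline{W},\varnothing)=\bigcap_{w\in W}f(\overline{\{w\}},\varnothing)\ni r$ forces $v\notin\overline{W}$; since $\overline{W}$ is hereditary saturated, the construction \eqref{S-saturation} lets one build a boundary path $x$ issuing from $v$ whose vertices all lie in $E^0\setminus\overline{W}$, so $r\notin f(\overline{\{v'\}},\varnothing)$ for every $v'\in x^0$, and then the fibre formula for $\Psi(f,g)\cap RG_x^x$ gives $r\notrational{x}\notin\Psi(f,g)$, i.e.\ $rv\notin I$. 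The main subtlety here is reconciling this escaping-path argument with the infinite-emitter endpoint clause of $\Psi$ and with the breaking-vertex pairs appearing in $f(H,S)$; once the single-vertex and breaking-vertex identities are in hand, the saturation property of $f$ assembles the generators into $f(H,S)$, and the proof concludes.
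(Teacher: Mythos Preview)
Your overall architecture matches the paper's exactly: the proof is split into well-definedness of $\Psi$, well-definedness of $\Phi$, $\Psi\circ\Phi=\mathrm{id}$, and $\Phi\circ\Psi=\mathrm{id}$, with order-preservation handled separately. Your sketches for the first three parts are correct and essentially identical to Lemmas~\ref{range psi}--\ref{psi_phi}.

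The difference, and the gap, is in $\Phi\circ\Psi=\mathrm{id}$. You reduce to the single-vertex identity $\{r:rv\in I\}=f(\overline{\{v\}},\varnothing)$ plus an unstated ``breaking-vertex analogue'', then argue the nontrivial inclusion by building an escaping boundary path $x$ with vertices outside $\overline{W}$, where $W=\{w: r\in f(\overline{\{w\}},\varnothing)\}$. This works cleanly when $x$ is infinite or ends in a sink, but if $x$ terminates at an infinite emitter $v'$ (which can happen: $v'\notin\overline{W}$ is not forced into $\overline{W}$ by saturation even when all its successors are), you must show $r\notin\bigcup_F f\big(\overline{r(s^{-1}(v')\setminus F)},\{v'\}_\varnothing\big)$. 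Nothing in your setup gives this: the pairs $\big(\overline{r(s^{-1}(v')\setminus F)},\{v'\}\big)$ are not comparable to $(\overline{W},\varnothing)$, so knowing $r\in f(\overline{W},\varnothing)$ is useless here. You flag this as ``the main subtlety'' but do not resolve it, and the breaking-vertex analogue is likewise left unstated.

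The paper's Lemma~\ref{phi_psi} avoids this by \emph{not} reducing to a fixed set of generators. Instead, given $r\in\Phi(\Psi(f,g))_f(H,S)$, it constructs an $r$-dependent family of admissible pairs---one pair $(H_w,\{w\})$ for each $w\in S$, one pair $(\overline{V},\varnothing)$ collecting the vertices witnessed along boundary paths, and one pair $(H_z,\{z\}_\varnothing)$ for each infinite emitter $z\in H$---each of which already satisfies $r\in f(\cdot)$ by the definition of $\Psi$. The escaping-path argument then appears only at the end, to show that the join of this family is $(H,S)$; at that stage infinite emitters cause no trouble because they have already been accounted for by their own dedicated pairs. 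Your reduction-to-generators strategy can be made to work, but it requires exactly the infinite-emitter and breaking-vertex analysis that the paper's organisation absorbs automatically.
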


The proof of this theorem will be given in the Lemmas \ref{range psi}--\ref{phi_psi}. The work lies in showing that \(\Psi\) and \(\Phi\) are mutually inverse bijections, since it can easily be checked that they preserve the order.

\begin{lemma}\label{range psi}
If $(f,g) \in \mathscr{D}_{E,R}$, then $\Psi(f,g) \in \Delta_{G_E,R}$.
\end{lemma}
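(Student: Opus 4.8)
The plan is to show that $Y := \Psi(f,g)$ lies in $\Delta_{G_E,R}$ by verifying conditions (\hyperref[D1]{D1}) and (\hyperref[D2]{D2}) together with openness. By Theorem \ref{open equivalence}, once (\hyperref[D1]{D1}) and (\hyperref[D2]{D2}) are in hand I may replace ``open, and (\hyperref[D3]{D3})'' by the more combinatorial conditions (\hyperref[L1]{L1})--(\hyperref[L3]{L3}). So I would prove that $Y$ satisfies (\hyperref[D1]{D1}), (\hyperref[D2]{D2}) and (\hyperref[L1]{L1})--(\hyperref[L3]{L3}), and then invoke Theorem \ref{open equivalence} to obtain openness and (\hyperref[D3]{D3}) for free.

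First, (\hyperref[D1]{D1}). In the two rational cases the fibre $Y \cap RG_u^u$ equals $g(c)$ or $f(\overline{c^0},\varnothing)[x,x^{-1}]$, which are ideals by Definition \ref{D_E,R}. In the irrational and infinite-emitter cases it is a union of ideals of $R \cong RG_u^u$; here I would observe that the union is directed, since moving further along $u$ (respectively, enlarging the finite set $F$) yields a smaller hereditary saturated set and hence, by the order-reversing property of $f$, a larger ideal. A directed union of ideals is an ideal, giving (\hyperref[D1]{D1}). For (\hyperref[D2]{D2}) the key input is that tail-equivalent boundary paths share a common tail, and that cycles are primitive, so two tail-equivalent rational paths determine the same class in $C_u(E)$ and the same set $\overline{c^0}$. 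For trivial isotropy (irrational, sink, infinite emitter) the translation map is the identity on $R$, and the value of $Y$ depends only on the tail of $u$ (again by directedness, where the tail vertices dominate), so $Y \cap RG_u^u = Y \cap RG_v^v$ on a common orbit. For a rational orbit I would compute, with $t = (v,k,u) \in G_u^v$, that $\tau_t(u,|c|,u) = (v,|c|,v)$, so $\tau_t$ sends the positive generator to the positive generator; under $RG_u^u \cong R[x,x^{-1}]$ this is the identity automorphism, fixing $g(c)$ and $f(\overline{c^0},\varnothing)[x,x^{-1}]$. This yields (\hyperref[D2]{D2}).

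The heart of the argument is (\hyperref[L1]{L1})--(\hyperref[L3]{L3}), all instances of one mechanism: if $r \in f(H,S)$ and $z$ is a boundary path containing a vertex $v \in H$, then $\overline{\{v\}} \subseteq H$ gives $(\overline{\{v\}},\varnothing) \le (H,S)$, so $r \in f(\overline{\{v\}},\varnothing)$ by order-reversal; a short case analysis on the type of $z$ (using $v \in z^0$, hereditariness, and, for rational $z = \rho' c''^\infty$, the identity $g(c'') \cap R = f(\overline{c''^0},\varnothing)$ from Definition \ref{D_E,R}) then gives $r\,1_{(z,0,z)} \in Y \cap RG_z^z$. For (\hyperref[L1]{L1}) I choose $v_r \in u^0$ with $r \in f(\overline{\{v_r\}},\varnothing)$. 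For (\hyperref[L2]{L2}) I use that any coefficient of a Laurent polynomial in $Y \cap RG_u^u$ lies in $f(\overline{c^0},\varnothing)$ (shift the polynomial so the coefficient becomes the constant term, which lands in $g(c)\cap R = f(\overline{c^0},\varnothing)$), together with the elementary identity $\overline{\{w\}} = \overline{c^0}$ for any vertex $w$ of $c$, so that a vertex of $c$ lying on $z$ plays the role of $v$. For (\hyperref[L3]{L3}) I read off the required finite set $F$ directly from the defining union of $Y \cap RG_u^u$ and apply the mechanism with $H = \overline{r(s^{-1}(r(u))\setminus F)}$.

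I expect the main obstacle to be the bookkeeping with admissible pairs rather than any conceptual difficulty: one must check that the pairs appearing in the union defining $\Psi$ are genuinely in $\mathscr{T}_E^*$ (in particular that the infinite emitter $r(u)$ really is a breaking vertex for $\overline{r(s^{-1}(r(u))\setminus F)}$, so that $(\overline{r(s^{-1}(r(u))\setminus F)},\{r(u)\}_\varnothing)$ is admissible), and that the $S$-component $\{r(u)\}_\varnothing$ is compared correctly when applying order-reversal. Once (\hyperref[L1]{L1})--(\hyperref[L3]{L3}) are established, Theorem \ref{open equivalence} immediately gives that $Y$ is open and satisfies (\hyperref[D3]{D3}), so $Y \in \Delta_{G_E,R}$, completing the proof.
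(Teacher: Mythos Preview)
Your overall strategy---verify (\hyperref[D1]{D1}), (\hyperref[D2]{D2}), and (\hyperref[L1]{L1})--(\hyperref[L3]{L3}), then invoke Theorem \ref{open equivalence}---is exactly the paper's approach, and your treatment of (\hyperref[D1]{D1}), (\hyperref[D2]{D2}), (\hyperref[L1]{L1}) and (\hyperref[L3]{L3}) is fine.

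There is, however, a genuine gap in your argument for (\hyperref[L2]{L2}). You claim that every coefficient of a Laurent polynomial in $Y\cap RG_u^u = g(c)$ lies in $f(\overline{c^0},\varnothing)$, arguing that one can ``shift the polynomial so the coefficient becomes the constant term, which lands in $g(c)\cap R$.'' This is false: shifting puts the coefficient $r_k$ in the constant-term position of the shifted polynomial, but it does \emph{not} put the constant polynomial $r_k$ into $g(c)$. For a concrete counterexample, take $R=\ZZ$, $g(c)=(x-1)\trianglelefteq \ZZ[x,x^{-1}]$; then $g(c)\cap R=0=f(\overline{c^0},\varnothing)$ while the coefficients $\pm 1$ of $x-1$ certainly do not lie in $0$. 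So for $c\in C_u(E)$ you have no reason to expect $r\in f(\overline{c^0},\varnothing)$, and your ``mechanism'' (which requires $r\in f(H,S)$ with $z$ meeting $H$) cannot be fed with $H=\overline{c^0}$.

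The missing idea is precisely the second compatibility condition in Definition \ref{D_E,R}: when $c\in C_u(E)$ has an exit, one has $g(c)\subseteq f(c_\downarrow,\varnothing)[x,x^{-1}]$, so every coefficient lies in $f(c_\downarrow,\varnothing)$, not $f(\overline{c^0},\varnothing)$. The point is that a boundary path $z$ which meets $c^0$ but is \emph{not} tail equivalent to $c^\infty$ must take an exit of $c$ (or, if $z$ is finite ending at an infinite emitter on $c$, one uses the pair $(\overline{r(s^{-1}(r(z))\setminus\{e\})},\{r(z)\}_\varnothing)\le (c_\downarrow,\varnothing)$ with $e$ the unique edge of $c$ out of $r(z)$). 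In every case $z$ therefore meets $c_\downarrow$, and \emph{that} is the hereditary saturated set to feed into your mechanism. Once you replace $\overline{c^0}$ by $c_\downarrow$ in (\hyperref[L2]{L2}) and supply this exit argument, your proof goes through.
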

\begin{proof}
By Theorem \ref{open equivalence}, we have to check the conditions (\hyperref[D1]{D1}), (\hyperref[D2]{D2}) and (\hyperref[L1]{L1})--(\hyperref[L3]{L3}). Let \(u\in \partial E\) be a unit. It is clear that \(\Psi(f,g) \cap RG_u^u\trianglelefteq RG_u^u\), because $\Psi(f,g) \cap RG_u^u$ is either an ideal in the image of $f$ or $g$, or the union of an ascending chain of ideals in the image of $f$.

Next we will show that \(\Psi(f,g)\cap RG_u^u\) and \(\Psi(f,g)\cap RG_v^v\) are canonically isomorphic if \(u\) and \(v\) are tail equivalent. If \(u\) (and thus \(v\)) is rational or ends in an infinite emitter, this follows by noting that \(\Psi(f,g)\cap RG_u^u\) only depends on the recurring cycle or the infinite emitter at which \(u\) ends. If \(u\) is irrational or ends in a sink, it follows from the order-reversing property of \(f\), since \(f(\overline{\{v_1\}},\varnothing) \subseteq f(\overline{\{v_2\}},\varnothing)\) if there is a path from the vertex \(v_1\) to the vertex \(v_2\).

For (\hyperref[L1]{L1}), if \(u\) is irrational and $r \in \Psi(f,g)\cap RG_u^u$, we can choose the first vertex \(v\) on \(u\) such that \(r\in f(\overline{\{v\}},\varnothing)\). Indeed, let \(z\) be a boundary path such that \(v\in z^0\). If \(z\) is irrational or ends in a sink, we have \(r\in f(\overline{\{v\}}, \varnothing)\subseteq \Psi(f,g)\cap RG_z^z\). If \(z = \rho c^\infty\) is rational, we see that \((\overline{c^0}, \varnothing) \leq (\overline{\{v\}}, \varnothing)\), so \(r\in f(\overline{\{v\}}, \varnothing) \subseteq f(\overline{c^0}, \varnothing) = \Psi(f,g)\cap RG_z^z\cap R\), regardless of whether \(c\) is an element of \(C_u(E)\) or not. And if \(z\) ends in an infinite emitter, the similar statement follows from the fact that \((\overline{r(s^{-1}(r(z)))}, \{r(z)\}_\varnothing)\leq (\overline{\{v\}}, \varnothing)\).

To show (\hyperref[L2]{L2}), let \(u=\rho c^\infty\in \partial E\) be rational, \(z\in \partial E\) a boundary path not tail equivalent to \(u\) such that \(z^0\cap c^0\neq \varnothing\), and \(r\in R\) a coefficient of some Laurent polynomial in \(\Psi(f,g)\cap RG_u^u\). If \(z\) is irrational or ends in a sink, \(z\) must contain an exit \(e\) of \(c\), so \(r(e)\in z^0\). If \(c\in C_u(E)\) we see that \(r\in f(c_\downarrow, \varnothing)\) since \(g(c)\subseteq f(c_\downarrow, \varnothing)[x,x^{-1}]\), and if \(c\notin C_u(E)\), then \(r\in f(\overline{c^0},\varnothing) = f(c_\downarrow, \varnothing)\). Since \(r(e)\in c_\downarrow\), the order-reversing property of \(f\) implies that \(r\in f(\overline{r(e)},\varnothing)\subseteq Y\cap RG_z^z\). If \(z=\rho \gamma^\infty\) is rational but not tail equivalent to \(u\), \(z\) must also contain an exit of \(c\), so \((\overline{\gamma}, \varnothing) \leq (c_\downarrow, \varnothing)\), and we can conclude the same as above, since \(g(\gamma)\cap R = f(\overline{\gamma^0}, \varnothing)\) if \(\gamma\in C_u(E)\). Lastly, suppose \(z\) ends in an infinite emitter and that there exists a vertex \(v\in z^0\cap c^0\). If \(v \ne r(z)\), we can use the same reasoning as above. On the other hand, suppose \(v=r(z)\), and let \(e\) be the unique edge of \(c\) such that \(s(e)=v\). Then we have \((r(s^{-1}(v)\setminus\{e\}), \{e\}_\varnothing) \leq (c_\downarrow, \varnothing)\), and we can conclude that \(r\in f(c_\downarrow,\varnothing) \subseteq \Psi(f,g)\cap RG_z^z\).

Finally, suppose \(u\) ends in an infinite emitter and \(r\in f(\overline{r(s^{-1}(r(u))\setminus F)}, \{r(u)\}_\varnothing)\) for some finite \(F\subseteq s^{-1}(r(u))\). Then (\hyperref[L3]{L3}) follows in all cases from the fact that \((\overline{v}, \varnothing)\leq (\overline{r(s^{-1}(r(u))\setminus F)}, \{r(u)\}_\varnothing)\) if \(v\in r(s^{-1}(r(u))\setminus F)\), and making use of the fact that \((\overline{c^0}, \varnothing)\leq (\overline{v}, \varnothing)\) for \(z=\rho c^\infty\).
\end{proof}

\begin{lemma}
If $Y \in \Delta_{G_E,R}$, then $\Phi(Y) \in \mathscr{D}_{E,R}$
\end{lemma}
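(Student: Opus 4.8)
The plan is to verify directly the three defining conditions of $\mathscr{D}_{E,R}$ for the pair $\Phi(Y)=(f,g)$: that $f$ lands in $\mathcal{L}(R)$ and is saturated, that $g$ lands in $\mathcal{L}(R[x,x^{-1}])$, and that the compatibilities $g(c)\cap R=f(\overline{c^0},\varnothing)$ and $g(c)\subseteq f(c_\downarrow,\varnothing)[x,x^{-1}]$ hold. The ideal-valuedness is the easy part: $g(c)=Y\cap RG_{c^\infty}^{c^\infty}$ is an ideal of $R[x,x^{-1}]$ by (D1); each term $Y\cap RG_u^u\cap R$ is the intersection of the ideal $Y\cap RG_u^u\trianglelefteq RG_u^u$ with the subring $R$, hence an ideal of $R$; and for $v\in S$ the length-$0$ path $v$ ends in an infinite emitter, so $RG_v^v\cong R$ and $Y\cap RG_v^v\trianglelefteq R$. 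Thus $f(H,S)$, an intersection of ideals of $R$, is an ideal of $R$.

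The organizing device would be the set $V_r=\{w\in E^0 : r\notrational{u}\in Y \text{ for every } u\in\partial E \text{ with } w\in u^0\}$ attached to each $r\in R$. First I would record the reformulation $r\in f(H,S)\iff H\subseteq V_r \text{ and } r\notrational{v}\in Y \text{ for all } v\in S$, which just unwinds the definition once one notes that the condition $u^0\cap H\ne\varnothing$ runs over all $u$ passing through a vertex of $H$. Next I would show that $V_r$ is hereditary saturated; this is forced by the translation-invariance (D2) alone, since for $w\in u^0$ one reduces by tail equivalence to $s(u)=w$, and then both hereditariness (pushing along an edge $w\to r(e)$) and saturation (a regular vertex whose edge-ranges all lie in $V_r$) are witnessed by the tail equivalence $u=eu'\sim u'$, under which $r\notrational{u}\in Y\iff r\notrational{u'}\in Y$. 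Order-reversibility of $f$ is then immediate, which gives the inclusion $\subseteq$ in the saturation identity $f(\bigvee_i(H_i,S_i))=\bigcap_i f(H_i,S_i)$. For $\supseteq$ I would induct along $\overline{\bigcup_i H_i}^{\bigcup_i S_i}=\bigcup_n\Lambda_n^{\bigcup_i S_i}(\bigcup_i H_i)$: if $r\in\bigcap_i f(H_i,S_i)$ then $\bigcup_i H_i\subseteq V_r$ (base case) and $r\notrational{v}\in Y$ for every $v\in\bigcup_i S_i$; the regular-vertex step is covered by saturation of $V_r$, and the one genuinely new case, a breaking vertex $v$ entering the saturation because $r(s^{-1}(v))\subseteq\Lambda_{n-1}$, is handled by combining $r\notrational{v}\in Y$ with the tail-equivalence argument applied to the paths $eu'$ starting at $v$.

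For the two compatibilities I would lean on (L2). Writing $g(c)\cap R=\{r:r\notrational{c^\infty}\in Y\}$ and $f(\overline{c^0},\varnothing)=\{r:\overline{c^0}\subseteq V_r\}$, the inclusion $\supseteq$ is trivial since $s(c)\in(c^\infty)^0$. For $\subseteq$, given $r\notrational{c^\infty}\in Y$, condition (L2) applied to the constant coefficient $r$ of $r\cdot 1\in Y\cap RG_{c^\infty}^{c^\infty}$ yields $r\notrational{z}\in Y$ for all $z$ meeting $c^0$ that are not tail-equivalent to $c^\infty$, while for $z$ meeting $c^0$ and tail-equivalent to $c^\infty$ the translation isomorphism $Y\cap RG_{c^\infty}^{c^\infty}\cong Y\cap RG_z^z$ carries $r\cdot 1$ to $r\cdot 1$; together these give $c^0\subseteq V_r$, hence $\overline{c^0}\subseteq V_r$. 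For the last condition, given $p=\sum_n r_n x^n\in g(c)$, each $r_n$ is a coefficient of $p\in Y\cap RG_{c^\infty}^{c^\infty}$, so (L2) gives $r_n\notrational{z}\in Y$ whenever $z$ meets $c^0$ and is not tail-equivalent to $c^\infty$. I would then deduce $r(e)\in V_{r_n}$ for every exit $e$ of $c$ by prepending to a path $u$ with $s(u)=r(e)$ a run $\mu e$ along the cycle up to the exit, so that $z=\mu e u$ meets $c^0$ and is tail-equivalent to $u$, and finally conclude $c_\downarrow=\overline{\{r(e)\}}\subseteq V_{r_n}$, i.e. $r_n\in f(c_\downarrow,\varnothing)$.

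The hard part will be precisely this last step: for $z=\mu e u$ to fall under (L2) it must not be tail-equivalent to $c^\infty$, equivalently $u$ (and hence $r(e)$) must never reach the cyclic behaviour of $c$. This is exactly where the hypothesis $c\in C_u(E)$ is indispensable, as an exclusive cycle cannot be re-entered once left: a returning path would furnish a second closed simple path based on the cycle. I would make this precise through the fact that $c_\downarrow\cap c^0=\varnothing$ (which holds since $(c_\downarrow,\varnothing)$ is strictly below $(\overline{c^0},\varnothing)$, together with Observation \ref{observation: saturation}) combined with hereditariness of $c_\downarrow$, giving that no boundary path starting in $c_\downarrow$ can reach $c^0$ and therefore none is tail-equivalent to $c^\infty$. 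This clean separation of $c_\downarrow$ from the cycle is the single point I expect to demand genuine argument rather than bookkeeping; the remaining verifications are routine applications of (D1), (D2), and the order-reversing structure already set up via $V_r$.
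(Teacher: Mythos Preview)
Your proposal is correct and follows essentially the same route as the paper: ideal-valuedness from (D1), saturatedness of $f$ from tail-equivalence via (D2) together with the $S$-saturation construction, and both compatibilities from (L2). The one organisational difference is your device $V_r=\{w\in E^0:r\mayberational{u}\in Y\text{ for all }u\text{ through }w\}$, which you prove hereditary saturated once and then reuse; the paper instead argues directly that the indexing sets of boundary paths for $f(\bigvee_i(H_i,S_i))$ and for $\bigcap_i f(H_i,S_i)$ agree. Your packaging makes the breaking-vertex step in the $\Lambda_n$-induction and the passage from the exit ranges $r(e)$ to all of $c_\downarrow$ more transparent, while the paper's version is terser but relies on the same underlying facts.
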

\begin{proof}
As \(\Phi(Y)_f(H,S)\) is the intersection of ideals of \(R\), \(\Phi(Y)_f(H,S)\) is itself an ideal of \(R\), for any \((H,S)\in \mathscr{T}_E^*\). Consequently, \(\Phi(Y)_f\) is a function from \(\mathscr{T}_E^*\) to \(\mathcal{L}(R)\). We want to show that \(\Phi(Y)_f\) is saturated, i.e.\! that for any number of admissible pairs \((H_i,S_i)\in \mathscr{T}_E^*\) the equality \(\Phi(Y)_f(\bigvee_i (H_i,S_i)) = \bigcap_i \Phi(Y)_f(H_i,S_i)\) holds. We know that for each \((H,S)\in \mathscr{T}_E^*\), \(\Phi(Y)_f = \bigcap_u (Y\cap RG_u^u \cap R)\), where \(u\) ranges over the boundary paths that contain a vertex of \(H\), together with the vertices of \(S\) considered as boundary paths. As such, it will be enough to show that the set of boundary paths considered in the intersection for \(\Phi(Y)_f(\bigvee_i (H_i,S_i))\) is the same as the union of the sets of boundary paths considered for each \(\Phi(Y)_f(H_i,S_i)\). For simplicity, denote \(\bigvee_i (H_i, S_i)\) by \((H',S')\). Clearly, if \(u\) is a boundary path that contains a vertex of some \(H_i\) or is a vertex in some \(S_i\), the same can be said for \(H'\) or \(S'\). Conversely, if \(u\) is a vertex in \(S'\), it must be a vertex in some \(S_i\), and if \(u\) is a boundary path that contains a vertex of \(H'\), it follows from the construction of the \(S'\)-saturation (see (\ref{S-saturation})) that \(u\) also contains a vertex of some \(H_i\) or some~\(S_i\). Consequently, \(\Phi(Y)_f\in \mathscr{F}_{E,R}\).

Since \(\Phi(Y)_g(c) = Y\cap RG_{c^\infty}^{c^\infty}\) for each \(c\in C_u(E)\), \(\Phi(Y)_g\) is a function from \(C_u(E)\) to \(\mathcal{L}(R[x,x^{-1}])\). We want to show that for a cycle \(c\in C_u(E)\) the ideals \(\Phi(Y)_g(c)\cap R = Y\cap RG_{c^\infty}^{c^\infty} \cap R\) and \(\Phi(Y)_f(\overline{c^0}, \varnothing) = \bigcap_{u\in \partial E: u^0\cap \overline{c^0}\neq \varnothing} Y \cap RG_u^u\cap R\) are the same. Since \(c^\infty\) obviously contains a vertex of \(\overline{c^0}\), this gives us the inclusion \(\Phi(Y)_f(\overline{c^0}, \varnothing)\subseteq \Phi(Y)_g(c) \cap R\). For the reverse inclusion, suppose we have \(r\in \Phi(Y)_g(c)\), for some \(r\in R\), and \(u\in \partial E\) such that \(u^0\cap \overline{c^0}\neq \varnothing\). If \(u\) and \(c^\infty\) are not tail equivalent, (\hyperref[L2]{L2}) tells us that \(r\in Y\cap RG_u^u\cap R\), and if \(u\) and \(c^\infty\) are tail equivalent, the same follows, as \(Y\cap RG_u^u\) and \(Y\cap RG_{c^\infty}^{c^\infty}\) are naturally isomorphic.

Finally, suppose we have a cycle \(c\in C_u(E)\) such that \(c_\downarrow\neq \varnothing\). We need to show that the coefficients of the Laurent polynomials in \(\Phi(Y)_g(c)\) are elements of \(\Phi(Y)_f(c_\downarrow, \varnothing) = \bigcap_{u\in\partial E: u^0\cap c_\downarrow \neq \varnothing} Y\cap RG_u^u\cap R\). But this follows immediately from (\hyperref[L2]{L2}), as \(c\in C_u(E)\) implies that \(c\) cannot be tail equivalent to a boundary path \(u\) that is considered in this intersection.
\end{proof}

\begin{lemma}\label{psi_phi}
For all $Y \in \Delta_{G_E,R}$, we have $\Psi(\Phi(Y) = Y$.
\end{lemma}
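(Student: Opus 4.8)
The plan is to prove the identity fiber by fiber. Since \(X(G_E)\) is the disjoint union \(\bigsqcup_{u\in\partial E} RG_u^u\), any subset is the union of its intersections with the fibers \(RG_u^u\), so it suffices to show \(\Psi(\Phi(Y)) \cap RG_u^u = Y \cap RG_u^u\) for every \(u \in \partial E\). Writing \((f,g) = \Phi(Y)\), I would unwind the case distinction defining \(\Psi\) and treat: \(u\) irrational, \(u\) finite ending in a sink, \(u\) finite ending in an infinite emitter, \(u = \rho c^\infty\) with \(c \in C_u(E)\), and \(u = \rho c^\infty\) with \(c \notin C_u(E)\). Throughout I would use that \(Y\), being an open subset of \(X(G_E)\) satisfying (\hyperref[D1]{D1}) and (\hyperref[D2]{D2}), also satisfies (\hyperref[L1]{L1})--(\hyperref[L3]{L3}) by Theorem~\ref{open equivalence}. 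By the remark following the definition of \(\Psi\), the irrational case and the sink case are handled simultaneously, since in both \(\Psi(f,g) \cap RG_u^u = \bigcup_{v \in u^0} f(\overline{\{v\}}, \varnothing)\).

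The inclusions \(\subseteq\) are the easy direction: in each case the boundary path \(u\) itself (or the vertex \(r(u)\), to which \(u\) is tail equivalent, whence \(Y\cap RG_u^u = Y\cap RG_{r(u)}^{r(u)}\) by (\hyperref[D2]{D2})) appears among the paths or breaking vertices over which \(\Phi(Y)_f\) intersects, or \(u\) recurs along the cycle \(c\) defining \(g(c)\); the defining formula for \(\Phi(Y)\) then yields \(\Psi(\Phi(Y)) \cap RG_u^u \subseteq Y \cap RG_u^u\) directly. The real work is the reverse inclusion, for which I would isolate two recurring tools. The first is the translation isomorphisms of (\hyperref[D2]{D2}), which carry a scalar \(r\notrational{z} \in Y\) to \(r\notrational{w} \in Y\) whenever \(z\) and \(w\) are tail equivalent. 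The second is an induction along the construction (\ref{S-saturation}) of a hereditary saturated closure, packaged as the following \emph{propagation claim}: if a scalar \(r\in R\) satisfies \(r\notrational{z} \in Y\) for every boundary path \(z\) passing through a fixed seed vertex (resp. seed set of vertices), then \(r\notrational{w}\in Y\) for every boundary path \(w\) meeting the hereditary saturated closure of that seed. For the hereditary base step one prepends a connecting path to the common tail of \(w\) and applies the first tool; for the saturation step one uses that a boundary path through a regular vertex must continue through one of its children, which lies at an earlier stage of (\ref{S-saturation}).

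With these in hand I would finish each case. In the irrational/sink case, given \(r \in Y \cap RG_u^u\), condition (\hyperref[L1]{L1}) supplies a vertex \(v_r \in u^0\) with \(r\notrational{z} \in Y\) for all \(z\) through \(v_r\); the propagation claim gives \(r \in f(\overline{\{v_r\}}, \varnothing) \subseteq \Psi(\Phi(Y)) \cap RG_u^u\). In the infinite-emitter case, condition (\hyperref[L3]{L3}) supplies a finite \(F \subseteq s^{-1}(r(u))\), and propagation from the seed set \(r(s^{-1}(r(u)) \setminus F)\), combined with the tail equivalence of \(u\) and \(r(u)\) to handle the breaking-vertex coordinate \(\{r(u)\}_\varnothing\), yields \(r \in f(\overline{r(s^{-1}(r(u)) \setminus F)}, \{r(u)\}_\varnothing)\). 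When \(c \in C_u(E)\), we have \(\Psi(\Phi(Y)) \cap RG_u^u = g(c) = Y \cap RG_{c^\infty}^{c^\infty}\), and I would observe that under the standard identifications of \(RG_u^u\) and \(RG_{c^\infty}^{c^\infty}\) with \(R[x,x^{-1}]\) the translation isomorphism attached to the tail equivalence of \(u\) and \(c^\infty\) becomes the identity (both send an \(n|c|\)-loop to \(x^n\)), so (\hyperref[D2]{D2}) upgrades the abstract isomorphism to the equality \(Y \cap RG_{c^\infty}^{c^\infty} = Y \cap RG_u^u\). When \(c \notin C_u(E)\), there are several closed simple paths at the base of \(c\), so Lemma~\ref{lemma to prove some condition} gives \(Y \cap RG_u^u = J[x,x^{-1}]\) with \(J = Y \cap RG_u^u \cap R\); it remains to identify \(J = f(\overline{c^0}, \varnothing)\), where \(\subseteq\) is immediate and \(\supseteq\) follows from the propagation claim with seed \(c^0\), using (\hyperref[L2]{L2}) for boundary paths through the cycle not tail equivalent to \(u\) and the isomorphism \(Y \cap RG_w^w \cong J[x,x^{-1}]\) for those that are.

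I expect the main obstacle to be exactly the reverse inclusions, and within them the bookkeeping of the propagation claim: one must check that in every case the connecting paths and common tails can be chosen so that each boundary path \(w\) meeting the relevant closure is tail equivalent to a path \(z\) for which the appropriate condition among (\hyperref[L1]{L1})--(\hyperref[L3]{L3}) already places \(r\) in \(Y \cap RG_z^z\), while correctly separating the boundary paths tail equivalent to the defining rational path \(u\) (where one must instead invoke Lemma~\ref{lemma to prove some condition}) from those that are not. A secondary point requiring care is verifying that the admissible pairs appearing in the formula for \(\Psi\), notably \((\overline{r(s^{-1}(r(u)) \setminus F)}, \{r(u)\}_\varnothing)\), are genuinely elements of \(\mathscr{T}_E^*\) for the relevant \(F\).
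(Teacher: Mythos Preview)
Your proposal is correct and follows essentially the same fiber-by-fiber strategy as the paper, with the same case split and the same appeals to (\hyperref[L1]{L1})--(\hyperref[L3]{L3}), (\hyperref[D2]{D2}), and Lemma~\ref{lemma to prove some condition}. The only notable difference is that where the paper simply asserts ``every path containing a vertex of $\overline{\{v\}}$ is tail equivalent to a path containing $v$'' and moves on, you unpack this into an explicit induction on the construction~(\ref{S-saturation}); this is a legitimate way to make that step rigorous, though it is more work than the paper does. One small point: in the $c\notin C_u(E)$ case your labeling of which inclusion is ``immediate'' and which needs the propagation/(L2) argument reads ambiguously against the displayed equation $J = f(\overline{c^0},\varnothing)$ --- make sure it is clear that $f(\overline{c^0},\varnothing)\subseteq J$ is the trivial direction (since $u$ itself appears in the defining intersection) and $J\subseteq f(\overline{c^0},\varnothing)$ is the one requiring (\hyperref[L2]{L2}) and tail-equivalence.
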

\begin{proof}
We will show that for all \(u\in \partial E\), \(\Psi(\Phi(Y))\cap RG_u^u = Y\cap RG_u^u\). If \(u\) is irrational or finite with \(r(u)\) a sink, we have
\[
\Psi(\Phi(Y))\cap RG_u^u = \bigcup_{v\in u^0} \bigg(\bigcap_{\substack{u'\in \partial E\\ (u')^0\cap \overline{\{v\}}\neq \varnothing}} (Y\cap RG_{u'}^{u'}\cap R)\bigg).
\]
If \(r\in Y\cap RG_u^u\), there exists a vertex \(v\in u^0\) such that \(r\in Y\cap RG_z^z\) for all \(z\in \partial E\) that contain~\(v\) (for finite paths this is \(r(u)\), and for infinite paths we get it by (\hyperref[L1]{L1})). But then we have \(r\in \bigcap_{u'\in \partial E: (u')^0\cap \overline{\{v\}}\neq \varnothing} (Y\cap RG_{u'}^{u'}\cap R)\), since every path that contains a vertex of \(\overline{\{v\}}\) is tail equivalent to a path that contains \(v\), so \(r\in \Psi(\Phi(Y))\cap RG_u^u\). On the other hand, if \(r\in \Psi(\Phi(Y))\cap RG_u^u\), there exists a vertex \(v\in u^0\) such that \(r\in \bigcap_{u'\in \partial E: (u')^0\cap \overline{\{v\}}\neq \varnothing} (Y\cap RG_{u'}^{u'}\cap R)\). But clearly \(u^0\cap \overline{\{v\}}\neq \varnothing\), so \(r\in Y\cap RG_u^u\).

If \(u\) is finite and \(r(u)\) is an infinite emitter, we have
\[
\Psi(\Phi(Y))\cap RG_u^u = \bigcup_{F\subseteq_{\rm finite} s^{-1}(r(u))} \bigg(\bigcap_{u'} (Y\cap RG_{u'}^{u'}\cap R)\bigg),
\]
where \(u'\) ranges over all boundary paths that contain a vertex of \(\overline{r(s^{-1}(r(u))\setminus F)}\), and the path \(r(u)\) itself. So if there is such a finite set \(F\) for which this intersection contains some \(r\in R\), then \(r\) will be an element of \(Y\cap RG_{r(u)}^{r(u)}\). Since \(u\) is tail equivalent with \(r(u)\), \(r\) will also be in \(Y\cap RG_u^u\). On the other hand, if \(r\in Y\cap RG_u^u\), then (\hyperref[L3]{L3}) gives us such a finite set \(F\) for which \(r\) will be in the corresponding intersection.

For \(u=\rho c^\infty\in \partial E\) rational with \(c\in C_u(E)\) we have \(\Psi(\Phi(Y)) \cap RG_{u}^u=Y\cap RG_{c^\infty}^{c^\infty}\). But this is the same as \(Y\cap RG_{\rho c^\infty}^{\rho c^\infty}\), since \(u\) and \(c^\infty\) are tail equivalent.

Lastly, if \(u=\rho c^\infty\) is rational with \(c\notin C_u(E)\), we have \(\Psi(\Phi(Y))\cap RG_u^u= \Phi(Y)_f(\overline{c^0},\varnothing)[x,x^{-1}]\). If \(r\in \Phi(Y)_f(\overline{c^0},\varnothing)\), we also have \(r\in Y\cap RG_u^u\), since \(u^0\cap c^0\neq \varnothing\). On the other hand, if we have some Laurent polynomial in \(Y\cap RG_u^u\), its coefficients will be in \(\Phi(Y)_f(\overline{c^0},\varnothing)\): this follows from (\hyperref[L2]{L2}) and Lemma \ref{lemma to prove some condition}.
\end{proof}

\begin{lemma}\label{phi_psi}
For all \((f,g)\in \mathscr{D}_{E,R}\), we have \(\Phi(\Psi(f,g))=(f,g)\).
\end{lemma}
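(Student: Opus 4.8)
Throughout write $Y=\Psi(f,g)$ and $(f',g')=\Phi(Y)$; the goal is to show $f'=f$ and $g'=g$. The equality $g'=g$ is immediate: for $c\in C_u(E)$ the path $c^\infty$ is rational with recurring cycle $c\in C_u(E)$, so $g'(c)=Y\cap RG_{c^\infty}^{c^\infty}=g(c)$ by the definition of $\Psi$. For the $f$-component I would prove the two inclusions separately. The inclusion $f(H,S)\subseteq f'(H,S)$ is the routine one: $f'(H,S)$ is an intersection of the ideals $Y\cap RG_u^u\cap R$ (over $u$ with $u^0\cap H\neq\varnothing$) and $Y\cap RG_w^w$ (over $w\in S$), and for each factor one checks that the admissible pair appearing in the formula for $\Psi$ is $\le(H,S)$ — if $u^0\cap H\neq\varnothing$ then $r(u)\in H$ or $\overline{c^0}\subseteq H$, and if $w\in S$ then $(\overline{r(s^{-1}(w)\setminus F_w)},\{w\})\le(H,S)$ with $F_w=s^{-1}(w)\cap r^{-1}(E^0\setminus H)$ — so that $f(H,S)$ sits inside that factor by the order-reversing property of $f$.

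For the reverse inclusion I would first record a decomposition of $(H,S)$ in $\mathscr T_E$. The supremum formula of Proposition \ref{prop:supremum in TE} gives
\[(H,S)=\bigvee_{v\in H}(\overline{\{v\}},\varnothing)\ \vee\ \bigvee_{w\in S}\big(\overline{r(s^{-1}(w)\setminus F_w)},\{w\}\big),\]
the point being that the $\{w\}$-saturation adds no new vertex, since every $w\in S$ is a breaking vertex, so some edge out of $w$ misses $H$. Because $f$ is saturated this turns into
\[f(H,S)=\bigcap_{v\in H}f(\overline{\{v\}},\varnothing)\ \cap\ \bigcap_{w\in S}f\big(\overline{r(s^{-1}(w)\setminus F_w)},\{w\}\big).\]
Hence it suffices, for $r\in f'(H,S)$, to show $r\in f(\overline{\{v\}},\varnothing)$ for each $v\in H$ and $r\in f(\overline{r(s^{-1}(w)\setminus F_w)},\{w\})$ for each $w\in S$. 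The heart of the matter is the single-vertex claim
\[(\star)\qquad\text{if }r\notrational{u}\in Y\cap RG_u^u\text{ for every }u\in\partial E\text{ with }v\in u^0,\text{ then }r\in f(\overline{\{v\}},\varnothing).\]
Granting $(\star)$, the vertex factors follow at once because $\{u:v\in u^0\}\subseteq\{u:u^0\cap H\neq\varnothing\}$. For $w\in S$ one has $r\in Y\cap RG_w^w=\bigcup_{F}f(\overline{r(s^{-1}(w)\setminus F)},\{w\}_\varnothing)$, so $r$ lies in one such term; using $(\star)$ for the vertices of $r(s^{-1}(w)\setminus F_w)\subseteq H$ together with the join identities of $\mathscr T_E$, I would replace $F$ by $F\cap F_w$ and then note that the resulting pair dominates $(\overline{r(s^{-1}(w)\setminus F_w)},\{w\})$, landing $r$ in the required factor by order-reversal.

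Finally I would prove $(\star)$ by contraposition, building a boundary path through $v$ that witnesses $r\notin Y\cap RG_u^u\cap R$. Call a vertex $x$ \emph{good} if $r\in f(\overline{\{x\}},\varnothing)$ and \emph{bad} otherwise, and assume $v$ is bad. I would grow a path starting at $v$ visiting only bad vertices, driven by: (i) a bad \emph{regular} vertex $x$ has a bad out-neighbour, since $(\overline{\{x\}},\varnothing)=\bigvee_{e\in s^{-1}(x)}(\overline{\{r(e)\}},\varnothing)$ forces $f(\overline{\{x\}},\varnothing)=\bigcap_{e}f(\overline{\{r(e)\}},\varnothing)$; and (ii) at a bad \emph{infinite emitter} $x$, either $r\notin Y\cap RG_x^x$, in which case the finite boundary path from $v$ to $x$ is already a witness, or $r\in Y\cap RG_x^x$, in which case $x$ must again have a bad out-neighbour — for if all $r(e)$ were good then $r\in f(\overline{r(s^{-1}(x))},\varnothing)$, and together with a term $r\in f(\overline{r(s^{-1}(x)\setminus F)},\{x\}_\varnothing)$ from $Y\cap RG_x^x$ the identity $(\overline{r(s^{-1}(x)\setminus F)},\{x\})\vee(\overline{r(s^{-1}(x))},\varnothing)=(\overline{\{x\}},\varnothing)$ would make $x$ good. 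The process therefore halts at a bad sink or at an infinite emitter with $r\notin Y\cap RG_x^x$, or runs forever along an all-bad infinite path. In the sink and infinite-path cases the resulting $u$ satisfies $Y\cap RG_u^u\cap R\subseteq\bigcup_{x\in u^0}f(\overline{\{x\}},\varnothing)$, which omits $r$ because every term does; in the infinite-emitter case $r\notin Y\cap RG_u^u\cap R$ holds by construction. Either way this contradicts the hypothesis of $(\star)$.

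The step I expect to be most delicate is $(\star)$, and within it the infinite-emitter case: unlike a sink or a regular vertex, a bad infinite emitter cannot be resolved by simply following an edge, so one must track the breaking-vertex datum and exploit that $(\overline{r(s^{-1}(x)\setminus F)},\{x\})$ joined with the out-neighbour pairs recovers exactly $(\overline{\{x\}},\varnothing)$. I would also take care to verify that the greedily constructed path is a genuine boundary path and that passing from the \emph{intersection} $f'(H,S)$ to the \emph{union} $\bigcup_{x\in u^0}f(\overline{\{x\}},\varnothing)$ is what lets a single witnessing path exclude $r$.
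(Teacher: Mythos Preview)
Your proposal is correct, and the core mechanism — growing a path through ``bad'' vertices (those $x$ with $r\notin f(\overline{\{x\}},\varnothing)$), using saturation at regular vertices and the join identity $(\overline{r(s^{-1}(x)\setminus F)},\{x\})\vee(\overline{r(s^{-1}(x))},\varnothing)=(\overline{\{x\}},\varnothing)$ at infinite emitters — is exactly the engine the paper uses. The organization differs, though. The paper does \emph{not} first decompose $(H,S)$; instead it collects, for the given $r$, a family of pairs it can certify $r$ lies in (one $(\overline{V},\varnothing)$ for the set $V$ of good vertices, one $(H_w,\{w\})$ per $w\in S$, and one $(H_z,\{z\}_\varnothing)$ per infinite emitter $z\in H$), and then proves their supremum is $(H,S)$ by the same path-growing contradiction. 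In particular, the paper never asserts that \emph{every} $v\in H$ is good, only that $V$ meets every relevant boundary path — which is weaker than your $(\star)$ but suffices. Your route, by contrast, proves the stronger single-vertex statement $(\star)$ and combines it with the explicit decomposition $(H,S)=\bigvee_{v\in H}(\overline{\{v\}},\varnothing)\vee\bigvee_{w\in S}(\overline{r(s^{-1}(w)\setminus F_w)},\{w\})$. This buys you a cleaner modular structure (the breaking-vertex factors are handled by joining the $F$-term from $Y\cap RG_w^w$ with the good out-neighbours supplied by $(\star)$), at the cost of having to verify that decomposition and of proving a slightly stronger intermediate claim than is strictly needed. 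Both arguments stand; yours is a legitimate refactoring rather than a new idea.
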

\begin{proof}
Suppose we have some pair \((H,S)\in \mathscr{T}_E^*\). Then
\[
\Phi(\Psi(f,g))_f(H,S) = \bigcap_u (\Psi(f,g)\cap RG_u^u\cap R),\]
where \(u\) ranges over the boundary paths that contain a vertex of \(H\), and the vertices in \(S\), considered as boundary paths. Suppose we have \(r\in f(H,S)\) for some \(r\in R\), and suppose firstly that $u \in \partial E, v \in u^0 \cap H$. We proceed to show that $r \in \Psi(f,g)\cap RG_u^u\cap R$.
If \(u\) is irrational or finite with \(r(u)\) a sink, then we have \(f(H,S)\subseteq f(\overline{\{v\}},\varnothing)\), as \(f\) is order-reversing. Since \(\Psi(f,g) \cap RG_u^u\) contains \(f(\overline{\{v\}},\varnothing)\) as a subset, it also contains \(r\). If \(u\) is finite and ends in an infinite emitter \(r(u) \in H\), we can conclude the same thing by choosing \(F=\varnothing\) in the definition of \(\Psi\) and observing that $\overline{r(s^{-1}(r(u)))} \subseteq H$, so $(\overline{r(s^{-1}(r(u)))}, \{r(u)\}_\varnothing) \le (H,S)$. If \(u=\rho c^\infty\) is rational and contains a vertex \(v\) of \(H\), then we must have \(\overline{c^0}\subseteq H\), so \(f(H,S)\subseteq f(\overline{c^0},\varnothing)\). If \(c\notin C_u(E)\) this is enough, and otherwise we can just note that \(f(\overline{c^0},\varnothing) = g(c)\cap R\) and \(\Psi(f,g)\cap RG_u^u = g(c)\). Lastly, if \(u \in S \), the fact that \(u\) is a breaking vertex for \(H\) implies that \((r(s^{-1}(u)\setminus F), \{u\}_\varnothing )\leq (H,S)\) for some finite  \(F = s^{-1}(u)\cap r(E^0\setminus H)\), so \(r\in f(H,S)\subseteq f(r(s^{-1}(u)\setminus F),\{u\}_\varnothing )\subseteq \Psi(f,g)\cap RG_u^u\). We can conclude that \(f(H,S)\subseteq \Phi(\Psi(f,g))_f(H,S)\).

Next, suppose we have \(r\in \bigcap _u (\Psi(f,g)\cap RG_u^u\cap R)\), where \(u\) ranges over the boundary paths that contains a vertex of \(H\), and the vertices in \(S\). We will show that \(r\in \bigcap_\alpha f(H_\alpha,S_\alpha)\) for some pairs \((H_\alpha,S_\alpha)\in \mathscr{T}_E^*\) with \(\bigvee_\alpha(H_\alpha,S_\alpha) = (H,S)\), and conclude that $r \in f(H,S)$ using the property from Definition \ref{def:saturated function}.

First, by the definition of $\Psi$, for any \(w\in S\), we can find a finite set \(F\subseteq s^{-1}(w)\) such that \(r\in f(\overline{r(s^{-1}(w)\setminus F)}, \{w\}_\varnothing)\). If we define \(H_w\) as \(r(s^{-1}(w)\setminus F)\cap H\), we find that \((H_w,\{w\})\leq (r(s^{-1}(w)\setminus F),\{w\}_\varnothing )\), so \(r\in f(\overline{r(s^{-1}(w)\setminus F)}, \{w\}_\varnothing) \subseteq f(H_w,\{w\})\). Secondly, for any boundary path \(u\) that contains a vertex of \(H\) and does not end in an infinite emitter, we can find a vertex \(v\in u^0\cap H\) such that \(r\in f(\overline{\{v\}},\varnothing)\): for irrational paths and finite paths that end in a sink this follows from the definition of \(\Psi\). For rational paths we can choose \(v = s(c)\), since \(\overline{\{s(c)\}} = \overline{c^0}\) and \(r \in g(c)\cap R = f(\overline{c^0},\varnothing)\) if \(c\in C_u(E)\). Let \(V\) be the set of vertices \(v\in H\) such that \(r\in f(\overline{\{v\}}, \varnothing)\), and consider the set \(\overline{V}\). In particular, we know that \(V\), and thus also \(\overline{V}\), contains a vertex of each boundary path having a vertex in \(H\) and not ending at an infinite emitter. Also, $r \in f(\overline{V},\varnothing)$ by the defining property of a saturated function. Lastly, for any infinite emitter \(z\in H\), we can find a finite set \(F\subseteq s^{-1}(z)\) such that \(r\in f(H_z, \{z\}_\varnothing)\), with \(H_z=r(s^{-1}(z)\setminus F)\).

So far we have found that
\[
r\in \bigcap_{w\in S} f\big(H_w,\{w\}\big) \ \cap\ f\big(\overline{V},\varnothing\big)\ \cap\ \smashoperator{\bigcap_{\substack{z \in H \\ \text{inf.\! emit.} }}}f\big(H_z, \{z\}_\varnothing\big).
\]

If we can prove that
\begin{equation*}
\label{supremum}(H,S) = \bigvee_{w\in S} (H_w,\{w\})\ \vee\ (\overline{V},\varnothing) \ \vee \ \smashoperator{\bigvee_{\substack{z \in H \\ \text{inf.\! emit.} }}} f(H_z,\{z\}_\varnothing),
\end{equation*}
the saturatedness of \(f\) will imply \(\Phi(\Psi(f,g))_f = f\). Since each of the pairs over which we take the supremum is less than or equal to \((H,S)\), their supremum is also less than or equal to \((H,S)\). Denote this supremum by \((H',S')\), and suppose that there exists a vertex \(v\in H\) that is not in \(H'\). Then \(v\) cannot be a sink, since all sinks in \(H\) must be in \(V\subseteq H'\). If \(v\) is regular, there must be an edge \(e_1\in s^{-1}(v)\) such that \(r(e)\notin \overline{V}\), because \(\overline{V}\) is saturated. If \(v\) is an infinite emitter, there must also be an edge \(e_1\in s^{-1}(v)\) such that \(r(e)\notin H'\), and thus \(r(e)\notin \overline{V}\), because of the saturation property of the supremum of such pairs. Continuing in this fashion, we can construct an infinite path with vertices in \(H\setminus V\), but this contradicts the definition of \(V\). Consequently, \(H=H'\), and it is easy to see that \(S=S'\).

Finally, for each cycle \(c\in C_u(E)\), we have \(\Phi(\Psi(f,g))_g(c) = \Psi(f,g)\cap RG_{c^\infty} ^{c^\infty} = g(c)\), by definition of \(\Phi\) and \(\Psi\). So \(\Phi(\Psi(f,g))_g = g\), which concludes the proof.
\end{proof}

Theorems \ref{disassembly-theorem} and \ref{phi is lattice isomorphism} and the fact that $L_R(E) \cong A_R(G_E)$ give us lattice isomorphisms
\begin{equation} \label{composing isomorphisms}	
	\begin{tikzcd}
	\mathcal{L}(L_R(E)) \ar[r,"\sim"] & \mathcal{L}(A_R(G_E)) \ar[rr,"\sim","\text{\tiny disassembly}" below] && \Delta_{G_E,R} \ar[r,"\Phi" below, "\sim" above] &\mathscr{D}_{E,R}.
	\end{tikzcd}
\end{equation}
The following theorem gives an explicit description of the isomorphism $\mathcal{L}(L_R(E))  \overset{\sim}{\longrightarrow} \mathscr{D}_{E,R}$.

\begin{notation}
For a hereditary saturated subset \(H\) and a breaking vertex \(w\) for \(H\), recall the notation \(w^H = w- \sum_{e\in s^{-1}(w), r(e)\notin H} ee^*\). In general, if \(A\) is a ring or algebra, and \(S\subseteq A\) a subset, the ideal of \(A\) generated by \(S\) will be denoted by \(\langle S\rangle\).
We use the convention that if $c$ is a cycle based at~$v$, and $p(x) = \sum_{n \in \ZZ} a_n x^n \in R[x,x^{-1}]$, then $p(c) = \sum_{n < 0} a_{n}(c^*)^{-n} + a_0 v + \sum_{n > 0}a_n c^n \in L_R(E)$.
\end{notation}

Our main theorem is the following.

\begin{theorem} \label{LPA ideal correspondence}
There is a lattice isomorphism \(\mathcal{L}(L_R(E)) \to \mathscr{D}_{E,R}\), that sends an ideal \(I\) to the pair \((f,g)\), with
\begin{align} \label{f}
f(H,S)&=\big\{r\in R\mid rv\in I\ \forall v\in H \text{ and } rw^H\in I \ \forall w\in S\big\}, \\
\label{g} g(c)&=\big\{p(x)\in R[x,x^{-1}]\mid p(c)\in I\big\}.	
\end{align}
 The inverse of this map is given by 
\[(f,g)\longmapsto \left\langle \bigcup_{(H,S)\in \mathscr{T}_E^*} \big\{rv, rw^H\mid r\in f(H,S), v\in H, w\in S\big\}\cup \big\{p(c)\mid c\in C_u(E), p(x)\in g(c)\big\}\right\rangle.\]
\end{theorem}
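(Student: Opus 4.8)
My plan is to exploit the composite lattice isomorphism already assembled in \eqref{composing isomorphisms}, namely
\[
\mathcal{L}(L_R(E)) \xrightarrow{\sim} \mathcal{L}(A_R(G_E)) \xrightarrow{\sim} \Delta_{G_E,R} \xrightarrow{\Phi} \mathscr{D}_{E,R},
\]
and merely to check that the explicit formulas \eqref{f}--\eqref{g} compute this composite. Since the composite is already known to be a lattice isomorphism, this verification is all that is needed to conclude that the map of the theorem is one; only its explicit shape and the shape of its inverse remain. Throughout I would work under the identification $L_R(E) \cong A_R(G_E)$, so that a vertex $v$ corresponds to $\bm{1}_{\mathcal{Z}(v,v)}$, a breaking-vertex element $w^H$ corresponds to $\bm{1}_{\mathcal{Z}(w,w,F)}$ with $F = s^{-1}(w)\cap r^{-1}(E^0\setminus H)$, and $p(c) = \sum_n a_n c^n$ corresponds to $\sum_n a_n \bm{1}_{\mathcal{Z}(c,s(c))^n}$.

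First I would compute the two components of $\Phi(Y_I)$, where $Y_I = \bigcup_u I_u$. The cycle component is immediate, since $\Phi(Y_I)_g(c) = I_{c^\infty}$ and the rational-path case of Proposition \ref{correspondence: proposition} says exactly that a Laurent polynomial lies in $I_{c^\infty}$ if and only if the corresponding $p(c)$ lies in $I$, which is \eqref{g}. For the vertex component, $\Phi(Y_I)_f(H,S) = \big(\bigcap_{u^0\cap H \ne \varnothing}(I_u\cap R)\big)\cap\big(\bigcap_{w\in S}I_w\big)$, and the vertex case of Proposition \ref{correspondence: proposition} rewrites the first factor as $\{r : rv\in I \text{ for all } v\in H\}$. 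The crux is then the claim that, for $r$ with $rv\in I$ for all $v\in H$ and a breaking vertex $w\in S$, one has $r\in I_w$ if and only if $rw^H\in I$. One direction is the infinite-emitter case of Proposition \ref{correspondence: proposition} applied to the specific set $F$; for the converse that case yields $r\bm{1}_{\mathcal{Z}(w,w,F')}\in I$ for \emph{some} finite $F'$, and I would enlarge $F'$ so that $F\subseteq F'$ --- legitimate because $\bm{1}_{\mathcal{Z}(w,w,F'')}*\bm{1}_{\mathcal{Z}(w,w,F')} = \bm{1}_{\mathcal{Z}(w,w,F'')}$ for $F''\supseteq F'$ --- and then add back the pieces $r\bm{1}_{\mathcal{Z}(we,we)}$ for $e\in F'\setminus F$, each of which lies in $I$ since $r(e)\in H$ gives $r\cdot r(e)\in I$. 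This recovers $rw^H = r\bm{1}_{\mathcal{Z}(w,w,F)}\in I$ and establishes \eqref{f}, completing the identification of the forward map.

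The remaining task is the inverse formula. Writing $I'$ for the ideal generated by the displayed set, the inclusion $I'\subseteq I$ is immediate from \eqref{f}--\eqref{g}, since each listed generator lies in $I$ by definition. For $I\subseteq I'$ I would invoke Corollary \ref{ideal is generated by vertices and cycles}, reducing the problem to showing that every generator of $I$ of the form $rv$, $p(c)$, or $rw^H$ lies in $I'$. The vertex case follows from $\{r : rv\in I\} = f(\overline{\{v\}},\varnothing)$; the cycle case splits according to whether $c\in C_u(E)$ (where $p(c)$ is literally a generator of $I'$) or not, in which case Lemma \ref{lemma to prove some condition} forces each coefficient $a_n$ of $p$ into $f(\overline{c^0},\varnothing)$, so that $p(c)$ becomes an $R$-linear combination of the vertex generators $a_n\,s(c)$.

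I expect the breaking-vertex generator $rw^H$ to be the main obstacle: a priori we know neither that $rv\in I$ for $v\in H$ nor that $H$ is saturated, so $rw^H$ need not itself appear among the listed generators. The remedy I propose is to replace $H$ by the hereditary saturated set $H^* = \{v : rv\in I\}$. Computing $e^*w^H = e^*$ for each edge $e$ with $r(e)\in H$ shows $r\cdot r(e)\in I$, hence $\{e\in s^{-1}(w) : r(e)\notin H^*\}\subseteq F$ is finite and the difference $r(w^{H^*}-w^H)$ is a finite sum of elements $e\,(r\cdot r(e))\,e^*\in I'$, giving $rw^{H^*}\in I$. If $w\notin H^*$, then $w$ is a breaking vertex for $H^*$ and $r\in f(H^*,\{w\})$, so $rw^{H^*}$ is a genuine generator of $I'$; if $w\in H^*$, then heredity of $H^*$ forces $w^{H^*}=w$ and $rw^{H^*}=rw\in I'$. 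In either case $rw^H = rw^{H^*} - r(w^{H^*}-w^H)\in I'$, which finishes $I\subseteq I'$ and the proof. This reduction is the delicate point precisely because it must reconcile a generator built from an arbitrary hereditary $H$ with the saturated admissible pairs indexing $\mathscr{D}_{E,R}$.
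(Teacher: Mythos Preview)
Your proposal is correct and follows essentially the same route as the paper: both obtain the isomorphism by composing the maps in \eqref{composing isomorphisms} and then verifying the explicit formulas via Proposition~\ref{correspondence: proposition}, with the inverse handled through Corollary~\ref{ideal is generated by vertices and cycles}. Your treatment of the breaking-vertex equivalence (enlarge $F'$, then add back the pieces $ree^*$) is a minor reorganisation of the paper's argument (shrink $F$, then multiply by $w^H$), and your inverse direction spells out in full the case analysis that the paper dispatches in a single sentence; in particular, your replacement of an arbitrary hereditary $H$ by the hereditary saturated set $H^*=\{v:rv\in I\}$ is exactly the kind of detail the paper leaves implicit.
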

\begin{proof}
The lattice isomorphism is gotten by composing the arrows in line (\ref{composing isomorphisms}). Let \((f,g)\in \mathscr{D}_{E,R}\) be the image of an ideal \(I\) under this lattice isomorphism. Then we have \(g(c)=\{p(x)\in R[x,x^{-1}]\mid p(c)\in I\}\) by definition of \(\Phi\) (see (\ref{Phi})), and by Proposition \ref{correspondence: proposition} (\ref{correspondence: rational}). This explains (\ref{g}). Next, we will explain (\ref{f}).

 For \((H,S)\in \mathscr{T}_E^*\), the definition of \(\Phi\) gives
\[
f(H,S)=\bigcap_{\substack{u\in \partial E\\ u^0\cap H\neq \varnothing}} (Y\cap RG_u^u\cap R) \cap \bigcap_{v\in S} (Y\cap RG_v^v),
\]
 where \(Y\) is the image of \(I\) under the disassembly map. We can rewrite this as
 \[
 \bigcap_{v\in H} \bigg( \bigcap_{\substack{u\in \partial E\\v\in u^0}}(Y\cap RG_u^u\cap R) \bigg)\cap \bigcap_{w\in S}(Y\cap RG_w^w).
 \] By Proposition \ref{correspondence: proposition} (\ref{correspondence: vertices}) we know that \(\bigcap_{v\in H} \left( \bigcap_{u\in \partial E:v\in u^0}(Y\cap RG_u^u\cap R) \right)\) is the set of scalars \(r\) such that \(rv\in I\), for all \(v\in H\). Now suppose we have \(r\in Y\cap RG_w^w\), with \(w\in S\) and \(rv\in I\) for all \(v\in H\). By Proposition \ref{correspondence: proposition} (\ref{correspondence: ends in infinite emitter}), there exists a finite set \(F\subseteq s^{-1}(w)\) such that \(r\bm{1}_{\mathcal{Z}(w,w,F)}\in I\), or equivalently, \(r(w-\sum_{f\in F}ff^*)\in I\) in the language of Leavitt path algebras. If there were to exist an edge \(e\in F\) such that \(r(e)\in H\), we would also get \(rer(e)e^*=ree^*\in I\), and consequently \(r(w-\sum_{f\in F\setminus \{e\}} ff^*)\in I\), so we may assume that \(r(f)\notin H\) for all \(f\in F\). Now let \(F'\subseteq s^{-1}(w)\) be the finite set of edges such that \(r(e)\notin H\), for all \(e\in F'\). In particular, we have \(F\subseteq F'\). Since \(r(w-\sum_{f\in F}f f^*)\) is an element of \(I\), \(r(w-\sum_{f\in F} ff^*)(w-\sum_{e \in F'}ee^*)\) must also be in \(I\). But this is equal to \[r(w^2 - w{\sum_{e\in F'}}ee^* -\sum_{f\in F} ff^* w + \smashoperator{\sum_{e\in F',f\in F}} ff^*ee^*) = r(w-\sum_{e\in F'}ee^* - \sum_{f\in F}ff^* + \sum_{f\in F} ff^*) = rw^H,\] since \(F\subseteq F'\). The converse also holds: if \(rw^H\in I\), with \(r\in R\) and \(w\in S\), then \(r\in Y\cap RG_w^w\) by Proposition \ref{correspondence: proposition} (\ref{correspondence: ends in infinite emitter}). It follows that \(f(H,S)\) is exactly the set of scalars \(r\in R\) for which \(rv\in I\) and \(rw^H\in I\), for all \(v\in H\) and \(w\in S\).

The inverse map is then given as in the statement of the theorem, making use of Corollary~\ref{ideal is generated by vertices and cycles}.
\end{proof}

\subsubsection{Special cases of the ideal correspondence theorem}

If we restrict to the case where \(E\) is row-finite and \(C_u(E)=\varnothing\), we can use this theorem to get back \cite[Theorem 6.1 (b)]{clark2019ideals}. 

In the case where \(R=K\) is a field, the lattice of ideals of \(L_K(E)\) is described in \cite[Theorem 2.8.10]{abrams2005leavitt}: each ideal \(I\trianglelefteq L_K(E)\) corresponds to a triple \(((H,S), C, P)\), where \((H,S)\) is an admissible pair, \(C\subseteq C_u(E)\) a set of cycles whose exits have their range in \(H\), and \(P\) assigns to each \(c\in C\) a non constant polynomial \(p_c(x)\in K[x]\) with constant term 1. For readers who are familiar with this theorem, we explain briefly how we can deduce it from our Theorem \ref{LPA ideal correspondence}.

Consider an ideal \(I\), corresponding to \((f,g)\in \mathscr{D}_{E,R}\). We want to show \(I\) corresponds to a unique triple \(((H,S),C,P)\). For such a triple, the set \(H\) of vertices would consist of the vertices \(v\) for which \(v\in I\), and \(S\) would contain the breaking vertices \(w\) of \(H\) for which \(w^H\in I\). So we need to set \((H,S) = \max\left\{(H',S') \in \mathscr{T}_E^* \mid f(H',S')=K \right\}\). (This maximum exists by the saturation property of \(f\).) Next, the set \(C\) must consist of those cycles \(c\) whose vertices are not contained in \(I\), but for which there exists a nonzero polynomial \(p\in K[x]\) such that \(p(c)\in I\). So we can set \(C\subseteq C_u(E)\) to be the cycles which are mapped by \(g\) to a proper nonzero ideal of \(K[x,x^{-1}]\). For such a cycle \(c\), as \(K[x,x^{-1}]\) is a principal ideal domain, this ideal is uniquely determined by some non-constant polynomial in \(K[x]\) with constant term 1. This should  be exactly the polynomial in \(P\) corresponding to \(c\), and we easily determine it by looking at \(g(c)\).

Conversely, if we have such a triple \(((H,S),C,P)\), the corresponding ideal will correspond to the pair \((f,g)\) defined by
\begin{align*}f(H',S') = \begin{cases}
K & \text{if } (H',S')\leq (H,S)\\
0 & \text{else.}
\end{cases} && 
g(c) = \begin{cases}
K[x,x^{-1}] & \text{if } c^0 \subseteq H\\
\langle p_c(x)\rangle & \text{if } c\in C\\
0 & \text{else.}
\end{cases}
\end{align*}

\subsection{Products of ideals}

Since we have a lattice isomorphism, if we know which pairs \((f_I,g_I)\) and \((f_J,g_J)\) correspond to the ideals \(I\) and \(J\) respectively, Proposition \ref{lattice of pairs of functions} tells us which pairs correspond to the ideals \(I+J\) and \(I\cap J\). An obvious next step would be to find the pair corresponding to \(IJ\), the product of \(I\) and \(J\). This is done in the next proposition. Recall that the product \(f_1f_2\) of two $\mathcal{L}(R)$-valued functions \(f_1\) and \(f_2\) is defined as the pointwise multiplication of \(f_1\) and \(f_2\).

\begin{proposition} \label{multiplication of Leavitt ideals}
Let \(I\) and \(J\) be ideals in \(L_R(E)\), with corresponding pairs \((f_I,g_I)\) and \((f_J,g_J)\). Then the pair corresponding to the ideal \(IJ\) is \((\overline{f_If_J + G_c^\times},g_Ig_J)\), with \(G_c^\times: \mathscr{T}_E^*\to \mathcal{L}(R)\) the function that maps \((\overline{c^0},\varnothing)\) to \((g_1(c)g_2(c))\cap R\) for \(c\in C_u(E)\), and the rest to the zero ideal.
\end{proposition}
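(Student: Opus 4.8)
The plan is to compute the pair corresponding to $IJ$ directly on the disassembly side. Write $Y_I=\Psi(f_I,g_I)$ and $Y_J=\Psi(f_J,g_J)$ for the images of $I$ and $J$ under the disassembly map, so that $(f_I,g_I)=\Phi(Y_I)$ and $(f_J,g_J)=\Phi(Y_J)$. By Proposition~\ref{product of ideals of isotropy groups}, disassembly is multiplicative fibrewise, i.e.\ $Y_{IJ}\cap RG_u^u=(Y_I\cap RG_u^u)(Y_J\cap RG_u^u)$ for every $u\in\partial E$. The goal is to show $\Phi(Y_{IJ})=(\overline{f_If_J+G_c^\times},\,g_Ig_J)$. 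The $g$-component is immediate from the definition of $\Phi$ together with fibrewise multiplicativity: $\Phi(Y_{IJ})_g(c)=Y_{IJ}\cap RG_{c^\infty}^{c^\infty}=g_I(c)g_J(c)$. Everything else concerns the $f$-component, which I would pin down as $\Phi(Y_{IJ})_f=\overline{f_If_J+G_c^\times}$ by proving two inclusions.

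Since $\Phi$ takes values in $\mathscr{D}_{E,R}$, the function $\Phi(Y_{IJ})_f$ is saturated, so for $\overline{f_If_J+G_c^\times}\le\Phi(Y_{IJ})_f$ it suffices to verify the pointwise inequality $(f_If_J+G_c^\times)(H,S)\subseteq\Phi(Y_{IJ})_f(H,S)$. For the summand $f_I(H,S)f_J(H,S)$ I would check it on generators $rs$ with $r\in f_I(H,S)$ and $s\in f_J(H,S)$: for every $u$ with $u^0\cap H\neq\varnothing$ one has $r\cdot 1_u\in Y_I\cap RG_u^u$ and $s\cdot 1_u\in Y_J\cap RG_u^u$, and likewise for every $v\in S$, so $rs\cdot 1_u$ lies in the fibrewise product, giving $rs\in\Phi(Y_{IJ})_f(H,S)$. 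For the summand $G_c^\times$ the point is that the defining relation of $\mathscr{D}_{E,R}$ yields $\Phi(Y_{IJ})_f(\overline{c^0},\varnothing)=\Phi(Y_{IJ})_g(c)\cap R=(g_I(c)g_J(c))\cap R=G_c^\times(\overline{c^0},\varnothing)$.

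For the reverse inclusion I would fix $(H,S)\in\mathscr{T}_E^*$ and $r\in\Phi(Y_{IJ})_f(H,S)$ and use the explicit saturation formula (Proposition~\ref{construction saturation}): it is enough to produce a family $A\subseteq\mathscr{T}_E^*$ with $\bigvee_{\alpha\in A}(H_\alpha,S_\alpha)=(H,S)$ and $r\in\bigcap_{\alpha\in A}(f_If_J+G_c^\times)(H_\alpha,S_\alpha)$. The fibres $Y_I\cap RG_u^u$ and $Y_J\cap RG_u^u$ are directed unions — over the vertices of $u$ in the irrational/sink case, and over finite edge-sets $F$ in the infinite-emitter case — so $r$ already lies in a single product stage. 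Accordingly I would let $A$ consist of the pairs $(\overline{\{v\}},\varnothing)$ for $v$ in $V:=\{v\in H:r\in(f_If_J)(\overline{\{v\}},\varnothing)\}$, the pairs $(\overline{c^0},\varnothing)$ for cycles $c\in C_u(E)$ with $\overline{c^0}\subseteq H$ (on which $G_c^\times$ supplies $r$), and pairs $(\overline{r(s^{-1}(z)\setminus F_z)},\{z\}_\varnothing)$ for the infinite emitters $z\in H$ and breaking vertices $z\in S$, with each $F_z$ finite and large enough that the pair lies below $(H,S)$. Each pair then carries $r$, and $\bigvee A=(H,S)$ follows by the no-escape argument of Lemma~\ref{phi_psi}: a vertex of $H$ outside the join $(H',S')$ would let one follow edges with range outside $H'$ — possible at regular vertices by saturation, and at infinite emitters because such a vertex lies in $S'$ and is hence a breaking vertex of $H'$ — producing an infinite boundary path inside $H\setminus H'$ that avoids $V$ and all the cycle pairs, a contradiction.

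The crux, and the only genuinely new input beyond Lemma~\ref{phi_psi} and the proof of Proposition~\ref{lattice of pairs of functions}, is the correction term $G_c^\times$. It is the multiplicative analogue of $G_c^+$: over a general ring $(g_I(c)g_J(c))\cap R$ can strictly contain $f_I(\overline{c^0},\varnothing)\,f_J(\overline{c^0},\varnothing)=(g_I(c)\cap R)(g_J(c)\cap R)$, so the value of the product ideal at $(\overline{c^0},\varnothing)$ is invisible to $f_If_J$ and must be added in by hand; this discrepancy disappears over a field. One should also check that $G_c^\times$ contributes nothing at $(\overline{c^0},\varnothing)$ when $c\notin C_u(E)$ (so that the two formulas agree on those fibres), which holds because $(\overline{c^0},\varnothing)=(\overline{c'^0},\varnothing)$ with $c'\in C_u(E)$ would force $c$ and $c'$ to be equivalent via Observation~\ref{observation: saturation}\eqref{observation: vertex on cycle}. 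The remaining ingredients — fibrewise directedness and the infinite-path argument — are routine adaptations of the cited results.
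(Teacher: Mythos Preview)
Your approach is correct and closely parallels the paper's, but runs in the dual direction: the paper verifies $\Psi(\overline{f_If_J+G_c^\times},g_Ig_J)=Y_{IJ}$ fibrewise (checking each type of boundary path separately and invoking Corollary~\ref{saturation function for cycles} to control the saturation), whereas you compute $\Phi(Y_{IJ})$ directly and match it to the candidate via two inclusions and the explicit saturation formula of Proposition~\ref{construction saturation}. Both arguments rest on the same ingredients --- fibrewise multiplicativity from Proposition~\ref{product of ideals of isotropy groups}, the directed-union structure of $Y_I\cap RG_u^u$, and the no-escape argument from Lemma~\ref{phi_psi} --- so the difference is organisational rather than substantive. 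Your route has the pleasant feature that membership of the candidate in $\mathscr{D}_{E,R}$ comes for free (it is $\Phi(Y_{IJ})$), whereas the paper must verify this separately; conversely, the paper's fibrewise check is slightly more concrete than your supremum argument.

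One small omission: before invoking $\overline{f_If_J+G_c^\times}$ you should verify that $f_If_J+G_c^\times$ is order-reversing, since Definition~\ref{def:saturation} only defines the saturation for such functions. The check is exactly as for $G_c^+$ in Proposition~\ref{lattice of pairs of functions}: for $(H,S)<(\overline{c^0},\varnothing)$ one has $(H,S)\le(c_\downarrow,\varnothing)$, and $g_I(c)g_J(c)\subseteq f_I(c_\downarrow,\varnothing)f_J(c_\downarrow,\varnothing)[x,x^{-1}]$ gives the needed inclusion. Also, in your no-escape argument you should note explicitly that rational paths $\rho c^\infty$ with $c\notin C_u(E)$ have $c^0\subseteq V$ (since $Y_{IJ}\cap RG_u^u\cap R=f_If_J(\overline{c^0},\varnothing)$ there), so that such paths are already absorbed by the vertex pairs rather than the cycle pairs.
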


\begin{proof}
First of all, we will show that \(f_If_J + G_c^\times\) is order-reversing, so its saturation \(\overline{f_If_J + G_c^\times}\) is defined and is an element of \(\mathscr{F}_{E,R}\). Since \(f_If_J\) is clearly order-reversing, it is enough to show that if \((H,S) < (\overline{c^0}, \varnothing)\) for some \((H,S)\in \mathscr{T}_E^*\) and \(c\in C_u(E)\), then \((g_I(c)g_J(c)) \cap R \subseteq f_I(H,S)f_J(H,S)\). But for such \((H,S)\) we have \((H,S)\leq (c_\downarrow, \varnothing)\), so this follows from the fact that \(g_I(c)\subseteq f_I(c_\downarrow, \varnothing)[x,x^{-1}]\), i.e. all the coefficients of the Laurent polynomials in \(g_I(c)\) are elements of \(f_I(c_\downarrow, \varnothing)\), and similarly for \(J\).

To show that \((\overline{f_If_J+ G_c^\times}, g_Ig_J)\) is an element of \(\mathscr{D}_{E,R}\), consider some \(c\in C_u(E)\). Clearly, we have \(g_Ig_J(c)\cap R = G_c^\times(\overline{c^0}, \varnothing) \subseteq \overline{f_If_J + G_c^\times}(\overline{c^0}, \varnothing)\). For the reverse inclusion, we can apply Corollary \ref{saturation function for cycles}, and note that
\begin{align*}
(f_If_J + G_c^\times) (\overline{c^0}, \varnothing) &= f_I(\overline{c^0}, \varnothing) f_J(\overline{c^0}, \varnothing) + G_c^\times(\overline{c^0}, \varnothing) \\ &= (g_I(c)\cap R)(g_J(c)\cap R) + (g_I(c)g_J(c))\cap R \subseteq (g_I(c)g_J(c))\cap R.
\end{align*}
Finally, suppose \(c_\downarrow \neq \varnothing\). Then we have
\begin{align*}
g_Ig_J(c) \subseteq f_I(c_\downarrow, \varnothing)[x,x^{-1}]f_J(c_\downarrow, \varnothing)[x,x^{-1}] \subseteq (f_If_J)(c_\downarrow, \varnothing)[x,x^{-1}] \subseteq (\overline{f_If_J+G_c^\times})(c_\downarrow, \varnothing)[x,x^{-1}].
\end{align*}
So we can conclude that \((\overline{f_If_J + G_c^\times}, g_Ig_J)\in \mathscr{D}_{E,R}\).
To show that this pair corresponds to the ideal \(IJ\), we will make use of the map \(\Psi\). Consider the images \(Y_I\), \(Y_J\) and \(Y\) under \(\Psi\) of \((f_I,g_I)\), \((f_J,g_J)\) and \((\overline{f_If_J + G_c^\times}, g_Ig_J)\) respectively. If we can show that, for each \(u\in \partial E\), the ideals \((Y_I\cap RG_u^u)(Y_J\cap RG_u^u)\) and \(Y\cap RG_u^u\) are equal, the proposition will follow from Proposition \ref{product of ideals of isotropy groups}, line (\ref{multiplicativity}).

First suppose \(u\in \partial E\) is irrational, or ends in a sink. Then we have \(Y_I\cap RG_u^u = \bigcup_{v\in u^0} f_I(\overline{\{v\}}, \varnothing)\), \(Y_J\cap RG_u^u = \bigcup_{v\in u^0} f_J(\overline{\{v\}}, \varnothing)\), and \(Y\cap RG_u^u = \bigcup_{v\in u^0} (\overline{f_If_J + G_c^\times})(\overline{\{v\}}, \varnothing)\). Note that, if \(v_0,v_1,v_2, \ldots\) are the vertices of \(u\), then \(f(\overline{\{v_0\}}, \varnothing), f(\overline{\{v_1\}}, \varnothing), f(\overline{\{v_2\}}, \varnothing), \ldots\) is an ascending chain of ideals, for any \(f\in \mathscr{F}_{E,R}\). It immediately follows that \((Y_I\cap RG_u^u) (Y_J\cap RG_u^u)\subseteq Y\cap RG_u^u\). For the reverse inclusion, suppose we have a vertex \(v\in u^0\) and an element \(r\in R\) such that \(r\in (\overline{f_If_J + G_c^\times})(\overline{\{v\}}, \varnothing)\). By Proposition~\ref{construction saturation}, this means we can find a set of admissible pairs \(\{(H_\alpha, S_\alpha)\}_\alpha \subseteq \mathscr{T}_E^*\) such that \(\bigvee_\alpha (H_\alpha, S_\alpha) = (\overline{\{v\}}, \varnothing)\) and \(r\in \bigcap_{\alpha} (f_If_J + G_c^\times)(H_\alpha, S_\alpha)\). Since \(\bigvee_\alpha (H_\alpha, S_\alpha) = (\overline{\{v\}}, \varnothing)\), there must exist some \(\beta\) such that \(H_\beta\) contains a vertex \(w\in u^0\). But then we have \((\overline{\{w\}}, \varnothing) \leq (H_\beta, S_\beta)\), so \(r\in (f_If_J + G_c^\times)(H_\beta, S_\beta) \subseteq (f_If_J + G_c^\times)(\overline{\{w\}}, \varnothing) = f_If_J(\overline{\{w\}}, \varnothing)\). This means we can find \(a_n\in f_I(\overline{\{w\}}, \varnothing)\) and \(b_n\in f_J(\overline{\{w\}}, \varnothing)\) such that \(r=\sum_n a_nb_n\). Since \(f_I(\overline{\{w\}}, \varnothing) \subseteq Y_I\cap RG_u^u\), and similarly for \(J\), we can conclude that \(Y\cap RG_u^u\subseteq (Y_I\cap RG_u^u)(Y_J\cap RG_u^u)\).

Now suppose \(u\) is finite and ends in an infinite emitter. The inclusion \((Y_I\cap RG_u^u) (Y_J\cap RG_u^u) \subseteq Y\cap RG_u^u\) follows similarly as in the previous paragraph, by noting that, for \(F_I,F_J \subseteq _{\text{finite}} s^{-1}(r(u))\), we have \(f(\overline{r(s^{-1}(r(u))\setminus F_I)}, \{r(u)\}_\varnothing)\subseteq f(\overline{r(s^{-1}(r(u))\setminus (F_I\cup F_J)}, \{r(u)\}_\varnothing)\), and similarly for \(F_J\), for any \(f\in \mathscr{F}_{E,R}\). For the reverse inclusion, let \(r\in Y\cap RG^u_u\). Then using that \(Y=\Psi(\overline{f_If_J + G_c^\times}, g_Ig_J)\) and Proposition \ref{construction saturation}, we can find a finite set  \(F\subseteq s^{-1}(r(u))\) and a set of admissible pairs \(\{(H_\alpha, S_\alpha)\}_{\alpha \in A} \subseteq \mathscr{T}_E^*\) with 
\[
\bigvee_{\alpha \in A} (H_\alpha, S_\alpha) = \Big(\overline{r(s^{-1}(r(u))\setminus F)}, \{r(u)\}_\varnothing\Big),
\]
such that \(r\in \bigcap_{\alpha \in A} (f_If_J+ G_c^\times)(H_\alpha, S_\alpha)\). The equation above and Proposition \ref{prop:supremum in TE} imply that
\[
r(u) \in \Big(\overline{\bigcup_{\alpha \in A}H_\alpha}^{\bigcup_{\alpha \in A}S_\alpha}\Big)\cup\Big( \bigcup_{\alpha \in A}S_\alpha\Big).
\] Since \(r(u)\) is an infinite emitter, this means that there exists \(\beta \in A\) such that \(r(u)\in H_\beta \cup S_\beta\), by Observation \ref{observation: saturation} \eqref{observation: infinite emitter}. Consequently, we can find a finite set \(F'\subseteq s^{-1}(r(u))\) such that \((\overline{r(s^{-1}(r(u))\setminus F')}, \{r(u)\}_\varnothing) \leq (H_\beta, S_\beta)\), so we have 
	\begin{align*}r\in (f_If_J + G_c^\times) (H_\beta, S_\beta) &\subseteq (f_If_J+ G_c^\times)(\overline{r(s^{-1}(r(u))\setminus F')}, \{r(u)\}_\varnothing)\\ &= f_If_J(\overline{r(s^{-1}(r(u))\setminus F')}, \{r(u)\}_\varnothing).
	\end{align*}
	Again, we can find \(a_n\in f_I(\overline{r(s^{-1}(r(u))\setminus F')}, \{r(u)\}_\varnothing)\) and \(b_n\in f_J(\overline{r(s^{-1}(r(u))\setminus F')}, \{r(u)\}_\varnothing)\) such that \(r = \sum_n a_nb_n\), from which it follows that \(r\in (Y_I\cap RG_u^u)(Y_J\cap RG_u^u)\). 

Finally, suppose \(u=\rho c^\infty\) is rational. If \(c\in C_u(E)\), we have \(Y_I\cap RG_u^u = g_I(c)\), \(Y_J\cap RG_u^u = g_J(c)\) and \(Y\cap RG_u^u = g_Ig_J(c)\), so we immediately find \((Y_I\cap RG_u^u)(Y_J\cap RG_u^u) = Y\cap RG_u^u\). On the other hand, if \(c\notin C_u(E)\), we have \(Y_I\cap RG_u^u = f_I(\overline{c^0}, \varnothing)[x,x^{-1}]\), \(Y_J\cap RG_u^u = f_J(\overline{c^0}, \varnothing)[x,x^{-1}]\) and \(Y\cap RG_u^u = \overline{f_If_J+ G_c^\times} (\overline{c^0}, \varnothing)[x,x^{-1}] = f_If_J(\overline{c^0}, \varnothing)[x,x^{-1}]\), where we made use of Corollary~\ref{saturation function for cycles}. Since \(A[x,x^{-1}]B[x,x^{-1}]=AB[x,x^{-1}]\) holds in general for \(A,B \trianglelefteq R\), it follows that \(Y\cap RG_u^u= (Y_I\cap RG_u^u)(Y_J\cap RG_u^u)\), which concludes the proof.
\end{proof}

\begin{remark}
The reason we introduced the function \(G_c^\times\) in the previous proposition, is that for two ideals \(I,J\trianglelefteq R[x,x^{-1}]\), we know that \((I\cap R)(J\cap R) \subseteq (IJ\cap R)\), but equality does not hold in general. If we are working with a ring $R$ for which this equality holds for all such ideals, e.g.\! if $R$ is a field, then we can restate the previous proposition omitting all mention of \(G_c^\times\).

As an example of a ring for which this property does not hold, consider \(R=\CC[y,z]/\langle y^2, z^2 \rangle = \CC 1\oplus \CC y\oplus \CC z\oplus \CC yz \). Let \(I\) and \(J\) be the principal ideals of \(R[x,x^{-1}]\) generated by \(y+zx^{-1}\) and \(z+yx\) respectively. Then one can show that 
\begin{align*}
I&=\CC[x,x^{-1}]   yz \oplus \CC[x,x^{-1}]  ( y+zx^{-1} ) \\
J&=   \CC[x,x^{-1}]  yz \oplus \CC[x,x^{-1}] ( z+yx )
\end{align*}
and that \(I\cap R=J\cap R=\CC yz\). Consequently, \((I\cap R) (J\cap R) = 0\). However, as \((y+zx^{-1})(z+yx) = 2yz\in IJ\cap R\), there is a proper containment \((I\cap R)(J\cap R) \subsetneq IJ\cap R\).
\end{remark}

\subsection{Graded ideals}
The following theorem tells us how to see whether an ideal is graded or not by looking at the corresponding pair of functions.

\begin{theorem}
For any pair \((f,g)\in \mathscr{D}_{E,R}\) with corresponding ideal \(I\trianglelefteq L_R(E)\), the following are equivalent:
\begin{enumerate}[\rm(1)]
    \item \(I\) is graded.
    \item For each \(c\in C_u(E)\), \(g(c)=f(\overline{c^0})[x,x^{-1}]\).
    \item For each \(c\in C_u(E)\), \(g(c)\) is a graded ideal of \(R[x,x^{-1}]\).
\end{enumerate}
\end{theorem}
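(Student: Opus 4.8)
The plan is to prove the cycle of implications $(1)\Rightarrow(3)\Rightarrow(2)\Rightarrow(1)$, unwinding the $\ZZ$-grading of $L_R(E)$ through the description of $g$ in Theorem \ref{LPA ideal correspondence} and the generating set from Corollary \ref{ideal is generated by vertices and cycles}. The key preliminary observation, used throughout, is that for $p(x)=\sum_n a_n x^n$ the element $p(c)=\sum_{n<0}a_n(c^*)^{-n}+a_0 s(c)+\sum_{n>0}a_n c^n$ splits into homogeneous pieces: since $\deg(c^n)=n|c|$ and $\deg((c^*)^m)=-m|c|$, the summand carrying $a_n$ is homogeneous of degree $n|c|$, and as $|c|\ge 1$ these degrees are pairwise distinct. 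For $(1)\Rightarrow(3)$, suppose $I$ is graded and take $p(x)=\sum_n a_n x^n\in g(c)$, so $p(c)\in I$. By the observation, the homogeneous components of $p(c)$ are exactly the monomials $a_n c^{(n)}$, where $c^{(n)}$ denotes $s(c)$, $c^n$, or $(c^*)^{-n}$ according to the sign of $n$. Gradedness of $I$ forces each $a_n c^{(n)}\in I$, i.e.\ $a_n x^n\in g(c)$; hence $g(c)$ contains the homogeneous components of all its elements and is a graded ideal of $R[x,x^{-1}]$.

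For $(3)\Rightarrow(2)$, I would first record the elementary structure of graded ideals of $R[x,x^{-1}]$: if $L\trianglelefteq R[x,x^{-1}]$ is graded and $J=L\cap R$, then since $x$ is a unit one has $a x^n\in L\iff a\in L$, so the degree-$n$ part of $L$ equals $Jx^n$ and $L=J[x,x^{-1}]$. Applying this to the graded ideal $g(c)$ gives $g(c)=(g(c)\cap R)[x,x^{-1}]$, and the defining compatibility of $\mathscr{D}_{E,R}$ in Definition \ref{D_E,R}, namely $g(c)\cap R=f(\overline{c^0},\varnothing)$, then yields $g(c)=f(\overline{c^0},\varnothing)[x,x^{-1}]$, which is condition (2).

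For $(2)\Rightarrow(1)$, I would use the explicit generators of $I$ provided by Corollary \ref{ideal is generated by vertices and cycles} and the inverse map of Theorem \ref{LPA ideal correspondence}: scalar multiples $rv$ and $rw^H$ with $r\in f(H,S)$, together with the elements $p(c)$ for $c\in C_u(E)$ and $p\in g(c)$. The first two families are homogeneous of degree $0$, since $\deg v=0$ and $w^H=w-\sum_{e}ee^*$ with $\deg(ee^*)=0$. Under hypothesis (2) every $p=\sum_n a_n x^n\in g(c)=f(\overline{c^0},\varnothing)[x,x^{-1}]$ has all coefficients $a_n\in f(\overline{c^0},\varnothing)$; as $s(c)\in\overline{c^0}$ this gives $a_n s(c)\in I$, whence $a_n c^{(n)}=(a_n s(c))c^{(n)}\in I$ for every $n$, so $p(c)\in I$ already lies in the ideal generated by the degree-$0$ generators. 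Thus $I$ is generated by homogeneous elements, and an ideal of a $\ZZ$-graded algebra generated by homogeneous elements is graded; hence $I$ is graded.

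The implications $(1)\Rightarrow(3)$ and $(3)\Rightarrow(2)$ are essentially formal once the grading of $p(c)$ is unwound and the one-line description of graded ideals in $R[x,x^{-1}]$ is in hand. The only step demanding care is $(2)\Rightarrow(1)$: one must verify that condition (2) makes the cycle generators redundant, which rests on the explicit generating set of Corollary \ref{ideal is generated by vertices and cycles} and on the fact that membership $a_n\in f(\overline{c^0},\varnothing)$ forces $a_n c^{(n)}\in I$ through the basepoint $s(c)\in\overline{c^0}$. This is where the combinatorial content of the correspondence is genuinely used, and it is the part I expect to be the main obstacle.
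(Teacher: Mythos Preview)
Your proof is correct and rests on the same three ingredients as the paper's: the homogeneous decomposition of $p(c)$, the fact that a graded ideal of $R[x,x^{-1}]$ is $J[x,x^{-1}]$ for $J=g(c)\cap R$, and the explicit generating set from Theorem~\ref{LPA ideal correspondence}. The only cosmetic difference is that the paper runs the cycle $(1)\Rightarrow(2)\Rightarrow(3)\Rightarrow(1)$ rather than your $(1)\Rightarrow(3)\Rightarrow(2)\Rightarrow(1)$: its $(1)\Rightarrow(2)$ is the contrapositive of your $(1)\Rightarrow(3)$ and $(3)\Rightarrow(2)$ combined, and its $(3)\Rightarrow(1)$ simply observes that a graded $g(c)$ allows one to replace each generator $p(c)$ by its homogeneous monomials $a_n c^{(n)}$, without needing your extra (but correct) remark that these monomials are already forced into $I$ by the degree-$0$ generators $a_n s(c)$.
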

\begin{proof}
\((1)\Rightarrow (2)\) Suppose there is some \(c\in C_u(E)\) such that \(g(c)\neq f(\overline{c^0}, \varnothing)[x,x^{-1}]\). Since we know that \(g(c)\cap R = f(\overline{c^0}, \varnothing)\), it follows that there is some Laurent polynomial in \(g(c)\), such that its coefficients are not all in \(g(c)\). But the elements of \(g(c)\) are just the Laurent polynomials \(p(x)\) such that \(p(c)\in I\), where \(c^n = (c^*)^{-n}\) for \(n< 0\) and \(c^0 = s(c) = r(c)\). Consequently, there exists an element of \(I\) of which the homogeneous components are not in \(I\), so \(I\) is not graded. This gives a contradiction.

\((2)\Rightarrow (3)\) is clear.

\((3)\Rightarrow (1)\) If \(g(c)\) is graded for all \(c\in C_u(E)\), the description of the lattice isomorphism of Theorem \ref{LPA ideal correspondence} shows that \(I\) can be generated by homogeneous elements. As in any graded ring or algebra, this implies that \(I\) is graded.
\end{proof}

\begin{corollary}\label{classification of graded ideals}
The lattice \(\mathcal{L}_{\rm gr}(L_R(E))\) of graded ideals of \(L_R(E)\) is isomorphic to \(\mathscr{F}_{E,R}\).
\end{corollary}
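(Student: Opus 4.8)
The plan is to leverage the lattice isomorphism already established in Theorem \ref{LPA ideal correspondence} and to read off, using the theorem immediately preceding this corollary, exactly which pairs $(f,g) \in \mathscr{D}_{E,R}$ arise from graded ideals. By that theorem, an ideal $I$ with corresponding pair $(f,g)$ is graded precisely when $g(c) = f(\overline{c^0},\varnothing)[x,x^{-1}]$ for every $c \in C_u(E)$; that is, precisely when $g$ is completely determined by $f$. So under the isomorphism $\Theta\colon \mathcal{L}(L_R(E)) \to \mathscr{D}_{E,R}$ obtained by composing the arrows in \eqref{composing isomorphisms}, the graded ideals correspond bijectively to the subset
\[
\mathscr{D}_{E,R}^{\mathrm{gr}} = \big\{(f,g) \in \mathscr{D}_{E,R} \mid g(c) = f(\overline{c^0},\varnothing)[x,x^{-1}] \text{ for all } c \in C_u(E)\big\}.
\]
I would first record that $\mathcal{L}_{\mathrm{gr}}(L_R(E))$ is a sublattice of $\mathcal{L}(L_R(E))$, since the sum and intersection of graded ideals are again graded; in particular it is a lattice, and the restriction of $\Theta$ is an order isomorphism $\mathcal{L}_{\mathrm{gr}}(L_R(E)) \to \mathscr{D}_{E,R}^{\mathrm{gr}}$.

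It then remains to identify $\mathscr{D}_{E,R}^{\mathrm{gr}}$ with $\mathscr{F}_{E,R}$. I would use the projection $\pi\colon \mathscr{D}_{E,R}^{\mathrm{gr}} \to \mathscr{F}_{E,R}$, $(f,g) \mapsto f$, with candidate inverse $f \mapsto (f,g_f)$ where $g_f(c) := f(\overline{c^0},\varnothing)[x,x^{-1}]$. The only point requiring an argument is that $(f,g_f)$ indeed lies in $\mathscr{D}_{E,R}$: the identity $g_f(c) \cap R = f(\overline{c^0},\varnothing)$ is immediate, and because $(c_\downarrow,\varnothing) \leq (\overline{c^0},\varnothing)$ and $f$ is order-reversing, we get $g_f(c) = f(\overline{c^0},\varnothing)[x,x^{-1}] \subseteq f(c_\downarrow,\varnothing)[x,x^{-1}]$ whenever $c_\downarrow \neq \varnothing$, which is exactly the remaining condition of Definition \ref{D_E,R}. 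The two composites are the identity, since every $(f,g) \in \mathscr{D}_{E,R}^{\mathrm{gr}}$ already satisfies $g = g_f$; and both $\pi$ and its inverse are order-preserving, because $f_1 \leq f_2$ forces $g_{f_1}(c) \subseteq g_{f_2}(c)$ for all $c$, while conversely the first components control the order on $\mathscr{D}_{E,R}$ by definition.

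Composing $\Theta|_{\mathcal{L}_{\mathrm{gr}}}$ with $\pi$ yields an order isomorphism $\mathcal{L}_{\mathrm{gr}}(L_R(E)) \to \mathscr{F}_{E,R}$, and since an order isomorphism between lattices automatically preserves all suprema and infima, this is a lattice isomorphism, as required. I expect no deep obstacle here: the substantive work was done in the preceding graded-ideal theorem (which pins down $\mathscr{D}_{E,R}^{\mathrm{gr}}$) and in Theorem \ref{LPA ideal correspondence}, leaving only the well-definedness check for $f \mapsto (f,g_f)$, where the order-reversing property of saturated functions and the relation between $c_\downarrow$ and $\overline{c^0}$ enter. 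As a consistency check one may verify that the join and meet formulas of Proposition \ref{lattice of pairs of functions} restrict correctly to graded pairs — in particular that the correction term $G_c^+$ contributes nothing, since $(g_{f_1}(c)+g_{f_2}(c)) \cap R = (f_1+f_2)(\overline{c^0},\varnothing)$ already lies in the value of $f_1+f_2$, and Corollary \ref{saturation function for cycles} gives $\overline{f_1+f_2}(\overline{c^0},\varnothing) = (f_1+f_2)(\overline{c^0},\varnothing)$ — but this verification is not logically needed once the argument is phrased via the order isomorphism.
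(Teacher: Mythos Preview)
Your proposal is correct and follows essentially the same outline as the paper: both use the preceding theorem to identify graded ideals with those pairs $(f,g)$ in which $g$ is forced by $f$, observe that sums and intersections of graded ideals are graded, and then collapse $\mathscr{D}_{E,R}^{\mathrm{gr}}$ to $\mathscr{F}_{E,R}$. The one noteworthy difference is in how you justify that the resulting bijection respects the lattice structure. The paper checks this directly by computing the join of two graded pairs in $\mathscr{D}_{E,R}$ and showing that the correction term $G_c^+$ is absorbed (since $G_c^+(\overline{c^0},\varnothing) \subseteq (f_I+f_J)(\overline{c^0},\varnothing)$), so that the join reduces to $(\overline{f_I+f_J}, g_I+g_J)$ and matches the join in $\mathscr{F}_{E,R}$. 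You instead establish an order isomorphism and invoke the general fact that any order isomorphism between lattices preserves joins and meets, relegating the $G_c^+$ computation to an optional consistency check. Your route is slightly more economical; the paper's has the virtue of making the transported lattice operations explicit.
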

\begin{proof}
It is easy to see that the sum and intersection of two graded ideals will also be graded. Suppose we have two graded ideals \(I\) and \(J\), with corresponding pairs \((f_I,g_I)\) and \((f_J,g_J)\). Then the intersection of \(I\) and \(J\) corresponds to the pair \((f_I\cap f_J, g_I\cap g_J)\), while the sum of \(I\) and \(J\) corresponds to \((\overline{f_I+ f_J+ G_c^+}, g_I+ g_J) = (\overline{f_I+ f_J}, g_I+g_J)\). The last equality follows from the fact that, for \(c\in C_u(E)\) we have \(G_c^+(\overline{c^0}, \varnothing) = (f_I(\overline{c^0}, \varnothing)[x,x^{-1}] + f_J(\overline{c^0}, \varnothing)[x,x^{-1}])\cap R \subseteq (f_I+ f_J) (\overline{c^0}, \varnothing)\). The statement then follows from the fact that the second function in a pair corresponding to a graded ideal is redundant, and that the sum and intersection of ideals correspond to the lattice \(\mathscr{F}_{E,R}\) as described in Section \ref{section: lattices}.
\end{proof}

\begin{corollary}
Let \(I\) be an ideal in \(L_R(E)\) and \((f,g)\) the corresponding pair in \(\mathscr{D}_{E,R}\). Then the ideal corresponding to the pair \((f,f_C)\), with \(f_C(c) = f(\overline{c^0}, \varnothing)[x,x^{-1}]\) for \(c\in C_u(E)\), is the largest graded ideal contained in \(I\).
\end{corollary}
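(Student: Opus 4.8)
The plan is to verify first that $(f,f_C)$ is a genuine element of $\mathscr{D}_{E,R}$, then to identify its corresponding ideal as a graded ideal sitting inside $I$, and finally to establish maximality among all graded ideals contained in $I$. Throughout I would work through the lattice isomorphism of Theorem \ref{LPA ideal correspondence}, which lets me compare ideals by comparing their pairs in $\mathscr{D}_{E,R}$.

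First I would check $(f,f_C)\in\mathscr{D}_{E,R}$ against Definition \ref{D_E,R}. The identity $f_C(c)\cap R = f(\overline{c^0},\varnothing)[x,x^{-1}]\cap R = f(\overline{c^0},\varnothing)$ is immediate since $f(\overline{c^0},\varnothing)\trianglelefteq R$. For the bound $f_C(c)\subseteq f(c_\downarrow,\varnothing)[x,x^{-1}]$ when $c_\downarrow\neq\varnothing$, I would use that $(c_\downarrow,\varnothing)\leq(\overline{c^0},\varnothing)$ always holds (as recorded in the text, $c_\downarrow\subseteq\overline{c^0}$), together with the order-reversing property of $f$, giving $f(\overline{c^0},\varnothing)\subseteq f(c_\downarrow,\varnothing)$ and hence the required inclusion after passing to Laurent polynomials. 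Since $f_C(c)=f(\overline{c^0},\varnothing)[x,x^{-1}]$ by construction, the equivalence $(1)\Leftrightarrow(2)$ of the preceding grading characterisation shows that the ideal $I_{\mathrm{gr}}\trianglelefteq L_R(E)$ corresponding to $(f,f_C)$ is graded.

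Next I would show $I_{\mathrm{gr}}\subseteq I$. As the isomorphism of Theorem \ref{LPA ideal correspondence} is order-preserving, it suffices to check $(f,f_C)\leq(f,g)$ in $\mathscr{D}_{E,R}$. The first components agree, and for the second component the consequence of Definition \ref{D_E,R} noted in the text gives $f_C(c)=f(\overline{c^0},\varnothing)[x,x^{-1}]\subseteq g(c)$ for every $c\in C_u(E)$. Hence $(f,f_C)\leq(f,g)$ and therefore $I_{\mathrm{gr}}\subseteq I$. For maximality, I would take any graded ideal $J\trianglelefteq L_R(E)$ with $J\subseteq I$ and let $(f',g')$ be its corresponding pair. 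From $J\subseteq I$ and the order-preserving isomorphism I get $f'\leq f$ pointwise; and because $J$ is graded, the equivalence $(1)\Leftrightarrow(2)$ forces $g'(c)=f'(\overline{c^0},\varnothing)[x,x^{-1}]$ for all $c\in C_u(E)$. Combining $f'(\overline{c^0},\varnothing)\subseteq f(\overline{c^0},\varnothing)$ with this identity yields $g'(c)\subseteq f(\overline{c^0},\varnothing)[x,x^{-1}]=f_C(c)$, so $(f',g')\leq(f,f_C)$ and thus $J\subseteq I_{\mathrm{gr}}$.

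The argument is essentially formal once the grading criterion of the preceding theorem is in hand, so I do not anticipate a genuine obstacle. The only step demanding any care is confirming that $(f,f_C)$ lies in $\mathscr{D}_{E,R}$, and even there the verification reduces entirely to the order-reversing property of $f$ and the definitional inequality $(c_\downarrow,\varnothing)\leq(\overline{c^0},\varnothing)$; the containments $I_{\mathrm{gr}}\subseteq I$ and maximality then follow mechanically from comparing pairs and invoking $(1)\Leftrightarrow(2)$.
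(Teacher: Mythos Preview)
Your proof is correct and follows exactly the natural argument the paper implicitly intends; the paper states this corollary without proof, treating it as immediate from the preceding grading criterion and the order-preserving lattice isomorphism of Theorem~\ref{LPA ideal correspondence}. Your verification that $(f,f_C)\in\mathscr{D}_{E,R}$, the use of $f(\overline{c^0},\varnothing)[x,x^{-1}]\subseteq g(c)$ noted after Definition~\ref{D_E,R}, and the maximality step via $(1)\Leftrightarrow(2)$ are all precisely the expected details.
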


\begin{remark}

When working with Leavitt path algebras over an arbitrary commutative ring with unit, the ideal lattice of so-called graded \emph{basic} ideals is known to be isomorphic to the lattice of admissible pairs of \(E\): see \cite[Theorem 7.9 (1)]{tomforde2011leavitt} for row-finite graphs, and \cite[Theorem 3.10 (4)]{larki} for the general case. A graded ideal \(I \in \mathcal{L}(L_R(E))\) is said to be \emph{basic} if \(rx\in I\) implies \(x\in I\), for any nonzero \(r\in R\) and any \(x\in L_R(E)\) of the form \(x = v\) or \(x=v-\sum_{i=1}^n e_ie_i^*\), with \(v\in E^0\) and \(s(e_i)=v\).
In our classification of graded ideals, Corollary \ref{classification of graded ideals}, it is clear that basic graded ideals correspond exactly to the functions \(f\in \mathscr{F}_{E,R}\) which take values in \(\{0,R\}\). The assignments
\[f \longmapsto \max\left\{(H,S) \in \mathscr{T}_E^* \mid f(H,S)=R \right\}\]
(this maximum exists by the saturation property of \(f\)) and
\[I \longmapsto \left( f: (H,S) \longmapsto \begin{cases}
R & \text{if } v\in I \text{ for } v\in H \text{ and } w^H\in I \text{ for } w\in S,\\
0 & \text{else}
\end{cases}\right)\]
define mutually inverse lattice isomorphisms between the lattice of functions in \(\mathscr{F}_{E,R}\) which take values in \(\{0,R\}\), and \(\mathscr{T}_E\). So we get back the known classification of basic graded ideals. In contrast with $\mathcal{L}_{\rm gr}(L_R(E))$ and $\mathcal{L}(L_R(E))$, which are influenced by the ideal structures of $R$ and $R[x,x^{-1}]$ respectively, the lattice of basic graded ideals of $L_R(E)$ does not depend on $R$.
\end{remark}

\subsection{Prime ideals} Finally, let us try to use our description of the ideal lattice and multiplication of ideals of \(L_R(E)\) to determine which ideals are prime. In the case where \(R\) is a field, a necessary and sufficient condition for an ideal to be prime was already given in \cite[Theorem 3.12]{rangaswamyprime}. While we have not yet found such a necessary and sufficient condition for Leavitt path algebras over arbitrary commutative rings, we will give some necessary conditions, and a counterexample to show these are not sufficient. Hopefully this will illustrate which kind of behaviour can occur in the general case. 

For simplicity, we will assume that \(E\) is a row-finite graph, and that \(E\) satisfies condition (K), i.e.\! \(C_u(E)=\varnothing\). In particular, we do not have to worry about breaking vertices or non-graded ideals. With these restrictions and in the case where \(R=K\) is a field, the condition previously mentioned is as follows: an ideal \(I\trianglelefteq L_K(E)\) is uniquely determined by the hereditary saturated subset \(H:= I\cap E^0\), and then \(I\) is prime if and only if \(H\neq E^0\) and \(E^0\setminus H\) is \emph{downward directed}, in the sense of the following definition:

\begin{definition}
A subset \(S\subseteq E^0\) is said to be \emph{downward directed} if for each \(v,w\in S\), there exists some vertex \(u\in S\), such that there exist paths in \(E\) from \(v\) to \(u\) and from \(w\) to \(u\). (The term downward directed appears in \cite{rangaswamy}; in other places it has been called the \emph{MT-3 condition}.)
\end{definition}

If \(R\) is an arbitrary commutative ring, we have the following necessary conditions for primeness. Let \(I\trianglelefteq L_R(E)\) be an ideal corresponding to the saturated function \(f\in \mathscr{F}_{E,R}\). (As \(C_u(E)=\varnothing\), all ideals are graded, so we can use Corollary \ref{classification of graded ideals}.)

\begin{lemma}\label{prime condition 1}
If \(I\) is prime, then \(f\) takes values in \(\operatorname{Spec} (R) \cup \{R\}\).
\end{lemma}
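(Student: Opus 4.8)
The plan is to work entirely inside the lattice $\mathscr{F}_{E,R}$, which by Corollary \ref{classification of graded ideals} is isomorphic to $\mathcal{L}(L_R(E))$ because $C_u(E) = \varnothing$ forces every ideal to be graded. Since $E$ is row-finite there are no breaking vertices, so $\mathscr{T}_E^*$ consists of the nonempty hereditary saturated subsets $H$, and $f(H) = \{r \in R \mid rv \in I \text{ for all } v \in H\}$. Under this isomorphism, Proposition \ref{multiplication of Leavitt ideals} specialises --- the function $G_c^\times$ is identically zero as there are no cycles in $C_u(E)$ --- to say that the product of the ideals with functions $f_1, f_2$ has function $\overline{f_1 f_2}$. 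Since $f$ is saturated, $\overline{f_1 f_2} \le f$ if and only if $f_1 f_2 \le f$ pointwise; thus primeness of $I$ becomes the statement that whenever $f_1(H') f_2(H') \subseteq f(H')$ for all $H'$, one has $f_1 \le f$ or $f_2 \le f$.

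Next I would introduce, for a hereditary saturated set $H$ and an ideal $J \trianglelefteq R$, the elementary function $\phi_{H,J} \colon \mathscr{T}_E^* \to \mathcal{L}(R)$ defined by $\phi_{H,J}(H') = J$ if $H' \subseteq H$ and $\phi_{H,J}(H') = 0$ otherwise. The first key step is to check that $\phi_{H,J}$ is saturated: using that $\overline{\bigcup_i H_i} \subseteq H$ holds exactly when every $H_i \subseteq H$ (as $H$ is already hereditary saturated), one sees directly that $\phi_{H,J}(\bigvee_i H_i) = \bigcap_i \phi_{H,J}(H_i)$, so $\phi_{H,J} \in \mathscr{F}_{E,R}$. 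Now fix $H \in \mathscr{T}_E^*$ with $f(H) \ne R$ (the case $f(H) = R$ placing the value in $\{R\}$ being trivial), let $a, b \in R$ with $ab \in f(H)$, and set $f_1 = \phi_{H,\langle a\rangle}$ and $f_2 = \phi_{H,\langle b\rangle}$. Since $f$ is order-reversing, $f(H) \subseteq f(H')$ for every $H' \subseteq H$; this gives at once that $f_1 \le f$ if and only if $a \in f(H)$ (the forward direction reads off $a \in f_1(H) \subseteq f(H)$, the reverse uses $\langle a\rangle \subseteq f(H) \subseteq f(H')$ for all $H' \subseteq H$), and likewise $f_2 \le f$ if and only if $b \in f(H)$.

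Finally I would verify $f_1 f_2 \le f$ pointwise: for $H' \not\subseteq H$ the product is $0$, while for $H' \subseteq H$ it equals $\langle a\rangle \langle b\rangle = \langle ab\rangle \subseteq f(H')$, using $ab \in f(H) \subseteq f(H')$. By the reformulation of primeness above, this forces $f_1 \le f$ or $f_2 \le f$, i.e.\ $a \in f(H)$ or $b \in f(H)$, so $f(H)$ is a prime ideal and $f$ takes values in $\operatorname{Spec}(R) \cup \{R\}$. I expect the main obstacle to be not any single computation but setting up the dictionary correctly: one must confirm that the lattice isomorphism of Theorem \ref{LPA ideal correspondence} is multiplicative in the required sense and that the elementary functions $\phi_{H,J}$ really are saturated (hence correspond to honest ideals), since it is precisely the saturation operation that encodes the graph-theoretic relations and is the only place where the reduction could break down.
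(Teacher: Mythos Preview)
Your proof is correct and follows essentially the same approach as the paper: both construct the elementary functions $\phi_{H,J}$ (the paper calls them $f_A$, $f_B$) and use them to test primeness, with the only cosmetic difference being that the paper argues by contradiction using arbitrary ideals $A,B$ with $AB\subseteq f(H)$ while you work directly with principal ideals $\langle a\rangle,\langle b\rangle$. Your write-up is in fact more careful than the paper's, since you explicitly verify that $\phi_{H,J}$ is saturated and spell out the passage from $\overline{f_1f_2}\le f$ to $f_1f_2\le f$.
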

\begin{proof}
Suppose for contradiction that we have a hereditary saturated subset \(H\subseteq E^0\), such that \(f(H)=J\), and that we have two ideals \(A,B\trianglelefteq R\) with \(AB\subseteq J \trianglelefteq R\), but \(A,B\nsubseteq J\).
Consider the two ideals \(I_A\) and \(I_B\) of \(L_R(E)\) corresponding to
\[f_A: H' \mapsto \begin{cases}
A & \text{ if } H'\subseteq H\\
0 & \text{ else }
\end{cases}
\qquad \text{ and } \qquad
f_B: H' \mapsto \begin{cases}
B & \text{ if } H'\subseteq H\\
0 & \text{ else }
\end{cases}\]
respectively. Then we have \(I_A\cdot I_B\subseteq I\), but \(I_A,I_B\nsubseteq I\), so \(I\) is not prime.
\end{proof}

\begin{lemma}\label{prime condition 2}
If \(I\) is prime, then for any ideal \(J\trianglelefteq R\), the set \(H:=\max\left\{H' \in \mathscr{T}_E \mid J \subseteq f(H')\right\}\) (which exists as \(f\) is saturated) has the property that \(E^0\setminus H\) is downward directed.
\end{lemma}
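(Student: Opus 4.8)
The plan is to prove the contrapositive: assuming $E^0\setminus H$ is not downward directed, I will exhibit two ideals whose product lies in $I$ although neither does, contradicting primeness. Since $E$ is row-finite there are no breaking vertices, so $\mathscr{T}_E^*$ consists of the nonempty hereditary saturated sets and (by Corollary \ref{classification of graded ideals}) $I$ is the graded ideal attached to the saturated function $f$, which I write as $f(H')$ for $f(H',\varnothing)$. For a vertex $x$ let $T(x)=\{r(\mu)\mid \mu\in E^*,\ s(\mu)=x\}$ be the set of vertices reachable from $x$; this is hereditary and $\overline{T(x)}=\overline{\{x\}}$. Unwinding the definition, the failure of downward directedness for $E^0\setminus H$ furnishes vertices $v,w\notin H$ with no common reachable vertex outside $H$, i.e.\ $T(v)\cap T(w)\subseteq H$. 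Moreover, since $H=\max\{H'\mid J\subseteq f(H')\}$ and $v,w\notin H$, maximality forces $J\not\subseteq f(\overline{\{v\}})$ and $J\not\subseteq f(\overline{\{w\}})$ (otherwise $\overline{\{v\}}$ or $\overline{\{w\}}$ would already lie inside $H$).

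The crucial point, which I expect to be the main obstacle, is that the hereditary saturated closures $\overline{\{v\}}$ and $\overline{\{w\}}$ are larger than $T(v),T(w)$, so I must control their intersection. I will prove the lemma that for hereditary sets $A,B\subseteq E^0$ one has $\overline{A}\cap\overline{B}\subseteq\overline{A\cap B}$. Applying it with $A=T(v)$, $B=T(w)$ and combining $T(v)\cap T(w)\subseteq H$ with saturatedness of $H$ yields
\[
\overline{\{v\}}\cap\overline{\{w\}}=\overline{T(v)}\cap\overline{T(w)}\subseteq\overline{T(v)\cap T(w)}\subseteq H .
\]
The lemma itself is proved by induction on $n+m$, using the stagewise construction \eqref{S-saturation} of the saturation with $S=\varnothing$ (write $\Lambda_n(A)$ for $\Lambda_n^{\varnothing}(A)$): given $x\in\Lambda_n(A)\cap\Lambda_m(B)$, if $x$ lies in an earlier stage for either set we reduce $n+m$, and otherwise $x$ is a regular vertex whose emitted edges all have range in $\Lambda_{n-1}(A)\cap\Lambda_{m-1}(B)$ (when $n=0$ we instead use that $A$ is hereditary to place these ranges in $A=\Lambda_0(A)$, and symmetrically when $m=0$); the induction hypothesis puts every such range in $\overline{A\cap B}$, whence $x\in\overline{A\cap B}$ by saturation.

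With the geometry settled, I construct the two ideals. Define $f_1,f_2\colon\mathscr{T}_E^*\to\mathcal{L}(R)$ by $f_1(H')=J$ if $H'\subseteq\overline{\{v\}}$ and $f_1(H')=0$ otherwise, and likewise $f_2$ with $\overline{\{w\}}$ in place of $\overline{\{v\}}$. A direct check (using that $\overline{\{v\}}$ is hereditary saturated, so $\bigvee_i H_i'\subseteq\overline{\{v\}}$ iff every $H_i'\subseteq\overline{\{v\}}$) shows $f_1,f_2\in\mathscr{F}_{E,R}$; let $I_1,I_2\trianglelefteq L_R(E)$ be the corresponding ideals. Since $f_1(\overline{\{v\}})=J\not\subseteq f(\overline{\{v\}})$ we have $f_1\not\le f$, hence $I_1\not\subseteq I$, and symmetrically $I_2\not\subseteq I$.

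It remains to show $I_1I_2\subseteq I$. Because $C_u(E)=\varnothing$, the function $G_c^{\times}$ of Proposition \ref{multiplication of Leavitt ideals} vanishes, so $I_1I_2$ corresponds to $\overline{f_1f_2}$; as $f$ is saturated, the inclusion $\overline{f_1f_2}\le f$ is equivalent to the pointwise inequality $f_1f_2\le f$. Now $f_1(H')f_2(H')$ is nonzero only when $H'\subseteq\overline{\{v\}}\cap\overline{\{w\}}$, in which case it equals $J^2$; but then $H'\subseteq H$ by the displayed inclusion, so the order-reversing property of $f$ gives $f(H')\supseteq f(H)\supseteq J\supseteq J^2=f_1(H')f_2(H')$. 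Hence $f_1f_2\le f$ and $I_1I_2\subseteq I$, so $I$ is not prime, completing the contrapositive. The degenerate cases $J=0$ and $H=E^0$ make $E^0\setminus H=\varnothing$ vacuously downward directed, so they require no separate argument.
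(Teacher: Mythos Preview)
Your proof is correct and follows the same strategy as the paper: construct the two ideals $I_1,I_2$ from the indicator functions $f_1,f_2$ supported on $\{H'\subseteq\overline{\{v\}}\}$ and $\{H'\subseteq\overline{\{w\}}\}$, and check $I_1I_2\subseteq I$ while $I_1,I_2\not\subseteq I$. You are in fact more careful than the paper at the one nontrivial step: the paper simply asserts that any $H'$ with $f_1(H')f_2(H')\neq 0$ lies in $H$ ``by the way we chose $v$ and $w$'', whereas you correctly identify that this requires $\overline{\{v\}}\cap\overline{\{w\}}\subseteq H$ and supply the auxiliary lemma $\overline{A}\cap\overline{B}\subseteq\overline{A\cap B}$ for hereditary $A,B$, together with a clean induction on the saturation stages. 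You also make explicit why $J\not\subseteq f(\overline{\{v\}})$ (maximality of $H$), which the paper leaves implicit.
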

\begin{proof}
By contradiction, assume we have such an ideal \(J\trianglelefteq R\), and vertices \(v,w\in E^0\setminus H\) witnessing the fact that \(E^0\setminus H\) is not downward directed. Consider the two ideals \(I_1, I_2\) of \(L_R(E)\) corresponding to 
\[f_1: H' \mapsto \begin{cases}
J & \text{ if } H'\subseteq \overline{\{v\}}\\
0 & \text{ else }
\end{cases}
\qquad \text{ and } \qquad
f_2: H' \mapsto \begin{cases}
J & \text{ if } H'\subseteq \overline{\{w\}}\\
0 & \text{ else }
\end{cases}\]
respectively. Then \(I_1,I_2\nsubseteq I\). However, the vertices of any hereditary saturated subset \(H'\) for which \(f_1(H') \cdot f_2(H')\neq 0\) will lie in \(H\), by the way we chose \(v\) and \(w\). For those subsets, we have \(f_1(H')\cdot f_2(H') = J^2 \subseteq f(H')\), so \(f_1\cdot f_2\leq f\), and hence \(\overline{f_1\cdot f_2} \leq f\). This shows that \(I_1\cdot I_2\subseteq I\), so \(I\) is not prime.
\end{proof}

Now consider the  graph $E$ with $E^0 = \{v,w \}$, $E^1 = \{e_1, e_2, f\}$, $r(e_i) = s(e_i) = s(f) = v$, and $r(f) = w$. Let us work over the ring \(\ZZ\), and consider the ideal $I$ of \(L_R(E)\) corresponding to
\[\begin{tikzcd}
(0) \arrow[d] \arrow[loop right] \arrow[loop left] \\
(2)
\end{tikzcd}\]
using Notation \ref{drawing functions}. This ideal clearly satisfies the conclusions of Lemmas \ref{prime condition 1} and \ref{prime condition 2}. Let $I_1$ and $I_2$ be the ideals of \(L_R(E)\) corresponding to
\begin{align*}\begin{tikzcd}
(0) \arrow[d] \arrow[loop right] \arrow[loop left] \\
\ZZ
\end{tikzcd}
&& \text{and} &&
\begin{tikzcd}
(4) \arrow[d] \arrow[loop right] \arrow[loop left] \\
(2)
\end{tikzcd}\end{align*}
respectively. Then $I_1 I_2 = I$ but neither $I_1$ nor $I_2$ are contained in $I$, so $I$ is not prime. This shows that these two conditions are not sufficient for an ideal to be prime. Note that for this counterexample to work, we needed to find a prime ideal contained in a different prime ideal. So we could have done this only with rings of Krull dimension at least 1. This suggests that some sufficient condition for primeness must also take into account the structure of the prime ideals of~\(R\).

\section*{Acknowledgements}

We would like to thank Gene Abrams and Kulumani Rangaswamy for their careful reading of the paper, encouragement, and many helpful suggestions.


\begin{thebibliography}{99}



\bibitem{LPAbook} Abrams, G., Ara, P., and Siles Molina, M. \emph{Leavitt Path Algebras}, vol. 2191 of {Lecture Notes in Mathematics}. Springer, 2017.

\bibitem{abrams2005leavitt} Abrams, G. and Aranda Pino, G. {The Leavitt path algebra of a graph}. \emph{Journal of  Algebra 293}, 2 (2005), 319--334.

\bibitem{abrams-posets} Abrams, G., Aranda Pino, G., Mesyan, Z., and Smith, C. {Realizing posets as prime spectra of Leavitt path algebras}. \emph{Journal of Algebra 476} (2017), 267--296.

\bibitem{abrams-corners} Abrams, G., Dokuchaev, M, and Nam, T. G. {Realizing corners of Leavitt path algebras as Steinberg algebras, with corresponding connections to graph $\mathrm{C^*}$-algebras}. \emph{arXiv preprint}, arXiv:1909.03964v1 (2019).

\bibitem{ara-realization} Ara, P., Bosa, J., and Pardo, E. The realization problem for finitely generated refinement
monoids. \emph{Selecta Mathematica 23}, 33 (2020), 1--63.

\bibitem{ara-separated} Ara, P. and Goodearl, K. R. {Leavitt path algebras of separated graphs}. \emph{Journal für die reine und angewandte Mathematik 669} (2012), 165--224.

\bibitem{ara2007nonstable} Ara, P., Moreno, M. A., and Pardo, E. {Nonstable $K$-theory for graph algebras}. \emph{Algebras and Representation Theory 10}, 2 (2007), 157--178.

\bibitem{ararangaswamy} Ara, P. and Rangaswamy, K. M. Finitely presented simple modules over Leavitt path algebras. \emph{Journal of Algebra 417} (2014), 333--352.

\bibitem{ararangaswamy2} Ara, P. and Rangaswamy, K. M. Leavitt path algebras with at most countably many irreducible representations. \emph{Revista Matem\'atica Iberoamericana 31}, 4 (2015), 1263--1276.

\bibitem{clark2019ideals} Clark, L. O., Edie-Michell, C., an Huef, A., and Sims, A. {Ideals of Steinberg algebras of strongly effective groupoids, with applications to Leavitt path algebras}. \emph{Transactions of the American Mathematical Society  371}, 8 (2019), 5461--5486.

\bibitem{clark2016decomposability} Clark, L. O., Mart{\'in} Barquero, D., Mart{\'i}n Gonz{\'a}lez, C., and Siles Molina, M. {Using Steinberg algebras to study decomposability of Leavitt path algebras}. \emph{Forum Mathematicum 29}, 6 (2016), 1311--1324.

\bibitem{clark2014groupoid} Clark, L. O., Farthing, C., Sims, A., and Tomforde, M. {A groupoid generalisation of Leavitt path algebras}. \emph{Semigroup Forum 89} (2014), 501--517.

\bibitem{clark-hazrat} Clark, L. O. and Hazrat, R. {\'Etale groupoids and Steinberg algebras: a concise introduction}, in \emph{Leavitt Path Algebras and Classical $K$-Theory} (eds. Ambily, A. A., Hazrat, R, and Sury, B.), Indian Statistical Institute Series. Springer, 2020, 73--101.

\bibitem{clark-KP_algebras} Clark, L. O. and Pangalela, Y. E. P. {Kumjian–Pask algebras of finitely aligned
higher-rank graphs}. \emph{Journal of Algebra 482} (2017), 364--397.

\bibitem{hazrat-nam} Hazrat, R. and Nam, T. G. {Realizing ultragraph Leavitt path algebras as Steinberg algebras}. \emph{arXiv preprint}, arXiv:2008.04668v1 (2020).

\bibitem{larki} Larki, H. Ideal structure of Leavitt path algebras with coefficients in a unital commutative ring. \emph{Communications in Algebra 43}, 12 (2015), 5031--5058.

\bibitem{Lam} Lam, T. Y. \emph{A First Course in Noncommutative Rings} (2nd ed.), vol.\! 131 of Graduate Texts in Mathematics. Springer, 2001.

\bibitem{leavitt} Leavitt, W. The module type of a ring. \emph{Transactions of the American Mathematical Society 103}, 1 (1962), 113--130.

\bibitem{Loper} Loper, K. A., Mesyan, Z., and Oman, G. An infinite cardinal-valued Krull dimension for rings. \emph{Journal of Algebra, 517} (2019), 223--248.

\bibitem{rangaswamyprime} Rangaswamy, K. M. The theory of prime ideals of Leavitt path algebras over arbitrary graphs. \emph{Journal of Algebra 375} (2013), 73--96.

\bibitem{rangaswamy} Rangaswamy, K. M. The multiplicative ideal theory of Leavitt path algebras. \emph{Journal of Algebra 487} (2017), 173--199.

\bibitem{renault} Renault, J. \emph{A Groupoid Approach to $C^*$-Algebras}, vol.\! 793 of Lecture Notes in Mathematics. Springer, 1980.

\bibitem{rigby} Rigby, S. W. {The groupoid approach to Leavitt path algebras}, in \emph{Leavitt Path Algebras and Classical $K$-Theory} (eds. Ambily, A. A., Hazrat, R, and Sury, B.), Indian Statistical Institute Series. Springer, 2020, 21--72.

\bibitem{steinberg0} Steinberg, B. {A groupoid approach to discrete inverse semigroup algebras}. \emph{Advances in Mathematics 223} (2010), 689--727.

\bibitem{steinberg} Steinberg, B. {Modules over \'etale groupoid algebras as sheaves}. \emph{Journal of the Australian Mathematical Society  97} (2014), 418--429.

\bibitem{steinberg-eh} Steinberg, B. {Ideals of \'etale groupoid algebras and Exel's Effros-Hahn Conjecture}. \emph{arXiv preprint}, arXiv:1810.10580v2 (2018).

\bibitem{steinberg-prime} Steinberg, B. {Prime \'etale groupoid algebras with applications to inverse semigroup and Leavitt path algebras}.  \emph{Journal of Pure and Applied Algebra 223} (2017), 2474-2488.

\bibitem{tomforde2007} Tomforde, M. {Uniqueness theorems and ideal structure for Leavitt path algebras}. \emph{Journal of Algebra 318} (2007), 270--299.

\bibitem{tomforde2011leavitt} Tomforde, M. {Leavitt path algebras with coefficients in a commutative ring}. \emph{Journal of Pure and Applied Algebra 215} (2011), 471-484.

\bibitem{tuganbaev} Tuganbaev, A. A. \emph{Semidistributive Modules and Rings}, vol. 449 of Mathematics and Its Applications. Springer, 1998.


\end{thebibliography}
\end{document}